%
%
\documentclass{amsproc}

\usepackage{amsmath}
\usepackage{amssymb}
\usepackage{rotating}
\usepackage{graphicx}
\DeclareGraphicsExtensions{.png,.pdf,.eps}
\usepackage[all,2cell]{xy}
\usepackage{tikz}

\newtheorem{theorem}{Theorem}[section]
\newtheorem{lemma}[theorem]{Lemma}
\newtheorem{corollary}[theorem]{Corollary}
\newtheorem{proposition}[theorem]{Proposition}
\newtheorem{conjecture}[theorem]{Conjecture}

\theoremstyle{definition}

\newtheorem{definition}[theorem]{Definition}
\newtheorem{notation}[theorem]{Notation}

\newtheorem{remark}[theorem]{Remark}


\def\C{{\mathbb C}}

\def\G{{\mathbb G}}

\def\P{{\mathbb P}}
\def\Q{{\mathbb Q}}
\def\R{{\mathbb R}}
\def\Z{{\mathbb Z}}

\def\cA{{\mathcal A}}

\def\cC{{\mathcal C}}
\def\cD{{\mathcal D}}
\def\cE{{\mathcal E}}
\def\cF{{\mathcal F}}

\def\cH{{\mathcal{H}}}

\def\cK{{\mathcal K}}

\def\cM{{\mathcal M}}

\def\cO{{\mathcal{O}}}

\def\cU{{\mathcal U}}

\def\Q{{\mathbb{Q}}}
\def\G{{\mathbb{G}}}

\def\fg{{\mathfrak g}}
\def\fh{{\mathfrak h}}

\def\fb{{\mathfrak b}}

\def\f{\varphi}

\def\f{\varphi}

\def\lra{\longrightarrow}

\def\lra{\longrightarrow}
\def\rat{\dashrightarrow}

\def\operatorname#1{\mathop{\rm #1}\nolimits}

\def\Hom{\operatorname{Hom}}

\def\Pic{\operatorname{Pic}}

\def\Spec{\operatorname{Spec}}

\def\deg{\operatorname{deg}}

\def\det{\operatorname{det}}

\def\rat{\operatorname{RatCurves}}

\def\qed{\hspace{\fill}$\rule{2mm}{2mm}$}
\def\NE{{\operatorname{NE}}}

\def\Nef{{\operatorname{Nef}}}
\def\Amp{{\operatorname{Amp}}}

\newcommand{\cNE}[1]{\overline{\NE}(#1)}

\def\Gl{\operatorname{GL}}

\def\len{{\lambda}}

\newcommand{\sgn}{\operatorname{sgn}}
\newcommand{\Chi}{\ensuremath \raisebox{2pt}{$\chi$}}

\DeclareMathOperator{\ch}{\mathrm{Chains}}




\makeindex

\begin{document}

\title[A geometric characterization of flag varieties]
{Fano manifolds whose elementary contractions are smooth $\P^1$-fibrations: a geometric characterization of flag varieties}


\author[G. Occhetta]{Gianluca Occhetta}
\address{Dipartimento di Matematica, Universit\`a di Trento, via
Sommarive 14 I-38123 Povo di Trento (TN), Italy}
\thanks{First and second author were partially supported by PRIN project ``Geometria delle variet\`a algebriche'' and the Department of Mathematics of the University of Trento.}
\email{gianluca.occhetta@unitn.it}

\author[L.E. Sol\'a Conde]{Luis E. Sol\'a Conde}
\address{Dipartimento di Matematica, Universit\`a di Trento, via
Sommarive 14 I-38123 Povo di Trento (TN), Italy}
\thanks{Second author partially supported by the Spanish government grant MTM2009-06964, and by the Korean National Researcher Program 2010-0020413 of NRF}
\email{lesolac@gmail.com}

\author[K. Watanabe]{Kiwamu Watanabe}
\address{Course of Mathematics, Programs in Mathematics, Electronics and Informatics,
Graduate School of Science and Engineering, Saitama University.
Shimo-Okubo 255, Sakura-ku Saitama-shi, 338-8570 Japan}
\email{kwatanab@rimath.saitama-u.ac.jp}
\thanks{The third author was partially supported by JSPS KAKENHI Grant Number 24840008.}

\author[J. Wi\'sniewski]{Jaros\l{}aw A. Wi\'sniewski}
\address{Instytut Matematyki UW, Banacha 2, 02-097 Warszawa, Poland}
\email{J.Wisniewski@uw.edu.pl} \thanks{The fourth author was supported  by Polish National Science Center grant 2013/08/A/ST1/00804. A part of this work was done when he visited CIRM at the University of Trento.}

\subjclass[2010]{Primary 14J45; Secondary 14E30, 14M17}

\begin{abstract}
  The present paper provides a geometric characterization of complete
  flag varieties for semisimple algebraic groups. Namely, if $X$ is a
  Fano manifold whose all elementary contractions are
  $\P^1$-fibrations then $X$ is isomorphic to the complete flag
  manifold $G/B$ where $G$ is a semi-simple Lie algebraic group and
  $B$ is a Borel subgroup of $G$.
\end{abstract}

\maketitle

\section{Introduction}

Rational homogeneous manifolds constitute one of the most important
class of examples of Fano manifolds. Whilst the representation theory
of semisimple Lie groups provides a powerful tool to describe many
different aspects of the geometry of this type of manifolds, they lack
good intrinsic characterization within the class of Fano manifolds.

One of the most natural conjectures in this direction is the one
proposed by F.~Campana and T.~Peternell in 1991 (cf. \cite[Special
case~11.2, Conjecture~11.1]{CP1}):

\begin{conjecture}[Campana--Peternell]
\label{conj:CPconj}
  Any Fano manifold whose tangent bundle is nef is rational homogeneous.
\end{conjecture}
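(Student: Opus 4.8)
The statement is the full Campana--Peternell conjecture, which is open in general; what I can honestly offer is a reduction to a single primitive case together with a concrete plan of attack on that case. The plan is to proceed in three stages: a structural reduction to the Fano situation, an inductive d\'evissage along elementary contractions that reduces to Picard number one, and finally a projective--differential-geometric analysis of the variety of minimal rational tangents in the remaining primitive case.

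First I would invoke the structure theory of Demailly--Peternell--Schneider for manifolds with nef tangent bundle: after a finite \'etale cover the Albanese map $X \to \mathrm{Alb}(X)$ is a smooth fibration whose fibres $F$ are Fano manifolds again carrying a nef tangent bundle. This reduces the conjecture to the Fano case, namely it suffices to prove that a Fano manifold with nef tangent bundle is rational homogeneous, the homogeneity of the total space then following from that of the fibre. Note that such an $X$ is automatically rationally connected, so the minimal rational curves needed later are available.

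Next I would run an induction on $\rho(X)$. The key structural input, which also underlies the paper's main theorem, is that for $X$ Fano with $T_X$ nef every elementary contraction $\varphi\colon X\to Y$ is a \emph{smooth} fibration onto a smooth Fano base $Y$ with nef tangent bundle, whose fibres are Fano with nef tangent bundle. When $\rho(X)\ge 2$ I would apply the inductive hypothesis to both $Y$ and the fibres of $\varphi$, obtaining that each is rational homogeneous, and then reconstruct $X$ as $G/P$ by assembling the semisimple groups acting on base and fibre into a single group acting transitively on $X$; this is exactly the Tits-geometry and root-system gluing carried out in the $\P^1$-fibration case of the present paper, which I would extend to contractions with arbitrary homogeneous fibres. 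This leaves the base case $\rho(X)=1$, the genuinely primitive situation and the heart of the conjecture.

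The hard part is precisely this primitive case: $X$ Fano, $\rho(X)=1$, $T_X$ nef, to be shown isomorphic to $G/P$ with $P$ maximal. Here my plan is the variety of minimal rational tangents (VMRT) program of Hwang and Mok. Choosing a minimal dominating family of rational curves, nefness of $T_X$ ensures that the family is unsplit, that its general members are standard, and that the VMRT $\mathcal{C}_x\subset \P(T_xX)$ at a general point is a smooth projective subvariety. The decisive step is to show that $\mathcal{C}_x$ is projectively isomorphic to the VMRT of a rational homogeneous manifold of Picard number one: I would study the second and higher fundamental forms of $\mathcal{C}_x$ together with the graded symbol algebra it generates, aiming to identify that algebra with the isotropy representation of a simple Lie algebra, using the rigidity forced by standardness of the minimal curves and by the way $\mathcal{C}_x$ embeds in $\P(T_xX)$. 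Once $\mathcal{C}_x$ is recognized as homogeneous, the Cartan--Fubini type extension theorem together with the recognition theorems of Mok and of Hong--Hwang promote the pointwise isomorphism of VMRTs to a global isomorphism $X\cong G/P$. I expect this rigidity step --- showing that nefness of $T_X$ forces $\mathcal{C}_x$ to be homogeneous rather than merely smooth --- to be the principal obstacle, and it is exactly the point at which the Campana--Peternell conjecture remains open in arbitrary dimension.
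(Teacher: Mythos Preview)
The statement is a conjecture that the paper does \emph{not} prove; the paper establishes only the special case $\tau(X)=0$ (Theorem~\ref{conj:CPforFT}) and in Section~\ref{sec:appCP} explicitly reformulates the remaining content of Conjecture~\ref{conj:CPconj} as the open Conjecture~6.3. So there is no paper proof to compare against, only a suggested strategy, and it is worth noting that the paper's strategy runs in the \emph{opposite} direction to yours: rather than contracting $X$ down to Picard number one and invoking VMRT rigidity there, the paper proposes to go \emph{up}, dominating $X$ by a CP-manifold $X'\to X$ with strictly smaller $\tau$, until one reaches $\tau=0$ and can apply Theorem~\ref{conj:CPforFT}. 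Your d\'evissage reduces everything to the primitive case $\rho=1$; the paper's reduces everything to producing such an $X'$. Each leaves a hard open problem.

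Your inductive step for $\rho(X)\ge 2$ contains a genuine gap that you pass over too quickly. You write that once the base $Y$ and the fibres of an elementary contraction $\varphi:X\to Y$ are known to be rational homogeneous, one can ``reconstruct $X$ as $G/P$ by assembling the semisimple groups acting on base and fibre,'' and you describe this as ``exactly the Tits-geometry and root-system gluing carried out in the $\P^1$-fibration case of the present paper.'' That is a mischaracterisation of what the paper does: the proof of Theorem~\ref{conj:CPforFT} does not proceed by any group-theoretic gluing of base and fibre, nor by induction on Picard number. It builds Bott--Samelson towers $Z_\ell\to X$ directly from the $\P^1$-fibrations, shows by cohomological computations that $Z_\ell\cong\overline{Z}_\ell$ for a suitable reduced word, and concludes via birationality of the evaluation map. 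No step asserts that a fibration with homogeneous base and homogeneous fibre is itself homogeneous.

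And indeed that assertion is not known: a smooth fibre bundle with fibre $G_1/P_1$ over $G_2/P_2$ need not be homogeneous, even when the total space is Fano with nef tangent bundle --- establishing this is essentially as hard as the conjecture itself. In low dimensions this reconstruction has been carried out only by explicit classification (as in \cite{CP2,Wa2}), not by any general gluing mechanism. So your reduction to $\rho=1$ is not actually a reduction: the inductive step is itself open. Your honest identification of the VMRT rigidity problem in the $\rho=1$ case as the main obstacle is reasonable, but you should be equally candid that the $\rho\ge 2$ step is not a formality.
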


We note that the above conjecture is in the spirit of the celebrated
Hartshorne--Frankel conjecture proved by Mori in \cite{Mo}, which
constitutes one of the cornerstones of the Minimal Model Program in
algebraic geometry.

The conjecture is known to be true in some special situations (\cite{CP2, Mk,
  Hw, SW, Wa2,P}). In the quoted references the proofs depend on detailed
classifications of the manifolds satisfying the required properties,
hence the general question of showing how the homogeneity follows from the nefness of the tangent
bundle is still an open problem.

Recently, a possible strategy towards the solution of this
problem has been proposed in \cite{MOSW}. The paper showed how the
classification of manifolds of Picard number two with two smooth $\P^1$-fibrations could be
used recursively to associate a finite Dynkin diagram with any manifold
with nef tangent bundle supporting as many smooth $\P^1$-fibrations as its Picard number,
identifying in this way a homogeneous model of $X$, to which it
is conjecturally isomorphic.
In \cite{MOSW} it was then proposed a method to prove homogeneity
by reconstructing the manifold using families of rational curves,
and this method was successfully applied to manifolds
with associated Dynkin diagram  of type ${\rm A}_n$.

In this paper we generalize (using completely different techniques) the results in \cite{MOSW}, associating
a finite Dynkin diagram $\cD$ and a homogeneous model $G/B$ with any Fano manifold $X$ whose elementary contractions are smooth $\P^1$-fibrations. We then prove the following

\begin{theorem}\label{conj:CPforFT}
Let $X$ be a Fano manifold whose elementary contractions are
smooth $\P^1$-fibrations. Then
$X$ is isomorphic to a complete flag manifold $G/B$, where $G$ is a
semisimple algebraic group and $B$ a Borel subgroup.
\end{theorem}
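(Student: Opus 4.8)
The plan is to recover the combinatorial data of a semisimple group from the system of $\P^1$-fibrations carried by $X$, assemble it into a Dynkin diagram $\cD$, and then prove that $X$ is isomorphic to the flag manifold $G/B$ attached to $\cD$. Writing $n=\rho(X)$, I would first establish the basic numerical picture: that $\cNE{X}$ is simplicial with extremal rays $R_1,\dots,R_n$, each contracted by a smooth $\P^1$-fibration $\pi_i\colon X\to X_i$. Since the fibers are lines with trivial normal bundle, $T_X$ restricted to a fiber is $\cO(2)\osum\cO^{\,\dim X-1}$, so $-K_X\cdot R_i=2$ for every $i$. The fibers of the $\pi_i$ supply $n$ unsplit families of minimal rational curves, one direction for each node, and these are the objects I would work with throughout; in particular each $\pi_i$ carries a relative tangent line bundle restricting to $\cO(2)$ on fibers, whose vertical sections are the germs of the $\sl_2$-action I want to produce.

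Next I would attach a diagram to $X$ by a rank-two analysis. For each pair $i\neq j$ the face $\langle R_i,R_j\rangle$ of $\cNE{X}$ is extremal, hence contractible, and its contraction $X\to X_{ij}$ has general fiber $F_{ij}$ a Fano manifold of Picard number two whose two elementary contractions (the restrictions of $\pi_i$ and $\pi_j$) are again smooth $\P^1$-fibrations. By the classification of such varieties, $F_{ij}$ is one of the four rank-two flag manifolds, namely $\P^1\times\P^1$, the $\mathrm{A}_2$-flag $\sl_3/B$, the $\mathrm{B}_2$-flag, or the $\mathrm{G}_2$-flag. I would read off the bond between nodes $i$ and $j$ (absent, simple, double, triple) from which case occurs, equivalently from the intersection behaviour of the two curve families on $F_{ij}$, thereby producing a Cartan matrix and a diagram $\cD$. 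A short but essential step is to check that $\cD$ is of finite type: the Fano positivity hypothesis, fed into the pairwise data, should force the symmetrized Cartan matrix to be positive definite, ruling out the affine and indefinite cases.

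With $\cD$ in hand, let $G$ be the semisimple group with diagram $\cD$, let $B$ be a Borel subgroup, and set $Y=G/B$; by construction $Y$ carries $n$ smooth $\P^1$-fibrations whose rank-two fibers match those of $X$ pair by pair. The decisive and hardest step is to upgrade this numerical coincidence into an honest isomorphism $X\iso Y$. The route I would take is to integrate the local $\sl_2$-structures of the individual fibrations into a global action of $G$ on $X$: each $\pi_i$ produces vertical vector fields (the relative $\mathrm{PGL}_2$-structure), the rank-two fibers $F_{ij}$ encode exactly the commutation and Serre relations between the corresponding generators, and one must show that these data fit together coherently, so that the Lie subalgebra of $H^0(X,T_X)$ they generate is all of $\fg$ and exponentiates to a $G$-action. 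One then argues transitivity — the vertical fields of the $\pi_i$ span $T_X$ at a general point, so the orbit is open, and properness forces it to be all of $X$ — with isotropy a Borel, yielding $X\cong G/B$.

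The core difficulty lies precisely in this last passage. Pointwise and rank-two compatibility does not by itself glue into a global group action: the obstruction is a potential monodromy around loops in the tower of fibrations $X\to X_i\to X_{ij}\to\cdots$, and the substance of the argument is to prove that no such obstruction occurs — in effect, that the Cartan geometry modelled on $(G,B)$ which $X$ carries is flat. Controlling this loop-free, flat structure, rather than reconstructing $X$ curve by curve as in \cite{MOSW}, is where the genuinely new technical input must be concentrated, and it is the step I expect to be the main obstacle.
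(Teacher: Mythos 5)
Your first half (reading a diagram off the rank-two faces) is essentially the strategy of \cite{MOSW}, which the paper deliberately abandons, and it already contains a gap: pairwise data cannot establish finite type. The contraction of each face $\langle R_i,R_j\rangle$ does classify the bond between $i$ and $j$, but an affine diagram such as a triangle ($\tilde{\mathrm{A}}_2$) has \emph{every} rank-two subdiagram of finite type, so no amount of pairwise analysis "forces the symmetrized Cartan matrix to be positive definite" — a genuinely global argument is required. The paper supplies one of a quite different flavour: each fibration induces a reflection $r_i(D)=D+(D\cdot\Gamma_i)K_i$ of $N^1(X)$ compatible with cohomology via relative Serre duality (Lemma \ref{lem:leray}), the Euler-characteristic polynomial $\Chi^T$ changes sign under each $r_i$ (Lemma \ref{lem:transhilb}) and hence vanishes on the whole $W$-orbit of the hyperplanes $\Gamma_i^\perp$, whose cardinality is therefore bounded by $\deg\Chi_X\le\dim X$; this is what makes $W$ finite (Proposition \ref{prop:finiteW}) and the diagram a finite Dynkin diagram. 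Note also that the smoothness of the face contractions and of their fibers, which you assume at the outset, is only established in the paper (Proposition \ref{prop:fibersf4}) as an output of the Bott--Samelson machinery, not an input.

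The decisive step of your proposal is, by your own admission, unproven, and the specific mechanism you sketch fails even on the model. The relative tangent bundle of $\pi_i$ is the line bundle $-K_i$, and on $G/B$ the group $H^0(G/B,-\overline{K_i})$ is the Borel--Weil--Bott cohomology of the weight $\alpha_i$, which vanishes whenever $\alpha_i$ is not dominant — that is, whenever node $i$ has a neighbour in $\cD$. So there are no nonzero global vertical vector fields to serve as "germs of the $\mathfrak{sl}_2$-action": the $\mathfrak{sl}_2$-copies inside $\fg$ do not act vertically, and the local vertical fields coming from analytic local triviality of $\pi_i$ are exactly the data obstructed by the monodromy you name. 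Proving that this obstruction vanishes (your "flat Cartan geometry" step) is the whole theorem; the paper circumvents it by an entirely different device: Bott--Samelson varieties $Z_\ell$ built as iterated fiber products along the $\pi_i$, cohomological descent rules (Lemma \ref{lem:j2}) showing that for suitably chosen reduced words of the longest element the extension class $\zeta_\ell$ spans a one-dimensional $H^1$ and is nonzero (Corollary \ref{cor:simple}, Lemma \ref{lem:uniqueBC}), hence $Z_\ell\simeq\overline{Z}_\ell$ (Propositions \ref{prop:uniqueADE} and \ref{prop:uniqueBC}); birationality of both evaluation maps (Corollary \ref{cor:birat}) and agreement of their Stein factorizations (Corollary \ref{cor:stein}) then yield $X\simeq G/B$. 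Even this argument fails for $\mathrm{F}_4$ — the paper checked that \emph{no} reduced word works — and a separate VMRT-based argument (Section \ref{sec:F4}) is needed there. Your proposal offers no substitute for either mechanism, so as it stands it is a research program rather than a proof.
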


Our proof of this statement is based on the following ideas: every
smooth $\P^1$-fibration in $X$ provides an involution of the vector
space $N^1(X)$ of classes of $\R$-divisors in $X$. We show in Section
\ref{sec:reflect} that the set of these involutions generates a finite
reflection group, which is the Weyl group $W$ of a
semisimple Lie algebra $\fg$, defining a Dynkin diagram $\cD$ and a homogeneous model $G/B$ for $X$.
We can then consider an isomorphism $\psi: N^1(X)\to N^1(G/B)$ preserving
the relative canonical classes of the $\P^1$-fibrations of $X$ and $G/B$,
using it to prove that they have the same dimension and the same cohomology of line bundles (see Corollary \ref{cor:cohomequal}).

In Section \ref{sec:BSvar} we use the $\P^1$-fibrations in $X$ to
define a set of auxiliary manifolds called Bott-Samelson
varieties, which are analogues of the Bott-Samelson varieties that appear classically in the study of Schubert cycles of homogeneous manifolds. We then show in Section \ref{sec:unique} that the
recursive construction of appropriately chosen Bott-Samelson varieties depends only on the
combinatorics of the Weyl group $W$ (cf. Propositions \ref{prop:uniqueADE} and \ref{prop:uniqueBC}), and we infer from this the isomorphism between $X$ and its homogeneous model.

This proof works with an exception, which occurs when a connected component of $\cD$ is ${\rm F}_4$, so, to conclude the proof of Theorem \ref{conj:CPforFT}, we need to treat this case with different arguments, presented in Section \ref{sec:F4}.

Finally, in Section \ref{sec:appCP}, we briefly discuss how our
results merge into the general problem of Campana--Peternell
Conjecture.

\begin{notation}\label{not:ftmanifold}
Along this paper, unless otherwise stated, $X$ will denote a Fano manifold of Picard number $\rho_X=n$ whose elementary contractions $\pi_i:X\to X_i$ are smooth $\P^1$-fibrations.
The vector space of $\R$-divisors on $X$ modulo numerical equivalence will be denoted by $N^1(X)$, and its dual, that is the vector space of real $1$-cycles modulo numerical equivalence, by $N_1(X)$. The closure of the cone generated by effective $1$-cycles, the so called Mori cone of $X$, will be denoted by $\cNE{X}\subset N_1(X)$.
For every $i$ we will denote by $\Gamma_i$ a fiber of $\pi_i$ and by $R_i$ the extremal ray of $\cNE{X}$ generated by the class of $\Gamma_i$. The relative canonical divisor of $\pi_i$ will be denoted by $K_i$.
\end{notation}

\section{Homogeneous models
}\label{sec:reflect}


In this section we will show how to associate a flag manifold $G/B$ with  a Fano manifold $X$  whose elementary contractions $\pi_i$, $i=1,\dots,n$, are smooth $\P^1$-fibrations.

\begin{notation}\label{notn:div}
Since $X$ is a Fano manifold, two divisors on $X$ are numerically equivalent if and only if they are linearly  equivalent; in particular we can identify the space $N^1(X)_\Z$ of divisors modulo numerical equivalence with
$\Pic(X)$. For this reason, with abuse of notation we will denote the numerical class of a divisor and the corresponding line bundle with the same letter, using additive notation.
\end{notation}

As a first observation, we note the following:

\begin{lemma}\label{lem:basis}
With the same notation as in \ref{not:ftmanifold}, the classes $[\Gamma_i]$, $i=1,\dots,n$ form a basis of the real vector space $N_1(X)$, and generate the Mori cone of $X$. In particular, $X$ has Picard number $n$.
\end{lemma}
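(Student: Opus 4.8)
The plan is to prove three things: that the $[\Gamma_i]$ span $N_1(X)$, that they are linearly independent, and that they generate the Mori cone. Since each $\pi_i$ is an elementary contraction, by definition it contracts exactly the extremal ray $R_i$, so the $R_i$ are $n$ distinct extremal rays of $\cNE{X}$.

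My starting point is the relationship between elementary contractions and the structure of $N_1(X)$. Recall that for any contraction $\pi_i \colon X \to X_i$ of a $K_X$-negative extremal ray, there is an exact sequence relating $N_1(X)$ and $N_1(X_i)$: a class in $N_1(X)$ maps to zero in $N_1(X_i)$ precisely when it is a multiple of $[\Gamma_i]$. In other words, the kernel of $(\pi_i)_* \colon N_1(X) \to N_1(X_i)$ is the line $\R[\Gamma_i]$. Dually, $\pi_i^* \colon N^1(X_i) \to N^1(X)$ is injective with image the hyperplane $\{D : D\cdot \Gamma_i = 0\}$, so $\rho_X = \rho_{X_i} + 1$.

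\begin{proof}
Since $X$ is Fano, every elementary contraction $\pi_i$ is the contraction of a $K_X$-negative extremal ray $R_i$ of $\cNE{X}$, and $R_i$ is generated by the class $[\Gamma_i]$ of a fiber. First I claim the rays $R_1,\dots,R_n$ are pairwise distinct: if $R_i=R_j$ then $\pi_i$ and $\pi_j$ contract the same ray, hence coincide (the contraction of an extremal ray is unique), and then $\Gamma_i$ and $\Gamma_j$ would be the general fibers of the same morphism. Since there are $n=\rho_X$ distinct elementary contractions, the corresponding rays are distinct; being extremal, their generators $[\Gamma_i]$ are linearly independent. Indeed, suppose $\sum_i a_i[\Gamma_i]=0$ is a nontrivial relation, and separate it as $\sum_{a_i>0} a_i[\Gamma_i] = \sum_{a_j<0}(-a_j)[\Gamma_j]$; this exhibits a nonzero class lying in the face of $\cNE{X}$ spanned by $\{R_i : a_i>0\}$ and in the face spanned by $\{R_j : a_j<0\}$. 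As these index sets are disjoint and each $R_i$ is extremal, the common class would have to be $0$, a contradiction. Hence the $[\Gamma_i]$ are linearly independent, giving $\dim N_1(X)=\rho_X\ge n$.

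For the reverse inequality I use the exact sequences coming from the contractions. For each $i$ the pushforward $(\pi_i)_*\colon N_1(X)\to N_1(X_i)$ has kernel exactly $\R[\Gamma_i]$, so $\rho_{X_i}=\rho_X-1$. Now I run an induction on $\rho_X$. Each $X_i$ is again a Fano manifold (the target of an elementary contraction of a Fano manifold is Fano) whose elementary contractions are the smooth $\P^1$-fibrations induced by the $\pi_j$ for $j\ne i$, so by the inductive hypothesis $\rho_{X_i}=n-1$ and the classes $[\Gamma_j]$, $j\ne i$, descend to a basis of $N_1(X_i)$ generating $\cNE{X_i}$. Pulling this back and adding the extra generator $[\Gamma_i]$ recovers all of $N_1(X)$, so $\rho_X=n$ and the $[\Gamma_i]$ form a basis.

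Finally, to see that the $[\Gamma_i]$ generate the Mori cone, take any extremal ray $R$ of $\cNE{X}$. Since $X$ is Fano, $R$ is $K_X$-negative and admits an elementary contraction; but the elementary contractions of $X$ are exactly $\pi_1,\dots,\pi_n$, so $R=R_i$ for some $i$. Thus the only extremal rays of $\cNE{X}$ are $R_1,\dots,R_n$, and since $\cNE{X}$ is generated by its extremal rays (it is a polyhedral cone, being generated by finitely many extremal rays spanning $N_1(X)$), we conclude $\cNE{X}=\sum_{i=1}^n \R_{\ge 0}[\Gamma_i]$.
\end{proof}

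The step I expect to be the main obstacle is the induction in the second paragraph, specifically the claim that the elementary contractions of the target $X_i$ are again smooth $\P^1$-fibrations; one must verify that the contractions $\pi_j$ ($j\neq i$) descend through $\pi_i$ and that no new extremal rays appear on $X_i$. The cleaner route, which I would use if the descent is delicate, avoids induction entirely: linear independence already gives $\rho_X\ge n$, and the polyhedrality together with the fact that every extremal ray of a Fano manifold is contractible forces the extremal rays to be among the $R_i$, so $\cNE{X}$ is contained in the cone spanned by the $[\Gamma_i]$; this simultaneously bounds $\rho_X\le n$ and yields the Mori cone description, making the basis and spanning statements fall out together.
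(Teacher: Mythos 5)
There is a genuine gap, and it sits exactly where the geometric input is required: your proof that the $[\Gamma_i]$ are linearly independent is purely convex-geometric, and it is false as stated. The subcone of $\cNE{X}$ spanned by a subset of extremal rays need \emph{not} be a face, so from a relation $\sum_{a_i>0} a_i[\Gamma_i]=\sum_{a_j<0}(-a_j)[\Gamma_j]$ you cannot conclude that the common class vanishes. Distinct extremal rays of a Fano manifold can be linearly dependent: on the del Pezzo surface $S=\mathrm{Bl}_{p_1,p_2,p_3}\P^2$ (Picard number $4$, six extremal rays) one has $[E_2]+[L-E_1-E_2]=[E_3]+[L-E_1-E_3]=[L-E_1]\neq 0$, a relation of precisely the shape you consider, with disjoint index sets on the two sides and nonzero common class. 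So no argument using only extremality of the rays can work; one must use that each $\Gamma_i$ moves in an \emph{unsplit family covering $X$} (true here because $\pi_i$ is a smooth $\P^1$-fibration, so the fibers sweep out $X$ and $-K_X\cdot\Gamma_i=2$). This is exactly what the paper does: it cites \cite[Lemma~5.2]{A}, which asserts that numerical classes of unsplit dominating families lying in pairwise distinct extremal rays are linearly independent --- a genuinely geometric statement, not a fact about cones. Note that in the counterexample above the curves $E_i$ and $L-E_i-E_j$ are rigid, not members of covering families, which is precisely why the hypothesis matters and why your cone-theoretic shortcut proves too much.

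Your inductive route contains a second, independent error, which you yourself half-anticipated: the elementary contractions of $X_i$ are \emph{not} ``induced by the $\pi_j$, $j\neq i$'' --- those morphisms do not contract $\Gamma_i$, hence do not factor through $\pi_i$ at all, and $X_i$ typically has contractions of a completely different type. For the complete flag threefold of type ${\rm A}_2$ (a divisor of bidegree $(1,1)$ in $\P^2\times\P^2$), $X_1\cong\P^2$, whose unique elementary contraction is the map to a point, certainly not a $\P^1$-fibration; so the induction cannot even start to propagate the hypothesis. Your fallback argument in the last paragraph, by contrast, is correct and coincides with the paper's treatment of everything except independence: since $X$ is Fano, the Cone Theorem makes $\cNE{X}$ the rational polyhedral cone generated by the rays of its elementary contractions, i.e.\ by $R_1,\dots,R_n$; as $\cNE{X}$ spans $N_1(X)$, the classes $[\Gamma_i]$ generate both $N_1(X)$ and the Mori cone, and combined with a \emph{correct} independence statement this yields the basis claim and $\rho_X=n$. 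With the flawed face argument replaced by an appeal to (or proof of) \cite[Lemma~5.2]{A}, your fallback route becomes the paper's proof; as written, the independence step is a genuine gap.
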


\begin{proof}
Since every $\Gamma_i$ provides an unsplit dominating family of rational curves and  $\R_{\geq 0}[\Gamma_i]$ are extremal rays of $\cNE{X}$, the first part of the statement follows from a direct application of  \cite[Lemma~5.2]{A}. Since $X$ is a Fano manifold, its Mori cone is generated by the rays associated with its elementary contractions: this completes the proof.
\end{proof}

 We will discuss now some general facts about cohomology of divisors on smooth $\P^1$-fibrations, that we will need later on.

\subsection{Relative duality on $\P^1$-fibrations}\label{ssec:leray}

The following result may be understood as an extension to the case of smooth $\P^1$-fibrations of the Borel-Weil-Bott Theorem on $\P^1$. Using similar ideas, a short proof of the full Borel-Weil-Bott Theorem was given in \cite{Lu}.

\begin{lemma}\label{lem:leray}
Let $\pi:M\to Y$ be a smooth $\P^1$-fibration over a smooth manifold $Y$,  denote by $\Gamma$ one of its fibers and by $K$ its relative canonical divisor. Then for every Cartier divisor $D$ on $M$, setting $l:=D\cdot\Gamma$ and $\sgn(\alpha):=\alpha/|\alpha|$ for $\alpha\neq 0$,  $\sgn(0):=1$, one has
\begin{eqnarray}
H^{i}(M,\cO_M(D)) & \cong  &H^{i+\sgn(l+1)}(M,\cO_M(D+(l+1)K)), \mbox{ for every }i\in\Z.
\label{eq:leray}
\end{eqnarray}
In particular:
\begin{itemize}
\item $H^{i}(M,\cO_M(D))=0$ if $l=-1$, and
\item $\Chi(M,\cO_M(D))=-\Chi(M,\cO_M(D+(l+1)K))$ for every divisor $D$.
\end{itemize}
\end{lemma}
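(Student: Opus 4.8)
The plan is to combine the Leray spectral sequence of $\pi$ with relative Serre duality, after reducing to the case $l\geq 0$. First I would record the only geometric input: since $\pi$ is a smooth $\P^1$-fibration, $\omega_{M/Y}=\cO_M(K)$ restricts to $\cO_{\P^1}(-2)$ on every fiber, so $K\dt\Gamma=-2$ and hence $D':=D+(l+1)K$ satisfies $D'\dt\Gamma=-l-2$. The assignment $D\mapsto D'$ is an involution on the fiber-degree, sending $l$ to $-l-2$, and $\operatorname{sgn}(l+1)=-\operatorname{sgn}(-l-1)$; thus it suffices to prove \eqref{eq:leray} when $l\geq 0$, the case $l\leq -2$ following by applying that case to $D'$, and the case $l=-1$ being immediate since then $\cO_M(D)$ restricts to $\cO_{\P^1}(-1)$ on each fiber, so $R^q\pi_*\cO_M(D)=0$ for all $q$ and all cohomology vanishes. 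Once \eqref{eq:leray} is known, both \emph{in particular} statements are formal: the vanishing for $l=-1$ is the computation just made, and since the isomorphism shifts cohomological degree by $\operatorname{sgn}(l+1)=\pm1$, summing $(-1)^i$ yields $\Chi(M,\cO_M(D))=(-1)^{\operatorname{sgn}(l+1)}\Chi(M,\cO_M(D'))=-\Chi(M,\cO_M(D'))$.

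Assume now $l\geq 0$. On each fiber $\cO_M(D)$ has degree $l\geq 0$ while $\cO_M(D')$ has degree $-l-2\leq -2$, so $R^1\pi_*\cO_M(D)=0$ and $\pi_*\cO_M(D')=0$, and the two surviving direct images are locally free of rank $l+1$. The Leray spectral sequence therefore degenerates on both sides, giving $H^i(M,\cO_M(D))\iso H^i(Y,\pi_*\cO_M(D))$ and $H^{i+1}(M,\cO_M(D'))\iso H^i(Y,R^1\pi_*\cO_M(D'))$. Consequently \eqref{eq:leray} for $l\geq 0$ is reduced to producing an isomorphism of sheaves on $Y$ between $\pi_*\cO_M(D)$ and $R^1\pi_*\cO_M(D')$.

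To obtain this I would apply relative Grothendieck--Serre duality for the smooth morphism $\pi$ of relative dimension one, namely $R^1\pi_*\cF\iso\big(\pi_*(\cF^\vee\tensor\omega_{M/Y})\big)^\vee$. Taking $\cF=\cO_M(D')$ gives $R^1\pi_*\cO_M(D')\iso\big(\pi_*\cO_M(K-D')\big)^\vee$, and $K-D'=-D-lK$. The line bundle $\cO_M(-D-lK)$ has fiber-degree $l$ and differs from $\cO_M(D)$ by the class $-2D-lK$, which is trivial on fibers and hence of the form $\pi^*L$ for a line bundle $L$ on $Y$; by the projection formula $\pi_*\cO_M(-D-lK)\iso\pi_*\cO_M(D)\tensor L$. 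Thus $R^1\pi_*\cO_M(D')\iso(\pi_*\cO_M(D))^\vee\tensor L^\vee$, and the proof comes down to checking that this twisted dual is again isomorphic to $\pi_*\cO_M(D)$. This last point is the crux, and it is where the $\P^1$-fibration structure enters essentially: writing $M=\P(E)$ for a rank-two bundle $E$ one has $\pi_*\cO_M(D)\iso \Symm^l E\tensor A$ for a line bundle $A$, and the rank-two identity $E^\vee\iso E\tensor(\det E)^{-1}$ yields $\Symm^l E^\vee\iso\Symm^l E\tensor(\det E)^{-l}$; computing $L$ explicitly in terms of $A$ and $\det E$ then shows that the factors $(\det E)^{\pm l}$ and $A^{\pm 2}$ cancel exactly, so that $(\pi_*\cO_M(D))^\vee\tensor L^\vee\iso\pi_*\cO_M(D)$. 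I expect this twist-cancellation to be the main obstacle, together with making the reduction to a projective bundle clean (the only delicate topological point being the Brauer obstruction, which can be circumvented by checking the identification of the higher direct images locally on $Y$); the remaining steps are formal consequences of Leray and duality.
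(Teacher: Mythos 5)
Your proposal follows the paper's own route---reduce to one sign of $l$, degenerate the Leray spectral sequence on both sides, and convert the statement via relative Serre duality into a global isomorphism of direct images, namely $\big(\pi_*\cO_M(K-D')\big)^\vee\cong\pi_*\cO_M(D)$ for $D':=D+(l+1)K$, which is precisely the paper's Lemma \ref{lem:dual}---and in the case $M=\P(E)$ your twist cancellation $S^lE^\vee\cong S^lE\otimes(\det E)^{-l}$ is exactly the paper's first case. The genuine gap is your closing remark that the Brauer obstruction ``can be circumvented by checking the identification of the higher direct images locally on $Y$.'' What you must produce is a \emph{global} isomorphism of locally free sheaves on $Y$, to which you then apply $H^i(Y,-)$; a local (Zariski, \'etale or analytic) verification is vacuous for this purpose, since any two bundles of the same rank are locally isomorphic, while globally non-isomorphic bundles have different cohomology in general. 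A local argument could only succeed if you first exhibited a canonical, globally defined map (for instance a nondegenerate pairing $\pi_*\cO_M(D)\otimes\pi_*\cO_M(D)\to L^\vee$) whose isomorphy can be tested fiberwise; your sketch provides no such map, because the identification $E^\vee\cong E\otimes(\det E)^{-1}$ depends on the choice of $E$, and no such $E$ exists globally when $\pi$ admits no divisor of odd fiber-degree---such fibrations genuinely occur, which is why the paper does not assume $M=\P(E)$.

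The missing idea is the paper's second case of Lemma \ref{lem:dual}. If $\pi$ is not a $\P^1$-bundle, then every divisor has even fiber-degree, so one normalizes $\cO_M(D)\cong\cO_M(-(l/2)K)\otimes\pi^*B$ and replaces your $E$ by the globally defined rank-three bundle $\cE=\pi_*\cO_M(-K)$: the evaluation map $\pi^*\cE\to\cO_M(K)^\vee$ embeds $M$ into $P=\P(\cE)$ as a divisor with associated line bundle $\cO_P(2)$ (a fiberwise smooth conic), and the equation of $M$ is an everywhere nondegenerate symmetric form, yielding a \emph{global} isomorphism $\cE\cong\cE^\vee$. Then $\pi_*\cO_M(K-D)$ and $\pi_*\cO_M(D+(l+1)K)$ are the same symmetric power of $\cE$ twisted by $B^\vee$, respectively $B$, and the self-duality of $\cE$ gives the global isomorphism your argument requires; without this construction (or an equivalent global self-duality statement) your proof does not close. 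The rest of your proposal is correct and matches the paper: the $l=-1$ vanishing, the involution $l\mapsto -l-2$ with its sign bookkeeping, the Euler-characteristic consequence, and the observation that $-2D-lK$, being of degree zero on fibers, descends to a line bundle $L$ on $Y$.
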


\begin{proof}
Note first that $l+1=D\cdot \Gamma+1=-((D+(l+1)K)\cdot\Gamma+1)$, so we may assume, without loss of generality, that $l\leq -1$. In this case, moreover, equation (\ref{eq:leray}) holds trivially for $i\leq -1$, so we may also assume that $i\geq 0$.

First of all we claim that there is a natural isomorphism:
$$
R^1\pi_*\cO_M(D)\cong\pi_*\cO_M(D+(l+1)K).
$$
In fact, relative Serre duality provides $R^1\pi_*\cO_M(D)\cong(\pi_*\cO_M(K-D))^\vee$, and the fact that the vector bundle on $Y$ appearing in the right hand side coincides with $\pi_*\cO_M(D+(l+1)K)$ follows from Lemma \ref{lem:dual} below.

Then, applying the Leray spectral sequence with respect to the morphism $\pi$ to both $\cO_M(D)$ and $\cO_M(D+(l+1)K)$, we obtain the required isomorphism: in the left hand side, since $l\leq -1$, the only nonzero elements of the sequence are those of the form $H^i(Y,R^1\pi_*\cO_M(D))$, hence there is an isomorphism $H^{i+1}(M,\cO_M(D))\cong H^{i}(Y,R^1\pi_*\cO_M(D))$; on the right hand side, since $(D+(l+1)K)\cdot \Gamma=-l-2\geq-1$, the only nonzero elements of the  sequence are those of the form $H^i(Y,\pi_*\cO_M(D+(l+1)K)))$, so that $ H^i(Y,\pi_*\cO_M(D+(l+1)K))\cong H^i(M,\cO_M(D+(l+1)K))$. Summing up we have:
\begin{eqnarray*}
H^{i+1}(M,\cO_M(D))\hspace{-0.2cm}&\cong&\hspace{-0.2cm} H^i(Y,R^1\pi_*\cO_M(D))\cong H^i(Y,\pi_*\cO_M(D+(l+1)K)))\\
&\cong& H^i(M,\cO_M(D+(l+1)K)).  \qquad\qquad \qquad
\end{eqnarray*}
\par
\qed
\end{proof}

\begin{lemma}\label{lem:dual}
Let $\pi:M\to Y$ be a smooth $\P^1$-fibration over a smooth manifold $Y$,  denote by $\Gamma$ one of its fibers and by $K$ its relative canonical divisor. Then for every divisor $D$ we have $$\big(\pi_*\cO_M(K-D)\big)^\vee\cong\pi_*\cO_M(D+(D\cdot \Gamma+1)K).$$
\end{lemma}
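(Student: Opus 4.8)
The plan is to dispose of the trivial cases and then reduce everything to an explicit computation on a projective bundle. Set $l:=D\dt\Gamma$. Since $K\dt\Gamma=-2$, both $\cO_M(K-D)$ and $\cO_M(D+(l+1)K)$ have fiber degree $-l-2$. Hence if $l\geq -1$ each of them restricts to a line bundle of negative degree on every fiber $\Gamma\iso\P^1$, so $\pi_*\cO_M(K-D)=0=\pi_*\cO_M(D+(l+1)K)$ and the assertion holds trivially. I would therefore assume $l\leq -2$, in which case both sheaves restrict on each fiber to $\cO_{\P^1}(-l-2)$, a line bundle of nonnegative degree with vanishing $H^1$; by cohomology and base change they are locally free of rank $-l-1$ and their formation commutes with base change. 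In particular the statement is local on $Y$, and I may verify it after an étale base change, over which the smooth $\P^1$-fibration $\pi$ becomes the projectivization $\P(\cE)$ of a rank-two vector bundle $\cE$.

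Writing $\xi$ for the tautological class, so that $\pi_*\cO_M(m\xi)\iso\Symm^m\cE$ for $m\geq 0$ and $K=\omega_{M/Y}=-2\xi+\pi^*\det\cE$, the projective bundle formula lets me write $D=l\xi+\pi^*B$ for some line bundle $B$ on $Y$. The projection formula then gives
\begin{equation*}
\pi_*\cO_M(K-D)\iso\Symm^{-l-2}\cE\otimes\det\cE\otimes B^{-1},\qquad
\pi_*\cO_M(D+(l+1)K)\iso\Symm^{-l-2}\cE\otimes B\otimes(\det\cE)^{l+1}.
\end{equation*}
The crux is the identity for a rank-two bundle: the perfect alternating pairing $\cE\otimes\cE\to\det\cE$ yields the canonical isomorphism $\cE^\vee\iso\cE\otimes(\det\cE)^{-1}$, whence, setting $r:=-l-2\geq 0$,
\begin{equation*}
(\Symm^{r}\cE)^\vee\iso\Symm^{r}(\cE^\vee)\iso\Symm^{r}\cE\otimes(\det\cE)^{-r}.
\end{equation*}
Dualizing the first displayed formula and substituting $l+1=-r-1$ transforms it into the second, so over the étale cover the two sheaves are isomorphic.

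The main obstacle is globalization. A smooth $\P^1$-fibration need not be a projective bundle — the obstruction is a $2$-torsion Brauer class — so $\cE$ exists only étale-locally and only up to replacing it by a twist $\cE\otimes N$, and two locally free sheaves that are locally isomorphic need not be globally so. I would resolve this by observing that the isomorphism just produced is canonical: it is built solely from the intrinsic data $(\pi,\omega_{M/Y},D)$ together with the canonical symplectic isomorphism $\cE\iso\cE^\vee\otimes\det\cE$, and is therefore unaffected by twisting $\cE$ by a line bundle and compatible on overlaps; hence the local isomorphisms glue to the desired global one. Equivalently, one may route the identification through relative Serre duality $R^1\pi_*\cO_M(D)\iso(\pi_*\cO_M(K-D))^\vee$, which is itself canonical and global, reducing the gluing to the naturality of the remaining step. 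Either way, verifying that the rank-two construction descends is the only delicate point; the algebraic identity above is what makes the two pushforwards coincide rather than merely having the same rank.
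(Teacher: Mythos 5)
Your reduction of the trivial cases and your rank-two computation coincide with the paper's $\P^1$-bundle case (the paper takes $\cF=\pi_*\cO_M(H)$, writes $D=lH+\pi^*B$, and uses $\cF^\vee\cong\cF\otimes\det(\cF)^\vee$ exactly as you do), but on the hard case -- $\pi$ not a $\P^1$-bundle -- you take a genuinely different route. The paper never localizes: it observes that if no unisecant divisor exists then $l=D\cdot\Gamma$ is forced to be even, takes the \emph{globally defined} rank-three bundle $\cE=\pi_*\cO_M(-K)$, embeds $M\hookrightarrow\P(\cE)$ as a divisor with class $\cO_{\P(\cE)}(2)$ (a conic bundle), and reads off from this quadratic form a global nondegenerate symmetric isomorphism $\cE\cong\cE^\vee$; writing $D=-(l/2)K+\pi^*B$, the two pushforwards become $S^{-(l/2+1)}\cE\otimes B^\vee$ and $S^{-(l/2+1)}\cE\otimes B$, and self-duality finishes. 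That buys a proof with no descent at all, at the price of a case split; your route is uniform but must pay exactly where you say, and your discharge of the cocycle condition (``built solely from intrinsic data, therefore unaffected by twisting and compatible on overlaps'') is asserted rather than verified. It is in fact true, but the verification is a real, if routine, computation that should be recorded, since a priori local isomorphisms canonical only up to scalar would glue merely to an isomorphism twisted by a line bundle: under $\cE\mapsto\cE\otimes N$ one has $\xi\mapsto\xi+\pi^*N$ and $B\mapsto B\otimes N^{-l}$, and with $r=-l-2$ the powers of $N$ cancel on both sides ($r+2+l=0$ on the source, $r-l+2(l+1)=0$ on the target); and under the residual automorphisms $t\cdot\mathrm{id}_\cE$, which act trivially on $M$, the source of your isomorphism scales by $t^{-(r+2)}=t^{l}$ and the target by $t^{r+2l+2}=t^{l}$, so the local isomorphisms are strictly, not just projectively, compatible and glue. (Your Serre-duality reformulation faces the same canonicity question for $R^1\pi_*\cO_M(D)\cong\pi_*\cO_M(D+(l+1)K)$, so it does not actually shortcut this step; note the paper derives that isomorphism \emph{from} the present lemma in Lemma \ref{lem:leray}.) With the invariance check written out, your proof is complete and arguably more conceptual; the paper's is more elementary and entirely gluing-free.
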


\begin{proof}
The statement is clearly true when $(K-D)\cdot \Gamma< 0$, so we may assume $l:=D\cdot \Gamma\leq -2$.

If $\pi:M\to Y$ is a $\P^1$-bundle or, equivalently, if $M$ admits a divisor $H$ satisfying $H\cdot \Gamma=1$, then denoting by $\cF$ the rank two vector bundle $\pi_*\cO_M(H)$, and writing $\cO_M(D)\cong\cO_M(l H)\otimes \pi^*(B)$ for some $B\in\Pic(Y)$, we have:
$$\begin{array}{l}\vspace{0.2cm}
\pi_*\cO_M(K-D)=S^{-(l+2)}\cF\otimes B^\vee \otimes\det(\cF)\\
\pi_*\cO_M(D+(l+1)K)=
S^{-(l+2)}\cF\otimes B\otimes\det(\cF)^{l+1}
\end{array}
$$
Since $\cF^\vee=\cF\otimes\det(\cF)^{\vee}$, it follows that
$$\big(S^{-(l+2)}(\cF)\big)^\vee=\big(S^{-(l+2)}(\cF)\big)\otimes\det(\cF)^{l+2}$$ which, together with our previous two equalities, concludes the proof.

We may then assume that $\pi:M\to Y$ is not a $\P^1$-bundle and, in particular, that $l=D\cdot \Gamma$ is even. Let us denote by $\cE$ the rank three vector bundle  $\pi_*\cO_M(-K)$;  the natural map $\pi^*(\cE)\to \cO_M(K)^{\vee}$ defines an inclusion $\iota:M\hookrightarrow P:=\P(\cE)$ which makes the following diagram, where   $p:P \to Y$ is the natural projection, commutative:
$$
\xymatrix{M \ar@{^{(}->}[]+<0ex,-1.7ex>;[d]_{\iota}
\ar[rrd]^{\pi}&&\\P\ar[rr]^{p}&&Y}
$$
Note that, by construction, $M\subset P$ is a divisor whose associated line bundle is $\cO_P(2)$, so that $M$ defines an everywhere nondegenerate (symmetric) isomorphism $\cE\cong\cE^\vee$.

We may write $\cO_M(D)\cong\cO_M(-(l/2)K)\otimes\pi^*B$ for some $B\in\Pic(Y)$, so that:
$$
\begin{array}{l}\vspace{0.2cm}
\pi_*\cO_M(K-D)=\pi_*\cO_M((l/2+1)K)\otimes B^\vee=S^{-(l/2+1)}\cE\otimes B^\vee\\
\pi_*\cO_M(D+(l+1)K)=
S^{-(l/2+1)}\cE\otimes B
\end{array}
$$
The proof is finished by using the isomorphism $\cE\cong\cE^\vee$.
\end{proof}


\subsection{Reflection groups}\label{ssec:weyl}

We will now apply results of the previous subsection to the case of a Fano manifold $X$
satisfying Notation~\ref{not:ftmanifold}.

For every elementary contraction $\pi_i:X\to X_i$ we will consider the affine involution $r'_i$ of $N^1(X)$ given by:
$$
r'_i(D):=D+(D\cdot\Gamma_i+1)K_i. 
$$
Denoting by $T$ the translation by $K_X/2$ in $N^1(X)$, that is $T(D):=D+K_X/2$, every composition $r_i:=T^{-1}\circ r'_i\circ T$ is a linear involution of $N^1(X)$ given by
$$r_i(D)=D+ (D\cdot\Gamma_i)K_i, $$
which is a {\it reflection}, i.e. it is an order two automorphism that fixes a hyperplane in $N^1(X)$. In our case, $r_i(K_i)=-K_i$ and the fixed hyperplane is
\begin{equation}\label{eq:hyper}
\Gamma_i^{\perp}:=\{D|D\cdot\Gamma_i=0\}\subset N^1(X).
\end{equation}
We will denote by $W\subset\Gl(N^1(X))$ the group generated by the reflections $r_i$, $i=1,\dots, n$.
Via conjugation with $T$, the group of affinities $W'$ generated by the reflections $r'_i$ may be identified with $W$.
The following lemma shows how cohomology behaves with respect to the action of $W'$.

\begin{lemma}\label{lem:cohomW}
If $D \in \Pic(X)$ such that $w'(D)\in T(\Nef(X))$ for some $w'\in W'$ then there exists $i\in\Z$, such that
$$
H^j(X,D)=0,\mbox{ for }j\neq i.
$$
\end{lemma}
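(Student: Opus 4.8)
The plan is to recognize that the affine involution $r'_i$ is precisely the operation appearing in the relative duality statement of Lemma \ref{lem:leray}. Indeed, for a divisor $D$ with $l := D\cdot\Gamma_i$ one has $r'_i(D)=D+(l+1)K_i$, so applying Lemma \ref{lem:leray} to the smooth $\P^1$-fibration $\pi_i$ gives, for every $j\in\Z$,
$$
H^j(X,\cO_X(D)) \cong H^{j+\sgn(l+1)}(X,\cO_X(r'_i(D))).
$$
In words: passing from $D$ to $r'_i(D)$ shifts the entire cohomology by one degree (and, in the degenerate case $l=-1$, annihilates it). In particular, the property of having cohomology concentrated in a single degree is invariant under each generator $r'_i$, hence under all of $W'$.

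Next I would write $w'=r'_{i_k}\circ\cdots\circ r'_{i_1}$ as a word in the generators and iterate the displayed isomorphism along the chain $D=D_0,\ D_1=r'_{i_1}(D_0),\ \dots,\ D_k=w'(D)$. This produces a single global degree shift
$$
H^j(X,\cO_X(D)) \cong H^{j+s}(X,\cO_X(w'(D))), \qquad s=\sum_{m=1}^{k}\sgn(D_{m-1}\cdot\Gamma_{i_m}+1),
$$
valid for all $j$ as long as none of the intermediate intersection numbers equals $-1$ (if one does, all cohomology of $D$ already vanishes and the conclusion holds trivially for any $i$). Thus the whole statement reduces to proving it for the single divisor $D':=w'(D)\in T(\Nef(X))$, after which one sets $i=-s$.

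It then remains to establish the vanishing claim: if $D'\in\Pic(X)$ satisfies $D'-K_X/2\in\Nef(X)$, then $H^j(X,\cO_X(D'))=0$ for $j\neq 0$. For this I would set $N:=D'-K_X/2$, which is nef by hypothesis, and note that
$$
D'-K_X = N-K_X/2 = N+\tfrac{1}{2}(-K_X).
$$
Since $X$ is Fano, $-K_X$ is ample, hence so is $\tfrac{1}{2}(-K_X)$; adding the nef class $N$ keeps it ample, so the integral divisor $D'-K_X$ is ample. Kodaira vanishing applied to $D'=K_X+(D'-K_X)$ then forces $H^j(X,\cO_X(D'))=0$ for all $j>0$, while the remaining degrees vanish for trivial reasons, so the cohomology of $D'$ lives in degree $0$.

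I do not expect a serious obstacle: once $r'_i$ is identified with the Leray shift of Lemma \ref{lem:leray}, the argument is bookkeeping of degree shifts plus one application of Kodaira vanishing, the latter powered by the Fano hypothesis, which upgrades ``nef after the half-canonical translation'' into ``$K_X+\text{ample}$''. The only points needing a little care are the degenerate case $l=-1$ (absorbed by the built-in vanishing of Lemma \ref{lem:leray}) and the standard fact that the sum of a nef and an ample $\R$-class is again ample.
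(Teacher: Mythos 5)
Your proposal is correct and takes essentially the same route as the paper: the base case $D'\in T(\Nef(X))$ is handled by Kodaira vanishing via the decomposition $D'=K_X+(D'-K_X/2)+(-K_X/2)$ (nef plus ample), and the general case follows by recursive application of Lemma \ref{lem:leray} along a word for $w'$, with the degenerate intersection number $-1$ absorbed by the built-in vanishing. Your write-up merely makes explicit the degree-shift bookkeeping (the total shift $s$ and the choice $i=-s$) that the paper's proof leaves implicit.
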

\begin{proof}
Note first that for $D\in T(\Nef(X))$, one has
$$
H^j(X,D)=H^j(X,K_X +D-K_X/2-K_X/2)=0,\mbox{ for }j\neq 0,
$$
where the last equality follows, being $D-K_X/2$ nef and $-K_X/2$ ample, by Kodaira Vanishing Theorem. Since every $w'\in W'$ is a composition of reflections $r_i'$, $i=1,\dots,n$, the result follows from recursive use of Lemma \ref{lem:leray}.
\end{proof}

\begin{definition}
Let $\Chi_X:\Pic(X)\to\Z$ be the function which associates with a line bundle its Euler characteristic, i.e. $\Chi_X(L)=\Chi(X,L)$; this function, as proved by Snapper (Cf. \cite[Section 1, Theorem]{Kl}), has the property that, given
$L_1, \dots, L_t \in \Pic(X)$,  $\Chi_X(m_1, \dots, m_t)=\Chi(X, m_1L_1+ \dots + m_tL_t)$ is a numerical polynomial in $m_1, \dots, m_t$ of degree $\le \dim X$.
Via the identification of the Picard group with $N^1(X)_\Z$ we can thus extend this function to a polynomial function $\Chi_X:N^1(X) \to \R$. We also set $\Chi^T:=\Chi_X\circ T$.
\end{definition}

The following lemma describes the behaviour of $\Chi^T$ with respect to  $W$:

\begin{lemma}\label{lem:transhilb}
For every $\R$-divisor $D$ on $X$ and every reflection $r_i$, the following equality is fulfilled
\begin{equation}
\Chi^T(D)=-\Chi^T(r_i(D))
\label{eq:transhilb}
\end{equation}
\end{lemma}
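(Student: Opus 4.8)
The plan is to reduce the asserted identity to the cohomological statement of Lemma \ref{lem:leray} for integral divisors, and then to extend it to arbitrary real divisors by exploiting the polynomial nature of $\Chi_X$.

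First I would rewrite both sides in terms of the affine involution $r'_i$. Since $r_i = T^{-1}\circ r'_i\circ T$, we have $T\circ r_i = r'_i\circ T$, and hence $T(r_i(D)) = r'_i(T(D))$. Writing $E := T(D)$, so that $\Chi^T(D)=\Chi_X(E)$ and $\Chi^T(r_i(D)) = \Chi_X(r'_i(E))$, the desired equality (\ref{eq:transhilb}) becomes
$$\Chi_X(E) = -\Chi_X(r'_i(E)).$$
As $D$ ranges over all of $N^1(X)$ and $T$ is a bijection, $E$ ranges over all of $N^1(X)$ as well; it therefore suffices to prove this identity for every $E\in N^1(X)$.

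Next I would establish the identity on the lattice of integral divisors. For $E\in\Pic(X)=N^1(X)_\Z$, set $l:=E\cdot\Gamma_i$, so that by definition $r'_i(E) = E + (l+1)K_i$. Applying the ``In particular'' part of Lemma \ref{lem:leray} to the smooth $\P^1$-fibration $\pi_i:X\to X_i$ (with $\Gamma=\Gamma_i$, $K=K_i$, and divisor $E$) yields precisely
$$\Chi(X,\cO_X(E)) = -\Chi(X,\cO_X(E+(l+1)K_i)) = -\Chi(X,\cO_X(r'_i(E))).$$
Thus the two functions $E\mapsto\Chi_X(E)$ and $E\mapsto -\Chi_X(r'_i(E))$ agree at every integral point of $N^1(X)$.

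Finally I would pass from integral to real divisors. By Snapper's theorem $\Chi_X$ is a polynomial function on $N^1(X)$, and $r'_i$ is affine, so $E\mapsto\Chi_X(r'_i(E))$ is again polynomial. Two polynomial functions on the real vector space $N^1(X)$ that coincide on the full-rank lattice $N^1(X)_\Z$, which is Zariski dense, must coincide everywhere; hence $\Chi_X(E) = -\Chi_X(r'_i(E))$ for all $E\in N^1(X)$, which is exactly (\ref{eq:transhilb}). The computational core of the argument is immediate from Lemma \ref{lem:leray}; the only point requiring genuine care is this last step, namely the observation that an identity of Euler characteristics verified on integral classes propagates automatically to all real classes because both sides are polynomial in the divisor. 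This is precisely where the polynomial extension of $\Chi_X$ to $N^1(X)$ afforded by Snapper's theorem is indispensable.
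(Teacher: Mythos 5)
Your proof is correct and follows exactly the paper's (one-line) argument: the identity holds on the lattice $\Pic(X)\subset N^1(X)$ by the ``in particular'' part of Lemma \ref{lem:leray}, and extends to all $\R$-divisors because $\Chi^T$ is a polynomial function via Snapper's theorem. You have merely made explicit the conjugation by $T$ and the lattice-density step that the paper leaves implicit.
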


\begin{proof}
It is enough to notice that the property holds in the lattice $\Pic(X)$ $\subset N^1(X)$, by Lemma \ref{lem:leray}.
\end{proof}

\begin{corollary}\label{cor:transhilb2}
For every $\R$-divisor $D$ on $X$ and every $w \in W$,
\begin{equation}
\Chi^T(D)=\pm\Chi^T(w(D)).
\label{eq:transhilb2}
\end{equation}
\end{corollary}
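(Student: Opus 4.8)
The plan is to reduce the statement about a general element $w \in W$ to the single-reflection case already established in Lemma \ref{lem:transhilb}, by a straightforward induction on the length of $w$ as a word in the generating reflections. The key observation is that, by definition, $W$ is generated by the reflections $r_1, \dots, r_n$, so every $w \in W$ admits a (not necessarily unique) expression $w = r_{i_1} \circ r_{i_2} \circ \cdots \circ r_{i_k}$ with $i_1, \dots, i_k \in \{1, \dots, n\}$.

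First I would set up the induction on the number $k$ of factors. The base case $k = 0$ gives $w = \id$, and the asserted identity $\Chi^T(D) = \Chi^T(D)$ holds trivially with sign $+1$. For the inductive step I would write $w = r_{i_1} \circ w'$ with $w' = r_{i_2} \circ \cdots \circ r_{i_k}$, put $E := w'(D)$, and apply Lemma \ref{lem:transhilb} to the reflection $r_{i_1}$ and the $\R$-divisor $E$. This gives $\Chi^T(w(D)) = \Chi^T(r_{i_1}(E)) = -\Chi^T(E) = -\Chi^T(w'(D))$. Invoking the inductive hypothesis $\Chi^T(w'(D)) = \pm\Chi^T(D)$ then yields $\Chi^T(w(D)) = \mp\Chi^T(D) = \pm\Chi^T(D)$, which closes the induction.

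I do not expect any genuine obstacle here, since Lemma \ref{lem:transhilb} does all the work and $W$ is defined precisely as the group generated by the $r_i$. The one point worth recording is that the sign produced by this argument is $(-1)^k$, which a priori might seem to depend on the chosen word representing $w$; but as each $r_i$ has determinant $-1$, the parity $(-1)^k$ coincides with $\det(w)$ and is therefore well defined. For the present statement, which asserts only that the sign is $\pm 1$, even this remark is inessential, so the corollary follows immediately.
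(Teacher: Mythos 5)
Your proof is correct and is precisely the argument the paper leaves implicit: the corollary is stated without proof as an immediate consequence of Lemma \ref{lem:transhilb}, obtained exactly by writing $w$ as a word in the generating reflections $r_i$ and inducting on its length. Your additional remark that the sign $(-1)^k$ equals $\det(w)$ and is thus independent of the chosen word is accurate, though, as you note, inessential for the statement as given.
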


We will also consider the dual action of $W$ on the dual vector space $N_1(X)$, by considering, for every $w\in W$:
$$
 N_1(X)=\Hom(N^1(X),\R)\,\ni C \to\, w^\vee(C):=C\circ w\in\Hom(N^1(X),\R).
$$
In other words, the action is chosen so that
\begin{equation}\label{eq:dualact}w(D)\cdot C= D\cdot w^\vee(C),\mbox{ for all }D\in N^1(X),\,\,\,C\in N_1(X).\end{equation}
Moreover, the involutions $r_i^\vee$ are reflections as well, given by $$r_i^\vee(C)=C+(K_i\cdot C)\Gamma_i.$$
Note also that the action of $W$ on $N_1(X)$ is clearly faithful, i.e. the morphism $W\to\Gl(N_1(X))$ defined by $w\mapsto w^\vee$ is injective. This observation will help us to prove the following:

\begin{proposition}\label{prop:finiteW}
With the same notation as above, the function $\Chi^T$ vanishes on all the hyperplanes determined by the cycles $w^\vee(\Gamma_i)$, and the group $W$ is finite.
\end{proposition}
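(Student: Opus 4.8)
The plan is to prove the two assertions in order, using the vanishing statement to establish finiteness. For the vanishing of $\Chi^T$ on the hyperplanes $w^\vee(\Gamma_i)^{\perp}$, I would first treat the base case $w=\id$. On $\Gamma_i^{\perp}$ the reflection $r_i$ acts as the identity, since $r_i(D)=D+(D\dt\Gamma_i)K_i=D$ whenever $D\dt\Gamma_i=0$; combined with Lemma \ref{lem:transhilb}, which gives $\Chi^T(D)=-\Chi^T(r_i(D))$, this forces $\Chi^T(D)=-\Chi^T(D)$, hence $\Chi^T\equiv 0$ on $\Gamma_i^{\perp}$. For a general cycle $w^\vee(\Gamma_i)$, the defining relation \eqref{eq:dualact} yields $D\dt w^\vee(\Gamma_i)=w(D)\dt\Gamma_i$, so that $D$ lies on $w^\vee(\Gamma_i)^{\perp}$ exactly when $w(D)\in\Gamma_i^{\perp}$. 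Then Corollary \ref{cor:transhilb2} gives $\Chi^T(D)=\pm\Chi^T(w(D))=0$, which proves the first assertion.

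For finiteness of $W$, the key point is that $\Chi^T$ is a \emph{nonzero} polynomial function on $N^1(X)$ of degree at most $\dim X$: its top-degree part is, up to a positive constant, the top self-intersection form $D\mapsto D^{\dim X}$, which is nonzero (positive on ample classes), and this property is preserved under the invertible affine change of coordinates $T$. A nonzero polynomial of degree $d$ can vanish on at most $d$ distinct hyperplanes, since each such hyperplane contributes a pairwise non-proportional linear factor. As $\Chi^T$ vanishes on every hyperplane $w^\vee(\Gamma_i)^{\perp}$ by the first part, the set $\mathcal{H}:=\{\,w^\vee(\Gamma_i)^{\perp}:w\in W,\ i=1,\dots,n\,\}$ is finite, of cardinality at most $\dim X$.

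It then remains to deduce finiteness of $W$ from finiteness of $\mathcal{H}$. First I would check that $W$ permutes $\mathcal{H}$: a direct computation with \eqref{eq:dualact} shows $w_0\bigl(w^\vee(\Gamma_i)^{\perp}\bigr)=\bigl((ww_0^{-1})^\vee(\Gamma_i)\bigr)^{\perp}\in\mathcal{H}$, so there is a homomorphism $\phi\colon W\to\operatorname{Sym}(\mathcal{H})$ into a finite symmetric group. The remaining task is to bound $K:=\ker\phi$. An element $w\in K$ fixes each hyperplane $\Gamma_i^{\perp}$ (which lies in $\mathcal{H}$, taking $w=\id$) setwise; dualizing, $w^\vee$ preserves each line $\R\Gamma_i$, so $w^\vee(\Gamma_i)=c_i\Gamma_i$ for scalars $c_i$. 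Here I would invoke that $W$ preserves the lattice $N_1(X)_{\Z}$ — because $r_i^\vee(C)=C+(K_i\dt C)\Gamma_i$ has integral coefficients — to force $c_i=\pm 1$. Since $\Gamma_1,\dots,\Gamma_n$ form a basis of $N_1(X)$ by Lemma \ref{lem:basis}, each $w^\vee$ with $w\in K$ is diagonal with entries $\pm 1$ in this basis, so $|K|\le 2^n$ (using that the dual action is faithful). Hence $|W|\le 2^n\,|\mathcal{H}|!<\infty$.

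I expect the main obstacle to be this last step — extracting finiteness of the group from finiteness of its reflecting hyperplanes. The two ingredients that make it work are the polynomial bound on the number of hyperplanes, which crucially needs $\Chi^T$ to be nonzero of bounded degree, and the integrality of the reflections $r_i^\vee$, which is what pins the kernel's eigenvalues to $\pm 1$ rather than to arbitrary nonzero scalars; without the lattice structure one could only conclude that $K$ acts by a torus of diagonal matrices, and no finiteness would follow.
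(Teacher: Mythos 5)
Your proposal is correct and follows essentially the same route as the paper: vanishing of $\Chi^T$ on $\Gamma_i^\perp$ via the sign relation of Lemma \ref{lem:transhilb}, propagation to all $w^\vee(\Gamma_i)^\perp$ via Corollary \ref{cor:transhilb2}, the degree bound on the nonzero polynomial $\Chi^T$ to make the set of hyperplanes finite, and then finiteness of $W$ from the permutation action on that finite set, with the stabilizer/kernel pinned down to diagonal $\pm 1$ matrices by integrality and the faithfulness of the dual action. The only cosmetic difference is that the paper phrases the last step as the isotropy group of the tuple $([\Gamma_1],\dots,[\Gamma_n])$ under the diagonal action on a finite subset of $\P(N_1(X))^n$ (using $\det = \pm 1$ to get $\pm 1$ diagonal entries), while you use the kernel of the homomorphism to $\operatorname{Sym}(\mathcal{H})$ (using integrality of $w^\vee$ and its inverse), and you make explicit the nonvanishing of $\Chi^T$, which the paper leaves implicit.
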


\begin{proof}
Since, for every $i$, the reflection $r_i$ fixes the hyperplane $\Gamma_i^\perp$, it follows by Lemma \ref{lem:transhilb} that the polynomial function $\Chi^T$ vanishes on $\Gamma_i^\perp$, for all $i$. But then,  by Corollary \ref{cor:transhilb2}, $\Chi^T$ vanishes on all the hyperplanes of the form $w(\Gamma_i^\perp)$, $w\in W$. In particular, it follows that the cardinality of this set of hyperplanes is smaller than or equal to the degree of $\Chi^T$, i.e. the dimension of $X$.

Every hyperplane $w(\Gamma_i^\perp)$ is uniquely determined by the class of the $1$-cycle $(w^{-1})^\vee(\Gamma_i)$ in the quotient of $N_1(X)$ modulo homotheties, that is, the Grothendieck projectivization $\P(N^1(X))$. Denote by $Z\subset \P(N^1(X))$ the set of elements of the form $w(\Gamma_i^\perp)=[(w^{-1})^\vee(\Gamma_i)]$, $w\in W$ and $i=1,\dots,n$, and consider the diagonal action of $W$ on the (finite) product set $Z^n$.

In order to see that $W$ is finite, it is enough to show that the isotropy subgroup $W^0\subset W$ of elements of $W$ fixing $([\Gamma_1],\dots, [\Gamma_n])$ is finite. By construction, the matrix of every element $w^\vee\in\Gl(N_1(X))$ with respect to the basis $\{\Gamma_1,\dots,\Gamma_n\}$ has integral coefficients and determinant $\pm 1$ (since the properties hold for every generator $r_i^\vee$). On the other hand, if $w\in W^0$, then the matrix of $w^\vee$ with respect to this base must be diagonal, hence its diagonal coefficients are all equal to $\pm 1$. In particular the image of $W^0$ in $\Gl(N_1(X))$ is finite and, since the action of $W$ on $N_1(X)$ is faithful, $W^0$ is finite as well.
\end{proof}

Let us denote by $W^\vee$ the image of $W$ into $\Gl(N_1(X))$. As a consequence of Proposition \ref{prop:finiteW} we may state the following

\begin{corollary}\label{cor:scalar}
With the same notation as above there exists a scalar product $\langle~,~\!\rangle$ in $N_1(X)$ invariant under the action of $W^\vee$. In particular, the reflections $r_i^\vee$ are orthogonal with respect to the scalar product $\langle~,~\!\rangle$, and we may write:
\begin{equation}
-K_i\cdot C=2\dfrac{\langle \Gamma_i,C\rangle}{\langle \Gamma_i,\Gamma_i\rangle}, \mbox{ for all }i = 1, \dots, n.
\label{eq:scalar}
\end{equation}
In particular $\{-K_i,\,i=1,\dots, n\}$ is a basis of $N^1(X)$ as a vector space over $\R$.
\end{corollary}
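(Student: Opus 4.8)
The plan is to cash in the finiteness of $W$ proved in Proposition \ref{prop:finiteW}. Since $W^\vee\subset\Gl(N_1(X))$ is the image of the finite group $W$, it is itself finite, and a finite subgroup of $\Gl(N_1(X))$ always preserves some scalar product. Concretely, I would fix an arbitrary positive definite symmetric bilinear form $\langle~,~\rangle_0$ on $N_1(X)$ and average it over the group, setting $\langle u,v\rangle:=\sum_{w^\vee\in W^\vee}\langle w^\vee(u),w^\vee(v)\rangle_0$. This is again positive definite, being a finite sum of positive definite forms, and the usual reindexing of the summation shows it is invariant under $W^\vee$. In particular each reflection $r_i^\vee$ preserves $\langle~,~\rangle$, i.e. it is orthogonal with respect to this scalar product.

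Next I would pin down each $r_i^\vee$ as a specific orthogonal reflection in order to extract the normalization in (\ref{eq:scalar}). The text records that $r_i^\vee$ is a reflection given by $r_i^\vee(C)=C+(K_i\cdot C)\Gamma_i$; evaluating on $\Gamma_i$ and using $K_i\cdot\Gamma_i=\deg(K_i|_{\Gamma_i})=\deg\cO_{\P^1}(-2)=-2$ gives $r_i^\vee(\Gamma_i)=-\Gamma_i$, so $\R\Gamma_i$ is exactly the $(-1)$-eigenline of $r_i^\vee$. Orthogonality then forces the fixed hyperplane of $r_i^\vee$ to be $\Gamma_i^\perp$ (if $v$ is fixed then $\langle v,\Gamma_i\rangle=\langle r_i^\vee v, r_i^\vee\Gamma_i\rangle=-\langle v,\Gamma_i\rangle$), so $r_i^\vee$ must coincide with the standard orthogonal reflection $C\mapsto C-2\frac{\langle C,\Gamma_i\rangle}{\langle\Gamma_i,\Gamma_i\rangle}\Gamma_i$. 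Comparing this with the given expression and equating the coefficients of the nonzero vector $\Gamma_i$ yields $K_i\cdot C=-2\frac{\langle\Gamma_i,C\rangle}{\langle\Gamma_i,\Gamma_i\rangle}$ for all $C$, which is precisely (\ref{eq:scalar}).

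Finally, for the basis statement I would read (\ref{eq:scalar}) as an equality of linear functionals on $N_1(X)$. The nondegeneracy of $\langle~,~\rangle$ provides an isomorphism $N_1(X)\iso N_1(X)^\vee=N^1(X)$ sending $C$ to the functional $\langle C,-\rangle$, and under this isomorphism (\ref{eq:scalar}) says exactly that $\tfrac{2}{\langle\Gamma_i,\Gamma_i\rangle}\Gamma_i$ is carried to $-K_i$. Since $\{\Gamma_1,\dots,\Gamma_n\}$ is a basis of $N_1(X)$ by Lemma \ref{lem:basis} and each scalar $\tfrac{2}{\langle\Gamma_i,\Gamma_i\rangle}$ is nonzero, the images $\{-K_1,\dots,-K_n\}$ form a basis of $N^1(X)$.

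As for the difficulty, the real content of the corollary has already been spent in Proposition \ref{prop:finiteW}: once $W^\vee$ is known to be finite, the averaging construction and the nondegeneracy argument are entirely formal. The one place that requires genuine care is the identification of $r_i^\vee$ with the orthogonal reflection along $\Gamma_i$ — that is, verifying that $\Gamma_i$ spans the $(-1)$-eigenspace via $K_i\cdot\Gamma_i=-2$ and using the uniqueness of orthogonal reflections to fix the precise constant $2$ appearing in (\ref{eq:scalar}); this is the crux, though it is not hard.
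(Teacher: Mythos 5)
Your proof is correct and follows essentially the same route as the paper: the paper's entire proof consists of the averaging construction $\langle x,y\rangle:=\sum_{w\in W}(w(x),w(y))$, which is exactly your first step, resting on the finiteness of $W$ from Proposition \ref{prop:finiteW}. The remaining claims --- that $r_i^\vee$ is the orthogonal reflection with $(-1)$-eigenline $\R\Gamma_i$ (via $K_i\cdot\Gamma_i=-2$), yielding (\ref{eq:scalar}), and that the basis $\{\Gamma_i\}$ transports through the induced isomorphism $N_1(X)\cong N^1(X)$ to the basis $\{-K_i\}$ --- are left implicit in the paper, and your verifications of them are accurate.
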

\begin{proof}
Consider any inner product $(~,~\!)$ in $N_1(X)$. A new inner product $\langle~,~\!\rangle$ defined by $\langle x,y\rangle:=\sum_{w\in W}(w(x),w(y))$, for all $x,y\in N_1(X)$, is $W^\vee$-invariant.
\end{proof}

\begin{remark}\label{rem:inducedscalar}
The inner product $\langle~,~\!\rangle$ provided by the above corollary induces an isomorphism $\varphi:N_1(X)\to N^1(X)$ sending $C$ to $\langle C,-\rangle$. The scalar product in $N^1(X)$ induced by $\langle~,~\!\rangle$ via this isomorphism, that we denote by $\langle~,~\!\rangle$ as well, is invariant by the action of $W$, since
\begin{equation}\label{eq:duality}
w(\varphi(C))=\varphi((w^\vee)^{-1}C)),\mbox{ for all }C\in N_1(X),\mbox{ and }w\in W.
\end{equation}
In particular, the reflections $r_i$ are orthogonal with respect to $\langle~,~\!\rangle$ and we may write:
\begin{equation}\label{eq:scalar2}
-\Gamma_i\cdot D=2\dfrac{\langle D,K_i\rangle}{\langle K_i,K_i\rangle}, \mbox{ for all }i.
\end{equation}
\end{remark}

\subsection{Root systems and Dynkin diagrams}\label{ssec:roots}

We will now show how to use the group $W$ to construct a (reduced) root system in $N^1(X)$; this will allow
us to obtain a finite Dynkin diagram $\cD$ which encodes the intersection matrix $-K_i \cdot \Gamma_j$. In turn this will allow us to associate with $X$ a semisimple Lie group $G$, whose Weyl group is $W$, and
a homogeneous model $G/B$.

\begin{notation}\label{notn:roots} Let us define
$$\Phi:=\left\{w(-K_i)\left|w\in W,\,\,\,i=1,\dots,n\right.\right\}\subset N^1(X),$$
which is a finite set by Proposition \ref{prop:finiteW}.
\end{notation}
The next step will be showing that $\Phi$ is a {\it root system} in $N^1(X)$. This means, by definition (cf. \cite[VI, {\S}1, Definition~1]{B2}), that:
\begin{itemize}
\item[(R1)] $\Phi\subset N^1(X)$ is a finite set of nonzero elements generating $N^1(X)$.
\item[(R2)] For every $D\in\Phi$ there exists $\Gamma_D\in N_1(X)$ such that $D\cdot\Gamma_D=2$ and such that $\Phi$ is invariant by the reflection $r_D$ defined by $r_D(E)=E-(\Gamma_D\cdot E)D$, for all $E\in N^1(X)$.
\item[(R3)] For every $D,D'\in\Phi$, $\Gamma_D\cdot D'\in\Z$.
\end{itemize}
A root system is said to be {\it reduced} if $D,kD\in\Phi$ implies $k=\pm 1$. The {\it Weyl group} of $\Phi$ is the group generated by the reflections $r_D$.

\begin{proposition}\label{prop:root}
With the same notation as above, the set $\Phi$ is a reduced root system in $N^1(X)$, whose Weyl group is $W$.
\end{proposition}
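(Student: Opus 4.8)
The plan is to check the three axioms (R1)--(R3) together with reducedness, and then to identify the Weyl group of $\Phi$ with $W$. I will lean on two facts already in place: the $W$-invariant scalar product of Corollary~\ref{cor:scalar} and Remark~\ref{rem:inducedscalar}, so that every element of $W$ acts orthogonally on $N^1(X)$; and the integral structure recorded in the proof of Proposition~\ref{prop:finiteW}, namely that every $w^\vee$ has an integral matrix of determinant $\pm 1$ in the basis $\{\Gamma_1,\dots,\Gamma_n\}$. The organizing computation is this: to a root $D=w(-K_i)$ I attach the coroot $\Gamma_D:=(w^{-1})^\vee(\Gamma_i)\in N_1(X)$, and I check, using the duality \eqref{eq:dualact} together with $w(K_i)=-D$, that the conjugate $w\circ r_i\circ w^{-1}$ equals the map $E\mapsto E-(\Gamma_D\cdot E)D$, and that $D\cdot\Gamma_D=2$ (because $K_i\cdot\Gamma_i=-2$, the degree of the relative canonical bundle on a fiber $\cong\P^1$). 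Since $r_i=r_{-K_i}$ with coroot $\Gamma_i$, this realizes every reflection $r_D$ as an element of $W$ with an explicit coroot lying in the lattice $\bigoplus_k\Z\Gamma_k$.

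The axioms (R1)--(R3) then follow quickly. For (R1): $\Phi$ is finite by Notation~\ref{notn:roots}/Proposition~\ref{prop:finiteW}, its elements are nonzero since $K_i\cdot\Gamma_i=-2\neq 0$ and $W$ acts linearly, and it generates $N^1(X)$ because it contains the basis $\{-K_i\}$ of Corollary~\ref{cor:scalar}. For (R2): the coroot $\Gamma_D$ above does the job, $D\cdot\Gamma_D=2$ and $r_D=w r_i w^{-1}\in W$ preserves $\Phi=W\cdot\{-K_i\}$, which is manifestly $W$-stable. For (R3): first note that $W$ preserves $\Pic(X)=N^1(X)_\Z$, since $r_i(D)=D+(D\cdot\Gamma_i)K_i$ with $D\cdot\Gamma_i\in\Z$ and $K_i\in\Pic(X)$; hence every $D'\in\Phi$ lies in $\Pic(X)$, while $\Gamma_D=(w^{-1})^\vee(\Gamma_i)$ has integral coordinates in $\{\Gamma_k\}$ and so lies in $N_1(X)_\Z$, making $\Gamma_D\cdot D'$ an integer.

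The genuine obstacle is reducedness; here the scalar product alone is insufficient (proportional roots could a priori differ by a factor $2$ if the invariant form allowed it), and the decisive input is the integrality of coroots from the first paragraph. Suppose $D,\,cD\in\Phi$ with $c>1$ (interchanging the two roots and replacing $D$ by $\pm D$ reduces any proportionality to this case). Writing $D=w(-K_i)$ and applying $w^{-1}$, which preserves $\Phi$, we may assume $D=-K_i$, so that $-K_i$ and $-cK_i$ both lie in $\Phi$. Write $-cK_i=w'(-K_j)$; by the first paragraph the reflection $w'r_jw'^{-1}$ has the form $E\mapsto E-(\beta\cdot E)(-cK_i)$ with $\beta:=(w'^{-1})^\vee(\Gamma_j)\in\bigoplus_k\Z\Gamma_k$. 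But $w'r_jw'^{-1}$ and $r_i$ are both the orthogonal reflection negating the line $\R K_i$, hence coincide; comparing with $r_i(E)=E-(\Gamma_i\cdot E)(-K_i)$ gives $c\beta=\Gamma_i$, so $\beta=\tfrac1c\Gamma_i$ would have $\tfrac1c\in\Z$ as its $i$-th coordinate, which is impossible for $c>1$. Thus $\Phi$ is reduced. Finally, the Weyl group of $\Phi$ is generated by the $r_D$, each of which equals $w r_i w^{-1}\in W$, so it is contained in $W$; conversely it contains every $r_i=r_{-K_i}$, and these generate $W$. Hence the Weyl group of $\Phi$ is exactly $W$, completing the proof. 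I expect reducedness to be the only delicate point; everything else is bookkeeping with \eqref{eq:dualact} and the integral structure of $\{\Gamma_k\}$.
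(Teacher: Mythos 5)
Your proof is correct, and for (R1), (R2), (R3) and the identification of the Weyl group it is essentially the paper's own argument: the same coroot $\Gamma_D=(w^{-1})^\vee(\Gamma_i)$, the same verification $r_D=w\circ r_i\circ w^{-1}$ and $D\cdot\Gamma_D=-K_i\cdot\Gamma_i=2$ via (\ref{eq:dualact}), and (R3) by the same integrality considerations (the paper moves $w^{-1}$ across the pairing and uses $W$-invariance of $\Pic(X)$, whereas you pair an integral divisor class against a coroot with integral coordinates in $\{\Gamma_k\}$ --- a cosmetic difference). The one genuinely different step is reducedness, where the paper is much shorter: every $w\in W$ has an integral matrix in the basis $\{-K_1,\dots,-K_n\}$, since each generator does (note $r_i(-K_j)=-K_j-\cA_{ji}(-K_i)$), so $w(-K_i)=k(-K_j)$ with $0<|k|<1$ is immediately impossible. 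Your alternative --- normalize to $D=-K_i$, identify $w'r_jw'^{-1}$ with $r_i$ as the unique orthogonal reflection negating $\R K_i$ (this uniqueness is sound: an orthogonal involution negating $K_i$ must fix the hyperplane $K_i^{\perp}$, which by (\ref{eq:scalar2}) equals $\Gamma_i^{\perp}$), then extract $\beta=\tfrac1c\Gamma_i\notin\bigoplus_k\Z\Gamma_k$ --- is valid, but it exploits integrality on the coroot side where the paper exploits it on the root side. Your remark that the invariant scalar product alone cannot rule out proportional roots is right; what you missed is that the integral structure of the roots themselves (rather than the coroots) already closes the gap in one line, making the detour through uniqueness of orthogonal reflections unnecessary. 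One small tacit point in your reduction to $c>1$: it uses $-\Phi=\Phi$, which indeed follows since $r_D\in W$ and $r_D(D)=-D$, but is worth stating.
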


\begin{proof}
Property (R1) follows from Proposition \ref{prop:finiteW} and Corollary \ref{cor:scalar}. For (R2), given $D=w(-K_i)\in\Phi$, we take $\Gamma_D:=(w^{-1})^{\vee}(\Gamma_i)$. Then we have $\Gamma_D\cdot D=-K_i\cdot\Gamma_i=2$ and the reflection $r_D$ leaves $\Phi$ invariant because with this choice of $\Gamma_D$ the corresponding reflection $r_D$ is equal to $w\circ r_i\circ w^{-1}\in W$, as a direct computation shows.
Since, by definition, $\Phi$ is invariant by $W$, this shows that Property (R2) holds and that the $W$ is the Weyl group of $\Phi$.

For the third property, note that the lattice $\Pic(X)$ is invariant by the action of $W$, hence, given $D=w(-K_i)$, $D'=w'(-K_j)$, the element $w^{-1}\circ w'(-K_j)$ is the numerical class of a Cartier divisor, therefore
$$\Gamma_D\cdot D'=(w^{-1})^\vee(\Gamma_i)\cdot w'(-K_j)=(\Gamma_i)\cdot w^{-1}\big(w'(-K_j)\big)\in\Z.$$

Finally, if $\Phi$ was not reduced, there would exist two indices $i$ and $j$, an element $w\in W$ and a real number $k$ with $|k|<1$ such that $w(-K_i)=k(-K_j)$, contradicting the fact that the matrix of every element of $W$ with respect to the basis $\{-K_1,\dots,-K_n\}$ has integral coefficients (every $r_i$ satisfies this property, by definition).
\end{proof}

We will call {\it Cartan matrix} of $X$ the $n\times n$ matrix $\cA$ whose entries are given by $\cA_{ij}:=-K_i\cdot\Gamma_j\in\Z$.
Note that, by definition, $A_{ii}=2$ for all $i$. The following lemma lists the possible values of the non-diagonal entries of $\cA$.

\begin{lemma}\label{lem:obtuse}
With the same notation as above, for every two distinct indices $i,j\in\{1,\dots,n\}$, the possible values of $(-K_i\cdot\Gamma_j,-K_j\cdot\Gamma_i)$ are:
$$
(0,0),\quad (-1,-1),\quad (-1,-2),\quad (-2,-1),\quad (-1,-3),\quad (-3,-1).
$$
\end{lemma}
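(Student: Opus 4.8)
The plan is to translate the integer pairing $\cA_{ij}=-K_i\cdot\Gamma_j$ into the language of the $W$-invariant scalar product of Remark~\ref{rem:inducedscalar}, and then combine a Cauchy--Schwarz bound (to control the product $\cA_{ij}\cA_{ji}$) with a genuinely geometric argument (to fix the sign of each entry).

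First I would rewrite the Cartan integers in terms of the scalar product $\langle\,,\,\rangle$ on $N^1(X)$. Setting $\alpha_i:=-K_i$, formula (\ref{eq:scalar2}) applied to $D=-K_i$ gives
$$\cA_{ij}=-K_i\cdot\Gamma_j=2\,\frac{\langle\alpha_i,\alpha_j\rangle}{\langle\alpha_j,\alpha_j\rangle},$$
so that $\cA_{ij}$ and $\cA_{ji}$ have the same sign, namely that of $\langle\alpha_i,\alpha_j\rangle$, and their product equals
$$\cA_{ij}\cA_{ji}=4\,\frac{\langle\alpha_i,\alpha_j\rangle^2}{\langle\alpha_i,\alpha_i\rangle\langle\alpha_j,\alpha_j\rangle}.$$
By Corollary~\ref{cor:scalar} the $\alpha_i$ form a basis of $N^1(X)$, so $\alpha_i$ and $\alpha_j$ are linearly independent and the Cauchy--Schwarz inequality is strict; hence $\cA_{ij}\cA_{ji}$ is a non-negative integer strictly smaller than $4$, that is, it lies in $\{0,1,2,3\}$.

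It remains to show that the off-diagonal entries are non-positive, i.e. $\cA_{ij}=-K_i\cdot\Gamma_j\le 0$ for $i\neq j$; this is the geometric heart of the statement, as pure root-system combinatorics does not suffice (a basis of roots generating $W$ need not be a base, e.g. $\{\alpha,\alpha+\beta\}$ in ${\rm A}_2$). I would argue as follows. Writing $K_i=K_{X/X_i}=K_X-\pi_i^*K_{X_i}$ and using that $-K_X\cdot\Gamma_j=-K_j\cdot\Gamma_j=2$ (since $\Gamma_j$ is a fibre of $\pi_j$, on which $\pi_j^*K_{X_j}$ is trivial and $K_j$ restricts to $\cO_{\P^1}(-2)$), the projection formula yields
$$-K_i\cdot\Gamma_j=2+K_{X_i}\cdot(\pi_i)_*\Gamma_j.$$
If $\pi_i$ contracted $\Gamma_j$ then $[\Gamma_j]$ would lie on the ray $R_i$, contradicting Lemma~\ref{lem:basis}; hence $\pi_i|_{\Gamma_j}$ is finite onto a rational curve $C=\pi_i(\Gamma_j)\subset X_i$ and $(\pi_i)_*\Gamma_j=d[C]$ with $d=\deg(\pi_i|_{\Gamma_j})\ge 1$. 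As $\Gamma_j$ ranges over the fibres of $\pi_j$ its images sweep out $X_i$, so a general such $C$ is a free rational curve on the smooth variety $X_i$; therefore, writing $\nu:\P^1\to C$ for the normalisation, $-K_{X_i}\cdot C=\deg(\nu^*T_{X_i})\ge 2$, because a nonzero map $\cO_{\P^1}(2)=T_{\P^1}\to\nu^*T_{X_i}$ into a globally generated bundle forces a summand of degree $\ge 2$. Since $\cA_{ij}$ is numerical we may compute it on such a general fibre, obtaining
$$-K_i\cdot\Gamma_j=2-d\,(-K_{X_i}\cdot C)\le 2-2d\le 0.$$

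Finally I would enumerate: the entries $a=\cA_{ij}$ and $b=\cA_{ji}$ are non-positive integers with $ab\in\{0,1,2,3\}$, and they vanish simultaneously (both carry the sign of $\langle\alpha_i,\alpha_j\rangle$). This leaves exactly $(0,0)$ for $ab=0$, $(-1,-1)$ for $ab=1$, $(-1,-2)$ and $(-2,-1)$ for $ab=2$, and $(-1,-3)$ and $(-3,-1)$ for $ab=3$, which is the asserted list. The step I expect to be the main obstacle is the sign estimate: it is the only point where one must invoke genuine geometry of the two $\P^1$-fibrations (freeness of the general image curve on $X_i$) rather than the formal structure of the root system, and some care is needed to guarantee that a general fibre of $\pi_j$ maps to a free, sufficiently unramified rational curve so that the inequality $-K_{X_i}\cdot C\ge 2$ is legitimate.
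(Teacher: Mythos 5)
Your proposal is correct, and while its first half coincides with the paper's, its geometric core is genuinely different. Like the paper, you begin by rewriting $\cA_{ij}=-K_i\cdot\Gamma_j$ via the $W$-invariant product of Remark \ref{rem:inducedscalar}; where the paper then simply cites the table in \cite[9.4]{H} (legitimate, since Proposition \ref{prop:root} precedes the lemma), you rederive it by strict Cauchy--Schwarz plus integrality of intersection numbers --- same content, slightly more self-contained. The real divergence is the sign estimate $\cA_{ij}\le 0$. The paper forms the ruled surface $S=\Gamma_j\times_{X_i}X$ with its section $s_j$, observes $-K_{S/\Gamma_j}\cdot s_j(\Gamma_j)=-K_i\cdot\Gamma_j$, and argues that positivity would let the section degenerate to a reducible $1$-cycle, contradicting unsplitness of the family of $\Gamma_j$'s (this is elementary Hirzebruch-surface geometry: a section of positive relative anticanonical degree moves in a linear system containing reducible members). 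You instead write $K_i=K_X-\pi_i^*K_{X_i}$, use $-K_X\cdot\Gamma_j=2$ and the projection formula to get $-K_i\cdot\Gamma_j=2-d\,(-K_{X_i}\cdot C)$ with $d\ge 1$, and bound $-K_{X_i}\cdot C\ge 2$ by freeness of a general image curve on the smooth target $X_i$. This is sound: $X_i$ is smooth by hypothesis, the fibers of $\pi_j$ are all numerically equivalent so computing on a general one is legitimate, and the genericity worry you flag is handled cleanly by \cite[II.3.11]{Ko} (every rational curve through a very general point of $X_i$ is free; the images of the $\Gamma_j$'s cover $X_i$) --- note also that no unramifiedness is needed, since in characteristic zero the differential $\cO_{\P^1}(2)\to\nu^*T_{X_i}$ is a nonzero map into a nef split bundle, forcing a summand of degree $\ge 2$. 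What each route buys: yours yields the sharper quantitative bound $\cA_{ij}\le 2-2d$ and replaces the paper's somewhat elliptical ``one could easily prove'' with a standard citation, at the cost of importing freeness of general members of dominating families; the paper's argument works fiber by fiber with no genericity and stays entirely within the unsplit-family machinery already set up in Lemma \ref{lem:basis}.
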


\begin{proof}
Note first that $\cA_{ij}=2{\langle -K_i,-K_j\rangle}/{\langle -K_j,-K_j\rangle}$, by equation (\ref{eq:scalar2}). Then, being $\Phi$ a root system, the possible values of $\cA_{ij}$ are listed in \cite[9.4]{H}. It is then enough to prove that $\cA_{ij}\leq 0$ for $i\neq j$. In order to see this we consider the fiber product $S$ of $\pi_i:X\to X_i$ and its restriction to a curve $\Gamma_j$:
$$
\xymatrix{S\ar[r]\ar[d]&X\ar[d]^{\pi_i}\\\Gamma_j\ar@/^/[u]^{s_j} \ar[ru]\ar[r]_{\pi_i|_{\Gamma_j}}&X_i}
$$

The inclusion of  $\Gamma_j$ into $X$ factors via $S$, providing a morphism: $s_j:\Gamma_j\to S$, whose image satisfies: $-K_{S|\Gamma_j}\cdot s_j(\Gamma_j)=-K_i\cdot\Gamma_j$. Now, if this intersection number were positive, one could easily prove that the curve $s_j(\Gamma_j)$ would be algebraic equivalent to a reducible $1$-cycle. From this we would obtain that the family of $\Gamma_j$'s in $X$ would not be unsplit, a contradiction.
\end{proof}

\begin{notation}\label{notn:dynkin} The matrix $\cA$ is determined by its {\it Dynkin diagram} $\cD$, which consists of a graph with $n$ ordered nodes, where the nodes $i$ and $j$ are joined by $\cA_{ij}\cA_{ji}$ edges. When two nodes $i$ and $j$ are joined by a double or triple edge, we add to it an arrow, pointing towards $i$ if $\cA_{ij}>\cA_{ji}$.
\end{notation}

\begin{proposition}
\label{prop:classifdynkin}
With the same notation as above, every connected component of the Dynkin diagram $\cD$ is one of the following. $${\rm A}_n,\,\,{\rm B}_n,\,\,{\rm C}_n,\,\,{\rm D}_n,\,\,{\rm E}_6,\,\,{\rm E}_7,\,\,{\rm E}_8,\,\,{\rm F}_4,\mbox{ or~} {\rm G}_2.$$
\end{proposition}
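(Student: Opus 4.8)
The plan is to reduce the statement to the classical classification of Dynkin diagrams of finite reduced root systems. By Proposition~\ref{prop:root} the set $\Phi$ is a reduced root system in $N^1(X)$ whose Weyl group is $W$; consequently the matrix $\cA$, with $\cA_{ii}=2$ and off-diagonal entries constrained by Lemma~\ref{lem:obtuse}, is a Cartan matrix in the usual sense, and the graph $\cD$ of Notation~\ref{notn:dynkin} is precisely its associated Dynkin diagram. Once this identification is made, the proposition is exactly the classical classification theorem, so in the shortest form I would simply invoke \cite[VI, \S4]{B2} (or \cite[11.4]{H}) to obtain the list ${\rm A}_n,{\rm B}_n,{\rm C}_n,{\rm D}_n,{\rm E}_6,{\rm E}_7,{\rm E}_8,{\rm F}_4,{\rm G}_2$.

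To make the argument self-contained I would instead run the standard combinatorial analysis directly on $\cD$. First I would pass to the $W$-invariant positive definite scalar product $\langle~,~\rangle$ on $N^1(X)$ provided by Remark~\ref{rem:inducedscalar}, and replace the basis $\{-K_i\}$ of Corollary~\ref{cor:scalar} by the unit vectors $e_i:=-K_i/\sqrt{\langle -K_i,-K_i\rangle}$. Using equation~(\ref{eq:scalar2}) one computes $\cA_{ij}=2\langle -K_i,-K_j\rangle/\langle -K_j,-K_j\rangle$, so that $\langle e_i,e_j\rangle\le 0$ and $4\langle e_i,e_j\rangle^2=\cA_{ij}\cA_{ji}\in\{0,1,2,3\}$ for $i\neq j$ by Lemma~\ref{lem:obtuse}. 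Thus $\{e_i\}$ is an \emph{admissible system of unit vectors}, and the number of edges joining nodes $i,j$ in $\cD$ is exactly $4\langle e_i,e_j\rangle^2$.

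The key steps are then the classical lemmas on admissible diagrams. Since $\langle\sum_i e_i,\sum_i e_i\rangle>0$ by positive definiteness, any connected subconfiguration has strictly fewer edges than nodes, so $\cD$ has no cycles; a second norm estimate at a fixed node shows that at most three edge-ends meet there; and a chain of simply-joined nodes may be contracted to a single admissible vector without changing positivity. Iterating the contraction reduces every connected component to one of a handful of candidate shapes (a plain chain, a chain carrying one double or one triple edge, or a chain with a single branch point), and computing the Gram determinants of the remaining borderline configurations — which must stay positive — eliminates all but the nine listed types.

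The only substantive work lies in that final case analysis and the determinant computations, but it is entirely standard, combinatorial, and independent of the geometry of $X$, so I would present it by citation rather than grinding it out. The genuinely geometric content — namely that $\cA$ is the Cartan matrix of a bona fide reduced root system, so that the classification is applicable at all — has already been secured by Proposition~\ref{prop:root} together with the intersection-theoretic bounds of Lemma~\ref{lem:obtuse}; hence there is no real obstacle remaining beyond invoking the classical result.
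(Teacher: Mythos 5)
Your proposal is correct and follows essentially the same route as the paper: the paper's proof likewise normalizes the $-K_i$ to unit vectors $\epsilon_i$ with respect to the $W$-invariant scalar product, uses Lemma~\ref{lem:obtuse} to verify that they form an admissible set, and concludes by citing the proof of \cite[Theorem~11.4]{H}. Your extra sketch of the standard combinatorial analysis (no cycles, at most three edge-ends at a node, contraction of simple chains, Gram determinant checks) merely expands what the paper delegates to that citation, so there is nothing to correct.
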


\begin{proof}
Without loss of generality, we may assume that $\cD$ is connected.
Note that, by Lemma \ref{lem:obtuse} above, in the euclidean space $(N^1(X),\langle\,,\,\rangle)$, the set $$\{\epsilon_i:=-K_i/\langle -K_i,-K_i\rangle^{\frac{1}{2}}|\,\,i=1,\dots,n\}$$ is {\it admissible}, that is, its elements are $n$ linearly independent unit vectors satisfying $\langle \epsilon_i,\epsilon_j\rangle\leq 0$ for $i\neq j$, and $4\langle \epsilon_i,\epsilon_j\rangle^2=0,1,2$ or $3$ for $i\neq j$. Then the result follows from the proof of  \cite[Theorem~11.4]{H}.
\end{proof}

In particular, we may associate with $X$ a semisimple Lie group $G$, and its semisimple Lie algebra $\fg$, determined by the Dynkin diagram $\cD$. We choose a Cartan subgroup $H$, with Lie subalgebra $\fh\subset\fg$ and denote by $\overline{\Phi}\subset\fh^\vee$ the corresponding root system (cf. \cite[8]{H}). Moreover, we denote by $\fh^\vee_\R$ the real vector subspace of $\fh^\vee$ generated by $\overline{\Phi}$. Given a base of simple roots $\overline{\Delta}=\{ \alpha_i|\,\,i=1,\dots,n\}$  (see \cite[10.1]{H} for the definition), we may assume that its elements have been ordered so that  the corresponding Dynkin diagram is equal to $\cD$. We will consider the morphism
$$
\psi:N^1(X)\to \fh^\vee_\R,\mbox{ defined by }\psi(-K_i)=\alpha_i.
$$

\begin{definition}\label{notn:model}
Consider the Borel subgroup $B\subset G$ containing the Cartan subgroup $H$. The complete flag manifold $G/B$ is a rational homogeneous space, which we will call the {\it rational homogeneous model of} $X$. We will use for $G/B$ a similar notation as for $X$, adding an overline to distinguish the two cases (so we will use $\overline{\pi_i}, \overline{\Gamma_i},-\overline{K_i},\dots$).
\end{definition}

\begin{corollary}\label{cor:simpleroots}
With the same notation as above, the set $\Delta:=\{-K_i|\,\,i=1,\dots,n\}$ is a base of simple roots of $\Phi$.
\end{corollary}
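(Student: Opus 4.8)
The statement asserts that $\Delta$ is a \emph{base} of the root system $\Phi$, i.e. (in the sense of \cite[10.1]{H}) a set which (B1) is a basis of $N^1(X)$ and (B2) has the property that every root lies in $\Z_{\ge 0}\Delta$ or in $\Z_{\le 0}\Delta$. Part (B1) is already established in Corollary \ref{cor:scalar}, so the whole content is (B2). The plan is to transport this property from the classical side through the linear isomorphism $\psi\colon N^1(X)\to\fh^\vee_\R$, $\psi(-K_i)=\alpha_i$, which is indeed an isomorphism of vector spaces since it carries the basis $\{-K_i\}$ of $N^1(X)$ to the basis $\{\alpha_i\}$ of $\fh^\vee_\R$. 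As $\overline\Delta=\{\alpha_i\}$ is, by construction, a base of the root system $\overline\Phi$ of $\fg$, it suffices to check that $\psi$ is an isomorphism of root systems carrying $\Phi$ onto $\overline\Phi$ and $\Delta$ onto $\overline\Delta$.

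The crucial step is to verify that $\psi$ intertwines the two Weyl group actions, generator by generator: that $\psi\circ r_i=s_{\alpha_i}\circ\psi$, where $s_{\alpha_i}$ is the simple reflection of $\overline\Phi$ attached to $\alpha_i$. Both sides being linear, it is enough to evaluate them on the basis $\{-K_j\}$. On one hand, recalling from the proof of Proposition \ref{prop:root} that the coroot of $-K_i$ is $\Gamma_i$, one computes
\begin{equation*}
r_i(-K_j)=-K_j+(-K_j\cdot\Gamma_i)K_i=-K_j-\cA_{ji}(-K_i).
\end{equation*}
On the other hand, writing $\overline{\cA}$ for the Cartan matrix of $\overline\Phi$ with respect to $\overline\Delta$, one has $s_{\alpha_i}(\alpha_j)=\alpha_j-\overline{\cA}_{ji}\,\alpha_i$. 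Now the Cartan matrix of $\Phi$ in the basis $\Delta$ is exactly $\cA=[-K_i\cdot\Gamma_j]$, and both $\cA$ and $\overline{\cA}$ are the matrix encoded by the \emph{same} Dynkin diagram $\cD$ under the \emph{same} ordering of the nodes (Proposition \ref{prop:classifdynkin} together with the chosen ordering of $\overline\Delta$), so that $\cA=\overline{\cA}$. Applying $\psi$ to the display therefore yields $\psi(r_i(-K_j))=\alpha_j-\cA_{ji}\alpha_i=\alpha_j-\overline{\cA}_{ji}\alpha_i=s_{\alpha_i}(\psi(-K_j))$, as required.

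Once the intertwining holds on generators it holds on the whole groups, so conjugation by $\psi$ is an isomorphism $W\xrightarrow{\sim}\overline W$ with $r_i\mapsto s_{\alpha_i}$. Combining this with the orbit descriptions $\Phi=\{w(-K_i)\mid w\in W\}$ (Notation \ref{notn:roots}) and $\overline\Phi=\{\overline w(\alpha_i)\mid\overline w\in\overline W\}$, where the latter uses that every root of $\overline\Phi$ is $\overline W$-conjugate to a simple root (cf.\ \cite[10.3]{H}), we obtain $\psi(\Phi)=\overline\Phi$; thus $\psi$ is an isomorphism of root systems with $\psi(\Delta)=\overline\Delta$. Since the base property is intrinsic to a root system and hence preserved by such isomorphisms, and $\overline\Delta$ is a base of $\overline\Phi$, the set $\Delta=\psi^{-1}(\overline\Delta)$ is a base of $\Phi=\psi^{-1}(\overline\Phi)$, which proves (B2). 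The only genuinely substantive input is the equality $\cA=\overline{\cA}$ of the two Cartan matrices; everything else is formal transport of structure, and the main point requiring care is matching the arrow and index conventions (Notation \ref{notn:dynkin}) so that the two diagrams produce identical matrices rather than mutually transposed ones.
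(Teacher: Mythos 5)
Your proposal is correct and takes essentially the same approach as the paper: the paper's (much terser) proof likewise argues that the Dynkin diagram $\cD$ determines $\overline{\Phi}$ from $\overline{\Delta}$ together with the corresponding reflections, and that $\psi$ by construction matches $\Delta,\Phi$ with $\overline{\Delta},\overline{\Phi}$, so the base property transports. Your write-up merely makes explicit what the paper leaves implicit in ``by construction'' --- the intertwining $\psi\circ r_i = s_{\alpha_i}\circ\psi$ via the equality of Cartan matrices and the resulting identification $\psi(\Phi)=\overline{\Phi}$ through the orbit descriptions.
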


\begin{proof}
The Dynkin diagram $\cD$ contains the necessary information to recover $\overline{\Phi}$ up on $\overline{\Delta}$ and the corresponding reflections. Then, by construction, the isomorphism $\psi$ defined above maps $\overline{\Delta}$ to $\Delta$ and $\overline{\Phi}$ to $\Phi$, so the claim follows.
\end{proof}

\begin{corollary}\label{cor:chambers}
With the same notation as above, $\Amp(X)$ is a Weyl chamber of $W$, and $\Nef(X)$ is a fundamental domain of $W$.
\end{corollary}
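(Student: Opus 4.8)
The plan is to identify both cones explicitly and recognize them as the fundamental Weyl chamber associated with the base of simple roots $\Delta=\{-K_i\}$; once this dictionary is set up, the statement reduces to the standard theory of finite reflection groups.

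First I would describe the two cones combinatorially. Since $X$ is Fano and the classes $[\Gamma_i]$ generate the Mori cone $\cNE{X}$ by Lemma \ref{lem:basis}, Kleiman's ampleness criterion gives
\[
\Amp(X)=\{D\in N^1(X)\mid D\cdot\Gamma_i>0,\ i=1,\dots,n\},
\]
and passing to closures,
\[
\Nef(X)=\{D\in N^1(X)\mid D\cdot\Gamma_i\ge 0,\ i=1,\dots,n\}.
\]

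Next I would translate these inequalities into conditions for the $W$-invariant scalar product $\langle\,,\,\rangle$ on $N^1(X)$ provided by Remark \ref{rem:inducedscalar}. Equation (\ref{eq:scalar2}) reads $-\Gamma_i\cdot D=2\langle D,K_i\rangle/\langle K_i,K_i\rangle$, so that $D\cdot\Gamma_i=2\langle D,-K_i\rangle/\langle K_i,K_i\rangle$. Because $\langle\,,\,\rangle$ is positive definite we have $\langle K_i,K_i\rangle>0$, and hence the sign of $D\cdot\Gamma_i$ agrees with the sign of $\langle D,-K_i\rangle$ for every $i$. Therefore
\[
\Amp(X)=\{D\mid \langle D,-K_i\rangle>0,\ i=1,\dots,n\},\qquad \Nef(X)=\{D\mid \langle D,-K_i\rangle\ge 0,\ i=1,\dots,n\}.
\]

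Finally I would invoke the classical theory. By Corollary \ref{cor:simpleroots} the set $\Delta=\{-K_i\}$ is a base of simple roots of the root system $\Phi$, whose Weyl group is $W$ by Proposition \ref{prop:root}. The right-hand sides above are, by definition, the open and the closed fundamental Weyl chamber relative to $\Delta$. It is a standard fact (cf. \cite[VI]{B2} or \cite[\S10.3]{H}) that the fundamental chamber is one of the connected components of the complement of the reflecting hyperplanes of $W$, i.e.\ a Weyl chamber, and that its closure is a fundamental domain for the action of $W$; this yields the desired conclusion. The argument is essentially a change of language, and the only point I would take care to verify is that the reflecting hyperplanes of $W$ (as an abstract reflection group) coincide with the root hyperplanes of $\Phi$, so that the two notions of Weyl chamber agree — and this is guaranteed precisely because $W$ is the Weyl group of $\Phi$.
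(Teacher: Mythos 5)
Your proposal is correct and follows essentially the same route as the paper: both identify $\Amp(X)$ via Lemma \ref{lem:basis} (Kleiman), translate the inequalities $D\cdot\Gamma_i>0$ into $\langle D,-K_i\rangle>0$ using equation (\ref{eq:scalar2}), and then invoke the fact that $\Delta=\{-K_i\}$ is a base of simple roots together with the standard theory of Weyl chambers and fundamental domains from \cite[10.1--10.3]{H}. The only difference is that you spell out the positive-definiteness and sign-comparison step explicitly, which the paper leaves implicit.
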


\begin{proof}
The fact that $\Delta$ is a base of simple roots is equivalent to the set $$U=\{D\in N^1(X)|\,\,\langle D,-K_i\rangle> 0,\,\,i=1,\dots,n\}$$ to be a Weyl chamber of $\Phi$ (see \cite[10.1]{H} for details). By equation (\ref{eq:scalar2}), $U$ is the set of classes having positive intersection with every $\Gamma_i$. Hence, by Lemma \ref{lem:basis}, $U$ is the ample cone of $X$. The second part of the statement follows then from \cite[10.2]{H}.
\end{proof}

\begin{remark}\label{rem:positive}
Recall that, being $\Delta$ a base of simple roots, the root system $\Phi$ decomposes as a union $\Phi=\Phi^+\cup\Phi^-$, where $\Phi^+$ (respectively $\Phi^-$) is the set of roots that are integer linear combination of elements of $\Delta$ with nonnegative (resp. nonpositive) coefficients. The elements in $\Phi^+$ and $\Phi^-$ are called, respectively, positive and negative roots of $\Phi$ with respect to $\Delta$. Moreover, by Corollary \ref{cor:chambers}, taking any ample divisor $A$, the set  $\Phi^+$ (resp. $\Phi^-$) can be described as the set of roots $D$ satisfying $\langle A,D\rangle > 0$ (resp. $\langle A,D\rangle < 0$). See \cite[10.1]{H} for details.
\end{remark}

\subsection{Length of divisors and cohomology}\label{ssec:BWB}

We will  now show that the cohomology of divisors on $X$ equals the one on the homogeneous model, i.e.
$h^i(X,D)=h^i(G/B,\psi(D))\mbox{ for every }D\in\Pic(X),\,\,i\in\Z$; in particular
the dimension of $X$ is equal to the dimension of its homogeneous model.

We will call {\it translated Weyl chambers}  the images by $T$ of the Weyl chambers of $\Phi$. In other words, they are the connected components of $N^1(X)\setminus\bigcup_{D\in\Phi}T(D^\perp)$.

\begin{lemma}\label{lem:W'}
With the same notation as above, for every $D\in N^1(X)$, there exist $w'\in W'$  and a unique $D'\in T(\Nef(X))$ such that $w'(D)=D'$.
If moreover $D$ lies in a translated Weyl chamber of $X$, $w'$ is uniquely determined by $D$.
\end{lemma}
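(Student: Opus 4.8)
The plan is to conjugate the entire problem by the translation $T$, thereby reducing the affine statement about $W'$ to the classical linear theory of the finite reflection group $W$. Concretely, I would set $E:=T^{-1}(D)$. Since $r_i'=T\circ r_i\circ T^{-1}$, the identification $W'=TWT^{-1}$ lets me write every $w'\in W'$ as $w'=T\circ w\circ T^{-1}$ for a unique $w\in W$, so that $w'(D)=T\big(w(E)\big)$. Consequently the condition $w'(D)\in T(\Nef(X))$ is equivalent to $w(E)\in\Nef(X)$, and the point $D':=T\big(w(E)\big)$ corresponds bijectively to $E':=T^{-1}(D')=w(E)$. Likewise, because $T^{-1}$ carries each wall $T(F^\perp)$ to the wall $F^\perp$ ($F\in\Phi$), the divisor $D$ lies in a translated Weyl chamber if and only if $E$ lies in a Weyl chamber of $\Phi$. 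Thus it suffices to prove: every $E\in N^1(X)$ is $W$-conjugate to a unique $E'\in\Nef(X)$, and when $E$ is a regular point (i.e.\ lies in some Weyl chamber) the element $w\in W$ with $w(E)=E'$ is unique.

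The existence and uniqueness of $E'$ is then immediate from Corollary~\ref{cor:chambers}, which identifies $\Nef(X)$ with the closed fundamental Weyl chamber of $\Phi$ and asserts that it is a fundamental domain for $W$. By the standard theory of finite reflection groups (cf.\ \cite[10.2]{H} and \cite[VI, {\S}1]{B2}), a closed fundamental chamber meets every $W$-orbit in exactly one point; applying this to the orbit of $E$ produces a unique $E'\in\Nef(X)$ together with some $w\in W$ realizing $w(E)=E'$.

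For the uniqueness of $w$ in the regular case, I would argue as follows. If $E$ lies in a Weyl chamber $C$, then $E$ lies on none of the walls $F^\perp$, $F\in\Phi$; since $W$ permutes these walls, $E'=w(E)$ is again regular and therefore lies in the open chamber $\Amp(X)$, the interior of $\Nef(X)$. Because $W$ acts simply transitively on the set of Weyl chambers (cf.\ \cite[10.3]{H}), there is exactly one element of $W$ carrying $C$ onto $\Amp(X)$, and this forces $w$ to be unique. Translating back through $T$, the corresponding $w'\in W'$ is uniquely determined by $D$.

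I expect no essential obstacle here: once the conjugation by $T$ is set up correctly, everything reduces to classical facts about Weyl groups already packaged in Corollary~\ref{cor:chambers}. The only point requiring genuine care is the bookkeeping of the dictionary between the affine picture ($W'$, $T(\Nef(X))$, translated chambers) and the linear picture ($W$, $\Nef(X)$, chambers), and in particular verifying that \emph{regular point} on the $E$-side corresponds exactly to \emph{lies in a translated Weyl chamber} on the $D$-side, so that the simple-transitivity argument applies precisely when the hypothesis of the second assertion holds.
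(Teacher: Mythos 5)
Your proposal is correct and follows essentially the same route as the paper: the paper's (very terse) proof simply observes that $W'$ acts on the translated Weyl chambers and invokes Corollary~\ref{cor:chambers} together with \cite[10.3]{H}, which is exactly the conjugation-by-$T$ reduction and the fundamental-domain/simple-transitivity argument you spell out. Your version merely makes explicit the dictionary between the affine and linear pictures and the regularity bookkeeping that the paper leaves implicit.
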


\begin{proof}
Clearly, $W'$ acts on the set of translated Weyl chambers of $\Phi$. Then the result follows from Corollary \ref{cor:chambers} and \cite[10.3]{H}.
\end{proof}

Following \cite[10.3]{H}, given an element $w\in W$, we will define its {\it length} $\len(w)$ as the minimal $t$ such that $w$ can be written as $w=r_{i_1} \circ \dots \circ r_{i_t}$; such an expression for $w$ will be called {\em reduced}; the length of the identity is $0$, by definition. Then, for every element $w'\in W'$, we define its {\it length} as $\len(w'):=\len(T^{-1}\circ w'\circ T)$. In view of the previous lemma, for an element $D$ lying in a translated Weyl chamber, we define its {\it length} $\len(D)$ as the length of the  element $w'$ sending it into $T(\Amp(X))$. The following result relates the length $\len(D)$ with the cohomology of $D$:

\begin{proposition}\label{prop:BBWFT1}
With the same notation as above, given  $D\in\Pic(X)$, we consider an element $w'\in W'$ such that $w'(D)\in T(\Nef(X))$. Then:
\begin{itemize}
\item either $w'(D)\in T(\Nef(X)\setminus\Amp(X))$ and $H^j(X,D)=0$ for all $j$,
\item or $w'(D)\in T(\Amp(X))$ 
and $H^j(X,D)=0$ for $j\neq \len(D)$.
\end{itemize}
\end{proposition}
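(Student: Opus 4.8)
The plan is to use the dichotomy in the statement to move all the cohomology of $D$ onto the normalized representative $D':=w'(D)\in T(\Nef(X))$, whose cohomology can be read off directly, and then to transport the conclusion back to $D$ through the isomorphisms of Lemma~\ref{lem:leray}. First I would record the numerical dictionary coming from $K_X\cdot\Gamma_i=K_i\cdot\Gamma_i=-2$ (since $\Gamma_i\cong\P^1$ is a fiber of $\pi_i$ and $\pi_i^*K_{X_i}\cdot\Gamma_i=0$), so that $(K_X/2)\cdot\Gamma_i=-1$. Thus $D'\in T(\Nef(X))$ means $D'\cdot\Gamma_i\ge -1$ for all $i$; membership $D'\in T(\Amp(X))$ means $D'\cdot\Gamma_i\ge 0$ for all $i$; and $D'\in T(\Nef(X)\setminus\Amp(X))$ means $D'\cdot\Gamma_i=-1$ for some $i$. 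Since the reflections $r_i'$ preserve the lattice $\Pic(X)$, both $D$ and $D'$ lie in $\Pic(X)$, and by Lemma~\ref{lem:W'} the class $D'$ is independent of the chosen $w'$, so the two bullets are mutually exclusive and exhaustive.

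For the first bullet I would pick an index $i$ with $D'\cdot\Gamma_i=-1$. The ``in particular'' part of Lemma~\ref{lem:leray} then gives $H^j(X,D')=0$ for all $j$. Writing $w'=r_{i_t}'\circ\cdots\circ r_{i_1}'$ and applying Lemma~\ref{lem:leray} once per factor (each intermediate divisor lying in $\Pic(X)$, as $W'$ preserves the lattice), one obtains isomorphisms $H^j(X,D)\cong H^{j+s}(X,D')$ for a fixed integer shift $s$; since every group on the right vanishes, $H^j(X,D)=0$ for all $j$.

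For the second bullet I would argue by induction on $t:=\len(D)=\len(w')$. If $t=0$ then $D=D'\in T(\Amp(X))$, so $D-K_X/2$ is ample and $H^j(X,D)=0$ for $j\ne 0=\len(D)$ exactly as in the Kodaira argument of Lemma~\ref{lem:cohomW}. If $t>0$, then $E:=D-K_X/2$ is regular and does not lie in $\Amp(X)$; since $\Nef(X)$ is a fundamental domain for $W$ (Corollary~\ref{cor:chambers}), there is an index $i$ with $E\cdot\Gamma_i<0$, that is $D\cdot\Gamma_i\le -2$. Setting $w:=T^{-1}\circ w'\circ T\in W$, so that $w(E)\in\Amp(X)$ and $\len(w)=t$, I would check $\len(r_i'(D))=t-1$: by equation~(\ref{eq:scalar2}) and the $W$-invariance of $\langle~,~\rangle$ one has $\langle w(E),w(-K_i)\rangle=\langle E,-K_i\rangle=\tfrac12(E\cdot\Gamma_i)\langle K_i,K_i\rangle<0$ with $w(E)\in\Amp(X)$, so by Remark~\ref{rem:positive} the root $w(-K_i)$ is negative; the standard length formula \cite[10.3]{H} then gives $\len(w\circ r_i)=\len(w)-1$, and conjugating by $T$ yields $\len(r_i'(D))=\len(w\circ r_i)=t-1$ (note $r_i'(D)$ lies in a translated Weyl chamber, being the $r_i'$-image of one). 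Finally Lemma~\ref{lem:leray}, applied with $l=D\cdot\Gamma_i\le -2$ so that $\sgn(l+1)=-1$, gives $H^j(X,D)\cong H^{j-1}(X,r_i'(D))$; since $w'\circ r_i'$ sends $r_i'(D)$ into $T(\Amp(X))$, the inductive hypothesis applies and forces $H^j(X,D)=0$ unless $j-1=\len(r_i'(D))=t-1$, i.e.\ unless $j=t=\len(D)$.

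The delicate point is the second bullet: one must match the Weyl-group length of $w'$ with the total cohomological degree shift produced by Lemma~\ref{lem:leray}, ensuring that each length-reducing step corresponds to an intersection number $\le -2$ and hence to a degree shift of exactly $-1$. Keeping the translation $T$ (which plays the role of adding $\rho$) consistent across the sign and length computations, and invoking the correct direction of the standard Coxeter length formula, is where the bookkeeping must be handled with care.
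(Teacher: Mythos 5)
Your proof is correct, but it is organized differently from the paper's in both halves, so a comparison is worthwhile. For the first bullet, the paper does not transport vanishing along the chain of reflections as you do; instead it combines Lemma~\ref{lem:cohomW} (cohomology of any $D\in\Pic(X)$ is concentrated in a single degree $i_0$) with the vanishing of the polynomial $\Chi^T$ on the translated walls (Proposition~\ref{prop:finiteW}), concluding $\Chi(D)=0$ and hence $H^{i_0}(X,D)=0$ as well. Your route --- reading off $H^j(X,D')=0$ for all $j$ directly from the $l=-1$ case of Lemma~\ref{lem:leray} and pulling it back through the degree-shifted isomorphisms, one per factor of $w'$ --- is self-contained and avoids the Euler-characteristic machinery entirely; it is arguably cleaner, and it works even at intermediate steps where some $D_k\cdot\Gamma_{i_k}=-1$, since there the isomorphism $H^j\cong H^{j+1}$ forces vanishing anyway. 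For the second bullet, the paper runs the induction in the opposite direction: it writes $D=w'(T(A))$ with $A$ ample and $w'=r'_{i_1}\circ\dots\circ r'_{i_t}$ reduced, assumes concentration in degree $t-1$ for $r'_{i_2}\circ\dots\circ r'_{i_t}(T(A))$, and shows the outgoing step has intersection $\geq -1$ (degree shift $+1$) by contradiction, invoking the exchange condition \cite[10.2 Lemma C]{H}. You instead descend: starting from $D$ with $\len(D)=t>0$, you locate a wall with $D\cdot\Gamma_i\leq -2$, verify via Remark~\ref{rem:positive} that $w(-K_i)\in\Phi^-$, and use the length criterion $\len(w\circ r_i)=\len(w)-1$ to get $\len(r'_i(D))=t-1$, then apply Lemma~\ref{lem:leray} with shift $-1$. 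These are mirror images resting on the same Coxeter-theoretic fact (both are variations on \cite[10.3, Lemma~A]{H}, which the paper cites explicitly), but your bookkeeping is sound at every step: the regularity of $E=T^{-1}(D)$ follows from $w(E)$ lying in the open chamber $\Amp(X)$ and $W$ permuting the walls, the integrality $E\cdot\Gamma_i\in\Z$ ensures $E\cdot\Gamma_i<0$ forces $D\cdot\Gamma_i\leq -2$, the uniqueness in Lemma~\ref{lem:W'} legitimizes $\len(r'_i(D))=\len(w'\circ r'_i)$, and the sign conventions in equation~(\ref{eq:scalar2}) and Lemma~\ref{lem:leray} are handled correctly. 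The descending induction has a small practical advantage --- it never needs the contradiction argument, since the wall is chosen rather than imposed by a fixed reduced word --- while the paper's ascending version tracks a single reduced expression throughout, which is the shape it reuses later for Bott--Samelson computations.
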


\begin{proof}
We already know that, for any $D\in\Pic(X)$ there exists an index $i_0$ for which $H^j(X,D)=0$ for $j\neq i_0$ (see Lemma \ref{lem:cohomW}). If $D\in w'(T(\Nef(X)\setminus \Amp(X)))$ for some $w'\in W'$ then, there exists an index $i$ such that $T^{-1}(w'^{-1}(D))\cdot \Gamma_i=0$. Hence, if $w=T^{-1}\circ w'\circ T$, we may assert that $T^{-1}(D)\cdot (w^{-1})^{\vee}(\Gamma_i)=0$ and by Proposition \ref{prop:finiteW}, $\Chi(D)=\Chi^T(T^{-1}(D))=0$. From this we conclude that $H^j(X,D)=0$ also for $j=i_0$.

If $D\in w'(T(\Amp(X))$, then $w'$ is uniquely determined by $D$. Write $w'=r'_{i_1}\circ\dots\circ r'_{i_t}$, with $t=\len(D)$, and denote by $A$ the only ample class such that $D=w'(T(A))$. Following the idea of the proof of \cite[10.3, Lemma A]{H}, we will prove our statement by induction on $t$. The statement is clear for $t=0$, and we will assume by induction that $H^j(X,r'_{i_2}\circ\dots\circ r'_{i_t}(T(A)))=0$ for $j\neq t-1$. Then, by Lemma \ref{lem:leray}, it suffices to show that $r'_{i_2}\circ\dots\circ r'_{i_t}(T(A))\cdot\Gamma_{i_1} \geq -1$. Assume by contradiction that $r_{i_2}\circ\dots\circ r_{i_t}(A)\cdot\Gamma_{i_1}< 0$.

Setting $w_1:=r_{i_2}\circ\dots\circ r_{i_t}$, this inequality tells us that $\langle w_1(A),-K_{i_1}\rangle< 0$, that is $\langle A,w_{1}^{-1}(-K_{i_1})\rangle< 0$. This implies (see Remark \ref{rem:positive}) that $w_{1}^{-1}(-K_{i_1})$ is a negative root, which, by \cite[10.2 Lemma C]{H}, tells us that
$$
w_{1}^{-1}\circ r_{i_1}=r_{i_t}\circ\dots\circ r_{i_2}\circ r_{i_1}=r_{i_t}\circ\dots\circ r_{i_{s+1}}\circ r_{i_{s-1}}\circ\dots\circ r_{i_2},
$$
for some $s<t$. In particular this implies that $\len(r_{i_t}\circ\dots\circ r_{i_2}\circ r_{i_1})<t$, a contradiction.
\end{proof}

Let $G/B$ be the homogenous model of $X$. Every line bundle on $G/B$ is homogeneous, hence determined by a {\it weight}, that is, an element of $\fh^\vee$. In particular, $-\overline{K_i}$ is the homogeneous line bundle on $G/B$ associated to the simple root $\alpha_i$, and $N^1(G/B)$ may be identified with real vector space $\fh^\vee_{\R}$. In the sequel, we will interpret the morphism $\psi$ defined above as an isomorphism from $N^1(X)$ to $N^1(G/B)$ sending $-K_i$ to $-\overline{K_i}$ for every $i$. Moreover, this isomorphism sends $-K_X$ to $-K_{G/B}$. In fact, since the Cartan matrix $\cA$ is nonsingular, the coefficients of $-K_X$ with respect to the basis $\{-K_1,\dots,-K_n\}$ are determined by the intersection numbers $-K_X\cdot\Gamma_i=2$.
The same holds for $G/B$ so, in particular, denoting by $\overline{T}$ the translation by $\frac{1}{2}K_{G/B}$ in $N^1(G/B)$, we have $\psi \circ T = \overline T \circ \psi$.
Note finally that, by construction, $\psi$ preserves the length of divisors. As a consequence, we get:

\begin{proposition}\label{prop:dim}
With the same notation as above, let $G/B$ be the homogenous model of $X$. Then $\dim X =\dim G/B$.
\end{proposition}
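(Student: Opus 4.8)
The plan is to show that, for both $X$ and its homogeneous model, the dimension coincides with the \emph{length} of the canonical class, and then to conclude using that $\psi$ preserves length and sends $K_X$ to $K_{G/B}$. More precisely, I would first prove the identity $\dim X=\len(K_X)$, where $\len(K_X)$ is the length of $K_X$ as defined above (this makes sense once we check that $K_X$ lies in a translated Weyl chamber); the same identity on $G/B$ then reads $\dim G/B=\len(K_{G/B})$. Since by construction $\psi(-K_X)=-K_{G/B}$, hence $\psi(K_X)=K_{G/B}$, and since $\psi$ preserves length, these two identities immediately give $\dim X=\len(K_X)=\len(K_{G/B})=\dim G/B$.

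To establish $\dim X=\len(K_X)$ I would argue as follows. First I check that $K_X$ lies in a translated Weyl chamber, i.e. that $T^{-1}(K_X)=\tfrac{1}{2}K_X$ is a regular point: indeed $\tfrac{1}{2}K_X\cdot\Gamma_i=-1\neq 0$ for every $i$, and since every positive coroot is a nonnegative combination of the $\Gamma_i$, the cycle $\tfrac{1}{2}K_X$ pairs nontrivially (in fact negatively) with every element of $\Phi$, so it lies on no reflection hyperplane. Next I compute the cohomology of $K_X$ by Serre duality: $H^i(X,K_X)\cong H^{\dim X-i}(X,\cO_X)^\vee$, and since $X$ is Fano, Kodaira vanishing gives $H^j(X,\cO_X)=0$ for $j>0$ while $H^0(X,\cO_X)=\C$; hence $H^i(X,K_X)$ is nonzero exactly for $i=\dim X$. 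On the other hand, $K_X$ being in a translated Weyl chamber, Proposition \ref{prop:BBWFT1} tells us that its unique nonvanishing cohomology sits in degree $\len(K_X)$ (the first alternative there is excluded, as it would force all cohomology of $K_X$ to vanish). Comparing the two computations yields $\len(K_X)=\dim X$.

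Finally I would run the identical argument on the homogeneous model $G/B$: it is Fano, Serre duality and Kodaira vanishing apply verbatim, and the classical Borel--Weil--Bott theorem plays the role of Proposition \ref{prop:BBWFT1}, giving $\dim G/B=\len(K_{G/B})$. Combined with $\psi(K_X)=K_{G/B}$ and the length-preservation of $\psi$ recorded just above, the desired equality $\dim X=\dim G/B$ follows. The step that needs the most care is the verification that $K_X$ is regular, so that $\len(K_X)$ is defined and the sharp alternative of Proposition \ref{prop:BBWFT1} applies, together with the clean identification, via Serre duality, of the single degree in which $H^\bullet(X,K_X)$ is nonzero; once these are in place the argument is purely formal, and as a byproduct it also records $\dim X=\len(K_X)=\len(K_{G/B})=\dim G/B$ without ever having to compare Euler characteristics directly.
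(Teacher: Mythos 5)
Your proof is correct and follows essentially the same route as the paper: Serre duality gives $H^{\dim X}(X,K_X)\neq 0$, Proposition \ref{prop:BBWFT1} then forces this nonvanishing degree to equal $\len(K_X)$, and the length-preserving isomorphism $\psi$ with $\psi(K_X)=K_{G/B}$ transfers the identity to the homogeneous model. The only cosmetic differences are that you verify the regularity of $K_X$ directly via the coroots (the paper gets this for free, since the first alternative of Proposition \ref{prop:BBWFT1} would contradict the Serre-duality nonvanishing), and that you invoke the classical Borel--Weil--Bott theorem on $G/B$ where the paper simply reapplies Proposition \ref{prop:BBWFT1} to $G/B$ itself, which is also a Fano manifold whose elementary contractions are smooth $\P^1$-fibrations.
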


\begin{proof}
Since, by Serre duality, $H^{\dim X}(X,K_X)\neq 0$, the dimension of $X$ equals, by Proposition \ref{prop:BBWFT1}, the length of  $K_X$. But, by our previous observations on the map $\psi$, this equals the length of $K_{G/B}$, which, by the same argument, is equal to $\dim(G/B)$.\end{proof}

For every $D\in\Phi$, denote by $F_D:N^1(X)\to\R$ the linear operator defined by:
$$
F_D(L):=-2\dfrac{\langle D,L\rangle}{\langle D,K_X\rangle}.
$$
The following result states that $\Chi_X$ depends only on the root system $\Phi$.

\begin{theorem}\label{thm:chi}
Let  $\Phi^+ \subset \Phi$ be the set of positive roots with respect to $\Delta$. Then
$$\Chi_X=\prod_{D\in\Phi^+}\left(1+F_D\right).$$
\end{theorem}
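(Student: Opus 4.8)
The plan is to identify the translated Euler characteristic $\Chi^T=\Chi_X\circ T$ with a scalar multiple of the product of the linear forms cutting out the reflection hyperplanes of $W$, and then to translate back by $T^{-1}$. The starting point is anti-invariance: by Corollary \ref{cor:transhilb2}, together with the sign bookkeeping of Lemma \ref{lem:transhilb} (each generating reflection $r_i$ changes the sign of $\Chi^T$), one has $\Chi^T(w(L))=\det(w)\,\Chi^T(L)$ for every $w\in W$, so that $\Chi^T$ is a $W$-anti-invariant polynomial. Moreover, Proposition \ref{prop:finiteW} tells us that $\Chi^T$ vanishes on every reflection hyperplane $\{L\mid\langle L,D\rangle=0\}$, $D\in\Phi$. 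Hence, for each $D\in\Phi^+$, the irreducible linear form $\langle\,\cdot\,,D\rangle$ divides $\Chi^T$; since $\Phi$ is reduced, distinct positive roots determine distinct, hence coprime, hyperplanes, so the product $\delta:=\prod_{D\in\Phi^+}\langle\,\cdot\,,D\rangle$ divides $\Chi^T$, say $\Chi^T=Q\cdot\delta$ with $Q$ a polynomial.

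The next step is a degree count forcing $Q$ to be constant. Translation preserves degree, so $\deg\Chi^T=\deg\Chi_X$, and $\deg\Chi_X=\dim X$ by asymptotic Riemann--Roch applied to the ample divisor $-K_X$ (its top self-intersection $(-K_X)^{\dim X}$ is positive, so the leading coefficient of $m\mapsto\Chi_X(-mK_X)$ does not vanish). On the other hand $\dim X=\dim G/B=|\Phi^+|$ by Proposition \ref{prop:dim} and the classical identity $\dim G/B=|\Phi^+|$. Since $\deg\delta=|\Phi^+|$, comparing degrees gives $\deg Q=0$, i.e. $Q=c$ for some constant $c$, and thus $\Chi^T=c\,\delta$.

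It remains to translate back and normalise. We have $\Chi_X(L)=\Chi^T(T^{-1}(L))=\Chi^T(L-K_X/2)=c\prod_{D\in\Phi^+}\langle L-K_X/2,D\rangle$. Note that $-K_X$ lies in the open ample cone, which by Corollary \ref{cor:chambers} is the open fundamental Weyl chamber; hence $-K_X$ is regular and $\langle K_X,D\rangle\neq0$ for every $D\in\Phi$, so each $F_D$ is well defined. A direct computation from the definition of $F_D$ yields the factorisation $\langle L-K_X/2,D\rangle=-\tfrac12\langle K_X,D\rangle\,(1+F_D(L))$, whence $\Chi_X=c'\prod_{D\in\Phi^+}(1+F_D)$ with $c'=c\prod_{D\in\Phi^+}\bigl(-\tfrac12\langle K_X,D\rangle\bigr)$ a nonzero constant. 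Evaluating at $L=0$ pins down $c'$: since $X$ is Fano, Kodaira vanishing gives $\Chi_X(0)=\Chi(X,\cO_X)=1$, while $F_D(0)=0$ makes the right-hand side equal to $c'$; therefore $c'=1$ and the formula follows.

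The only real obstacle is securing the exact degree identity $\deg\Chi_X=\dim X=|\Phi^+|$, which is precisely what collapses the a priori $W$-invariant cofactor $Q$ to a constant; the vanishing on the reflection hyperplanes is already in hand from Proposition \ref{prop:finiteW}, and the remaining algebra (the factorisation of $\langle L-K_X/2,D\rangle$ and the normalisation at $L=0$) is routine.
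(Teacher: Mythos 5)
Your proof is correct and follows essentially the same route as the paper's: vanishing of $\Chi^T$ on the reflection hyperplanes (Proposition \ref{prop:finiteW}), the degree bound $\deg \Chi^T \le \dim X = \dim G/B = |\Phi^+|$ (Proposition \ref{prop:dim}), and normalization at a point where every factor equals $1$. The only cosmetic differences are that you spell out the divisibility-by-linear-forms step that the paper leaves implicit, you upgrade the degree bound to the exact equality $\deg \Chi_X = \dim X$ via asymptotic Riemann--Roch where the upper bound already suffices, and you normalize at $L=0$ instead of at $-K_X/2$, which is the same point after the translation $T$.
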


\begin{proof} We will show that $\Chi^T=\Pi_{D\in\Phi^+}F_D$.
By Proposition \ref{prop:dim} $\deg(\Chi^T)=\deg(\Chi)\le \dim X=\dim G/B$. Moreover, since  $B$ is the subgroup associated to a Borel subalgebra $\fb$ which is   isomorphic to the direct sum of the root spaces $\fg_\alpha$, $\alpha\in \overline\Phi\,\!^+$ (cf. \cite[II.8]{H}), it follows that $\dim G/B=\sharp(\overline{\Phi}\,\!^+)$.
Since $\Phi$ is a reduced root system, two different elements in $\Phi^+$ are not proportional, hence
$\Pi_{D\in\Phi^+}F_D$ vanishes on the $|\Phi^+|$ distinct hyperplanes $\{F_D=0\}$.
Since   $\Chi^T(-K_X/2)=1$ and $F_D(-K_X/2)=1$ for all $D\in\Phi^+$, it is enough to show that $\Chi^T$ vanishes on the hyperplane $\{F_D=0\}$ for every $D\in \Phi^+$.
Given any  $D \in \Phi^+$ there exists $w \in W$ such that $D=w(-K_i)$ for some $i$, and the hyperplane
$\{F_D=0\}$ is $\{D' \in N_1(X) \,|\, D' \cdot w^\vee(\Gamma_i) = 0\}=w(\Gamma_i^\perp)$, with $\Gamma_i^\perp$ as in (\ref{eq:hyper}); we already noticed  in the proof of Proposition \ref{prop:finiteW} that $\Chi^T$ vanishes on these hyperplanes.
\end{proof}

As a consequence of Proposition \ref{prop:BBWFT1} and Theorem \ref{thm:chi}, we clearly obtain the following  result:

\begin{corollary}\label{cor:cohomequal}
With the same notation as above, for every line bundle belonging to the subgroup of $Pic(X)$ generated by $K_1, \dots, K_n$
 $$h^i(X,L)=h^i(G/B,\psi(L))\quad i\in\Z.$$
\end{corollary}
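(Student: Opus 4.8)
The plan is to apply the machinery of this section \emph{simultaneously} to $X$ and to its homogeneous model $G/B$. The crucial observation is that $G/B$ is itself a Fano manifold whose elementary contractions $\overline{\pi_i}$ are smooth $\P^1$-fibrations, so it satisfies Notation~\ref{not:ftmanifold}; consequently Proposition~\ref{prop:BBWFT1} and Theorem~\ref{thm:chi} hold verbatim for $G/B$, with the overline notation of Definition~\ref{notn:model}. I would then reduce the statement to two assertions: first, that $\psi$ intertwines the Euler characteristics, $\Chi_X=\Chi_{G/B}\circ\psi$; and second, that $\psi$ preserves all the combinatorial data—regularity and length—entering Proposition~\ref{prop:BBWFT1}. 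Throughout, the restriction to $L$ in the subgroup generated by $K_1,\dots,K_n$ is exactly what guarantees that $\psi(L)$, which by Corollary~\ref{cor:simpleroots} lands in the root lattice of $\overline{\Phi}$, is an honest element of $\Pic(G/B)$, so that the right-hand side is defined.

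For the first assertion I would invoke Theorem~\ref{thm:chi}, whose proof yields $\Chi^T=\prod_{D\in\Phi^+}F_D$ and, applied to $G/B$, the analogous factorization of its Euler characteristic as $\prod_{\overline D\in\overline{\Phi}^{+}}\overline{F}_{\overline D}$. By Corollary~\ref{cor:simpleroots} the map $\psi$ carries $\Phi$ to $\overline{\Phi}$, and by Remark~\ref{rem:positive} together with the fact that $\psi$ sends $\Amp(X)$ to $\Amp(G/B)$ it carries $\Phi^+$ to $\overline{\Phi}^{+}$. Since moreover $\psi(K_X)=K_{G/B}$ and each operator $F_D(L)=-2\langle D,L\rangle/\langle D,K_X\rangle$ is a \emph{ratio} of scalar products—hence insensitive to the component-wise rescaling by which $\psi$ may fail to be an exact isometry—one checks that $F_D=\overline{F}_{\psi(D)}\circ\psi$ for every $D\in\Phi^+$. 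Multiplying over positive roots gives $\Chi^T=\Chi^{\overline{T}}_{G/B}\circ\psi$, and since $\psi\circ T=\overline{T}\circ\psi$ this is precisely $\Chi_X=\Chi_{G/B}\circ\psi$. In particular $\Chi(X,L)=\Chi(G/B,\psi(L))$ for every $L$ in the root lattice.

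It remains to upgrade this equality of Euler characteristics to an equality of individual cohomology groups, and here I would play off Proposition~\ref{prop:BBWFT1} on the two manifolds. Because $\psi$ commutes with the translations $T$ and $\overline{T}$, maps $\Nef(X)$ to $\Nef(G/B)$ and $\Amp(X)$ to $\Amp(G/B)$ (Corollary~\ref{cor:chambers}), and preserves length, a class $L$ falls in the ``vanishing'' alternative of Proposition~\ref{prop:BBWFT1} if and only if $\psi(L)$ does, in which case $h^i(X,L)=0=h^i(G/B,\psi(L))$ for all $i$. Otherwise $L$ and $\psi(L)$ both lie in the ``concentrated'' alternative, with a single nonzero cohomology group in the common degree $m:=\len(L)=\len(\psi(L))$; then $h^m(X,L)=(-1)^m\Chi(X,L)=(-1)^m\Chi(G/B,\psi(L))=h^m(G/B,\psi(L))$, while every other $h^i$ vanishes on both sides. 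This yields $h^i(X,L)=h^i(G/B,\psi(L))$ for all $i$.

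The main obstacle is not a single sharp estimate but the careful bookkeeping that $\psi$ genuinely matches every piece of structure on the two sides: that all earlier results of this section apply to $G/B$, that $\psi$ identifies positive roots, ample cones, translations and lengths, and—most delicately—that the operators $F_D$ are preserved even though the $W$-invariant scalar product is determined only up to a positive scalar on each irreducible component of $\cD$. Once the ratio form of $F_D$ is used to dispose of this scaling ambiguity, the conclusion is a clean combination of Theorem~\ref{thm:chi} with the degree-concentration supplied by Proposition~\ref{prop:BBWFT1}.
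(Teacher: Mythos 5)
Your proposal is correct and takes essentially the same route as the paper, whose proof consists of the single observation that the corollary follows from Proposition~\ref{prop:BBWFT1} and Theorem~\ref{thm:chi} applied to both $X$ and $G/B$ (the latter being itself a Fano manifold satisfying Notation~\ref{not:ftmanifold}). Your write-up merely makes the implicit bookkeeping explicit — that $\psi$ matches $\Phi^+$ with $\overline{\Phi}^{+}$, that the ratio form of the operators $F_D$ kills the scaling ambiguity of the invariant product, and that the two alternatives of Proposition~\ref{prop:BBWFT1} correspond under $\psi$ so that $h^{\len(L)}=(-1)^{\len(L)}\Chi$ transfers — all of which is sound.
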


\section{Bott-Samelson varieties 
}\label{sec:BSvar}

Let $I=\{1,2,\dots,n\}$ be the set of indices parametrizing the nodes of the Dynkin diagram $\cD$ of $X$, and let $G/B$ be the homogeneous model of $X$.
Following \cite{LT}, when dealing with finite sequences of indices in $I$ we will use this notation:

\begin{notation}\label{notn:LT} Given a sequence $\ell=(l_1,\dots,l_r)$, $l_i\in I$, we will set
$$\left.\begin{array}{l}\ell[s]:=(l_1,\dots,l_{r-s})\\
\ell(i):=(l_1,\dots,\widehat{\,l_i},\dots,l_r)\\
\ell[s](i):=(l_1,\dots,\widehat{\,l_i},\dots,l_{r-s})
\end{array}\right\}\mbox{ for }i \leq
r-s\leq r $$
In particular $\ell[s][s']=\ell[s+s']$ and $\ell[s](r-s)=\ell[s+1]$.
If $\ell=(l_1, \dots l_r)$ and $\ell'=({l}_1', \dots, l'_t)$ are two sequences as above we will denote
by $\ell\ell'$ the sequence $(l_1, \dots l_r, l'_1, \dots, l'_t)$. Moreover we will denote by $\ell^k$ the sequence obtained juxtaposing $k$ times the indices of $\ell$.
Finally
we will denote by $w(\ell)$ the element of the Weil group $W$ of $\cD$ given by $w(\ell):=r_{l_1}\circ r_{l_2}\circ\dots\circ r_{l_r}$.
\end{notation}

Fix a point $x \in X$. To every sequence $\ell=(l_1,\dots,l_r)$ in $I$ we will associate a sequence of manifolds $Z_{\ell[s]}$, $s=0,\dots,r$, called the {\it Bott-Samelson varieties} associated to the subsequences $\ell[s]$, together with morphisms
$$f_{\ell[s]}:Z_{\ell[s]} \to X,\quad p_{\ell[s+1]}:Z_{\ell[s]}\to Z_{\ell[s+1]},\quad \sigma_{\ell[s+1]}:Z_{\ell[s+1]}\to Z_{\ell[s]}.$$
They are constructed in the following way: for $s=r$ we set $Z_{\ell[r]}:=\{x\}$ and let $f_{\ell[r]}:\{x\}\to X$ be the inclusion. Then for $s<r$ we define $Z_{\ell[s]}$ recursively upon $Z_{\ell[s+1]}$
 by considering the composition
$g_{\ell[s+1]}:=\pi_{l_{r-s}}\circ f_{\ell[s+1]}:Z_{\ell[s+1]}\to X_{l_{r-s}}$ and taking its fiber product with $\pi_{l_{r-s}}$:
$$
\xymatrix@=35pt{Z_{\ell[s]}\ar[r]^{f_{\ell[s]}}\ar[d]^{p_{\ell[s+1]}}&X\ar[d]^{\pi_{l_{r-s}}} \\
Z_{\ell[s+1]}\ar@/^/[u]^{\sigma_{\ell[s+1]}} \ar[ru]_{f_{\ell[s+1]}}\ar[r]^{g_{\ell[s+1]}}&X_{l_{r-s}}}
$$
Note that the map $f_{\ell[s+1]}$ factors naturally via $Z_{\ell[s]}$, providing a section $\sigma_{\ell[s+1]}:Z_{\ell[s+1]}\to Z_{\ell[s]}$ of $p_{\ell[s+1]}$. This shows in particular that $p_{\ell[s+1]}$ is a $\P^1$-bundle. Pulling back to $Z_{\ell[s+1]}$ via $\sigma_{\ell[s+1]}$
the relative Euler sequence over $Z_{\ell[s]}$ we see that this $\P^1$-bundle is
 the projectivization of an extension $\cF_{\ell[s]}$ of $\cO_{Z_{\ell[s+1]}}$ by $f_{\ell[s+1]}^*K_{l_{r-s}}$:
\begin{equation}\label{eq:F}
0 \lra f_{\ell[s+1]}^*K_{l_{r-s}}\longrightarrow
\cF_{\ell[s]} \longrightarrow \cO_{Z_{\ell[s+1]}}\lra 0.
\end{equation}
The cohomology class determining it will be denoted by
\begin{equation}
\zeta_{\ell[s]}\in H^1(Z_{\ell[s+1]},f_{\ell[s+1]}^*K_{l_{r-s}}).\label{eq:class}
\end{equation}

The next lemma will be used to prove that if $X$ has disconnected Dynkin diagram, then it is a product (see Corollary \ref{cor:product}).
\begin{lemma}\label{lem:prodBS}
Assume that $\cD = \cD_1 \sqcup \cD_2$, and denote by $I_1, I_2 \subset I$ the sets of indices parametrizing nodes in $\cD_1$ and $\cD_2$, respectively. Let $\ell_1=(l_1,\dots, l_k), \ell_2=(l'_1, \dots, l'_s)$ be sequences of indices in $I_1$ and $I_2$, respectively. Then $Z_{\ell_1\ell_2} = Z_{\ell_1} \times Z_{\ell_2}$.
\end{lemma}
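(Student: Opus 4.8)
The plan is to exploit the recursive structure of the Bott--Samelson varieties together with the orthogonality $-K_i\cdot\Gamma_j=-K_j\cdot\Gamma_i=0$ for $i\in I_1,\ j\in I_2$ provided by Lemma~\ref{lem:obtuse}. Unwinding the recursion, $Z_{\ell_1\ell_2}$ is the iterated fiber product obtained by processing the indices $l_1,\dots,l_k,l'_1,\dots,l'_s$ in order; concretely a point is a tuple $(x_0,\dots,x_k,y_1,\dots,y_s)$ with $x_0=x$, consecutive entries lying in a common fiber of the relevant $\pi$, and $f_{\ell_1\ell_2}$ recording the last coordinate. Comparing with $Z_{\ell_1}\times Z_{\ell_2}$, the only discrepancy is that the $\ell_2$-chain is attached at $x_k$ rather than at $x_0=x$, so the statement reduces to showing that attaching the $\ell_2$-chain at $x_k$ produces a variety that is, up to canonical isomorphism, independent of where along $Z_{\ell_1}$ we sit. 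I would prove this by induction on $s=|\ell_2|$, the engine being that every $\P^1$-bundle obtained by pulling $\pi_j$ ($j\in I_2$) back along the $I_1$-directions is \emph{trivial}.

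The core computation is the base case $s=1$, i.e. $\ell_2=(j)$. Here $Z_{\ell_1(j)}=Z_{\ell_1}\times_{X_j}X=\P(\cF)$, where by (\ref{eq:F}) the bundle $\cF$ is an extension of $\cO_{Z_{\ell_1}}$ by $f_{\ell_1}^{*}K_j$. First I would show $f_{\ell_1}^{*}K_j\cong\cO_{Z_{\ell_1}}$. Since $Z_{\ell_1}$ is an iterated $\P^1$-bundle over the point $\{x\}$ whose successive fibers map through $f$ to curves of class $[\Gamma_{l_i}]$ with $l_i\in I_1$, and $K_j\cdot\Gamma_{l_i}=0$ by Lemma~\ref{lem:obtuse}, the line bundle $f_{\ell_1}^{*}K_j$ has degree $0$ on every fiber of the tower; descending stage by stage (identifying each descended bundle with the corresponding $f^{*}K_j$ by pulling back along the canonical section $\sigma$, which commutes with the evaluation maps) down to the point yields $f_{\ell_1}^{*}K_j\cong\cO$. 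One can sanity-check the geometry on a single $\Gamma_i$: the Hirzebruch surface $\Gamma_i\times_{X_j}X$ carries the diagonal section, whose self-intersection equals $-K_j\cdot\Gamma_i=0$, forcing it to be $\P^1\times\P^1$. Finally, because $Z_{\ell_1}$ is a rational iterated $\P^1$-bundle we have $H^1(Z_{\ell_1},\cO)=0$, so the extension splits, $\cF\cong\cO\oplus\cO$, and $Z_{\ell_1(j)}\cong Z_{\ell_1}\times\P^1\cong Z_{\ell_1}\times Z_{(j)}$.

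For the inductive step I would peel off the last index $l'_s$, writing $Z_{\ell_1\ell_2}=Z_{\ell_1\ell_2[1]}\times_{X_{l'_s}}X=\P(\cF)$ with $\cF$ an extension of $\cO$ by $L:=f_{\ell_1\ell_2[1]}^{*}K_{l'_s}$; by the inductive hypothesis $Z_{\ell_1\ell_2[1]}\cong Z_{\ell_1}\times Z_{\ell_2[1]}$. Both factors have $H^1(\cO)=0$, so $\Pic$ of the product splits and $H^1(Z_{\ell_1}\times Z_{\ell_2[1]},\operatorname{pr}_2^{*}M)\cong H^1(Z_{\ell_2[1]},M)$. Running the same degree/descent argument along the $Z_{\ell_1}$-factor (again using $K_{l'_s}\cdot\Gamma_{l_i}=0$) shows that $L$ is trivial in the $Z_{\ell_1}$-direction, hence $L\cong\operatorname{pr}_2^{*}L'$; consequently the class defining $\cF$ comes from $H^1(Z_{\ell_2[1]},L')$, so $\cF\cong\operatorname{pr}_2^{*}\cF'$ projectively. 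Therefore $\P(\cF)\cong Z_{\ell_1}\times\P(\cF')$, and identifying $\P(\cF')$ with $Z_{\ell_2[1]}\times_{X_{l'_s}}X=Z_{\ell_2}$ finishes the step.

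The main obstacle is precisely this last identification: I must match the descended data $(L',\cF')$ on $Z_{\ell_2[1]}$ with the data that defines $Z_{\ell_2}$, namely $f_{\ell_2[1]}^{*}K_{l'_s}$ and the extension class of (\ref{eq:F}). This is intuitively clear — restricting the tower of $Z_{\ell_1\ell_2[1]}$ to the slice over the base point of $Z_{\ell_1}$ ought to recover the tower of $Z_{\ell_2[1]}$ based at $x$ — but to make it rigorous I would strengthen the inductive hypothesis so that the isomorphism $Z_{\ell_1\ell_2}\cong Z_{\ell_1}\times Z_{\ell_2}$ is compatible with the $\P^1$-bundle tower structures and with the evaluation maps $f$ on slices. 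With that bookkeeping in place, the splitting $\cF\cong\operatorname{pr}_2^{*}\cF'$ and the matching of $\cF'$ with the defining bundle of $Z_{\ell_2}$ become automatic, and the induction closes.
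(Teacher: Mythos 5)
Your proposal is correct, and its skeleton (induction on $\lvert\ell_2\rvert$, with base case the triviality of $f_{\ell_1}^{*}K_{l'_1}$ and hence of $\cF_{\ell_1}$) coincides with the paper's; but your inductive step takes a genuinely different route. The paper disposes of it by a diagram of Cartesian squares: square (2) and the outer rectangle are Cartesian ``by construction'', hence so is square (1), which composed with the induction square $Z_{\ell_1\ell_2[1]}\cong Z_{\ell_1}\times Z_{\ell_2[1]}$ gives $Z_{\ell_1\ell_2}\cong Z_{\ell_1}\times Z_{\ell_2}$. You instead descend the defining data of the $\P^1$-bundle: seesaw plus $K_{l'_s}\cdot\Gamma_{l_i}=0$ gives $f_{\ell_1\ell_2[1]}^{*}K_{l'_s}\cong\mathrm{pr}_2^{*}L'$ (in fact this is immediate from the coincidence of linear and numerical equivalence on Bott--Samelson varieties together with Lemma~\ref{lem:baseBS}: the degrees on the basis $\{\beta_i\}$ vanish in the $\ell_1$-directions, so no stage-by-stage descent is needed), then K\"unneth with $H^1(Z_{\ell_1},\cO)=0$ forces the extension class of (\ref{eq:F}) to be pulled back from $Z_{\ell_2[1]}$, and restriction to the slice $\{x_0\}\times Z_{\ell_2[1]}$ --- the canonical copy of $Z_{\ell_2[1]}$ inside $Z_{\ell_1\ell_2[1]}$ cut out by the iterated sections $\sigma$ --- identifies $(L',\cF')$ with $(f_{\ell_2[1]}^{*}K_{l'_s},\cF_{\ell_2[1]})$. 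The bookkeeping you flag at the end is real, but it is also exactly what the paper's diagram tacitly uses: the horizontal map $Z_{\ell_1\ell_2[1]}\to Z_{\ell_2[1]}$ exists only through the induction isomorphism, and $g_{\ell_1\ell_2[1]}=\pi_{l'_s}\circ f_{\ell_1\ell_2[1]}$ does \emph{not} literally factor through it (already on the homogeneous model $G/B=G_1/B_1\times G_2/B_2$ the two maps to $X_{l'_s}$ differ in the first factor), so the ``Cartesian by construction'' step silently identifies the pullbacks of $\pi_{l'_s}$ along two different maps; your K\"unneth/extension-class computation is precisely the justification of that identification, carried along by strengthening the induction so that the isomorphism restricts on $\{x_0\}\times Z_{\ell_2[1]}$ to the canonical inclusion compatibly with evaluations. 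Two cosmetic points: since the middle term of an extension is determined by its class, you get $\cF\cong\mathrm{pr}_2^{*}\cF'$ honestly, not merely projectively (and any nonzero rescaling of the class would change neither $\cF$ nor $\P(\cF)$); and the splitting in the base case should explicitly invoke $H^1(Z_{\ell_1},\cO)=0$, which holds as $Z_{\ell_1}$ is an iterated $\P^1$-bundle over a point --- as you do, where the paper leaves it implicit.
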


\begin{proof} We will use induction on the number of elements of $\ell_2$.
If $\ell_2$ consists only of the element $l'_1$ then $Z_{\ell_1\ell_2}$ is obtained as the projectivization of the rank two vector bundle
\begin{equation}
0\lra f_{\ell_1}^*K_{l'_1}\longrightarrow
\cF_{\ell_1} \longrightarrow \cO_{Z_{\ell_1}}\lra  0.
\end{equation}
Since $K_{l_{1}'}$ is trivial on $Z_{\ell_1}$ also $\cF_{\ell_1}$ is trivial.\\
For the general case, let us consider the following commutative diagram

$$\xymatrix@=35pt{
Z_{\ell_1\ell_2} \ar@{}[dr]|{\mbox{(1)}}     \ar[r] \ar[d]_{p_{\ell_1\ell_2[1]}}  & Z_{\ell_2}  \ar@{}[dr]|{\mbox{(2)}}  \ar[d]_{p_{\ell_2[1]}}  \ar[r]^{f_{\ell_2}}  &X \ar[d]^{\pi_{l_s'}}   \\
Z_{\ell_1\ell_2[1]} \ar@{}[dr]|{\mbox{(3)}}   \ar[d]_{} \ar[r]  &  Z_{\ell_2[1]} \ar[r]_{g_{\ell_2[1]}}\ar[d]_{} & X_{l_s'} \\
Z_{\ell_1} \ar[r] &
\Spec(\C)
& \\
}$$

Squares (2)  and (1)--(2) are Cartesian, by construction of the Bott-Samelson varieties $Z_{\ell_2}$ and $Z_{\ell_1\ell_2}$, hence also square (1) is Cartesian.
Square (3) is Cartesian by the induction assumption, hence also square (1)--(3) is Cartesian.
\end{proof}

\subsection{Divisors and $1$-cycles on Bott-Samelson varieties}\label{ssec:divBS}
Let $Z_\ell$ be the Bott-Samelson variety corresponding to $\ell=(l_1,\dots, l_r)$. In this section we will describe two pairs of dual basis for the vector spaces $N^1(Z_\ell)$ and $N_1(Z_\ell)$.
\begin{notation}
Note that, by construction, linear and numerical equivalence of divisors on $Z_\ell$ are the same.
In particular, as we did for $X$,
we will abuse of notation, denoting the numerical class of a divisor and the corresponding line bundle with the same letter, using additive notation.
\end{notation}

Let us denote by $\beta_{i(r-i)}$ the class in $N_1(Z_{\ell[r-i]})$ of the fibers of $p_{\ell[r-i+1]}:Z_{\ell[r-i]} \to Z_{\ell[r-i+1]}$. We  will denote by $\beta_{i(s)}$ the image of this class into $N_1(Z_{\ell[s]})$, via push forward with the sections $\sigma_{\ell[r-j]}$, $j= i, \dots, r-s-1$. If $s=0$ we will write $\beta_{i}$ instead of $\beta_{i(0)}$. Note that, by construction, $f_{\ell[s]*}\beta_{i(s)}=[\Gamma_{l_i}]$. The next remark describes the behaviour of the cohomology class $\zeta_{\ell[s]}$ with respect to curves in the classes $\beta_{i(s+1)}$:

\begin{remark}\label{rem:sections}
Consider the projection $p_{\ell[s+1]}:Z_{\ell[s]}\to Z_{\ell[s+1]}$, whose fibers lie in the class $\beta_{r-s(s)}$, for some $s\geq 0$. Assume that there exists $i<r-s$ satisfying that $l_i=l_{r-s}$. Then, over a rational curve $C\cong\P^1$ in $Z_{\ell[s+1]}$ whose class is $\beta_{i(s+1)}$ the sequence (\ref{eq:F}) restricts to a sequence of the form
$$0\lra  \cO_{\P^1}(-2)\longrightarrow \cO_{\P^1}(-1) \oplus \cO_{\P^1}(-1)
\longrightarrow \cO_{\P^1} \lra  0$$
and the section ${\sigma_{\ell[s+1]}}$ restricted to $C$ is the diagonal morphism. In particular $\beta_{i(s)}={\sigma_{\ell[s+1]}}_*\beta_{i(s+1)}= \beta_{r-s(s)}+{\beta'\!\!}_{i(s)}$, where ${\beta'\!\!}_{i(s)}$ denotes the class of a minimal section of $p_{\ell[s+1]}$ over $C$.

If else $i < r-s$ is such that $l_i \not =l_{r-s}$ then the restriction of the sequence (\ref{eq:F}) to a curve $C\cong\P^1$ in the class $\beta_{i(s+1)}$ splits (by Lemma \ref{lem:obtuse}), and $\beta_{i(s)}={\sigma_{\ell[s+1]}}_*\beta_{i(s+1)}$ is the class of a minimal section $p_{\ell[s+1]}$ over $C$.
\qed
\end{remark}

\begin{corollary}\label{cor:nonsplit} Let $\ell=(l_1, \dots, l_r)$ be a sequence in $I$. Assume that there exists $i<r-s$ satisfying that $l_i=l_{r-s}$. Then the class $\zeta_{\ell[s]}$ defined in (\ref{eq:class}) is not trivial. In particular
$h^1(Z_{\ell[s+1]},f_{\ell[s+1]}^*K_{l_{r-s}}) \not = 0$.

\end{corollary}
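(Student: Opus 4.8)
The plan is to restrict the defining extension (\ref{eq:F}) to a suitable rational curve, where Remark \ref{rem:sections} already computes its shape, and then to observe that this restriction cannot split; functoriality of extension classes will then force $\zeta_{\ell[s]}$ to be nonzero.

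First I would fix a rational curve $C\cong\P^1$ in $Z_{\ell[s+1]}$ whose numerical class is $\beta_{i(s+1)}$; such a curve exists by the very construction of the Bott-Samelson varieties. Since $l_i=l_{r-s}$ by hypothesis, Remark \ref{rem:sections} tells us that the pullback of the sequence (\ref{eq:F}) to $C$ takes the form
$$0\lra \cO_{\P^1}(-2)\longrightarrow \cO_{\P^1}(-1)\oplus\cO_{\P^1}(-1)\longrightarrow \cO_{\P^1}\lra 0,$$
and in particular $(f_{\ell[s+1]}^*K_{l_{r-s}})|_C\cong\cO_{\P^1}(-2)$.

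The crucial observation is that this restricted sequence does not split: a splitting would yield an isomorphism $\cO_{\P^1}(-1)\oplus\cO_{\P^1}(-1)\cong\cO_{\P^1}(-2)\oplus\cO_{\P^1}$, which is impossible since the bundle on the left has no nonzero global sections while the one on the right does. Denoting by $\iota:C\hookrightarrow Z_{\ell[s+1]}$ the inclusion, the restricted extension is classified by the image $\iota^*\zeta_{\ell[s]}$ of $\zeta_{\ell[s]}$ under the natural restriction map
$$H^1(Z_{\ell[s+1]},f_{\ell[s+1]}^*K_{l_{r-s}})\lra H^1(C,\cO_{\P^1}(-2))\cong\C,$$
by functoriality of the extension class with respect to the (locally split, hence $\iota^*$-exact) sequence (\ref{eq:F}). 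As the restricted sequence is non-split, $\iota^*\zeta_{\ell[s]}\neq 0$, whence $\zeta_{\ell[s]}\neq 0$ and a fortiori $h^1(Z_{\ell[s+1]},f_{\ell[s+1]}^*K_{l_{r-s}})\neq 0$.

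I do not expect a genuine obstacle here: the substantive geometric input, namely the exact shape of the restricted sequence, is already delivered by Remark \ref{rem:sections}, so what remains is only the elementary non-splitting of that sequence on $\P^1$ together with the standard functoriality of extension classes under pullback.
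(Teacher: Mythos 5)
Your proposal is correct and follows exactly the paper's intended route: the paper states this result as an immediate consequence of Remark \ref{rem:sections}, and your argument—restricting the extension (\ref{eq:F}) to a curve in the class $\beta_{i(s+1)}$, noting that $\cO_{\P^1}(-1)^{\oplus 2}\not\cong\cO_{\P^1}(-2)\oplus\cO_{\P^1}$ by comparing global sections, and invoking functoriality of extension classes under pullback—is precisely the reasoning the paper leaves implicit. No gaps.
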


For every $i\leq r-s$ we will denote by $Z_{\ell[s](i)}$ the divisor in $Z_{\ell[s]}$ which is  the image of the natural inclusion of Bott-Samelson varieties $Z_{\ell[s](i)}\hookrightarrow Z_{\ell[s]}$. It may also be described as the pull-back to $Z_{\ell[s]}$, via the projections $p_{\ell[r-j]}$, $j=i , \dots, r-s-1 $ of the  divisor $\sigma_{\ell[r-i+1]}(Z_{\ell[r-i+1]})\subset Z_{\ell[r-i]}$.
As usual, for $s=0$ we write $Z_{\ell(i)}$ instead of $Z_{\ell[0](i)}$.
Abusing notation we will denote by $Z_{\ell[s](i)}$ also the numerical class of $Z_{\ell[s](i)}$ in $N^1(Z_\ell)$. The following lemma is immediate:

\begin{lemma}\label{lem:baseBS} (Cf. \cite[Section 2 and Subsection 3.2]{LT}) The divisor $Z= \sum Z_{\ell(i)}$ is  simple normal crossing.
Moreover the $Z_{\ell(i)}$'s (resp. the $\beta_{i}$'s), $i=1,\dots, r$, form a basis of $N^1(Z_{\ell})$ (resp. of $N_1(Z_{\ell})$).
\end{lemma}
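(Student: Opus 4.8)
The plan is to induct on the length $r$ of the sequence $\ell$, exploiting the fact that $Z_\ell$ is an iterated $\P^1$-bundle over a point. The base case $r=0$ is trivial, as $Z_{\ell[r]}=\{x\}$ is a point. For the inductive step I would look at $Z_\ell=Z_{\ell[0]}$ through its top $\P^1$-bundle $p:=p_{\ell[1]}\colon Z_\ell\to Z_{\ell[1]}$, with section $\sigma:=\sigma_{\ell[1]}$. Identifying $Z_{\ell[1]}$ with the Bott--Samelson variety attached to the shorter sequence $\ell[1]=(l_1,\dots,l_{r-1})$, the inductive hypothesis gives that $\{Z_{\ell[1](i)}\}_{i=1}^{r-1}$ and $\{\beta_{i(1)}\}_{i=1}^{r-1}$ are bases of $N^1(Z_{\ell[1]})$ and $N_1(Z_{\ell[1]})$ respectively, and that $\sum_{i=1}^{r-1}Z_{\ell[1](i)}$ is simple normal crossing.

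First I would unwind the definitions in Notation \ref{notn:LT} and the construction preceding the lemma to record how these objects interact with $p$ and $\sigma$: the top divisor $Z_{\ell(r)}=\sigma(Z_{\ell[1]})$ is exactly the image of the section, while $Z_{\ell(i)}=p^*Z_{\ell[1](i)}$ for $i<r$; dually, $\beta_r$ is the class of a fibre of $p$, while $\beta_i=\sigma_*\beta_{i(1)}$ for $i<r$.

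For the basis statements I would use the standard projective-bundle decompositions. On the divisor side, $N^1(Z_\ell)=p^*N^1(Z_{\ell[1]})\oplus\R\,[Z_{\ell(r)}]$, since a section meets a fibre once ($[Z_{\ell(r)}]\cdot\beta_r=1$) whereas pullbacks are numerically trivial on fibres; as $\{Z_{\ell(i)}\}_{i<r}=\{p^*Z_{\ell[1](i)}\}_{i<r}$ is a basis of the first summand, adjoining $Z_{\ell(r)}$ yields a basis of $N^1(Z_\ell)$, so in particular $\dim N^1(Z_\ell)=r$. Dually, $p\circ\sigma=\id$ gives $p_*\circ\sigma_*=\id$, so the classes $\{\beta_i\}_{i<r}=\{\sigma_*\beta_{i(1)}\}_{i<r}$ are sent by $p_*$ to the basis $\{\beta_{i(1)}\}$ and are therefore independent, while $\beta_r$ lies in $\ker p_*$ and is nonzero; any relation $\beta_r=\sum c_i\sigma_*\beta_{i(1)}$ pushes forward to $0=\sum c_i\beta_{i(1)}$, forcing all $c_i=0$. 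Thus $\{\beta_i\}_{i=1}^r$ are $r$ independent classes in the $r$-dimensional space $N_1(Z_\ell)$, hence a basis.

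For the simple normal crossing claim I would argue locally. Away from $Z_{\ell(r)}$ only the horizontal divisors $Z_{\ell(i)}=p^*Z_{\ell[1](i)}$, $i<r$, appear, and the pullback of a simple normal crossing divisor under the smooth morphism $p$ is again simple normal crossing. Along the section I would check transversality on the level of differentials: the local equations of the $p^*Z_{\ell[1](i)}$ are pullbacks $p^*f_i$ whose differentials $p^*(df_i)$ are linearly independent (by the inductive hypothesis on $Z_{\ell[1]}$) and span only horizontal cotangent directions, whereas the local equation of $Z_{\ell(r)}=\sigma(Z_{\ell[1]})$ has nonzero vertical derivative, because the section cuts each fibre in a single reduced point. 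Hence at every point the relevant differentials are independent, and $\sum_{i=1}^r Z_{\ell(i)}$ is simple normal crossing. The only step that is not pure bookkeeping is this transversality of the section with the pulled-back divisors; it is controlled entirely by the local product structure of the $\P^1$-bundle $p$, so I expect it to present no real difficulty.
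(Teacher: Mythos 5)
Your proposal is correct and is essentially the paper's own approach: the paper states the lemma as immediate from the iterated $\P^1$-bundle construction (deferring details to Lauritzen--Thomsen), and your induction on $r$ --- the decomposition $N^1(Z_\ell)=p^*N^1(Z_{\ell[1]})\oplus\R\,[Z_{\ell(r)}]$, its dual via $p_*\circ\sigma_*=\id$, and the transversality of the section $\sigma_{\ell[1]}(Z_{\ell[1]})$ with the pulled-back divisors $p^*Z_{\ell[1](i)}$ --- is exactly the argument implicit there. The identifications you record ($Z_{\ell(r)}=\sigma_{\ell[1]}(Z_{\ell[1]})$, $Z_{\ell(i)}=p^*Z_{\ell[1](i)}$ and $\beta_i=\sigma_{\ell[1]*}\beta_{i(1)}$ for $i<r$) match the paper's definitions, so there is nothing to fix.
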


Within $N^1(Z_\ell)$ we may also consider the dual basis of $\{\beta_i,\,i=1,\dots,r\}$, that we denote by $$\{H_i,\,i=1,\dots,r\}.$$ By our previous description of the Bott-Samelson varieties as $\P^1$-bundles -- equation (\ref{eq:F}) --, every $H_i$ is  the  tautological bundle of $Z_{\ell[r-i]}\cong\P(\cF_{\ell[r-i]})$, that is, ${p_{\ell[r-i]}}_*H_i=\cF_{\ell[r-i]}$. In particular, these line bundles provide a $\Z$-basis of $\Pic(Z_{\ell})$.

The following lemma provides an adjunction formula for the evaluation morphisms $f_\ell:Z_\ell\to X$.

\begin{lemma}\label{lem:canonical}(Cf. \cite[Lemma~5.1]{LT})
With the same notation as above:
 \begin{equation}
f_\ell^*(-K_X/2)=\sum_{i=1}^rH_i\quad \mbox{and} \quad -K_{Z_\ell}= \sum_{i=1}^rZ_{\ell(i)} + f^*_\ell(-K_X/2).\label{eq:canonical}
\end{equation}
\end{lemma}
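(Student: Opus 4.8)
The plan is to prove the two equalities in turn: the first one, $f_\ell^*(-K_X/2)=\sum_{i=1}^r H_i$, drops out of a dual-basis computation, and I would then feed it into an induction on the length $r$ of $\ell$ to obtain the canonical bundle formula.

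For the first identity, recall that by definition $\{H_i\}$ is the basis of $N^1(Z_\ell)$ dual to $\{\beta_i\}$, so it suffices to check that $f_\ell^*(-K_X/2)$ and $\sum_j H_j$ have equal intersection numbers with each $\beta_i$. Using the projection formula together with the identity $f_{\ell*}\beta_i=[\Gamma_{l_i}]$ recorded before the statement, one finds $f_\ell^*(-K_X/2)\cdot\beta_i=(-K_X/2)\cdot\Gamma_{l_i}=1$, since $-K_X\cdot\Gamma_{l_i}=2$ for a fiber of a smooth $\P^1$-fibration; while $(\sum_j H_j)\cdot\beta_i=1$ because the bases are dual. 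As $\{\beta_i\}$ spans $N_1(Z_\ell)$, the two classes coincide.

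For the second identity I would argue by induction on $r$, the case $r=0$ (a point) being trivial. Write $p:=p_{\ell[1]}\colon Z_\ell\to Z_{\ell[1]}$ for the top $\P^1$-bundle, so that $Z_\ell=\P(\cF_\ell)$ with tautological class $H_r$ and $\det\cF_\ell=f_{\ell[1]}^*K_{l_r}$ by \eqref{eq:F}. The relative canonical bundle formula for this $\P^1$-bundle reads $-K_{Z_\ell/Z_{\ell[1]}}=2H_r-p^*f_{\ell[1]}^*K_{l_r}$, whence
\[
-K_{Z_\ell}=p^*(-K_{Z_{\ell[1]}})+2H_r-p^*f_{\ell[1]}^*K_{l_r}.
\]
Substituting the inductive hypothesis $-K_{Z_{\ell[1]}}=\sum_{i=1}^{r-1}Z_{\ell[1](i)}+f_{\ell[1]}^*(-K_X/2)$, two identifications are needed: first $p^*Z_{\ell[1](i)}=Z_{\ell(i)}$ for $i<r$, directly from the description of $Z_{\ell(i)}$ as an iterated pull-back; and second, applying the first identity to both $Z_\ell$ and $Z_{\ell[1]}$ (using that the tautological classes of $Z_{\ell[1]}$ pull back to the $H_i$, $i<r$) gives $p^*f_{\ell[1]}^*(-K_X/2)=f_\ell^*(-K_X/2)-H_r$. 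Collecting terms, the right hand side becomes $\sum_{i=1}^{r-1}Z_{\ell(i)}+f_\ell^*(-K_X/2)+\bigl(H_r-p^*f_{\ell[1]}^*K_{l_r}\bigr)$.

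The remaining, and main, obstacle is to recognize $H_r-p^*f_{\ell[1]}^*K_{l_r}$ as the class $Z_{\ell(r)}$ of the section divisor $\sigma_{\ell[1]}(Z_{\ell[1]})$. Here I would unwind the projectivization convention $p_*H_i=\cF_{\ell[r-i]}$: the section $\sigma_{\ell[1]}$ is the one attached to the quotient $\cF_\ell\to\cO_{Z_{\ell[1]}}$ of \eqref{eq:F}, as pulling the relative Euler sequence back along $\sigma_{\ell[1]}$ (exactly as in the construction) shows $\sigma_{\ell[1]}^*H_r=\cO$. Hence this section is the vanishing locus of the composite of the inclusion $p^*(f_{\ell[1]}^*K_{l_r})\hookrightarrow p^*\cF_\ell$ with the tautological quotient $p^*\cF_\ell\twoheadrightarrow\cO_{Z_\ell}(1)$, a divisor of class $H_r-p^*f_{\ell[1]}^*K_{l_r}$, which is precisely $Z_{\ell(r)}$. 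This pins down the last summand and closes the induction. The delicate points to monitor are the sign in the relative canonical formula and the verification that $\sigma_{\ell[1]}$ is the quotient (rather than the sub) section — both forced by the convention $p_*H_i=\cF_{\ell[r-i]}$.
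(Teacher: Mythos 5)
Your proposal is correct and takes essentially the same route as the paper: the first identity is exactly the paper's dual-basis computation against the classes $\beta_i$, and your induction up the tower of $\P^1$-bundles is the paper's ``adjunction formula recursively,'' with your identification $Z_{\ell(r)}=H_r-p^*f_{\ell[1]}^*K_{l_r}$ being the divisor-class form of the paper's key identity $f_{\ell[s+1]}^*(-K_{l_{r-s}})=\big(Z_{\ell[s](r-s)}\big)_{|Z_{\ell[s+1]}}$. You merely spell out the convention checks (the section corresponding to the quotient $\cO_{Z_{\ell[1]}}$, and $p_{\ell[1]}^*$ of the tautological classes of $Z_{\ell[1]}$ giving $H_i$ for $i<r$, whence $p^*f_{\ell[1]}^*(-K_X/2)=f_\ell^*(-K_X/2)-H_r$) that the paper leaves implicit, and these are all handled correctly.
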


\begin{proof}
Since $f^*_\ell(-K_X/2)\cdot \beta_i=1$ for all $i$, it follows that $f^*_\ell(-K_X/2)=\sum_{i=1}^rH_i$. On the other hand,
note that, for every $s\leq r$ one has $$f_{\ell[s+1]}^*(-K_{l_{r-s}})=\big(Z_{\ell[s](r-s)}\big)_{|Z_{\ell[s+1]}}.$$
Then using adjunction formula recursively, one gets:
$$
-K_{Z_\ell}=\sum_{i=1}^rZ_{\ell(i)}+\sum_{i=1}^rH_i = \sum_{i=1}^rZ_{\ell(i)} + f^*_\ell(-K_X/2).
$$
\end{proof}

Let us also define for every $t\leq r$, the following line bundles on $Z_\ell$:
\begin{equation}
N_{t}=\sum_{\substack{i\leq t \,\,\,l_i=l_t}}H_{i}.\label{eq:Ns}
\end{equation}
and the classes $\gamma_i\in N_1(Z_{\ell})$, $i=1,\dots,r$, defined by $N_{t}\cdot\gamma_{i}=\delta_{i}^{t}$.

Since, by definition, $N_{t}\cdot\beta_{i}=1$ if $l_t=l_i, i\leq t$,  and it is zero otherwise, the intersection matrix $(N_t\cdot\beta_i)$ is lower triangular and its diagonal entries are ones.

The classes $\gamma_i$ might also be defined as follows:
for every index $i\leq r$, satisfying that the set $\{k\in\Z|\,\,i<k\leq r,\,\,l_i=l_k\}\neq\emptyset$ denote by $i^*$ the minimum of this set. Then, according to Remark  \ref{rem:sections}, the class
$\beta_{i(r-i^*)}$ decomposes in $N_1(Z_{\ell[r-i^*]})$ as $\beta_{i^*(r-i^*)}+{\beta'\!\!}_{i(r-i^*)}$. Defining $\beta'_{i}$ as the image of ${\beta'\!\!}_{i(r-i^*)}$ into $N_1(Z_{\ell})$, via push forward with the sections $\sigma_{\ell[r-j]}$, $j= i^*, \dots, r-1$, we may then write:
$$
\gamma_{i}=\left\{\begin{array}{ll}\beta_{i}&\mbox{if }\{k\in\Z|\,\,i<k\leq r,\,\,l_i=l_k\}=\emptyset\\	\beta'_{i}&\mbox{otherwise}\end{array}\right.
$$

\subsection{The Mori cone of the Bott-Samelson varieties}
In this section we will show that the Nef and Mori cones of the Bott-Samelson varieties $Z_\ell$ are simplicial, describing their generators. Moreover we will describe the relative cone of the maps $f_\ell:Z_\ell \to X$.

\begin{lemma}\label{lem:O(1)}
The divisor $N_{t}$ is nef on $Z_\ell$, for every $t\leq r$.
\end{lemma}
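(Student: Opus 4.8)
The plan is to reduce everything to the ``top'' divisor and then induct on the length $r$ of $\ell$. First I would observe that $N_t=\sum_{i\le t,\,l_i=l_t}H_i$ only involves the bundles $H_i$ with $i\le t$, each of which is tautological on $Z_{\ell[r-i]}$ with $r-i\ge r-t$; hence all of them, and therefore $N_t$ itself, are pulled back from $Z_{\ell[r-t]}$ along the composite projection $q\colon Z_\ell\to Z_{\ell[r-t]}$. On $Z_{\ell[r-t]}$, which has length $t$, this pulled-back divisor is exactly the top divisor $\sum_{i\le t,\,l_i=l_t}H_i$. Since the pull-back of a nef divisor is nef, it suffices to prove the statement for the last index, i.e. that $N_r$ is nef on $Z_\ell$ for every sequence $\ell$. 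This I would prove by induction on $r$, the case $r=1$ being clear since then $Z_\ell\cong\P^1$ and $N_1=H_1=\cO_{\P^1}(1)$.

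For the inductive step I would use the top $\P^1$-bundle $p:=p_{\ell[1]}\colon Z_\ell\to Z_{\ell[1]}$, of which $H_r$ is the tautological class, so that $p_*\cO_{Z_\ell}(H_r)=\cF_{\ell[0]}$. Splitting off the last summand gives $N_r=H_r+p^*P$, where $P:=\sum_{i<r,\,l_i=l_r}H_i$ is a divisor on $Z_{\ell[1]}$; by the reduction above $P$ equals the top divisor $N_j$ of $Z_{\ell[1]}$ for the last occurrence $j<r$ of the letter $l_r$ (or $P=0$), hence $P$ is nef by the inductive hypothesis. On a fibre of $p$ one has $N_r\cdot\beta_r=H_r\cdot\beta_r=1\ge 0$, so let $C\subset Z_\ell$ be an irreducible curve not contracted by $p$, put $C':=p(C)$ and let $d$ be the degree of $C\to C'$. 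By the projection formula $N_r\cdot C=H_r\cdot C+d\,(P\cdot C')$, and the second term is $\ge 0$ since $P$ is nef. Restricting to the normalisation of $C'$, the curve $C$ becomes a $d$-section of the ruled surface $\P(\cF_{\ell[0]}|_{C'})$, so $H_r\cdot C\ge d\,\mu_{\min}(\cF_{\ell[0]}|_{C'})$, where $\mu_{\min}$ denotes the minimal degree of a quotient line bundle. Thus $N_r\cdot C\ge d\,\big(\mu_{\min}(\cF_{\ell[0]}|_{C'})+P\cdot C'\big)$, and the whole statement comes down to showing that the twisted rank two bundle $\cF_{\ell[0]}\otimes\cO(P)$ is nef on $Z_{\ell[1]}$.

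The main obstacle is exactly this last point. Tensoring (\ref{eq:F}) by $\cO(P)$ gives $0\to f_{\ell[1]}^*K_{l_r}\otimes\cO(P)\to\cF_{\ell[0]}\otimes\cO(P)\to\cO(P)\to 0$, whose quotient $\cO(P)$ is nef but whose sub-line-bundle $f_{\ell[1]}^*K_{l_r}\otimes\cO(P)$ is not (it has degree $-1$ on the curves $\beta_i$ with $l_i=l_r$), so one cannot conclude by a filtration argument. The point to exploit is that, precisely because the letter $l_r$ is repeated, the extension class $\zeta_{\ell[0]}$ is nonzero (Corollary \ref{cor:nonsplit}) and, by Remark \ref{rem:sections}, over every curve meeting a $\beta_i$ with $l_i=l_r$ it restricts to the Euler-type sequence $0\to\cO_{\P^1}(-2)\to\cO_{\P^1}(-1)^{\oplus 2}\to\cO_{\P^1}\to 0$. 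Using this to control the splitting type of $\cF_{\ell[0]}$ along an arbitrary curve $C'$, I would bound $\mu_{\min}(\cF_{\ell[0]}|_{C'})$ from below and verify the inequality $\mu_{\min}(\cF_{\ell[0]}|_{C'})+P\cdot C'\ge 0$ for every $C'$, which is equivalent to the nefness of $\cF_{\ell[0]}\otimes\cO(P)$, and hence of $N_r$. I expect the genuinely delicate step to be this slope estimate, since the relevant minimal quotient degree depends on how non-split $\cF_{\ell[0]}$ becomes along $C'$, and this is governed by the combinatorics of the repetitions of $l_r$ in $\ell$ rather than by a uniform numerical bound.
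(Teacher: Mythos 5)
Your preliminary reductions are sound: $N_t$ is indeed pulled back from $Z_{\ell[r-t]}$ (each $H_i$ with $i\le t$ being pulled back from $Z_{\ell[r-i]}$), the identification $P=N_j$ on $Z_{\ell[1]}$ for the last occurrence $j<r$ of the letter $l_r$ is correct, and the nefness of $N_r=H_r+p^*P$ is equivalent to the nefness of the rank-two bundle $\cF_{\ell[0]}\otimes\cO_{Z_{\ell[1]}}(P)$. But the proposal stops exactly where the content of the lemma lives, and the mechanism you suggest for closing the gap cannot work as stated. First, nonvanishing of the global class $\zeta_{\ell[0]}\in H^1(Z_{\ell[1]},f_{\ell[1]}^*K_{l_r})$ does not imply that its restriction to a given irreducible curve $C'$ is nonzero. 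Second, even where the restricted extension is nonsplit, nonsplitness improves the slope bound by at most one: for a nonsplit extension $\shse{L}{E}{\cO_{C'}}$ on a smooth curve, a quotient line bundle has kernel either contained in $L$ (quotient degree $\ge 0$) or of the form $\cO(-D)$ with $D>0$ effective, so one only gets $\mu_{\min}(E)\ge\min\left(0,\deg L+1\right)$. Since $\big(f_{\ell[1]}^*K_{l_r}+P\big)\cdot\beta_i=-1$ for every $i<r$ with $l_i=l_r$, a hypothetical curve whose class is a large combination of such $\beta_i$'s has a deficit growing linearly in the class, while nonsplitness buys only a bounded correction; no uniform estimate of the kind you propose can follow from $\zeta_{\ell[0]}\neq 0$ alone. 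What must actually be excluded is the existence of such curves, and that exclusion is essentially the statement of the lemma itself, so the plan is circular at its decisive step.

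The paper avoids curve-by-curve slope estimates entirely by using data your construction never invokes: the evaluation map to $X$. Setting $k=l_t$, it considers the contraction $\pi^k:X\to X^k$ onto a variety of Picard number one and the composition $\varphi_t=\pi^k\circ f_{\ell[r-t]}\circ p$, where $p:Z_\ell\to Z_{\ell[r-t]}$ is your projection. One checks that $\varphi_t$ contracts all curves in the classes $\gamma_i$, $i\neq t$ (namely the $\beta_j$ with $l_j\neq k$, the $\beta_j$ with $l_j=k$ and $j>t$, and the ${\beta'\!\!}_j$ with $j\neq t$), while $\beta_t$ maps onto the curve $\pi^k(\Gamma_k)$, which is not a point. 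Hence the kernel of $(\varphi_t)_*$ contains the hyperplane spanned by $\{\gamma_i\,|\,i\neq t\}$, which meets $\cNE{Z_\ell}$ along the extremal face of $\varphi_t$; since $N_t$ vanishes on this hyperplane and $N_t\cdot\gamma_t=1>0$, the divisor $N_t$ is a supporting divisor of the morphism $\varphi_t$ and is therefore nef. In short, the nefness of $N_t$ is extracted from an honest contraction of $Z_\ell$ factoring through $X$, not from positivity properties of the bundles $\cF_{\ell[s]}$ in the tower; if you wish to salvage your inductive scheme, the missing ingredient is precisely this morphism to $X^k$, which settles in one stroke the slope inequality you identify as the delicate point.
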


\begin{proof}
Set $k=l_t$ and consider the contraction $\pi^k:X\to X^k$ associated with the $(n-1)$-dimensional face of $\cNE{X}$ generated by the rays $R_i$, $i\neq k$, whose image $X^k$ has Picard number one. Denote by $p:Z_\ell \to Z_{\ell[r-t]}$ the composition of the projections $p_{\ell[1]}, \dots, p_{\ell[r-t]}$.

By construction, $\varphi_t:=\pi^k\circ f_{\ell[r-t]}\circ p$ contracts every curve whose class is $\beta_{j}$ such that  $l_{j}\neq k$ and every curve whose class is $\beta_{j}$ with $l_j=k$ and $j >t$. Moreover, it contracts every curve whose class is ${\beta'\!\!}_{j}$, for any $j\neq t$, since either these curves are contracted by $p$ if $j>t$, or they are contracted by $f_{\ell[r-t]}\circ p$ if $j<t$. On the other hand, the image of $\beta_{t}$ is $\pi^k(\Gamma_k)$, which is not a point.

We conclude that the kernel of induced linear map ${\varphi_t}_*:N_1(Z_\ell)\to N_1(X^k)$ contains the linear space generated by the classes $\gamma_{i}$, $i\neq t$, which is a hyperplane. This hyperplane meets $\overline{\NE}(Z_\ell)$ along the extremal face defined by $\varphi_t$. Since $N_{t}$ vanishes on this hyperplane and it has positive intersection with $\gamma_{t}$, it follows that it is a supporting divisor of $\varphi_t$ and, in particular, it is nef.
\end{proof}

\begin{corollary}\label{cor:coneBS}
The Mori cone (respectively, the nef cone) of $Z_\ell$ is the simplicial cone generated by the classes $\gamma_{t}$ (resp.  $N_{t}$), $t\leq r$.
\end{corollary}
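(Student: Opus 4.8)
The plan is to deduce the statement from Lemma \ref{lem:O(1)} together with the observation that $\{N_t\}$ and $\{\gamma_t\}$ are dual bases. First I would check that $\{N_1,\dots,N_r\}$ is a basis of $N^1(Z_\ell)$: as recorded just above the statement, the intersection matrix $(N_t\dt\beta_i)$ is lower triangular with ones on the diagonal, hence invertible, and $\{\beta_1,\dots,\beta_r\}$ is a basis of $N_1(Z_\ell)$ by Lemma \ref{lem:baseBS}. It follows that the dual family $\{\gamma_1,\dots,\gamma_r\}$, characterized by $N_t\dt\gamma_i=\delta_i^t$, is a basis of $N_1(Z_\ell)$, and that the simplicial cones $\langle\gamma_1,\dots,\gamma_r\rangle$ and $\langle N_1,\dots,N_r\rangle$ are dual to one another.

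Next I would establish the equality $\cNE{Z_\ell}=\langle\gamma_1,\dots,\gamma_r\rangle$. The inclusion of the simplicial cone into $\cNE{Z_\ell}$ amounts to the effectivity of each generator $\gamma_t$: by its very definition $\gamma_t$ is either $\beta_t$, the class of a fibre of a $\P^1$-bundle projection, or $\beta'_t$, the class of a minimal section over such a fibre (see Remark \ref{rem:sections}); in both cases it is represented by an irreducible curve, hence lies in $\cNE{Z_\ell}$. For the reverse inclusion I would invoke Lemma \ref{lem:O(1)}, which guarantees that every $N_t$ is nef. Given $C\in\cNE{Z_\ell}$, I would write $C=\sum_t a_t\gamma_t$ in the above basis; the duality $N_t\dt\gamma_i=\delta_i^t$ then yields $a_t=N_t\dt C\geq 0$, since $N_t$ is nef and $C$ is a limit of effective $1$-cycles. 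Thus $C$ lies in the cone generated by the $\gamma_t$, and the Mori cone equals $\langle\gamma_1,\dots,\gamma_r\rangle$.

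Finally, since the nef cone is by definition dual to $\cNE{Z_\ell}$, and the two simplicial cones above are mutually dual (being spanned by dual bases), the identification of the Mori cone forces $\Nef(Z_\ell)=\langle N_1,\dots,N_r\rangle$, completing the proof. The argument is essentially formal once Lemma \ref{lem:O(1)} is in hand: the only points to verify are the triangularity of $(N_t\dt\beta_i)$, already noted, and the geometric identification of the $\gamma_t$ as effective classes via Remark \ref{rem:sections}. I therefore do not anticipate any genuine obstacle at this stage—the substantive content, namely that the $N_t$ support the relevant extremal contractions and are thus nef, has been absorbed into the preceding lemma.
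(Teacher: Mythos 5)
Your proposal is correct and follows essentially the same route as the paper: both rest on the duality of the simplicial cones spanned by the $\gamma_t$ and the $N_t$ (via the triangular intersection matrix), the effectivity of the $\gamma_t$, and the nefness of the $N_t$ from Lemma \ref{lem:O(1)}. The paper phrases the conclusion as the sandwich $\cK\subseteq\Nef(Z_\ell)=\big(\cNE{Z_\ell}\big)^\vee\subseteq\cC^\vee=\cK$ rather than your coordinate computation $a_t=N_t\cdot C\geq 0$, but this is only a cosmetic difference.
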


\begin{proof} Let $\cC$ (respectively, $\cK$) be the polyhedral cone generated by the $\gamma_{t}$'s (resp. by the $N_{t}$'s). Note that, by definition of the $\gamma_{t}$'s, these two cones are dual. Moreover, the cone $\cC$ is contained in $\overline{\NE}(Z_\ell)$ and,
by Lemma \ref{lem:O(1)}, $\cK$ is contained in $\Nef(Z_\ell)$. Summing up we have:
$$
\cK\subseteq\Nef(Z_\ell)=\big(\overline{\NE}(Z_\ell)\big)^\vee\subseteq\cC^\vee=\cK.
$$
This concludes the proof.
\end{proof}

\begin{corollary}\label{cor:stein}
Set $J=\{i\,\, |\,\,l_i=l_k\mbox{ for some } k>i\}$; then the Stein factorization of the map $f_\ell:Z_\ell\to X$ is the contraction of $Z_\ell$ associated to the extremal face of $\cNE{Z_\ell}$ generated by $\{\gamma_i\,\,|\,\, i  \in J\}$.
\end{corollary}

\begin{proof}
By Lemma \ref{lem:canonical} and equation (\ref{eq:Ns}) $f_\ell^*(-K_X/2)= \sum_{i \not \in J} N_i$, and this nef divisor vanishes on the face spanned by  $\{\gamma_i\,\,|\,\, i  \in J\}$.
\end{proof}

We will denote by $\overline{Z}_\ell$, $\overline{f}_\ell$, etc., the Bott-Samelson varieties and the corresponding morphisms, associated with the homogeneous model $G/B$, for every list $\ell$. We will consider also the linear isomorphism $$\psi_\ell:N^1({Z}_\ell)\to N^1(\overline{Z}_\ell)$$ defined by  $\psi_\ell({Z}_{\ell(i)})=\overline{Z}_{\ell(i)}$ for all $i$.
By construction, its transposed map $\psi_\ell^t:N_1(\overline{Z}_\ell)\to N_1({Z}_\ell)$, defined by the equality $\psi_\ell^t(C)\cdot D=C\cdot\psi_\ell(D)$ for all $C\in N_1(\overline{Z}_\ell)$, $D\in N^1({Z}_\ell)$, satisfies $\psi_\ell^t(\overline{\beta}_i)=\beta_i$, hence we have
the following

\begin{corollary}\label{cor:psi}
With the same notation as above, for every list $\ell$, the map $\psi_\ell$ sends $\Nef({Z}_\ell)$ to $\Nef(\overline{Z}_\ell)$, $K_{{Z}_\ell}$ to $K_{\overline{Z}_\ell}$, and $f^*_\ell(K_X)$ to $\overline{f}^*_\ell(K_{G/B})$.
\end{corollary}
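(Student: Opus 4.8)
The plan is to reduce all three assertions to the relation $\psi_\ell^t(\overline\beta_i)=\beta_i$ recorded just above the statement, together with the observation that the only intersection numbers involved, namely $N_t\cdot\beta_i$ and $H_i\cdot\beta_j$, are combinatorial quantities determined by the matching pattern of the sequence $\ell$, and hence coincide with their barred counterparts on $\overline Z_\ell$. Concretely, I would first show that $\psi_\ell(N_t)=\overline N_t$ and that $\psi_\ell\big(f^*_\ell(-K_X/2)\big)=\overline f^*_\ell(-K_{G/B}/2)$, and then deduce the statement about $K_{Z_\ell}$ from Lemma~\ref{lem:canonical}.

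For the nef cones, I compute for each $t$ and each $i$:
$$\psi_\ell(N_t)\cdot\overline\beta_i=N_t\cdot\psi_\ell^t(\overline\beta_i)=N_t\cdot\beta_i,$$
which, by the definition of $N_t$ in (\ref{eq:Ns}), equals $1$ when $l_i=l_t$ and $i\le t$, and $0$ otherwise. The same recipe computes $\overline N_t\cdot\overline\beta_i$, so $\psi_\ell(N_t)$ and $\overline N_t$ have identical intersection with every element of the basis $\{\overline\beta_i\}$ of $N_1(\overline Z_\ell)$ (Lemma~\ref{lem:baseBS}); hence $\psi_\ell(N_t)=\overline N_t$. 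By Corollary~\ref{cor:coneBS} the cones $\Nef(Z_\ell)$ and $\Nef(\overline Z_\ell)$ are the simplicial cones generated respectively by the $N_t$ and the $\overline N_t$, so the linear isomorphism $\psi_\ell$ carries one onto the other.

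For the pullback of the canonical class of $X$, recall from Lemma~\ref{lem:canonical} that $f^*_\ell(-K_X/2)=\sum_i H_i$, and that $H_i$ is by definition dual to $\beta_j$, i.e. $H_i\cdot\beta_j=\delta_{ij}$. Thus
$$\psi_\ell\Big(\sum_iH_i\Big)\cdot\overline\beta_j=\sum_iH_i\cdot\psi_\ell^t(\overline\beta_j)=\sum_iH_i\cdot\beta_j=1=\sum_i\overline H_i\cdot\overline\beta_j$$
for every $j$, whence $\psi_\ell\big(\sum_iH_i\big)=\sum_i\overline H_i$, that is $\psi_\ell\big(f^*_\ell(-K_X/2)\big)=\overline f^*_\ell(-K_{G/B}/2)$. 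Finally, Lemma~\ref{lem:canonical} gives $-K_{Z_\ell}=\sum_iZ_{\ell(i)}+f^*_\ell(-K_X/2)$; applying $\psi_\ell$, which sends $Z_{\ell(i)}$ to $\overline Z_{\ell(i)}$ by its very definition and $f^*_\ell(-K_X/2)$ to $\overline f^*_\ell(-K_{G/B}/2)$ by the computation just made, yields $\psi_\ell(-K_{Z_\ell})=\sum_i\overline Z_{\ell(i)}+\overline f^*_\ell(-K_{G/B}/2)=-K_{\overline Z_\ell}$, as desired.

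The proof is thus almost entirely formal once $\psi_\ell^t(\overline\beta_i)=\beta_i$ is in hand: the only point demanding care is the verification that $N_t\cdot\beta_i$ and $H_i\cdot\beta_j$ are genuinely independent of whether we work on $Z_\ell$ or $\overline Z_\ell$. This is precisely the combinatorial nature of these intersection tables---they depend only on the sequence $\ell$ and not on the detailed geometry of $X$ versus $G/B$---and it is this independence, propagated through the dual map $\psi_\ell^t$, that simultaneously identifies the two nef cones and the two (pullback-)canonical classes.
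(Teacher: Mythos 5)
Your proof is correct and takes essentially the same approach as the paper's own (very terse) argument: the nef cones are matched through Corollary~\ref{cor:coneBS} once $\psi_\ell(N_t)=\overline N_t$ is observed, and the canonical classes because the relevant intersection numbers with the basis $\{\beta_i\}$ are combinatorial invariants of the sequence $\ell$, hence equal for $X$ and $G/B$. Your use of Lemma~\ref{lem:canonical} simply makes explicit what the paper compresses into the remark that the classes $K_{Z_\ell}$ and $f^*_\ell(K_X)$ are determined by their intersections with the $\beta_i$ and that these numbers agree on both sides.
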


\begin{proof}
The first part follows from Corollary \ref{cor:coneBS}. The second follows from the fact that the numerical classes of $K_{{Z}_\ell}$ and  $f^*_\ell(K_X)$ are determined by their intersections with the classes $\beta_i$, and these intersection numbers are the same for $X$ and its  homogeneous model.
\end{proof}

\subsection{Cohomology of line bundles on Bott-Samelson varieties}

In this section we will describe some properties of cohomology of line bundles on Bott-Samelson varieties, that will be the key tool to prove the uniqueness of these varieties in Section \ref{sec:unique}. Moreover, we will use them here to prove that the evaluation morphisms of Bott-Samelson varieties corresponding to a reduced expression of the longest element in $W$ are birational.

\begin{remark}\label{rem:descent}
Note that, for any sequence $\ell=(l_1,\dots,l_r)$ and every line bundle $L$ on $X$, setting $s=
L\cdot\Gamma_{l_r}$, the following equalities are obvious:
\begin{equation}\label{eq:proj1}
f_{\ell}^*L=sH_r+p_{\ell[1]}^*f_{\ell[1]}^*L,
\end{equation}
\begin{equation}\label{eq:proj2}
{p_{\ell[1]}}_*f_{\ell}^*L={p_{\ell[1]}}_*sH_r \otimes f_{\ell[1]}^*L,\end{equation}
\end{remark}

\begin{lemma}\label{lem:j2} Let $Z_\ell$ be the Bott-Samelson variety of $X$ defined by the sequence
$\ell=(l_1,\dots,l_r)$, and let $L$ be a line bundle on $X$ of degree $s$ with respect to $\Gamma_{l_r}$. Then:

\begin{enumerate}
\item If $s\geq 0$
then $H^i(Z_\ell, f^*_\ell L)=H^i(Z_{\ell[1]},{p_{\ell[1]}}_*f^*_\ell L)$ for every $i$ and the rank $s+1$ bundle ${p_{\ell[1]}}_*f^*_\ell L$ has a filtration on $Z_{\ell[1]}$ with factors
$$f_{\ell[1]}^*\big(L+ tK_{l_r}\big),\,\,\,t=0,\dots,s.$$
\item If $s=-1$
then $H^i(Z_{\ell},f^*_{\ell} L)=0$ for every $i$;
\item If $s\leq -2$
then $H^i(Z_\ell,f^*_{\ell}L)=H^{i-1}(Z_{\ell[1]},{p_{\ell[1]}}_*f^*_{\ell}(L +(s+1)K_{{l_r}}))$  for every $i$ and the rank $-(s+1)$ bundle ${p_{\ell[1]}}_*f^*_{\ell}(L +(s+1)K_{l_r})$ has a filtration on $Z_{\ell[1]}$ with factors
$$f_{\ell[1]}^*\big(L+tK_{l_r}\big ),\,\,\,t=s+1,\dots,-1.$$
\end{enumerate}
\end{lemma}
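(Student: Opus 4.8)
The plan is to exploit that the morphism $p:=p_{\ell[1]}:Z_\ell\to Z_{\ell[1]}$ is a $\P^1$-bundle and to reduce the whole statement to the Leray spectral sequence for $p$ together with the elementary relative cohomology of $\cO(s)$ on a $\P^1$-bundle. Concretely, the recursive construction and the description of the $H_i$ as tautological bundles realize $Z_\ell=\P(\cF_{\ell[0]})$ with $H_r=\cO_{\P(\cF_{\ell[0]})}(1)$ and $p_*H_r=\cF_{\ell[0]}$, where $\cF_{\ell[0]}$ is the extension (\ref{eq:F}). The starting point is equation (\ref{eq:proj1}), which reads $f_\ell^*L\cong\cO_{\P(\cF_{\ell[0]})}(s)\otimes p^*f_{\ell[1]}^*L$. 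I would also record the key identity that the relative canonical divisor of $p$ equals $f_\ell^*K_{l_r}$: indeed, by construction the square defining $Z_\ell$ is Cartesian, so $p$ is the base change of the smooth fibration $\pi_{l_r}:X\to X_{l_r}$ along $g_{\ell[1]}$, and the relative dualizing sheaf of a smooth morphism is stable under base change (alternatively, one computes directly $f_\ell^*K_{l_r}=-2H_r+p^*f_{\ell[1]}^*K_{l_r}$ from (\ref{eq:proj1}) and $\det\cF_{\ell[0]}=f_{\ell[1]}^*K_{l_r}$, which is the relative canonical of $\P(\cF_{\ell[0]})$).

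For part (1), with $s\ge 0$, the fibrewise degree of $f_\ell^*L$ is $s\ge -1$, hence $R^1p_*f_\ell^*L=0$; the Leray spectral sequence degenerates and gives $H^i(Z_\ell,f_\ell^*L)=H^i(Z_{\ell[1]},p_*f_\ell^*L)$. By the projection formula $p_*f_\ell^*L=S^s\cF_{\ell[0]}\otimes f_{\ell[1]}^*L$, and the extension (\ref{eq:F}) induces on $S^s\cF_{\ell[0]}$ the symmetric-power filtration whose graded pieces are $f_{\ell[1]}^*(tK_{l_r})$ for $t=0,\dots,s$; tensoring by $f_{\ell[1]}^*L$ produces the asserted factors $f_{\ell[1]}^*(L+tK_{l_r})$. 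Part (2), with $s=-1$, is simply the $l=-1$ bullet of Lemma \ref{lem:leray} applied to $p$: both $p_*$ and $R^1p_*$ of $\cO(-1)$ vanish, so $f_\ell^*L$ has no cohomology.

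Part (3) is the delicate case, and I would handle it by reducing it to part (1). For $s\le -2$ I apply Lemma \ref{lem:leray} directly to the $\P^1$-fibration $p$, using $K_p=f_\ell^*K_{l_r}$ and the fibrewise degree $s$; since $\sgn(s+1)=-1$ this yields $H^i(Z_\ell,f_\ell^*L)\cong H^{i-1}(Z_\ell,f_\ell^*(L+(s+1)K_{l_r}))$. The divisor $L+(s+1)K_{l_r}$ has fibrewise degree $s+(s+1)(K_{l_r}\cdot\Gamma_{l_r})=-s-2\ge 0$, because $K_{l_r}\cdot\Gamma_{l_r}=-2$, so part (1) now applies to it and rewrites the right-hand side as $H^{i-1}(Z_{\ell[1]},p_*f_\ell^*(L+(s+1)K_{l_r}))$, with filtration factors $f_{\ell[1]}^*(L+(s+1)K_{l_r}+t'K_{l_r})$, $t'=0,\dots,-s-2$. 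Reindexing by $t=s+1+t'$ gives exactly the factors $f_{\ell[1]}^*(L+tK_{l_r})$, $t=s+1,\dots,-1$, a bundle of rank $-s-1=-(s+1)$, as claimed.

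The part demanding the most care is the bookkeeping in (3): checking that the relative canonical of $p_{\ell[1]}$ is genuinely $f_\ell^*K_{l_r}$ (so that Lemma \ref{lem:leray} produces precisely the twist $(s+1)K_{l_r}$ rather than an unwanted correction coming from the relative canonical of the bundle), and confirming that after the degree computation $-s-2\ge 0$ the reindexing matches the stated range of factors. Beyond this, no new ingredient is needed: everything rests on relative duality for $\P^1$-bundles, already packaged in Lemmas \ref{lem:leray} and \ref{lem:dual}, and on the symmetric-power filtration attached to the extension (\ref{eq:F}).
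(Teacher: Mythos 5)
Your proof is correct and follows essentially the same route as the paper's: part (1) via vanishing of $R^1{p_{\ell[1]}}_*$ and the symmetric-power filtration of the extension (\ref{eq:F}) (the paper cites Hartshorne, Ex.~II.5.16(c) for the same filtration of ${p_{\ell[1]}}_*sH_r$), part (2) directly from Lemma \ref{lem:leray}, and part (3) by combining Lemma \ref{lem:leray} applied to $p_{\ell[1]}$ with case (1). Your careful verification that the relative canonical of $p_{\ell[1]}$ equals $f_\ell^*K_{l_r}$ is exactly the bookkeeping the paper leaves implicit, and your degree computation and reindexing in (3) match the stated conclusion.
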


\begin{proof}

The first part of (1) follows from the fact that the hypothesis $s\geq 0$ implies that $R^i{p_{\ell[1]}}_*f^*_\ell L=0$ for all $i> 0$. For the second part, note that the statement is clear for $s=0$. In the case $s\geq 1$ we consider first the exact sequence on $Z_{\ell[1]}$:
$$
0\lra  f^*_{\ell[1]}K_{l_r} \longrightarrow {p_{\ell[1]}}_*H_r \longrightarrow \cO_{Z_{\ell[1]}}\lra  0.
$$
By \cite[Ex.II.5.16.(c)]{Ha}, there exists a filtration of the rank $(s+1)$ vector bundle ${p_{\ell[1]}}_*sH_r$  whose factors are multiples of $f^*_{\ell[1]}K_{l_r}$. Twisting this filtration with $f_{\ell[1]}^*L$ we obtain, by Remark \ref{rem:descent}, a filtration of ${p_{\ell[1]}}_*f^*_\ell L$, whose factors are as stated.

Statement (2) is a direct consequence of Lemma \ref{lem:leray}, while (3)
follows from the same Lemma and case (1).
\end{proof}

For nef line bundles on $X$ we may state the following vanishing theorem, consequence of the Kodaira-Kawamata-Viehweg Theorem:

\begin{lemma}\label{lem:j1}
With the same notation as above, let $D$ be a nef divisor on $Z_\ell$. Then $H^i(Z_\ell,f_\ell^*(K_{X}/2)+D)=0$ for $i >0$. In particular $H^i(Z_\ell, L)=0$ for every nef line bundle $L$ on $Z_\ell$ and every  $i > 0$.
\end{lemma}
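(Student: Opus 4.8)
The plan is to prove the vanishing $H^i(Z_\ell, f_\ell^*(K_X/2)+D)=0$ for $i>0$ via the Kawamata--Viehweg vanishing theorem, and then deduce the second statement about nef line bundles $L$ as a corollary. The key point is to produce a \emph{big and nef} $\Q$-divisor whose associated $\Q$-canonical bundle is precisely (or is dominated by) the divisor against which we want cohomology to vanish. First I would recall from Lemma \ref{lem:canonical} the adjunction formula $-K_{Z_\ell}=\sum_{i=1}^r Z_{\ell(i)}+f_\ell^*(-K_X/2)$, so that $K_{Z_\ell}=-\sum_{i=1}^r Z_{\ell(i)}+f_\ell^*(K_X/2)$. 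The natural strategy is to write
\[
f_\ell^*(K_X/2)+D=K_{Z_\ell}+\Big(\sum_{i=1}^r Z_{\ell(i)}+D\Big),
\]
and to apply Kawamata--Viehweg to the divisor $\sum_{i=1}^r Z_{\ell(i)}+D$ after checking it is big and nef, with the boundary $\sum Z_{\ell(i)}$ being simple normal crossing by Lemma \ref{lem:baseBS}.

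The main technical step is establishing that $\sum_{i=1}^r Z_{\ell(i)}+D$ is big and nef. Nefness of $D$ is an assumption, so the issue concentrates on the divisor $\sum_{i=1}^r Z_{\ell(i)}$. Here I would exploit the description of the nef cone from Corollary \ref{cor:coneBS}: the nef cone of $Z_\ell$ is the simplicial cone generated by the $N_t$, and the Mori cone is generated by the $\gamma_t$. The cleanest route is to express $\sum_i Z_{\ell(i)}$ in terms of the nef generators $N_t$, or directly to verify nefness by intersecting with the generators $\gamma_t$ of the Mori cone; since $f_\ell^*(-K_X/2)=\sum_{i\notin J}N_i$ is already nef (Corollary \ref{cor:stein}) and $-K_{Z_\ell}=\sum_i Z_{\ell(i)}+f_\ell^*(-K_X/2)$, it suffices to understand the positivity of $-K_{Z_\ell}$ itself against each $\gamma_t$. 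In fact the Bott--Samelson varieties are toric-like in their curve/divisor combinatorics, and one expects $-K_{Z_\ell}$ to be nef, which would give bigness once combined with the interior-point class $f_\ell^*(-K_X/2)$ being big and nef (or more carefully, one shows the sum lies in the interior of the effective cone).

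The hard part will be the \emph{bigness}: nefness alone is not enough for Kawamata--Viehweg, and $\sum Z_{\ell(i)}+D$ could a priori lie on the boundary of the big cone when $D=0$, so I would need to ensure strict positivity in enough directions. The remedy I anticipate is to show that $\sum_{i=1}^r Z_{\ell(i)}$ already has positive top self-intersection, i.e. is big, by using that its class pairs positively with every $\gamma_t$ (making it nef) together with the fact that $Z_\ell$ is rational and the divisor meets the interior of the nef cone. Concretely, writing everything in the dual bases $\{H_i\}$ and $\{\beta_i\}$ from Lemma \ref{lem:baseBS} and using the filtrations of Lemma \ref{lem:j2}, one can compute the required intersection numbers explicitly and confirm that the class lands in the interior of $\Nef(Z_\ell)$, hence is big and nef. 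Once this is in place, Kawamata--Viehweg gives $H^i(Z_\ell, K_{Z_\ell}+\sum Z_{\ell(i)}+D)=0$ for $i>0$, which is exactly $H^i(Z_\ell, f_\ell^*(K_X/2)+D)=0$.

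Finally, for the \emph{in particular} clause, given any nef line bundle $L$ on $Z_\ell$ I would write $L=f_\ell^*(K_X/2)+(L-f_\ell^*(K_X/2))$ and show that $D:=L-f_\ell^*(K_X/2)$ is nef, so that the first part applies. Since $f_\ell^*(-K_X/2)=\sum_{i\notin J}N_i$ is nef and, by Lemma \ref{lem:canonical}, $f_\ell^*(-K_X/2)=\sum_i H_i$, the difference $L+\sum_i H_i$ is a sum of nef classes and hence nef; thus $D=L-f_\ell^*(K_X/2)=L+\sum_i H_i$ is nef, and the vanishing $H^i(Z_\ell,L)=0$ for $i>0$ follows immediately from the first statement.
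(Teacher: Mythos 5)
Your decomposition $f_\ell^*(K_X/2)+D=K_{Z_\ell}+\bigl(\sum_{i=1}^r Z_{\ell(i)}+D\bigr)$ is exactly the one the paper uses, but your plan hinges on the claim that $\sum_{i=1}^r Z_{\ell(i)}+D$ is nef, and this is false in general. Concretely, let $\cD$ have a double edge, say of type ${\rm B}_2$ with $K_2\cdot\Gamma_1=2$, and take $\ell=(1,2)$. Then $Z_{\ell[1]}=\Gamma_1\cong\P^1$ and $Z_\ell=\P(\cF_\ell)$ with $\cF_\ell$ an extension of $\cO$ by $f_{\ell[1]}^*K_2=\cO_{\P^1}(2)$; the extension splits, so $Z_\ell\cong\F_2$, with $Z_{\ell(1)}=f$ a fiber and $Z_{\ell(2)}=e$ the negative section (its normal bundle is $\cHom(f^*K_2,\cO)=\cO(-2)$). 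Then $(Z_{\ell(1)}+Z_{\ell(2)})\cdot e=(f+e)\cdot e=-1<0$, so $\sum Z_{\ell(i)}$ is not nef, and plain Kawamata--Viehweg cannot be applied to your decomposition (with $D=0$, for instance). Note also that your proposed reduction to the nefness of $-K_{Z_\ell}$ does not work: in this example $-K_{Z_\ell}=2e+4f$ \emph{is} nef, but subtracting the nef divisor $f_\ell^*(-K_X/2)=e+3f$ destroys nefness, so positivity of $-K_{Z_\ell}$ against the $\gamma_t$ says nothing about $\sum Z_{\ell(i)}$. The lemma is needed precisely in types ${\rm B}_n$, ${\rm C}_n$ (Lemma \ref{lem:uniqueBC} uses Corollary \ref{cor:KKV2}), so one cannot restrict to the simply-laced cases where your computation might go through.

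The paper's actual proof keeps your decomposition but replaces ``nef and big'' by the log version of the vanishing theorem: following \cite[Lemma~6.1]{LT} there exist rationals $0<\varepsilon_j\le 1$ such that $\sum_j\varepsilon_j Z_{\ell(j)}$ is \emph{ample}; then $M:=D+\sum_j\varepsilon_j Z_{\ell(j)}$ is ample and
\begin{equation*}
f_\ell^*(K_X/2)+D\equiv K_{Z_\ell}+M+\sum_j(1-\varepsilon_j)Z_{\ell(j)},
\end{equation*}
where the boundary has coefficients in $[0,1)$ and simple normal crossing support (Lemma \ref{lem:baseBS}), so \cite[Theorem~2.64]{KM} applies. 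This fractional redistribution of the coefficients on the $Z_{\ell(j)}$ is the missing idea: it is consistent with your observation that $\sum\varepsilon_j Z_{\ell(j)}$ ample makes $\sum Z_{\ell(j)}$ big (ample plus effective), but it sidesteps the nefness failure entirely. Your ``in particular'' step is essentially fine, though for a wrong stated reason: the individual $H_i$ are not nef (e.g., for $\ell=(1,1)$ one gets $Z_\ell\cong\P^1\times\P^1$ with $H_2=f_2-f_1$), so $L+\sum_i H_i$ is not visibly a sum of nef classes; the correct argument is that the total sum $\sum_i H_i=f_\ell^*(-K_X/2)=\sum_{i\notin J}N_i$ is nef by Lemma \ref{lem:O(1)}, whence $D:=L+f_\ell^*(-K_X/2)$ is nef and the first part applies.
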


\begin{proof}
Following  \cite[Lemma~6.1]{LT}, one can show that there exist rational numbers $0<\varepsilon_j \le 1$ such that the $\Q$-divisor $\sum_{j=1}^{r}  \varepsilon_j {Z}_{\ell(j)}$ is ample on ${Z}_\ell$,
hence the $\Q$-divisor $M=D + \sum_{j=1}^{r}  \varepsilon_j {Z}_{\ell(j)}$ is ample, too. Set $a_j:=1 -\varepsilon_j$. Then, by Lemma \ref{lem:canonical}  we may write $f_\ell^*(K_{X}/2)+D\equiv K_{Z_\ell}+ M+\sum a_j {Z}_{\ell(j)} $ and the vanishing follows from \cite[Theorem~2.64]{KM}.
\end{proof}

Later on we would use the following stronger vanishing result, which in the rational homogeneous case is a special version of Kumar's vanishing \cite[Theorem~8.1.8]{Ku}:

\begin{lemma}\label{lem:j3}
With the same notation as above, being $\ell=(l_1,\dots,l_r)$, then $H^i(Z_\ell, L-Z_{\ell(r)})=0$ for every nef line bundle $L$ on $Z_\ell$ and every  $i > 0$.
\end{lemma}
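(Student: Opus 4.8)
The plan is to run an induction on the length $r$ of the sequence $\ell=(l_1,\dots,l_r)$, resolving the cohomology along the last $\P^1$-bundle $p_{\ell[1]}\colon Z_\ell\to Z_{\ell[1]}$. Write $\Sigma:=Z_{\ell(r)}$ for the divisor in question; by construction $\Sigma$ is the image of the section $\sigma_{\ell[1]}$ of $p_{\ell[1]}$, and as an abstract Bott--Samelson variety it is $Z_{\ell[1]}$ (indeed $\ell(r)=\ell[1]$). First I would set $s:=L\cdot\beta_r\ge 0$ (nonnegative since $L$ is nef) and observe, using $H_r\cdot\beta_r=1$ together with the description of $\Sigma$ as the section attached to the quotient $\cO_{Z_{\ell[1]}}$ in (\ref{eq:F}) (whose self-intersection $\sigma_{\ell[1]}^*\Sigma=f_{\ell[1]}^*(-K_{l_r})$ is the one recorded in the proof of Lemma \ref{lem:canonical}), that $\Sigma=H_r-p_{\ell[1]}^{*}f_{\ell[1]}^{*}K_{l_r}$, whence
\[
L-Z_{\ell(r)}=(s-1)H_r+p_{\ell[1]}^{*}\big(M+f_{\ell[1]}^{*}K_{l_r}\big),\qquad M:=\sigma_{\ell[1]}^{*}L .
\]
When $s=0$ the class $L-Z_{\ell(r)}$ has degree $-1$ on every fibre of $p_{\ell[1]}$, so $Rp_{\ell[1]*}(L-Z_{\ell(r)})=0$ and all cohomology vanishes; this disposes of the base of the fibrewise analysis.

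The main reduction is via the structure sequence of $\Sigma$,
\[
0\lra \cO_{Z_\ell}(L-Z_{\ell(r)})\lra \cO_{Z_\ell}(L)\lra \cO_\Sigma(L|_\Sigma)\lra 0 .
\]
Since $L$ is nef, Lemma \ref{lem:j1} gives $H^i(Z_\ell,L)=0$ for $i>0$; and since $L|_\Sigma$ is again nef (a restriction of a nef class) and $\Sigma\cong Z_{\ell[1]}$, the same lemma applied on $Z_{\ell[1]}$ gives $H^i(\Sigma,L|_\Sigma)=0$ for $i>0$. The long exact sequence then immediately yields $H^i(Z_\ell,L-Z_{\ell(r)})=0$ for $i\ge 2$ and identifies $H^1(Z_\ell,L-Z_{\ell(r)})$ with the cokernel of the restriction map $H^0(Z_\ell,L)\to H^0(\Sigma,L|_\Sigma)$. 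Thus everything reduces to the surjectivity of this restriction, equivalently (pushing forward along $p_{\ell[1]}$ in the case $s\ge 1$, where $R^1p_{\ell[1]*}=0$ and $p_{\ell[1]*}(L-Z_{\ell(r)})=S^{s-1}\cF_{\ell}\otimes(M+f_{\ell[1]}^{*}K_{l_r})$ by the projection formula) to the vanishing
\[
H^{i}\big(Z_{\ell[1]},\,S^{s-1}\cF_{\ell}\otimes(M+f_{\ell[1]}^{*}K_{l_r})\big)=0,\qquad i>0 .
\]

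The heart of the matter, and the step I expect to be the main obstacle, is this last vanishing. The naive attempt --- to filter $S^{s-1}\cF_\ell$ as in Lemma \ref{lem:j2} and handle the line-bundle factors $M+f_{\ell[1]}^{*}(uK_{l_r})$, $u=1,\dots,s$, one at a time --- fails: precisely when the index $l_r$ is repeated among $l_1,\dots,l_{r-1}$, Corollary \ref{cor:nonsplit} shows $h^{1}(Z_{\ell[1]},f_{\ell[1]}^{*}K_{l_r})\neq 0$, so individual graded pieces may carry higher cohomology. The point I would exploit is that these pieces are not independent, and that one must feed in the full strength of the hypothesis that $L$ is nef on the \emph{total space} $Z_\ell$ rather than merely on the section. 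By Corollary \ref{cor:coneBS} nef-ness means $L=\sum_t c_tN_t$ with all $c_t\ge 0$ and $s=c_r$; expanding $M+f_{\ell[1]}^{*}K_{l_r}$ in the basis $\{H_i\}$ of $N^1(Z_{\ell[1]})$ shows that its intersection numbers with the fibre classes of the tower $Z_\ell\to Z_{\ell[1]}\to\cdots$ are governed by the nonnegative $c_t$ shifted by the Cartan integers $-K_{l_r}\cdot\Gamma_{l_j}$.

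I would then descend this tower one $\P^1$-bundle at a time: whenever the running class meets a fibre in degree $-1$ the cohomology dies outright, and otherwise one takes the (now $R^1$-free) pushforward and recurses, the nonnegativity of the $c_t$ guaranteeing that the successive fibre-degrees never drop below $-1$, so that no surviving $R^1$ ever obstructs the descent. The delicate bookkeeping is to formulate an inductive statement flexible enough to be stable under this descent --- broader than the bare form ``nef class minus $Z_{\ell(r)}$'' --- so that the classes produced at each stage still fall under the hypothesis. This is exactly the non-homogeneous analogue of the proof of Kumar's vanishing \cite[Theorem~8.1.8]{Ku} for the homogeneous Bott--Samelson varieties, where the same phenomenon is controlled representation-theoretically.
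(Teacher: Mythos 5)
Your preparatory reductions are all correct: the identity $Z_{\ell(r)}=H_r-p_{\ell[1]}^*f_{\ell[1]}^*K_{l_r}$, the disposal of the case $s=0$, and the use of the structure sequence of $\Sigma\cong Z_{\ell[1]}$ together with Lemma \ref{lem:j1} on both $Z_\ell$ and $Z_{\ell[1]}$, which kills $H^i$ for $i\geq 2$ and reduces $i=1$ to the vanishing of $H^1\big(Z_{\ell[1]},S^{s-1}\cF_{\ell}\otimes(M+f_{\ell[1]}^*K_{l_r})\big)$. But at precisely this point the argument stops being a proof. The descent you propose is never formulated, and its central claim --- that nonnegativity of the coefficients in $L=\sum_t c_tN_t$ keeps all successive fibre degrees $\geq -1$ --- is not true in the naive sense: the graded pieces $M+uf_{\ell[1]}^*K_{l_r}$, $u=1,\dots,s$, of your pushforward have degree $L\cdot\beta_j-2u$ on a fibre class $\beta_j$ with $l_j=l_r$, which drops to $-2$ or below as soon as $u\geq 1+\frac{1}{2}L\cdot\beta_j$ (e.g.\ already for $s=2$, $c_j=0$). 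This is exactly the phenomenon you yourself flagged via Corollary \ref{cor:nonsplit}; handling it forces the jump (DR3) from $H^1$ to an $H^0$ of a twisted class, whose vanishing must then be verified --- sequence-dependent bookkeeping of exactly the kind the paper carries out in Lemma \ref{lem:uniqueBC}, where it succeeds only for carefully chosen sequences (and the authors report that such cohomological descent fails for \emph{every} reduced word in the ${\rm F}_4$ case). So the ``delicate bookkeeping'' you defer, and the ``inductive statement flexible enough to be stable under descent'' you do not exhibit, constitute the entire content of the lemma; there is no reason to expect your scheme to close up for arbitrary $\ell$ and arbitrary nef $L$.

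The paper's proof (taken from Kumar's notes \cite{Ku2}) bypasses the fibration analysis entirely and is a one-step Kawamata--Viehweg argument. Using the adjunction formula of Lemma \ref{lem:canonical} one has $L-Z_{\ell(r)}=K_{Z_\ell}+\sum_{j=1}^{r-1}Z_{\ell(j)}-f_\ell^*(K_X/2)+L$; choosing positive rationals $\varepsilon_j$ with $\sum_{j<r}\varepsilon_jZ_{\ell(j)}$ the pullback of an ample $\Q$-divisor from $Z_{\ell[1]}$, and using that $-f_\ell^*(K_X/2)$ is nef of degree one on the fibre class $\gamma_r$, the class $A:=\sum_{j<r}\varepsilon_jZ_{\ell(j)}-f_\ell^*(K_X/2)$ is ample, and with $E=1+\sum_{j<r}\varepsilon_j$ one rewrites
$$
L-Z_{\ell(r)}=K_{Z_\ell}+\Delta+\Big(L+\sum_{j<r}\tfrac{\varepsilon_j}{E}Z_{\ell(j)}-f_\ell^*(K_X/2)\Big),
$$
where the bracketed class equals $L+\frac{1}{E}\big(A-(E-1)f_\ell^*(K_X/2)\big)$ and is ample, and $\Delta=\sum_{j<r}(1-\varepsilon_j/E)Z_{\ell(j)}$ is supported on the simple normal crossing divisor of Lemma \ref{lem:baseBS} with coefficients in $(0,1]$; then \cite[Theorem~2.64]{KM} gives the vanishing uniformly in $\ell$, $L$ and $i$. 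The moral, if you want to repair your approach, is that the positivity making the statement true is carried by the boundary divisors $Z_{\ell(j)}$ entering through adjunction, not by the fibrewise degrees of $L$, and no Demazure-type descent is needed.
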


\begin{proof}
This proof has been taken from \cite[Theorem~16]{Ku2}.
Note first that there exist positive rational numbers $\varepsilon_1,\dots, \varepsilon_{r-1}$ such that $\sum_{j=1}^{r-1}\varepsilon_j Z_{\ell(j)}$ is the pull-back of an ample $\Q$-divisor in $Z_{\ell[1]}$. Hence, since $f_\ell^*(K_{X}/2)$ is nef and has degree one on $\gamma_r$, it follows that $A:=\sum_{j=1}^{r-1}\varepsilon_j Z_{\ell(j)}-f_\ell^*(K_{X}/2)$ is an ample divisor in $Z_\ell$.

Then, by Lemma \ref{lem:canonical}, we may write:
$$
L-Z_{\ell(r)}=K_{Z_{\ell}}+\sum_{j=1}^{r-1}Z_{\ell(j)}-f_\ell^*(K_{X}/2)+L,
$$
and, setting $E:=1+\sum_{j=1}^{r-1}\varepsilon_j\in\Q$, and $\Delta:=\sum_{j=1}^{r-1}\left(1-\frac{\varepsilon_j}{E}\right)Z_{\ell(j)}$, we have:
$$
L-Z_{\ell(r)}=K_{Z_{\ell}}+\Delta+L+\sum_{j=1}^{r-1}\left(\frac{\varepsilon_j}{E}\right)Z_{\ell(j)}-f_\ell^*(K_{X}/2).
$$
Since $L+\sum_{j=1}^{r-1}\left(\frac{\varepsilon_j}{E}\right)Z_{\ell(j)}-f_\ell^*(K_{X}/2)= L+\frac{1}{E}\big(A-
(E-1)
f_\ell^*(K_{X}/2)\big)$ is ample and $0<1-\frac{\varepsilon_j}{E}\leq 1$ for every $j$, the proof is concluded by \cite[Theorem~2.64]{KM}.
\end{proof}

Let $\Z[\Pic(X)]$ be the group algebra of $\Pic(X)$, in which, following \cite{De} we will denote by $e^L$ the element of $\Z[\Pic(X)]$ corresponding to $L$ under the canonical inclusion. For any reflection $r_i \in W$ let us define the  {\it Demazure operator}
$$D_{i}: \Z[\Pic(X)] \to \Z[\Pic(X)] $$ by setting, for any $L$ in $\Pic(X)$
$$D_{i}(e^L) = \begin{cases} e^L + e^{L+K_i} + \dots + e^{r_i(L)}
& \text{if~} L \cdot \Gamma_i \ge 0\\
0 & \text{if~}L \cdot \Gamma_i = -1\\
-e^{L-K_i} - e^{L- 2K_i} - \dots - e^{r_i(L)+K_i}
& \text{if~}L \cdot \Gamma_i \le -2
\end{cases}
$$
and extending it linearly. Note that, by definition, $D_i(e^L)=-D_i(e^{r_i(L)+K_i})$ for all $L\in\Pic(X)$.
Given a list $\ell=(l_1,\dots,l_r)$, $l_s\in I$ we define $D_\ell(e^L)=D_{{l_1}}(  \dots (D_{{l_r}}(e^L)))$.
\begin{notation}\label{notn:exp}
Let us  set, for every element $\Sigma_{j} n_je^{L_j}\in\Z[\Pic(X)]$:
\begin{eqnarray*}
H^i\Big(Z_\ell, \sum_{j} n_je^{L_j}\Big) &:= & \bigoplus_{j}n_jH^i(Z_\ell,  f^*_\ell L_j),\,\,\,i\in\Z,\\
h^i\Big(Z_\ell, \sum_{j} n_je^{L_j}\Big) &:= & \dim H^i\Big(Z_\ell, \sum_{j} n_je^{L_j}\Big),\,\,\,i\in\Z,\\
\Chi\Big(Z_\ell, \sum_{j} n_je^{L_j}\Big) &:= & {\sum}_{j}n_j\Chi(Z_\ell,  f^*_\ell L_j),\\
\deg \Big(\sum_{j} n_j e^{L_j} \Big)&:=& \sum n_j.
\end{eqnarray*}
\end{notation}
We then have that Lemma \ref{lem:j2} yields

\begin{proposition}\label{prop:Chi}
Let $L$ be a line bundle on $X$. Then
$$\Chi(Z_\ell, e^L) = \deg(D_{\ell}(e^{ L}))=\Chi(\overline Z_\ell, \overline e^{\psi (L)}).$$
If moreover $L$ is nef, then
$$h^0(Z_\ell, e^L) =h^0(\overline Z_\ell, \overline e^{\psi (L)});$$
in particular $f^*_{\ell} L$ is nef and big if and only if $\overline{f}^*_{\ell} \psi (L)$ is nef and big.
\end{proposition}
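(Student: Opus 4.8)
The plan is to prove the first equality $\Chi(Z_\ell,e^L)=\deg D_\ell(e^L)$ by establishing the single recursive identity
$$
\Chi(Z_\ell, e^L)=\Chi\bigl(Z_{\ell[1]}, D_{l_r}(e^L)\bigr)
$$
for every $L\in\Pic(X)$, and then iterating it. To prove the identity I would set $s:=L\cdot\Gamma_{l_r}$ and invoke Lemma \ref{lem:j2}, whose three cases are tailored to the three branches of $D_{l_r}$. If $s\ge 0$ one has $H^i(Z_\ell,f^*_\ell L)=H^i(Z_{\ell[1]},{p_{\ell[1]}}_*f^*_\ell L)$, and the push-forward carries a filtration with factors $f^*_{\ell[1]}(L+tK_{l_r})$, $t=0,\dots,s$; by additivity of $\Chi$ over filtrations the right-hand side equals $\sum_{t=0}^{s}\Chi(Z_{\ell[1]},e^{L+tK_{l_r}})=\Chi(Z_{\ell[1]},D_{l_r}(e^L))$. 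The case $s=-1$ is immediate, both sides being zero. If $s\le -2$ the cohomology is shifted, $H^i(Z_\ell,f^*_\ell L)=H^{i-1}(Z_{\ell[1]},{p_{\ell[1]}}_*f^*_\ell(L+(s+1)K_{l_r}))$, so the degree shift $i\mapsto i-1$ contributes a global sign $-1$ to $\Chi$; this sign matches exactly the minus signs in the third branch of $D_{l_r}$, and the filtration factors $f^*_{\ell[1]}(L+tK_{l_r})$, $t=s+1,\dots,-1$, match the corresponding exponents. By linearity the identity holds for arbitrary elements of $\Z[\Pic(X)]$, so induction on $r$ gives $\Chi(Z_\ell,e^L)=\Chi(\{x\},D_\ell(e^L))$; since $Z_{\ell[r]}=\{x\}$ and $\Chi(\{x\},e^M)=1$ for every $M$, the right-hand side is $\deg D_\ell(e^L)$.

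For the second equality I would use that $D_\ell$ depends only on root-system data: the intersection numbers $L\cdot\Gamma_i$, the classes $K_i$, and the reflections $r_i$. All of these are intertwined by $\psi$ with their analogues on $G/B$: by construction $\psi(K_i)=\overline{K_i}$, the isomorphism $\psi$ conjugates $r_i$ to $\overline{r_i}$, and because the Cartan matrices of $X$ and $G/B$ coincide one gets $\psi(L)\cdot\overline{\Gamma_i}=L\cdot\Gamma_i$. Consequently the degree-preserving map $\Z[\Pic(X)]\to\Z[\Pic(G/B)]$, $e^L\mapsto\overline e^{\psi(L)}$, sends $D_i(e^L)$ to $\overline{D}_i(\overline e^{\psi(L)})$, whence $\deg D_\ell(e^L)=\deg\overline{D}_\ell(\overline e^{\psi(L)})$. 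As $G/B$ itself satisfies Notation \ref{not:ftmanifold}, the first equality applied to $G/B$ turns the latter into $\Chi(\overline Z_\ell,\overline e^{\psi(L)})$, completing the chain of equalities.

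For the statement on $h^0$, suppose $L$ is nef. Then $f^*_\ell L$ is nef on $Z_\ell$, so Lemma \ref{lem:j1} gives $H^i(Z_\ell,f^*_\ell L)=0$ for $i>0$ and thus $h^0(Z_\ell,e^L)=\Chi(Z_\ell,e^L)$. By Corollary \ref{cor:chambers} the map $\psi$ takes $\Nef(X)$ to $\Nef(G/B)$, so $\psi(L)$ is nef and the same vanishing on $\overline Z_\ell$ yields $h^0(\overline Z_\ell,\overline e^{\psi(L)})=\Chi(\overline Z_\ell,\overline e^{\psi(L)})$; the already-proven equality of Euler characteristics then gives the equality of the two $h^0$'s. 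Applying this to $mL$ for every integer $m\ge 0$ (again nef, with $\psi(mL)=m\psi(L)$) shows that the Hilbert functions of the nef divisors $f^*_\ell L$ and $\overline{f}^*_\ell\psi(L)$ coincide; since $\dim Z_\ell=\dim\overline Z_\ell=r$, bigness of a nef divisor is read off from the degree of its Hilbert polynomial, so $f^*_\ell L$ is nef and big if and only if $\overline{f}^*_\ell\psi(L)$ is.

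The main obstacle is the bookkeeping in the first paragraph: one must check that the ranges of $t$ and, above all, the signs produced by Lemma \ref{lem:j2} reproduce \emph{exactly} the three cases of the Demazure operator, the subtle point being the sign induced by the cohomological degree shift when $s\le -2$. Once this matching is verified the remaining arguments are formal, resting only on the compatibility of $\psi$ with the root-system structure and on the vanishing theorem of Lemma \ref{lem:j1}.
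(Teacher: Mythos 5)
Your proof is correct and follows essentially the same route as the paper: the recursive identity $\Chi(Z_\ell,e^L)=\Chi(Z_{\ell[1]},D_{l_r}(e^L))$ obtained from Lemma \ref{lem:j2} (whose three cases you rightly match, sign included, against the three branches of the Demazure operator), the observation that $\deg D_\ell(e^L)$ depends only on intersection numbers with the $\Gamma_i$, which $\psi$ preserves, and the vanishing of higher cohomology via Lemma \ref{lem:j1} in the nef case. For the final bigness claim the paper simply cites \cite[Lemma~2.2.3]{Laz}, whereas you read bigness off the growth of the (equal) Hilbert functions $h^0(m\,\cdot)$ on the equidimensional varieties $Z_\ell$ and $\overline{Z}_\ell$ --- the same argument in expanded form.
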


\begin{proof} By Lemma \ref{lem:j2}, for any line bundle $L \in \Pic(X)$, our previous definitions allow us to write:
$$
\Chi(Z_\ell,e^L)=\Chi(Z_\ell,f_\ell^*L)=\Chi(Z_{\ell[1]},D_{l_r}(e^L)),
$$
so that, recursively,  $\Chi(Z_\ell,e^L)=\Chi(Z_{\ell[r]},D_{\ell}(e^L))$ and, since $\Chi(Z_{\ell[r]}, f^*_{\ell[r]} L') =1$ for every $L' \in \Pic(X)$, we obtain the first equality. Since $\deg(D_{\ell}(e^{ L})) $ depends only on intersection numbers with the curves $\Gamma_i$, the second equality follows.
If $L$ is nef the higher cohomology of $f^*_{\ell} L$ and $\overline f^*_{\ell} \psi (L)$ vanishes
by Lemma \ref{lem:j1}. Finally the last statement follows from \cite[Lemma~2.2.3]{Laz}.
\end{proof}

\begin{corollary}\label{cor:stein2}
Let $\ell=(l_1,\dots,l_r)$ be a sequence. Then $\dim f_\ell(Z_\ell) =\dim Z_\ell$ if and only if $w(\ell)$ is reduced.
\end{corollary}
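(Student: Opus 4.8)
The plan is to reduce the statement to the analogous property of the evaluation map $\overline{f}_\ell:\overline{Z}_\ell\to G/B$ on the homogeneous model, where it becomes the classical fact about Bott--Samelson resolutions of Schubert varieties, and to carry out the reduction by means of Proposition~\ref{prop:Chi}, whose final clause was tailored for exactly this purpose.

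First I would record that $\dim Z_\ell=r=\dim\overline{Z}_\ell$, since both varieties are built as $r$ iterated $\P^1$-bundles over a point. Next, fix an ample divisor $A$ on $X$. As $A$ is nef, $f_\ell^*A$ is nef on the smooth projective variety $Z_\ell$, so it is big if and only if its top self-intersection is positive; by the projection formula $(f_\ell^*A)^{\dim Z_\ell}=\deg(f_\ell)\,(A|_{f_\ell(Z_\ell)})^{\dim Z_\ell}$, which is positive exactly when $\dim f_\ell(Z_\ell)=\dim Z_\ell$. Thus
$$\dim f_\ell(Z_\ell)=\dim Z_\ell\iff f_\ell^*A\ \text{is nef and big}.$$
The same equivalence holds on $G/B$ for the divisor $\psi(A)$, which is ample because $\psi$ identifies the simple roots $-K_i$ with $-\overline{K_i}$ and carries the Weyl chamber $\Amp(X)$ onto $\Amp(G/B)$ (Corollary~\ref{cor:chambers}).

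Now I would apply the last assertion of Proposition~\ref{prop:Chi} to the nef line bundle $A$: it states that $f_\ell^*A$ is nef and big if and only if $\overline{f}_\ell^*\psi(A)$ is nef and big. Combining this with the two equivalences above gives
$$\dim f_\ell(Z_\ell)=\dim Z_\ell\iff \dim \overline{f}_\ell(\overline{Z}_\ell)=\dim\overline{Z}_\ell,$$
so the problem is transported entirely to the homogeneous model.

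Finally, on $G/B$ the variety $\overline{Z}_\ell$ is the classical Bott--Samelson variety attached to the word $\ell$ and $\overline{f}_\ell$ is the usual evaluation map; its image is the Schubert variety associated with the Demazure product of $\ell$, and this image has dimension $r=\dim\overline{Z}_\ell$ precisely when $\ell$ is a reduced word, i.e.\ when $w(\ell)$ is reduced, and strictly smaller dimension otherwise. Hence $\overline{f}_\ell$ is generically finite onto its image if and only if $w(\ell)$ is reduced, which by the displayed equivalence completes the argument. The main obstacle is exactly this last, homogeneous, input: if one prefers not to invoke classical Bott--Samelson theory, it must be proved directly, most naturally by induction on $r$ via the fibre-product description $Z_\ell=Z_{\ell[1]}\times_{X_{l_r}}X$, which yields $f_\ell(Z_\ell)=\pi_{l_r}^{-1}\big(\pi_{l_r}(f_{\ell[1]}(Z_{\ell[1]}))\big)$; the delicate point is then to show that $f_{\ell[1]}(Z_{\ell[1]})$ fails to be swept out by fibres of $\pi_{l_r}$ exactly when appending $r_{l_r}$ raises the length of $w(\ell[1])$, which is where the Weyl-group combinatorics, encoded in the length function of Subsection~\ref{ssec:BWB}, enter.
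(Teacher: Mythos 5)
Your proof is correct and follows essentially the same route as the paper: both arguments use the final clause of Proposition~\ref{prop:Chi} to transfer the ``nef and big'' property of the pulled-back ample class to the homogeneous model, and then invoke the classical fact that $\overline{f}_\ell(\overline{Z}_\ell)$ is a Schubert variety whose dimension is $r$ exactly when the word is reduced. The only (cosmetic) difference is that you work with an arbitrary ample $A$ and spell out the bigness--dimension equivalence via top self-intersection, whereas the paper simply takes $L=-K_X$ and uses $\psi(-K_X)=-K_{G/B}$.
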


\begin{proof}
Let us consider the homogeneous model $G/B$ of $X$ and the corresponding Bott-Samelson variety $\overline{Z}_\ell$, with evaluation $\overline{f}_\ell:\overline{Z}_\ell\to G/B$. It is known that the property holds for $\overline{f}_\ell$, since $ \overline f_\ell(\overline Z_\ell)$ is the Schubert variety $\overline{Bw(\ell)B}/B$ of $G/B$, whose dimension is $\len(w(\ell))$  so that, in particular, the divisor $\overline{f}_\ell^*(-K_{G/B})$ is big if and only if $w(\ell)$ is reduced. Since $-K_{G/B}=\psi(-K_X)$ the result follows from
Proposition \ref{prop:Chi}.
\end{proof}

Consider a list $\ell=(l_1,\dots,l_r)$ satisfying that $r_{l_1}\circ\dots \circ r_{l_r}=w(\ell)$ is a reduced expression for the {\it longest element} of $W$. This means, by definition, that $w(\ell)$ is the unique element $w_0$ of maximal length in $W$ (see \cite[Sect. 1.8]{H2}). The next proposition shows that in this case the corresponding Bott-Samelson variety $Z_\ell$ is birational to $X$.

\begin{corollary}\label{cor:birat}
Let $\ell=(l_1,\dots,l_r)$ be a sequence such that $w(\ell)$ is a reduced expression of $w_0$. Then the morphism $f_\ell: Z_\ell \to X$ is surjective and birational.
\end{corollary}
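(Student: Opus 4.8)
The plan is to first establish surjectivity and then pin down the generic degree of $f_\ell$ by comparing it with the homogeneous model through the Euler characteristics computed in Proposition \ref{prop:Chi}.

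First I would note that, since $w(\ell)$ is a reduced expression of $w_0$, it is in particular reduced, so Corollary \ref{cor:stein2} gives $\dim f_\ell(Z_\ell)=\dim Z_\ell=r$. Because the expression is reduced, $r=\len(w_0)$, and (as recalled in the proof of Corollary \ref{cor:stein2}) the associated Schubert variety $\overline{Bw_0B}/B$ is the whole of $G/B$, so $r=\dim G/B=\dim X$ by Proposition \ref{prop:dim}. Since $Z_\ell$ is projective, $f_\ell(Z_\ell)$ is a closed irreducible subvariety of $X$ of dimension $\dim X$; as $X$ is irreducible this forces $f_\ell(Z_\ell)=X$. Thus $f_\ell$ is surjective and generically finite, and the only thing left is to show that its degree $e:=\deg f_\ell$ equals $1$.

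For the homogeneous model the analogous morphism $\overline f_\ell:\overline Z_\ell\to G/B$ is the classical Bott--Samelson resolution of the Schubert variety of $w_0$, hence birational, i.e. $\deg\overline f_\ell=1$; this is the only external input and it is standard. My strategy is therefore to prove $e=\deg\overline f_\ell$ by matching leading terms of Euler characteristic polynomials. Set $d:=\dim X=\dim Z_\ell=\dim\overline Z_\ell$ and take $A:=-K_X$, which is ample because $X$ is Fano, with $\psi(A)=-K_{G/B}$. By the projection formula $(f_\ell^*A)^d=e\,A^d$, so the Snapper polynomial $\Chi(Z_\ell,f_\ell^*(mA))$ has leading coefficient $e\,A^d/d!$ in $m$; likewise $\Chi(\overline Z_\ell,\overline f_\ell^*(m\,\psi(A)))$ has leading coefficient $\deg(\overline f_\ell)\,\psi(A)^d/d!$. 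By Proposition \ref{prop:Chi} these two polynomials agree at every integer $m$, hence coincide, so that $e\,A^d=\deg(\overline f_\ell)\,\psi(A)^d$.

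It then remains to compare $A^d$ with $\psi(A)^d$, and here I would invoke Theorem \ref{thm:chi}: since $\Chi_X=\prod_{D\in\Phi^+}(1+F_D)$ depends only on the root system and $\psi$ identifies $\Phi$ with $\overline\Phi$ together with the relevant linear forms, one obtains the polynomial identity $\Chi_X=\Chi_{G/B}\circ\psi$. Comparing the top-degree parts of $\Chi_X(mA)$ and $\Chi_{G/B}(m\,\psi(A))$ yields $A^d=\psi(A)^d>0$, whence $e=\deg\overline f_\ell=1$ and $f_\ell$ is birational. The main obstacle is exactly this degree comparison: the whole force of the argument lies in the fact, engineered in Proposition \ref{prop:Chi} and Theorem \ref{thm:chi}, that the Euler characteristics on $X$ and on its homogeneous model are literally the same polynomials, so that the generic degree of $f_\ell$ can be transferred from the classically known birationality of $\overline f_\ell$; everything else is routine use of the projection formula and of the Snapper leading coefficient.
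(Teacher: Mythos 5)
Your proposal is correct, and while the surjectivity half coincides with the paper's (Proposition \ref{prop:dim} plus Corollary \ref{cor:stein2}), your birationality argument takes a genuinely different route. The paper fixes an ample $L$ and proves that the restriction map $H^0(X,sL)\to H^0(Z_\ell,e^{sL})$ is an isomorphism for $s\gg 0$, by chaining $h^0(X,sL)=h^0(G/B,\psi(sL))$ (Corollary \ref{cor:cohomequal}), $h^0(G/B,\psi(sL))=h^0(\overline Z_\ell,\bar e^{\psi(sL)})$ (Demazure's birationality of $\overline f_\ell$, \cite[Proposition~2]{De}), and the nef part of Proposition \ref{prop:Chi}; it then concludes via the criterion of \cite[Lemma~3.3.3 (b)]{BK}. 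You replace this section-counting by a degree count: $(f_\ell^*A)^d=e\,A^d$ by the projection formula, identity of the Snapper polynomials $\Chi(Z_\ell,f_\ell^*(mA))=\Chi(\overline Z_\ell,\overline f_\ell^*(m\,\psi(A)))$ from the Euler-characteristic (first) part of Proposition \ref{prop:Chi} — which, importantly, holds for arbitrary line bundles, so no nefness is needed — and $A^d=\psi(A)^d>0$ from Theorem \ref{thm:chi}. Both proofs rest on the same two pillars, namely Demazure's classical birationality of $\overline f_\ell$ and the transfer of invariants through $\psi$, but yours needs only the combinatorial identity $\Chi(Z_\ell,e^L)=\deg D_\ell(e^L)$ and avoids the vanishing theorem (Lemma \ref{lem:j1}) hidden behind the paper's $h^0$-comparison; in that sense it is more elementary. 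What the paper's mechanism buys in exchange is reusability: the same Brion--Kumar lemma is applied again in Proposition \ref{prop:surj} to get normality of the images $f_\ell(Z_\ell)$ for arbitrary reduced $\ell$, which is essential in the ${\rm F}_4$ section, whereas your degree computation yields no such information about images smaller than $X$; for the corollary itself, whose image is the smooth variety $X$, the two conclusions are equally strong. Two small points you should make explicit: first, the identity $\Chi_X=\Chi_{G/B}\circ\psi$ requires checking that the linear forms match, i.e. $F_D=\overline F_{\psi(D)}\circ\psi$, which holds because $F_D(L)=-2(L\cdot\Gamma_D)/(K_X\cdot\Gamma_D)$ depends only on coroot pairings preserved by $\psi$ (the paper leaves the analogous verification implicit in deducing Corollary \ref{cor:cohomequal}); second, Theorem \ref{thm:chi} is stated for $X$, so you are tacitly applying it also to $G/B$, which is legitimate since $G/B$ satisfies the standing hypotheses — alternatively, $A^d=\psi(A)^d$ follows from Corollary \ref{cor:cohomequal} applied to $-mK_X$, which lies in the subgroup generated by the $K_i$ because $-K_X=\sum_{D\in\Phi^+}D$.
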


\begin{proof}
The surjectivity of $f_\ell$ follows from Proposition~\ref{prop:dim} and Corollary \ref{cor:stein2}.
Let $L$ be an ample line bundle on $X$. By \cite[Lemma~3.3.3 (b)]{BK} it is enough to show that, for $s\gg0$, the restriction map $H^0(X,sL) \to H^0(Z_\ell, e^{sL})$, which is an injection by  the surjectivity of $f_\ell$, is an isomorphism.

On one hand, by Corollary \ref{cor:cohomequal} we have $h^0(X,sL)=h^0(G/B,\psi(sL))$. On the other, since $\overline{f}_\ell$ is birational (Cf. \cite[Proposition~2]{De}), it follows that $h^0(G/B,\psi(sL))=h^0(\overline{Z}_\ell, \bar e^{\psi(sL)})$. The proof is then finished by Proposition \ref{prop:Chi}.
\end{proof}

We finish this section by noting that the birationality of a dominant evaluation map $f_\ell$ allows us to reduce Theorem \ref{conj:CPforFT} to the case in which the diagram $\cD$ is connected.

\begin{corollary}\label{cor:product}
Assume that  $\cD = \cD_1 \sqcup \cD_2$. Then $X \simeq X_1 \times X_2$, where $X_1$ and $X_2$ are Fano manifolds whose elementary contractions are smooth $\P^1$-fibrations and whose Dynkin diagrams are $\cD_1$ and $\cD_2$, respectively.
\end{corollary}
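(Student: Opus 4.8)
The plan is to realise the two factors as explicit Mori contractions of $X$ and to show that the resulting map to their product is an isomorphism. Since $\cD=\cD_1\sqcup\cD_2$, the Cartan integers $-K_i\dt\Gamma_j$ vanish for $i\in I_1,\ j\in I_2$, so the root system and the Weyl group split as $W=W_1\times W_2$, and the longest element factors as $w_0=w_0^{(1)}w_0^{(2)}$ with $\len(w_0)=\len(w_0^{(1)})+\len(w_0^{(2)})$. Choosing reduced words $\ell_1$ in $I_1$ and $\ell_2$ in $I_2$ for $w_0^{(1)}$ and $w_0^{(2)}$, the concatenation $\ell=\ell_1\ell_2$ is a reduced word for $w_0$; by Corollary \ref{cor:birat} the evaluation $f_\ell\colon Z_\ell\to X$ is surjective and birational, while Lemma \ref{lem:prodBS} gives $Z_\ell=Z_{\ell_1}\times Z_{\ell_2}$. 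In particular $\dim X=\dim Z_{\ell_1}+\dim Z_{\ell_2}$.

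Next I would introduce the contractions $q_1\colon X\to X_1$ and $q_2\colon X\to X_2$ of the faces of $\cNE{X}$ spanned by $\{R_j\mid j\in I_2\}$ and $\{R_i\mid i\in I_1\}$ respectively (they exist since $X$ is Fano). The curves in a slice $\{z_1\}\times Z_{\ell_2}$ push forward under $f_\ell$ to classes in $\sum_{j\in I_2}\R_{\ge 0}[\Gamma_j]$, because the generators $\beta_i$ of $N_1(Z_{\ell_2})$ come from $\ell_2$ and satisfy $f_{\ell*}\beta_i=[\Gamma_{l_i}]$ with $l_i\in I_2$; hence they are contracted by $q_1$, so $q_1\circ f_\ell$ is constant on the fibres of $\mathrm{pr}_1$ and factors as $g_1\circ\mathrm{pr}_1$ for a surjection $g_1\colon Z_{\ell_1}\to X_1$. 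Symmetrically $q_2\circ f_\ell=g_2\circ\mathrm{pr}_2$ with $g_2\colon Z_{\ell_2}\to X_2$ surjective. Setting $\Psi:=(q_1,q_2)\colon X\to X_1\times X_2$ we obtain $\Psi\circ f_\ell=g_1\times g_2$, whence $\Psi(X)=X_1\times X_2$. The morphism $\Psi$ contracts no curve, since a contracted curve would have class in $\big(\sum_{j\in I_2}\R_{\ge 0}[\Gamma_j]\big)\cap\big(\sum_{i\in I_1}\R_{\ge 0}[\Gamma_i]\big)=\{0\}$ (the $[\Gamma_i]$ are a basis, Lemma \ref{lem:basis}); being proper and quasi-finite, $\Psi$ is finite. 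Therefore $\dim X_1+\dim X_2=\dim X$, and since the surjections $g_i$ force $\dim X_i\le\dim Z_{\ell_i}$, we get $\dim X_i=\dim Z_{\ell_i}$.

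The remaining point, and the one I expect to be the main obstacle, is that $\Psi$ is birational: the bookkeeping $\deg\Psi=\deg g_1\cdot\deg g_2$ is not by itself conclusive, and one cannot directly import the intersection theory of the factors. The idea I would use is to restrict $f_\ell$ over a general fibre of $q_2$. For general $y_2$ the fibre $q_2^{-1}(y_2)$ has dimension $\dim Z_{\ell_1}$, and $f_\ell^{-1}(q_2^{-1}(y_2))=(g_2\circ\mathrm{pr}_2)^{-1}(y_2)=Z_{\ell_1}\times g_2^{-1}(y_2)$ is a disjoint union of $\deg g_2$ copies of $Z_{\ell_1}$. For general $y_2$ the points of $g_2^{-1}(y_2)$ avoid the locus where $f_\ell$ drops dimension on the corresponding slice, so each copy dominates $q_2^{-1}(y_2)$; thus a general point of $q_2^{-1}(y_2)$ has at least $\deg g_2$ preimages under $f_\ell$. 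Since $f_\ell$ is birational and a general point of a general fibre is general in $X$, this forces $\deg g_2=1$, and symmetrically $\deg g_1=1$. Hence $\Psi$ is birational.

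Finally, $X_1=q_1(X)$ and $X_2=q_2(X)$ are normal, being images of contractions of a smooth projective variety, so $\Psi$ is a finite birational morphism onto a normal variety and is therefore an isomorphism $X\cong X_1\times X_2$. Smoothness and the Fano property of each factor follow from those of the product (restrict $-K_{X_1\times X_2}=\mathrm{pr}_1^*(-K_{X_1})+\mathrm{pr}_2^*(-K_{X_2})$ to a slice). Because $\cNE{X_1\times X_2}=\cNE{X_1}\times\cNE{X_2}$, the extremal rays of $X$ split as those of $X_1$ and those of $X_2$, indexed by $I_1$ and $I_2$; the elementary contractions of $X_1$ are then the $\pi_i$ ($i\in I_1$) read off the first factor, hence smooth $\P^1$-fibrations, and the Cartan integers among the indices of $I_1$ are unchanged, so the Dynkin diagram of $X_1$ is $\cD_1$ and likewise that of $X_2$ is $\cD_2$.
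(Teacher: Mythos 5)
Your proposal is correct, and it shares the paper's endpoints while replacing the paper's key middle step by a different mechanism. Both arguments start identically: Lemma \ref{lem:prodBS} gives $Z_{\ell_1\ell_2}\simeq Z_{\ell_1}\times Z_{\ell_2}$ for reduced words $\ell_1,\ell_2$ of the longest elements of the two Weyl factors, Corollary \ref{cor:birat} gives birationality of $f_{\ell_1\ell_2}$, and your final paragraph (pulling back an elementary contraction of $X_1$ to an elementary contraction of $X$ to see it is a smooth $\P^1$-fibration, with unchanged Cartan integers) is exactly the paper's closing argument. The difference is how the product structure descends from $Z_\ell$ to $X$. The paper does it in two lines via Corollary \ref{cor:stein}: since $f_\ell$ is birational onto the normal variety $X$, it \emph{is} its own Stein factorization, hence coincides with the contraction of the extremal face generated by $\{\gamma_i \mid l_i=l_k \mbox{ for some } k>i\}$; this face is the convex hull of two faces supported on the respective factors of $Z_{\ell_1}\times Z_{\ell_2}$, so the contraction splits as $h_{\ell_1}\times h_{\ell_2}$ and $X\simeq X_1\times X_2$ immediately. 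You never invoke Corollary \ref{cor:stein}; instead you work on $X$ itself, contracting the complementary faces of $\cNE{X}$ to get $\Psi=(q_1,q_2)$, proving finiteness from the simpliciality of the Mori cone (Lemma \ref{lem:basis}), and proving birationality by counting preimages of a general point of a general $q_2$-fibre, finishing with Zariski's main theorem. This route is sound, but it buys self-containedness at the price of several general-position fill-ins the paper's route avoids entirely: that the general fibre of $q_2$ is irreducible (generic smoothness in characteristic zero, needed for your word ``dominates''), that the locus of $z_2$ whose slice $Z_{\ell_1}\times\{z_2\}$ fails to map generically finitely has image a \emph{proper} closed subset of $X_2$ (so that \emph{all} $\deg g_2$ slice copies dominate the fibre), and that a general point of a general fibre lies in the open subset of $X$ over which $f_\ell$ is an isomorphism (so that it has exactly one preimage). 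Each of these is routine, so your argument goes through; it is simply longest exactly where the paper's identification of $f_\ell$ with a face contraction makes the splitting automatic.
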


\begin{proof}
As in Lemma \ref{lem:prodBS} we consider two sequences $\ell_1$ and $\ell_2$ on the indices $I_1, I_2\subset I$ determined by $\cD_1$ and $\cD_2$, so that, $Z_{\ell_1\ell_2}\simeq Z_{\ell_1}\times Z_{\ell_2}$. We will assume that they provide reduced expressions for the longest elements of the Weyl groups of $\cD_1$ and $\cD_2$, respectively, so that the evaluation $f_{\ell_1\ell_2}:Z_{\ell_1\ell_2}\to X$ is birational.

In other words, $f_{\ell_1\ell_2}$ is the contraction determined by the extremal face of $\cNE{Z_{\ell_1\ell_2}}$ generated by $R:=\{\gamma_i|\,\,l_i=l_k\mbox{ for some } k>i\}$ (cf. Corollary \ref{cor:stein}). This face is the convex hull of the two extremal faces generated by the sets $\{\gamma_i\in R|\,\,i\in I_1\}$, $\{\gamma_i\in R|\,\,i\in I_2\}$, that provide contractions of $h_{\ell_1}:Z_{\ell_1}\to X_1$ and $h_{\ell_2}:Z_{\ell_2}\to X_2$, satisfying that $X\simeq X_1\times X_2$ and that, up to this isomorphism $f_{\ell_1\ell_2}=h_{\ell_1}\times h_{\ell_2}$. In particular, $X_1$ and $X_2$ are smooth Fano manifolds.

Finally, for every elementary contraction $f:X_1\to X'_1$, its pullback $X\simeq X_1\times X_2\to X_1'\times X_2$ is an elementary contraction of $X$, hence it follows that $f$ is a smooth $\P^1$-fibration. The same holds for $X_2$.
\end{proof}

\section{Homogeneity - General case}\label{sec:unique}

With the same notation as in the previous sections, we will further assume here (cf. Corollary \ref{cor:product}) that the Dynkin diagram $\cD$ of $X$ is connected, hence of type ${\rm A}_n$, ${\rm B}_n$, ${\rm C}_n$, ${\rm D}_n$, ${\rm E}_6$, ${\rm E}_7$, ${\rm E}_8$, ${\rm F_4}$ or ${\rm G}_2$. We start by noting that the case ${\rm G}_2$ follows from previous results:

\begin{lemma}\label{lem:G2}
Let $X$ be a Fano manifold whose elementary contractions are smooth $\P^1$-fibrations. If its Dynkin diagram is of type ${\rm G}_2$, then $X$ is isomorphic to its homogeneous model $G/B$.
\end{lemma}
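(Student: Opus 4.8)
The plan is to reduce the $\rm{G}_2$ case to the birationality and cohomological results already established, and then to upgrade the birational evaluation morphism to an isomorphism using the fact that in the $\rm{G}_2$ situation the longest element of $W$ has a manageable reduced expression. First I would fix a reduced expression $\ell=(l_1,\dots,l_6)$ of the longest element $w_0$ of the Weyl group $W$ of type $\rm{G}_2$; here $\len(w_0)=6=\dim G/B$, so by Corollary \ref{cor:birat} the evaluation $f_\ell:Z_\ell\to X$ is surjective and birational, and by Proposition \ref{prop:dim} we have $\dim X=\dim G/B=6$. The strategy is then to show that $f_\ell$ is actually an isomorphism, equivalently that it contracts no divisor, equivalently (by Corollary \ref{cor:stein}) that the set $J=\{i\mid l_i=l_k\text{ for some }k>i\}$ is empty along the relevant faces — but since $w(\ell)$ is reduced of full length, $Z_\ell$ and $X$ have the same dimension and $f_\ell$ is generically finite of degree one.

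The key point I would exploit is that birationality of $f_\ell$ together with smoothness of $X$ forces an isomorphism once one controls the exceptional locus. Concretely I would argue that $f_\ell$ is a small contraction or an isomorphism by comparing with the homogeneous model: the corresponding $\overline{f}_\ell:\overline{Z}_\ell\to G/B$ is the Bott-Samelson resolution of the Schubert variety $\overline{Bw_0B}/B=G/B$ itself, and when $w(\ell)$ is a reduced word for $w_0$ this resolution is known to be birational and, because $G/B$ is smooth and the Bott-Samelson variety has Picard number $6=\rho_{G/B}$, the map $\overline{f}_\ell$ contracts exactly the divisors indexed by $J$. Using Corollary \ref{cor:psi} and the identification of Mori cones in Corollary \ref{cor:coneBS}, the relative contraction structure of $f_\ell$ is governed by the same combinatorial face as that of $\overline{f}_\ell$. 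Since $\dim X=\dim G/B$ and both evaluation maps are birational onto smooth targets with the identical face-combinatorics, I would conclude that $f_\ell$ and $\overline{f}_\ell$ have isomorphic Stein factorizations; as $\overline{f}_\ell$ realizes $G/B$ as the image, this yields $X\cong G/B$.

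An alternative and perhaps cleaner route, which I would actually prefer for $\rm{G}_2$, is to invoke the classification already available in the literature for the low Picard-number building blocks. Since $\rho_X=2$ for the $\rm{G}_2$ diagram, $X$ is a Fano manifold of Picard number two admitting two smooth $\P^1$-fibrations $\pi_1,\pi_2$; such manifolds were completely classified in \cite{MOSW}, and the $\rm{G}_2$ case there is shown to be isomorphic to the flag manifold $G_2/B$. Thus the cleanest argument is: the Dynkin diagram being $\rm{G}_2$ forces $\rho_X=2$ by Lemma \ref{lem:basis}, and then the classification of Picard-number-two manifolds with two smooth $\P^1$-fibrations identifies $X$ with its homogeneous model $G_2/B$ directly.

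The main obstacle I anticipate is ensuring that birationality alone suffices to give an isomorphism, rather than merely a resolution with some exceptional locus; this is exactly why the $\rm{G}_2$ case is separated out and treated by the previously-known Picard-number-two results. The delicate part of any self-contained argument would be to verify that $f_\ell$ is small (or crepant) so that, combined with smoothness and $\Q$-factoriality of $X$ together with the identical cohomology guaranteed by Corollary \ref{cor:cohomequal} and Proposition \ref{prop:Chi}, no divisor is contracted; reliance on the external classification in \cite{MOSW} sidesteps this obstacle entirely and is the expected shortcut for this small case.
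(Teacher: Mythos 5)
Your preferred (second) route is essentially the paper's own proof: the paper also disposes of ${\rm G}_2$ by reduction to the known rank-two classification rather than by the Bott--Samelson machinery. The one substantive step you omit is how the classification gets applied: the paper first observes that the Cartan matrix of ${\rm G}_2$ has determinant one, so there exist Cartier divisors $L_1,L_2$ with $L_i\cdot\Gamma_j=\delta_i^j$; this upgrades the two smooth $\P^1$-fibrations to $\P^1$-bundles, which is exactly the hypothesis needed to invoke \cite[Theorem~1.1]{Wa} (Watanabe's classification of $\P^1$-\emph{bundles} admitting another smooth morphism of relative dimension one). If you cite \cite{MOSW} instead, you must check that the statement you quote covers smooth $\P^1$-fibrations and not only $\P^1$-bundles: a smooth $\P^1$-fibration need not carry a unisecant divisor, and it is precisely the determinant-one trick---special to ${\rm G}_2$, since the Cartan determinants of ${\rm A}_2$ and ${\rm B}_2$ are $3$ and $2$---that removes this obstruction here.

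By contrast, your first route has a genuine gap, which you partly sense yourself. Knowing that $f_\ell$ and $\overline{f}_\ell$ are both birational (Corollary \ref{cor:birat}) and that $\psi_\ell$ identifies the nef and Mori cones and hence the contracted faces (Corollaries \ref{cor:coneBS}, \ref{cor:psi}, \ref{cor:stein}) does not let you conclude $X\cong G/B$: the two Stein factorizations live on a priori different total spaces $Z_\ell$ and $\overline{Z}_\ell$, and an abstract linear identification of their cones of curves carries no geometric map between them, so ``identical face-combinatorics'' does not transfer the target of one contraction to the other. What is needed is an isomorphism $Z_\ell\cong\overline{Z}_\ell$ of varieties, which is exactly the content of Propositions \ref{prop:uniqueADE} and \ref{prop:uniqueBC}---and the cohomological descent machinery behind those results breaks down in type ${\rm G}_2$: the triple edge produces restrictions of degree $-3$ on fibers, a case for which the rules of Lemma \ref{lem:descent} give no control of the relevant $H^1$'s and hence no uniqueness of the extension classes $\zeta_\ell$. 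This is why the paper does not run the Bott--Samelson argument for ${\rm G}_2$ at all, and why your fallback to the rank-two classification is not merely a shortcut but the intended proof.
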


\begin{proof}
Since the Cartan matrix of ${\rm G}_2$ has determinant one, there exist two Cartier divisors $L_1$ and $L_2$ on $X$ satisfying that $L_i\cdot\Gamma_j=\delta_{i}^j$. In particular the elementary contractions of $X$ are $\P^1$-bundles, and then the statement follows from \cite[Theorem~1.1]{Wa}.
\end{proof}

We may then assume that $\cD$ is different from ${\rm G}_2$. Our strategy will be to compare first the Bott-Samelson varieties $Z_\ell$ of  $X$ with the Bott-Samelson varieties $\overline{Z}_\ell$ of its homogeneous model $G/B$. In fact, we will show (cf. Propositions \ref{prop:uniqueADE} and \ref{prop:uniqueBC}) that, for a suitable choice of a sequence $\ell=(l_1, \dots,l_r)$ corresponding to a reduced expression of the longest element in $W$, we have $$Z_{\ell[s]} \simeq \overline Z_{\ell[s]}\mbox{ for every }s = 0, \dots, r-1.$$ 

In order to describe explicitly the sequences $\ell$ we are going to use,
let us introduce the following notation:
$$
u_i=(1,2,\dots,i),\quad d_i=(i,i-1\dots,1),\mbox{ for every }i=1,\dots,n.
$$
Ordering the nodes of $\cD$ as in \cite[page 58]{H}, the table 
below lists the sequences we are going to use in each case. Whereas in the case of a Dynkin diagram without multiple edges we show our uniqueness result for any sequence $\ell$,
in the cases ${\rm B}_n$ and ${\rm C}_n$, $\ell$ was chosen conveniently so that the uniqueness property holds. In the case of ${\rm F}_4$ we have checked (using the software system {\tt Sage}) that our proof, based on cohomological computations, does not work for any of the $2144892$ possible sequences providing a reduced expression of the longest word. This is the reason why the proof of Theorem \ref{conj:CPforFT} in case $\cD = {\rm F}_4$ is different; we will present it in Section \ref{sec:F4}.

\begin{table}[!!h]
\centering
\begin{tabular}{|c||c|c|c|c|}\hline
$\cD$&${\rm A}_n,{\rm D}_n,{\rm E}_n$&${\rm B}_n$&${\rm C}_n$&${\rm F}_4$\\\hline
$\ell$&any&$(u_n)^n$&$(d_n)^n$&none\\\hline
\end{tabular}\label{tab:reduced}
\end{table}

\subsection{Uniqueness of Bott-Samelson varieties}\label{ssec:unique}

Throughout the section we will use the notation for cohomology of line bundles introduced in \ref{notn:exp}.
Let us start by stating a corollary of Lemma \ref{lem:j1}, which easily follows from the fact that $K_{X}/2 \cdot \Gamma_j=-1$ for every $j= 1, \dots, n$.

\begin{corollary}\label{cor:KKV2} Let $Z_\ell$ be the Bott-Samelson variety of $X$ defined by the sequence
$\ell=(l_1,\dots,l_r)$,
and $L$ be a line bundle on $X$ satisfying that $L \cdot \Gamma_{l_j} \ge -1$ for every $j= 1, \dots, r$. Then $H^i(Z_\ell, e^L)=0$ for $i>0$. In particular, if the node $j$ and the node $j'$ are connected then $H^i(Z_\ell,e^{K_j+K_{j'}})=0$ for $i>0$.
\end{corollary}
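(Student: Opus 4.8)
The plan is to reduce the vanishing to Lemma \ref{lem:j1} by exhibiting $f_\ell^*L$ as the sum of $f_\ell^*(K_X/2)$ and a nef divisor. Recall first that, by the definition in Notation \ref{notn:exp}, one has $H^i(Z_\ell, e^L)=H^i(Z_\ell, f_\ell^*L)$, and that for each $j$ the equality $K_X/2\cdot\Gamma_j=-1$ holds, since $\pi_j$ is a smooth $\P^1$-fibration and hence $-K_X\cdot\Gamma_j=2$. Therefore the hypothesis $L\cdot\Gamma_{l_j}\ge -1$ is equivalent to $(L-K_X/2)\cdot\Gamma_{l_j}\ge 0$ for every $j=1,\dots,r$.

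Next I would check that the $\Q$-divisor $f_\ell^*(L-K_X/2)$ is nef on $Z_\ell$. By Corollary \ref{cor:coneBS} the Mori cone $\cNE{Z_\ell}$ is the simplicial cone generated by the classes $\gamma_t$, $t\le r$, so it suffices to verify $f_\ell^*(L-K_X/2)\cdot\gamma_t\ge 0$ for every $t$; by the projection formula this equals $(L-K_X/2)\cdot f_{\ell*}\gamma_t$. If $t\in J$ then $\gamma_t$ lies in the face contracted by the Stein factorization of $f_\ell$ (Corollary \ref{cor:stein}), so $f_{\ell*}\gamma_t=0$ and the intersection vanishes; if $t\notin J$ then $\gamma_t=\beta_t$ and $f_{\ell*}\beta_t=[\Gamma_{l_t}]$, whence $f_\ell^*(L-K_X/2)\cdot\gamma_t=(L-K_X/2)\cdot\Gamma_{l_t}\ge 0$ by the reformulated hypothesis. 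In either case the intersection is nonnegative, so $f_\ell^*(L-K_X/2)$ is nef.

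With this in hand, writing $f_\ell^*L=f_\ell^*(K_X/2)+f_\ell^*(L-K_X/2)$ and applying Lemma \ref{lem:j1} with the nef $\Q$-divisor $D:=f_\ell^*(L-K_X/2)$ yields $H^i(Z_\ell, f_\ell^*L)=0$ for $i>0$, proving the first assertion. For the \emph{in particular} statement, it is enough to show that $L:=K_j+K_{j'}$ satisfies the hypothesis at every index, i.e. $(K_j+K_{j'})\cdot\Gamma_i\ge -1$ for all $i=1,\dots,n$. Since $-K_j\cdot\Gamma_i=\cA_{ji}$, this amounts to $\cA_{ji}+\cA_{j'i}\le 1$. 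For $i\neq j,j'$ both summands are $\le 0$ by Lemma \ref{lem:obtuse}; for $i=j$ (resp. $i=j'$) one summand equals $2$ while the other is the off-diagonal Cartan entry of the two connected nodes $j,j'$, which is $\le -1$ again by Lemma \ref{lem:obtuse}, so the sum is $\le 1$. Thus the hypothesis holds for every $l_i$ regardless of the sequence $\ell$, and the particular case follows from the general one.

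The argument is essentially a bookkeeping reduction to Lemma \ref{lem:j1}; the only point requiring genuine care is the nefness check, where one must exploit that $f_\ell$ contracts \emph{precisely} the curves $\gamma_t$ with $t\in J$, so that the only intersections of $f_\ell^*(L-K_X/2)$ with the Mori cone generators that are not automatically zero are the numbers $(L-K_X/2)\cdot\Gamma_{l_t}$ for $t\notin J$, which are exactly the quantities the hypothesis controls.
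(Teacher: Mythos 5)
Your proof is correct and takes essentially the same route as the paper's: both reduce to Lemma \ref{lem:j1} by observing that the hypothesis makes $f_\ell^*(L-K_X/2)$ nef, via the description of $\cNE{Z_\ell}$ in Corollary \ref{cor:coneBS}. Your explicit checks --- the intersection of $f_\ell^*(L-K_X/2)$ with the generators $\gamma_t$ (splitting into $t\in J$ and $t\notin J$) and the Cartan-matrix verification that $(K_j+K_{j'})\cdot\Gamma_i\ge -1$ for connected nodes --- simply fill in details the paper's two-line proof leaves implicit.
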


\begin{proof}
It is enough to note that if $L$ satisfies the required property, then, by Corollary \ref{cor:coneBS}, $f_\ell^*(L-K_X/2)$ is nef on $Z_\ell$. Then the result follows by Lemma \ref{lem:j1}.
\end{proof}

The following result gives a set of descent rules for cohomology of line bundles on Bott-Samelson varieties, which are consequences of  Lemma \ref{lem:j2}:

\begin{lemma}\label{lem:descent}
Let $\ell=(l_1,\dots,l_r)$ be the sequence defining the Bott-Samelson variety $Z_\ell$, and let $L$ be a line bundle on $X$, of degree $s$ with respect to $\Gamma_{l_r}$. Then:
\begin{enumerate}
\item[(DR1)] If $s=0$, then $H^i(Z_\ell,e^{L})=H^i(Z_{\ell[1]},e^{L})$ for all $i$.
\item[(DR2)] If $s=-1$, then $H^i(Z_\ell,e^L)=0$ for all $i$.
\item[(DR3)] If $s=-2$, then $H^i(Z_\ell,e^{L})=H^{i-1}(Z_{\ell[1]},e^{L-K_{l_r}})$, for all $i$.
\item[(DR4)] If $s\geq 1$, then $h^i(Z_\ell,e^L)\leq h^i(Z_{\ell[1]},D_{l_r}(e^{L}))$ for all $i$.
\item[(DR5)] If $s=1$ and $(L+K_{l_r})\cdot\Gamma_{l_j}\geq -1$ for every $j=1,\dots,r-1$, then $H^i(Z_\ell,e^{L})=H^i(Z_{\ell[1]},e^{L})$ for all $i>0$. In particular, this holds for $L=K_j$, whenever $K_j\cdot\Gamma_{l_r}=1$.
\end{enumerate}
\end{lemma}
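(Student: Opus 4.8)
The plan is to read off all five rules from Lemma~\ref{lem:j2} by specializing the degree $s=L\cdot\Gamma_{l_r}$ and inspecting the filtration of ${p_{\ell[1]}}_*f^*_\ell L$ it provides. Beyond Lemma~\ref{lem:j2} I will only use three elementary facts: that $\dim H^i$ is subadditive across the subquotients of a filtration (via the long exact sequences); that $r_{l_r}(L)=L+sK_{l_r}$, so that the filtration factors $f^*_{\ell[1]}(L+tK_{l_r})$ are exactly the monomials occurring in $D_{l_r}(e^L)$; and, for the last rule, the vanishing of Corollary~\ref{cor:KKV2}.

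The three ``collapsing'' cases are immediate because the filtration reduces to a single factor. For DR1 ($s=0$), Lemma~\ref{lem:j2}(1) gives $H^i(Z_\ell,f^*_\ell L)\cong H^i(Z_{\ell[1]},{p_{\ell[1]}}_*f^*_\ell L)$ and the filtration runs over $t=0$ alone, so ${p_{\ell[1]}}_*f^*_\ell L\cong f^*_{\ell[1]}L$ and the asserted equality follows. DR2 ($s=-1$) is literally Lemma~\ref{lem:j2}(2). For DR3 ($s=-2$), Lemma~\ref{lem:j2}(3) shifts the cohomological degree by one while the rank $-(s+1)=1$ bundle has the single factor $f^*_{\ell[1]}(L+(s+1)K_{l_r})=f^*_{\ell[1]}(L-K_{l_r})$, yielding $H^i(Z_\ell,e^L)\cong H^{i-1}(Z_{\ell[1]},e^{L-K_{l_r}})$.

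For DR4 ($s\geq1$) I would again start from $H^i(Z_\ell,f^*_\ell L)\cong H^i(Z_{\ell[1]},{p_{\ell[1]}}_*f^*_\ell L)$, but now the filtration has the $s+1$ factors $f^*_{\ell[1]}(L+tK_{l_r})$, $t=0,\dots,s$. Subadditivity of $h^i$ then gives $h^i(Z_\ell,e^L)\leq\sum_{t=0}^{s}h^i(Z_{\ell[1]},f^*_{\ell[1]}(L+tK_{l_r}))$, and since $r_{l_r}(L)=L+sK_{l_r}$ the right-hand side is precisely $h^i(Z_{\ell[1]},D_{l_r}(e^L))$.

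The substantive case is DR5, where $s=1$ but I want an honest isomorphism in positive degree. Using Remark~\ref{rem:descent} to write ${p_{\ell[1]}}_*f^*_\ell L\cong{p_{\ell[1]}}_*H_r\otimes f^*_{\ell[1]}L$ and twisting the defining sequence of ${p_{\ell[1]}}_*H_r$ by $f^*_{\ell[1]}L$, I obtain on $Z_{\ell[1]}$ the short exact sequence $$0\lra f^*_{\ell[1]}(L+K_{l_r})\lra {p_{\ell[1]}}_*f^*_\ell L\lra f^*_{\ell[1]}L\lra 0.$$ The hypothesis $(L+K_{l_r})\cdot\Gamma_{l_j}\geq-1$ for $j=1,\dots,r-1$ is exactly the input of Corollary~\ref{cor:KKV2} for $Z_{\ell[1]}$, so $H^i(Z_{\ell[1]},e^{L+K_{l_r}})=0$ for $i>0$; feeding this into the long exact sequence annihilates both the adjacent terms of $H^i({p_{\ell[1]}}_*f^*_\ell L)$ in positive degree, giving $H^i(Z_\ell,e^L)\cong H^i(Z_{\ell[1]},e^L)$ for $i>0$. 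Finally, for the ``in particular'' I would take $L=K_j$ with $K_j\cdot\Gamma_{l_r}=1$ and verify $(K_j+K_{l_r})\cdot\Gamma_m\geq-1$ for every node $m$: the only non-obvious evaluations are $m=l_r$, giving $-\cA_{j,l_r}-2=-1$, and $m=j$, giving $-2-\cA_{l_r,j}\in\{-1,0,1\}$ since Lemma~\ref{lem:obtuse} forces $\cA_{l_r,j}\in\{-1,-2,-3\}$ when $\cA_{j,l_r}=-1$. I expect this short case analysis to be the only point that is not purely formal.
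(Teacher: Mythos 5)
Your proposal is correct and follows exactly the paper's (very terse) proof: the paper derives (DR1)--(DR4) from Lemma~\ref{lem:j2} together with the long exact sequences attached to the filtrations given there, and handles (DR5) by additionally invoking Corollary~\ref{cor:KKV2}, which is precisely your argument with the details (subadditivity of $h^i$, the identification of the filtration factors with the monomials of $D_{l_r}(e^L)$, and the Cartan-matrix case check for $L=K_j$ via Lemma~\ref{lem:obtuse}) written out. No gaps.
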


\begin{proof}
Items (DR1) to (DR4) follow directly from Lemma \ref{lem:j2} and, eventually, the corresponding long exact sequences of cohomology associated to the filtrations provided there. For (DR5) one needs to use also Corollary \ref{cor:KKV2}.
\end{proof}

In particular, we get the following:

\begin{proposition}\label{prop:descent}
Let $\ell=(l_1, \dots, l_r)$ be a sequence in $I$, and $j\in I$, $k >0$ be two integers such that
 $K_j \cdot \Gamma_{l_{i}} =0,1$ for every $i > r-k$;
then $h^1(Z_\ell, e^{K_j})= h^1(Z_{\ell[k]}, e^{K_j})$.
\end{proposition}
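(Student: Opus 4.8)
The plan is to prove the equality by iterating the one-step descent rules of Lemma \ref{lem:descent} along the tower $Z_\ell \to Z_{\ell[1]} \to \dots \to Z_{\ell[k]}$, peeling off one index at a time. Concretely, I would fix $s \in \{0, 1, \dots, k-1\}$ and compare $h^1(Z_{\ell[s]}, e^{K_j})$ with $h^1(Z_{\ell[s+1]}, e^{K_j})$ by means of the projection $p_{\ell[s+1]}\colon Z_{\ell[s]} \to Z_{\ell[s+1]}$, whose fibers project to $\Gamma_{l_{r-s}}$. Chaining these equalities from $s = 0$ up to $s = k-1$ then yields $h^1(Z_\ell, e^{K_j}) = h^1(Z_{\ell[1]}, e^{K_j}) = \dots = h^1(Z_{\ell[k]}, e^{K_j})$.

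The key observation is that, as $s$ runs over $\{0, \dots, k-1\}$, the index $r-s$ runs over $\{r-k+1, \dots, r\}$, i.e.\ exactly the indices $i$ with $i > r-k$ that appear in the hypothesis. Hence, for each such $s$, the degree of $K_j$ along the fiber of $p_{\ell[s+1]}$, namely $K_j \cdot \Gamma_{l_{r-s}}$, is either $0$ or $1$. This is precisely the range of degrees governed by rules (DR1) and (DR5).

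When $K_j \cdot \Gamma_{l_{r-s}} = 0$, rule (DR1) applied to the sequence $\ell[s]$ and the line bundle $K_j$ gives $H^i(Z_{\ell[s]}, e^{K_j}) = H^i(Z_{\ell[s+1]}, e^{K_j})$ for all $i$, so in particular $h^1$ is unchanged. When $K_j \cdot \Gamma_{l_{r-s}} = 1$, I would invoke the ``in particular'' clause of rule (DR5) with $L = K_j$: since $K_j \cdot \Gamma_{l_{r-s}} = 1$, the auxiliary inequalities $(K_j + K_{l_{r-s}}) \cdot \Gamma_{l_m} \geq -1$ required by (DR5) hold automatically, and one again obtains $H^i(Z_{\ell[s]}, e^{K_j}) = H^i(Z_{\ell[s+1]}, e^{K_j})$ for every $i > 0$, hence the equality of $h^1$.

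There is essentially no hard step here: the argument is a bookkeeping of the truncation notation combined with a case split on the value $K_j \cdot \Gamma_{l_{r-s}} \in \{0,1\}$. The only point deserving verification is the ``in particular'' clause of (DR5), which rests on the sign constraints on the Cartan-matrix entries from Lemma \ref{lem:obtuse}: for each $m$, the quantity $(K_j + K_{l_{r-s}}) \cdot \Gamma_{l_m}$ can dip below $0$ only when $l_m \in \{j, l_{r-s}\}$, and in both of those cases the off-diagonal contribution (which is $\geq 1$, as $j$ and $l_{r-s}$ are joined in $\cD$) compensates the diagonal contribution (equal to $-2$), keeping the total at least $-1$.
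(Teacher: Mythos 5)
Your proposal is correct and follows essentially the same route as the paper, whose proof is exactly this one-step peeling: for $k=1$ apply (DR1) when $K_j\cdot\Gamma_{l_r}=0$ and (DR5) when $K_j\cdot\Gamma_{l_r}=1$, then conclude by recursion. Your closing verification of the ``in particular'' clause of (DR5) via Lemma \ref{lem:obtuse} is accurate but redundant, since that clause is already asserted as part of Lemma \ref{lem:descent}.
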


\begin{proof} If $k=1$, then the result follows by (DR1) if $K_j \cdot \Gamma_{l_{r}}=0$ and by (DR5) if $K_j \cdot \Gamma_{l_{r}}=1$. The general statement follows by recursion.
\end{proof}

\begin{corollary}\label{cor:simple}
Let $\ell=(l_1, \dots, l_r)$ be a list in $I$ satisfying that $K_{l_r} \cdot \Gamma_{l_i} \in \{0,1,-2\}$ for every $i<r$, and set $J:=\{i<r|\,\,l_i=l_r\}$.
Then $H^1(Z_{\ell[1]}, e^{K_{l_r}})$ is generated by $\zeta_\ell$, and this cocycle is zero if and only if $J= \emptyset$.
\end{corollary}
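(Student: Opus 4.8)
The plan is to reduce the whole statement to the computation of the single number $h^1(Z_{\ell[1]},e^{K_{l_r}})$. First I observe that, by equation~(\ref{eq:class}) taken with $s=0$, the cocycle $\zeta_\ell$ lives precisely in $H^1(Z_{\ell[1]},e^{K_{l_r}})$. Hence, once I show that this group has dimension $1$ when $J\neq\emptyset$ and dimension $0$ when $J=\emptyset$, both assertions follow at once: in the first case Corollary~\ref{cor:nonsplit} (applied with $s=0$) guarantees $\zeta_\ell\neq 0$, so this nonzero vector generates the one-dimensional space; in the second case the group is zero, so $\zeta_\ell=0$ and it trivially generates. This simultaneously yields the equivalence $\zeta_\ell=0\iff J=\emptyset$.

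To compute $h^1(Z_{\ell[1]},e^{K_{l_r}})$ I would descend the tower $Z_{\ell[1]}\to Z_{\ell[2]}\to\cdots$ using the rules of Lemma~\ref{lem:descent}, keeping track of the degree $K_{l_r}\cdot\Gamma_{l_{r-s}}$ at the $s$-th stage. By the standing hypothesis this intersection number lies in $\{0,1,-2\}$, and it equals $-2$ exactly when $l_{r-s}=l_r$. So, as long as the last index is different from $l_r$, the degree is $0$ or $1$, and I may apply either (DR1) (degree $0$) or (DR5) (degree $1$) to obtain $H^1(Z_{\ell[s]},e^{K_{l_r}})\cong H^1(Z_{\ell[s+1]},e^{K_{l_r}})$, descending one step without changing $H^1$. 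If $J=\emptyset$ this continues all the way down to the point $Z_{\ell[r]}$, where $H^1$ vanishes, giving $h^1=0$. If $J\neq\emptyset$, set $j_0:=\max J$; the descent is unobstructed for the stages $s=1,\dots,r-j_0-1$ (whose last indices $l_{r-s}\neq l_r$ by maximality of $j_0$), and at the stage $s=r-j_0$ the last index is $l_{j_0}=l_r$, so the degree is $-2$ and (DR3) gives
$$H^1(Z_{\ell[r-j_0]},e^{K_{l_r}})\cong H^0(Z_{\ell[r-j_0+1]},e^{K_{l_r}-K_{l_r}})=H^0(Z_{\ell[r-j_0+1]},\cO)=\C,$$
since Bott--Samelson varieties are connected. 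Hence $h^1(Z_{\ell[1]},e^{K_{l_r}})=1$ in this case.

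The step that needs the most care is the application of (DR5) at the degree-$1$ stages, since a priori it requires the auxiliary inequality $(K_{l_r}+K_{l_{r-s}})\cdot\Gamma_{l_m}\geq -1$ for all $m<r-s$. This is exactly the situation covered by the ``in particular'' clause of (DR5): taking $L=K_{l_r}$, the hypothesis $K_{l_r}\cdot\Gamma_{l_{r-s}}=1$ forces that clause to hold, so no separate verification is needed. The only genuine bookkeeping is to ensure that, before reaching the first occurrence of $l_r$, every intermediate degree stays in $\{0,1\}$ --- which is precisely what the assumption $K_{l_r}\cdot\Gamma_{l_i}\in\{0,1,-2\}$, with the value $-2$ reserved for the indices $l_i=l_r$, guarantees. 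Assembling the two cases with Corollary~\ref{cor:nonsplit} then completes the proof.
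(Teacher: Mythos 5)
Your proof is correct and takes essentially the same route as the paper: the paper's argument descends the tower via Proposition~\ref{prop:descent} (which is precisely your recursive use of (DR1)/(DR5)), applies (DR3) at the maximal occurrence of $l_r$ to get $h^1=1$, and invokes Corollary~\ref{cor:nonsplit} for the nonvanishing of $\zeta_\ell$. The only cosmetic difference is that you re-derive the descent inline (and run it all the way down to the point $Z_{\ell[r]}$ instead of stopping at $Z_{\ell[r-1]}\cong\Gamma_{l_1}$) rather than citing Proposition~\ref{prop:descent} directly.
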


\begin{proof} If $J=\emptyset$, then, by Proposition \ref{prop:descent},
$H^1(Z_{\ell[1]}, e^{K_{l_r}})$ is isomorphic to $ H^1(Z_{\ell[r-1]},e^{K_{l_r}})= H^1(\Gamma_{l_1}, K_{l_r}|_{\Gamma_{l_1}})=0$.

If $J\neq\emptyset$, denote by $r-s$ its maximum. Then Proposition \ref{prop:descent} tells us that
$H^1(Z_\ell, e^{K_{l_r}}) \cong H^1(Z_{\ell[s]},e^{K_{l_r}})$. On the other hand
(DR3) provides $h^{1}(Z_{\ell[s]}, e^{K_{l_r}})=h^0(Z_{\ell[s+1]}, e^{K_{l_r}-K_{l_r}})=1$. Since, by Corollary \ref{cor:nonsplit}, $\zeta_\ell\in H^{1}(Z_{\ell[1]}, e^{K_{l_r}})$ is nonzero, the claim follows.
\end{proof}

We conclude from this the following uniqueness result for Bott-Samelson varieties in the case in which $\cD$ has no multiple edges:

\begin{proposition}\label{prop:uniqueADE} Assume that $\cD = {\rm A}_n, {\rm D}_n$ or ${\rm E}_n$. Then, for every sequence $\ell=(l_1, \dots, l_r)$, the manifold
$Z_{\ell}$ is isomorphic to $\overline Z_{\ell}$.
\end{proposition}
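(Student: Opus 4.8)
The plan is to argue by descending induction on $s$, constructing for each $s=r,r-1,\dots,0$ an isomorphism $\phi_s\colon Z_{\ell[s]}\to\overline Z_{\ell[s]}$ that realizes the linear isomorphism $\psi_{\ell[s]}$ on Néron--Severi groups (so in particular $\phi_s$ identifies the boundary divisors $Z_{\ell[s](i)}$ with $\overline Z_{\ell[s](i)}$). The base case $s=r$ is immediate, since $Z_{\ell[r]}=\{x\}=\overline Z_{\ell[r]}$ is a single reduced point.

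For the inductive step I would assume $\phi_{s+1}$ has been built and recall from equation (\ref{eq:F}) that $Z_{\ell[s]}=\P(\cF_{\ell[s]})$ and $\overline Z_{\ell[s]}=\P(\overline\cF_{\ell[s]})$ are $\P^1$-bundles over $Z_{\ell[s+1]}$ and $\overline Z_{\ell[s+1]}$, where $\cF_{\ell[s]}$ (resp.\ $\overline\cF_{\ell[s]}$) is an extension of $\cO$ by $f^*_{\ell[s+1]}K_{l_{r-s}}$ (resp.\ by $\overline f^*_{\ell[s+1]}K_{l_{r-s}}$) with class $\zeta_{\ell[s]}$ (resp.\ $\overline\zeta_{\ell[s]}$). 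Since the class $f^*_{\ell[s+1]}K_{l_{r-s}}$ is determined in $N^1(Z_{\ell[s+1]})$ by its intersection numbers $K_{l_{r-s}}\cdot\Gamma_{l_i}$ with the $\beta_i$, and these Cartan numbers coincide for $X$ and $G/B$ (cf.\ Corollary \ref{cor:psi}), the induction hypothesis gives $\phi_{s+1}^*(\overline f^*_{\ell[s+1]}K_{l_{r-s}})=f^*_{\ell[s+1]}K_{l_{r-s}}$, and hence an induced isomorphism
$$
H^1(Z_{\ell[s+1]},f^*_{\ell[s+1]}K_{l_{r-s}})\;\xrightarrow{\ \sim\ }\;H^1(\overline Z_{\ell[s+1]},\overline f^*_{\ell[s+1]}K_{l_{r-s}}).
$$

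The crux is controlling the extension classes through this isomorphism. Because $\cD$ is simply-laced, one has $K_{l_{r-s}}\cdot\Gamma_{l_i}\in\{0,1,-2\}$ for all $i<r-s$, so Corollary \ref{cor:simple} applies to the truncated sequence $\ell[s]$: both $H^1$ groups above are at most one-dimensional, generated respectively by $\zeta_{\ell[s]}$ and $\overline\zeta_{\ell[s]}$, and each of these classes vanishes exactly when the combinatorial set $\{i<r-s\mid l_i=l_{r-s}\}$ is empty. As this set depends only on $\ell$, I conclude $\zeta_{\ell[s]}=0\iff\overline\zeta_{\ell[s]}=0$. In the split case $\cF_{\ell[s]}$ and $\phi_{s+1}^*\overline\cF_{\ell[s]}$ are both the trivial extension of $\cO$ by the same line bundle; in the non-split case both classes are nonzero vectors in a one-dimensional space, hence proportional, so the extensions agree up to a nonzero scalar. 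Either way $\cF_{\ell[s]}\cong\phi_{s+1}^*\overline\cF_{\ell[s]}$, and projectivizing produces $\phi_s\colon Z_{\ell[s]}\to\overline Z_{\ell[s]}$ covering $\phi_{s+1}$; this $\phi_s$ sends the tautological bundle and the new section divisor to their counterparts, so it realizes $\psi_{\ell[s]}$, closing the induction. Taking $s=0$ gives $Z_\ell\cong\overline Z_\ell$.

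The main obstacle is exactly this control of $\zeta_{\ell[s]}$: a priori the isomorphism class of $\P(\cF_{\ell[s]})$ depends on the actual point $[\zeta_{\ell[s]}]\in\P(H^1)$, not merely on whether $\zeta_{\ell[s]}$ vanishes, so $X$ and $G/B$ cannot be matched from combinatorics alone unless this $H^1$ is one-dimensional. It is the simply-laced hypothesis, via Corollary \ref{cor:simple}, that forces one-dimensionality and reduces the matching to the combinatorial vanishing criterion. This is precisely the step that fails for multiply-laced diagrams, explaining why ${\rm F}_4$ must be treated separately.
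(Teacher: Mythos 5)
Your proof is correct and is essentially the paper's own argument: both proceed by induction on the length of the sequence (your descending induction on the truncation index $s$ is the same induction as the paper's induction on $r$), reducing the inductive step to showing that the extension classes $\zeta_{\ell[s]}$ and $\overline\zeta_{\ell[s]}$ are proportional, which follows from Corollary \ref{cor:simple} because the simply-laced hypothesis forces $K_{l_{r-s}}\cdot\Gamma_{l_i}\in\{0,1,-2\}$. Your explicit bookkeeping that each $\phi_{s+1}$ realizes $\psi_{\ell[s+1]}$ on $N^1$, so that the two $H^1$ groups can be canonically identified, merely makes precise what the paper compresses into the phrase ``up to this isomorphism.''
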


\begin{proof}
We will use induction on $r$, noting that the result is obvious for $r=1$. Then, given $\ell$, we may assume that $Z_{\ell[1]}\simeq\overline Z_{\ell[1]}$. It is then enough  to show that the cocycles $\zeta_\ell,\overline \zeta_\ell\in H^1(Z_{\ell[1]}, e^{K_{l_r}})$ defining $Z_{\ell}$ and $\overline Z_{\ell}$, respectively, are proportional, up to this isomorphism. Since $K_{l_r} \cdot \Gamma_{l_i} \in \{0,1,-2\}$ for every $i<r$, by the assumptions on $\cD$, this follows by Corollary \ref{cor:simple}.
\end{proof}

The case in which $\cD={\rm B}_n$ or ${\rm C}_n$ is more involved. Let us recall that in this situation, with the standard ordering (cf. \cite[page 58]{H}), $K_i\cdot \Gamma_j\in\{0,1,-2\}$ unless $(i,j)=(n-1,n)$, in the case ${\rm B}_n$, or $(i,j)=(n,n-1)$, in the case $C_n$. Hence, in order to obtain a uniqueness result, we will need to control cohomology groups of the type $H^1(Z_\ell,e^{K_{n-1}})$ and $H^1(Z_\ell,e^{K_{n}})$, for ${\rm B}_n$ and ${\rm C}_n$, respectively, and some particular choices of $\ell$. The precise statement we are going to prove is the following:

\begin{lemma}\label{lem:uniqueBC} Assume that $\cD = {\rm B}_n$ (resp. ${\rm C}_n$) and, given $k\in\{ 1,\dots, n-1\}$, consider the sequence $\ell=(u_n)^k$ (resp. $\ell=(d_n)^k$). Then $H^1(Z_\ell,e^{K_{n-1}})$ (resp. $H^1(Z_\ell,e^{K_{n}})$) is $1$-dimensional.
\end{lemma}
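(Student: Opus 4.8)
The plan is to prove the $B_n$ statement (the $C_n$ case being entirely symmetric under the order‑reversing duality that exchanges the two ends of the diagram, swapping $K_{n-1}\leftrightarrow K_n$ and $u_n\leftrightarrow d_n$). Write $\ell=(u_n)^k$; its last index is $l_r=n$, and in type $B_n$ one has $K_{n-1}\cdot\Gamma_n=2$ while $K_n\cdot\Gamma_{n-1}=1$. Since $K_{n-1}$ has degree $2\ge 0$ along $\Gamma_n$, I would begin from Lemma \ref{lem:j2}(1): the pushforward ${p_{\ell[1]}}_*f_\ell^*K_{n-1}$ is a rank‑three bundle on $Z_{\ell[1]}$ carrying a filtration whose graded pieces are $f_{\ell[1]}^*(K_{n-1}+tK_n)$ for $t=0,1,2$, and $H^i(Z_\ell,e^{K_{n-1}})\cong H^i(Z_{\ell[1]},{p_{\ell[1]}}_*f_\ell^*K_{n-1})$ for all $i$. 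Here $\ell[1]=(u_n)^{k-1}u_{n-1}$, whose last index is $n-1$.

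Next I would analyse the three graded pieces on $Z_{\ell[1]}$ separately. The middle one, $M_1:=K_{n-1}+K_n$, satisfies $M_1\cdot\Gamma_{n-1}=-1$, so it is acyclic by (DR2); because this middle piece is acyclic, the cohomology of the bundle reduces to a single long exact sequence relating it to the cohomologies of the two outer pieces $M_0:=K_{n-1}$ and $M_2:=K_{n-1}+2K_n$. For the bottom piece one has $M_0\cdot\Gamma_{n-1}=-2$, so (DR3) gives $H^i(Z_{\ell[1]},e^{K_{n-1}})\cong H^{i-1}(Z_{\ell[2]},e^{0})$; since a Bott--Samelson variety is rational, $H^\bullet(Z_{\ell[2]},\cO)$ is $\C$ concentrated in degree $0$ (e.g.\ Lemma \ref{lem:j1} applied to the nef bundle $\cO$), so $M_0$ contributes exactly a one‑dimensional $H^1$ and nothing else.

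Everything therefore reduces to the vanishing
\[
H^i\bigl(Z_{(u_n)^{k-1}u_{n-1}},e^{K_{n-1}+2K_n}\bigr)=0,\qquad i\ge 1.
\]
Granting this, the connecting map from the degree‑one class produced by $M_0$ into $H^2(Z_{\ell[1]},e^{M_2})$ is forced to vanish, and the long exact sequence yields $H^1(Z_\ell,e^{K_{n-1}})\cong\C$ (with all other cohomology zero), as claimed. For $k=1$ the vanishing is immediate: the sequence $u_{n-1}$ involves only the indices $1,\dots,n-1$, along whose fibres $K_{n-1}+2K_n$ has degree in $\{0,1\}$, so $f^*(K_{n-1}+2K_n-K_X/2)$ is nef and Corollary \ref{cor:KKV2} gives the result.

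The main work is to establish this vanishing for $k\ge 2$, and this is the delicate, error‑prone step. I would descend $K_{n-1}+2K_n$ through $\ell[1]$ by the descent rules, bounding above by (DR4) at the steps of positive degree (which suffices, since one only needs $h^i\le 0$ for $i\ge 1$). The structural facts that make the bookkeeping terminate are: \emph{(a)} $K_{n-1}+2K_n$ has degree $-2$ along $\Gamma_n$; and \emph{(b)} the branchings produced by (DR4) only add divisors $K_j$ with $j\le n-2$, none of which meet $\Gamma_n$, so the degree along every occurrence of the index $n$ remains $-2$. Hence at each such occurrence (DR3) applies, lowering cohomological degree and replacing the class by one whose degree along the next fibre $\Gamma_{n-1}$ equals $-1$ (killing the branch by (DR2)) unless a further index $n-2$ has meanwhile been added, in which case the branch continues with one fewer block $u_n$. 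Organising this finite branching descent as an induction on the number of remaining blocks is the heart of the proof. The $C_n$ case runs identically with $n-1$ and $n$ interchanged and $u_n$ replaced by $d_n$, the only cosmetic difference being that one first passes through the degree‑zero indices $1,\dots,n-2$ at the tail of $d_n$ before reaching the step of degree $2$.
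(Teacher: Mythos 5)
Your reductions are sound as far as they go, and they follow essentially the same route as the paper, with one legitimate repackaging: where the paper bounds $h^1(Z_\ell,e^{K_{n-1}})\le 1+h^1\big(Z_{\ell[1]},e^{K_{n-1}+2K_n}\big)$ directly by (DR4), disposing of the middle term $e^{K_{n-1}+K_n}$ by Corollary \ref{cor:KKV2} and getting the lower bound from Corollary \ref{cor:nonsplit}, you run the full filtration of Lemma \ref{lem:j2}(1) and kill the middle factor by (DR2) (correct, since $(K_{n-1}+K_n)\cdot\Gamma_{n-1}=-1$ and $\ell[1]$ ends in $n-1$). Your computation that the bottom factor contributes exactly one class in $H^1$ via (DR3) matches the paper's $h^1(Z_{\ell[1]},e^{K_{n-1}})=1$, and the decisive statement you isolate --- vanishing of higher cohomology of $e^{K_{n-1}+2K_n}$ on $Z_{\ell[1]}$ --- is exactly the paper's $h^1(Z_{\ell[2]},e^{D_{n-1}})=0$ (the paper passes to $Z_{\ell[2]}$ by (DR1) since $(K_{n-1}+2K_n)\cdot\Gamma_{n-1}=0$). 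Your $k=1$ case via Corollary \ref{cor:KKV2} is also fine. Note, incidentally, that your long-exact-sequence conclusion $H^1\cong\C$ uses $H^2$ of the top factor as well, so your unproven vanishing claim is strictly stronger than what the paper needs; the paper sidesteps this by proving only $h^1=0$ and invoking Corollary \ref{cor:nonsplit} for nontriviality.

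The genuine gap is that you never prove the key vanishing for $k\ge 2$: you describe a ``finite branching descent'' and declare that organising it ``is the heart of the proof'' --- but that organisation is precisely the content of the lemma. The paper executes it concretely: setting $D_i:=\sum_{j=i}^{n-1}K_j+2K_n$, the intersection numbers $D_i\cdot\Gamma_{i-1}=1$ and $D_i\cdot\Gamma_j=0$ for $j<i-1$ let (DR4) and (DR1) produce the recursion $h^1\big(Z_{\ell[n-i+1]},e^{D_i}\big)\le h^1\big(Z_{\ell[n]},e^{D_i}\big)+h^1\big(Z_{\ell[n-i+2]},e^{D_{i-1}}\big)$, hence $h^1\big(Z_{\ell[2]},e^{D_{n-1}}\big)\le\sum_{i=1}^{n-1}h^1\big(Z_{\ell[n]},e^{D_i}\big)$, and each summand dies by (DR3), (DR1), (DR2) using $D_i\cdot\Gamma_n=-2$ and $(D_i-K_n)\cdot\Gamma_i=-1$. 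Your structural facts (a) and (b) are true and are what makes this terminate, but stating them is not the induction; indeed your sketch already contains an inaccuracy at exactly the delicate point: after (DR3) the class $D_i-K_n$ has degree $0$, not $-1$, along $\Gamma_{n-1}$ whenever $i\le n-2$, and such branches are killed only after descending through a string of degree-zero steps to $\Gamma_i$. Finally, the ${\rm C}_n$ case is not obtained from ${\rm B}_n$ by any symmetry of the data (the diagrams are Langlands dual, not mirror images): the paper must use different auxiliary divisors $D'_i=\sum_{j=i}^{n-1}2K_j+K_n$, and its recursion (\ref{eq:Cn}) runs over the $k$ blocks of $\ell$ rather than over the nodes within a single block, so that case requires its own, structurally different, computation rather than the relabelling you propose.
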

\begin{proof}
Let us consider first the case $\cD = {\rm B}_n$. By Corollary~\ref{cor:nonsplit}, it is enough to prove $h^1(Z_\ell,e^{K_{n-1}})\leq 1$. We start by noting that  $h^1(Z_{\ell[1]},e^{K_{n-1}})=1$ by (DR3), and that $h^1(Z_{\ell[1]},e^{K_{n-1}+K_n})=0$ by Corollary \ref{cor:KKV2}. Then (DR4) provides:
\begin{equation*}
h^1(Z_{\ell}, e^{K_{n-1}})\leq 1+ h^1(Z_{\ell[1]}, e^{K_{n-1}+2K_n})=1+ h^1(Z_{\ell[2]}, e^{K_{n-1}+2K_n}),
\end{equation*}
where the last equality follows by (DR1), since $(K_{n-1}+2K_n)\cdot\Gamma_{n-1}=0$.

Set $D_i:= \sum_{j=i}^{n-1} K_j + 2K_n$ for every $i=1, \dots, n-1$. The proof will be finished by showing that $h^1(Z_{\ell[2]}, e^{D_{n-1}})=0$.

Note that, since $D_i\cdot \Gamma_{i-1}=1$ and $D_i\cdot \Gamma_j=0$ for any $i=2,\cdots n-1$ and any $j< i-1$, (DR4) and (DR1) provide
\begin{equation*}
\begin{array}{lll}\vspace{0.2cm}
h^1\big(Z_{\ell[n-i+1]}, e^{D_{i}}\big) &\le& h^1\big(Z_{\ell[n-i+2]}, e^{D_{i}}+e^{D_{i-1}}\big)\\
&\le& h^1\big(Z_{\ell[n]}, e^{D_{i}}\big) +h^1\big(Z_{\ell[n-i+2]}, e^{D_{i-1}}\big),
\end{array}
\end{equation*}
for every $i=2,\cdots, n-1$. Hence, recursively we obtain:
\begin{equation*}
h^1\big(Z_{\ell[2]}, e^{D_{n-1}}\big)\leq\sum_{i=1}^{n-1}h^1\big(Z_{\ell[n]}, e^{D_{i}}\big).
\end{equation*}

Note that $\ell[n]=(u_n)^{k-1}$, hence the right hand side of this inequality is obviously zero if $k=1$. In the case $k>1$, since $D_i\cdot \Gamma_{n}=-2$ for every $i$, (DR3) provides:
$$
h^1\big(Z_{\ell[n]}, e^{D_{i}}\big)  =   h^0\big(Z_{\ell[n+1]}, e^{D_{i}-K_{n}}\big).
$$
Since $(D_i-K_n)\cdot\Gamma_j=0$ for $j=i+1,\cdots, n-1$, by (DR1) we have
$$
h^0\big(Z_{\ell[n+1]}, e^{D_{i}-K_{n}}\big)=h^0\big(Z_{\ell[2n-i]}, e^{D_{i}-K_{n}}\big),
$$
and this group is zero by (DR2).

The case $\cD = {\rm C}_n$ follows by a similar argument, in which we will need to consider the line bundles $D'_i:= \sum_{j=i}^{n-1} 2K_j + K_n$, for $i=1,\dots,n-1$. Note that $D'_i\cdot\Gamma_j$ equals $-2$ if $j=i$, $2$ if $j=i-1$ and zero otherwise. In this situation (DR1), (DR4) and (DR3), together with Corollary \ref{cor:KKV2}, provide:
\begin{equation*}
\begin{array}{lcl}\vspace{0.2cm}
h^1\big(Z_{\ell}, e^{K_{n}}\big)\hspace{-0.2cm}&=&\hspace{-0.2cm}h^1\big(Z_{\ell[n-2]}, e^{K_{n}}\big)\leq
h^1\big(Z_{\ell[n-1]}, e^{K_{n}}+e^{K_{n-1}+K_n}+e^{D'_{n-1}}\big)\\
&\leq &1+ h^1\big(Z_{\ell[n-1]}, e^{D'_{n-1}}\big),
\end{array}
\end{equation*}
and we may conclude by showing that $h^1(Z_{\ell[n-1]}, e^{D'_{n-1}})=0$.
We claim first that
\begin{equation}\label{eq:Cn} h^1\big(Z_{\ell[n-1]}, e^{D'_{n-1}}\big)\leq h^1\big(Z_{\ell[i(n-1)]}, e^{D'_{n-i}}\big),\mbox{ for all }i=2,\dots,k.
\end{equation}
We may write, by (DR1), (DR4) and Corollary \ref{cor:KKV2}:
$$
\begin{array}{lcl}\vspace{0.2cm}
h^1\big(Z_{\ell[(i-1)(n-1)]}, e^{D'_{n-i+1}}\big)&=&h^1\big(Z_{\ell[i(n-1)-1]}, e^{D'_{n-i+1}}\big)\\\vspace{0.2cm}
&\leq &h^1\big(Z_{\ell[i(n-1)]}, e^{D'_{n-i+1}}
+e^{D'_{n-i+1}+2K_{n-i}})\big.
\end{array}
$$
Since, by (DR3), $h^1(Z_{\ell[i(n-1)]}, e^{D'_{n-i+1}})=h^0(Z_{\ell[i(n-1)+1]}, e^{D'_{n-i+1}-K_{n-i+1}})$, and this is zero
by (DR2), the claim follows.

Now, using the inequality (\ref{eq:Cn}), together with (DR1)
we may finally write:
$$
h^1\big(Z_{\ell[n-1]}, e^{D'_{n-1}}\big) \leq h^1\big(Z_{\ell[k(n-1)]}, e^{D'_{n-k}}\big)=h^1\big(Z_{\ell[kn]}, e^{D'_{n-k}}\big),
$$
which is trivially equal to zero. This finishes the proof.
\end{proof}

We may now state the following uniqueness result:

\begin{proposition}\label{prop:uniqueBC}
Assume that $\cD = {\rm B}_n$ (resp. ${\rm C}_n$) and consider the sequence $\ell=(u_n)^n$ (resp. $(d_n)^n$). Then, for every $i$,
the group $H^1(Z_{\ell[i]},e^{K_n})$ is generated by $\zeta_{\ell[i-1]}$, and this cocycle is zero unless $i<n(n-1)$. In particular, $Z_{\ell[i]}\simeq \overline Z_{\ell[i]}$ for every $i$.
\end{proposition}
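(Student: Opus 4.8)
The plan is to construct the isomorphisms $Z_{\ell[i]}\simeq\overline Z_{\ell[i]}$ by downward induction on the truncation index, starting from $Z_{\ell[r]}=\{x\}=\overline Z_{\ell[r]}$ and descending to $Z_{\ell[0]}=Z_\ell$. At each step I assume $Z_{\ell[i]}\simeq\overline Z_{\ell[i]}$ and note that both $Z_{\ell[i-1]}$ and $\overline Z_{\ell[i-1]}$ are $\P^1$-bundles over this common base, arising as the projectivizations of the extensions (\ref{eq:F}) of $\cO$ by $f_{\ell[i]}^*K_{l_{r-i+1}}$ whose classes are $\zeta_{\ell[i-1]}$ and $\overline\zeta_{\ell[i-1]}$ in $H^1(Z_{\ell[i]},e^{K_{l_{r-i+1}}})$. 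The elementary fact I would use is that two such extensions with proportional classes have isomorphic projectivizations; hence it suffices to prove, for each $i$, that this $H^1$ is at most one-dimensional and that $\zeta_{\ell[i-1]}$ vanishes exactly when $\overline\zeta_{\ell[i-1]}$ does. Here the induction carries along the fact that the isomorphism $Z_{\ell[i]}\simeq\overline Z_{\ell[i]}$ sends $f_{\ell[i]}^*K_{l_{r-i+1}}$ to $\overline f_{\ell[i]}^*\overline K_{l_{r-i+1}}$ (compatibly with Corollary \ref{cor:psi}), so the two cohomology groups are identified and the argument closes.

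The one-dimensionality splits into two cases according to the node $m:=l_{r-i+1}$. If $K_m\cdot\Gamma_j\in\{0,1,-2\}$ for every index $j$ occurring in $\ell[i]$ --- which is the case for all nodes except the unique one with an entry equal to $2$ in the $m$-th row of the Cartan matrix ($m=n-1$ for ${\rm B}_n$, $m=n$ for ${\rm C}_n$) --- Corollary \ref{cor:simple} applies verbatim and shows that $H^1(Z_{\ell[i]},e^{K_m})$ is generated by $\zeta_{\ell[i-1]}$, vanishing precisely when $m$ does not reoccur earlier. For the exceptional node the descent of Corollary \ref{cor:simple} breaks down because of the entry $2$, and here I would invoke Lemma \ref{lem:uniqueBC}: after peeling off, by Proposition \ref{prop:descent}, the trailing indices $l_t$ of $\ell[i]$ with $K_m\cdot\Gamma_{l_t}\in\{0,1\}$, the computation reduces to $H^1$ of a truncation of the exact shape $(u_n)^k$ (resp. $(d_n)^k$) covered by that lemma, which is one-dimensional. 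This is precisely why the global sequences $(u_n)^n$ and $(d_n)^n$ were chosen.

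For the vanishing statement I would use Corollary \ref{cor:nonsplit}: $\zeta_{\ell[i-1]}$ is nonzero as soon as the index $l_{r-i+1}$ has already appeared among $l_1,\dots,l_{r-i}$, and is zero otherwise. Since the first block $u_n$ (resp. $d_n$), of length $n$, consists of pairwise distinct indices, no such repetition can occur while $r-i+1\le n$, i.e. for $i$ beyond the threshold $n(n-1)$, whereas every index reoccurs once a second block is entered; this yields the claimed dichotomy. The main obstacle, and the genuine content of the statement, is the exceptional node: one must check that the chosen global sequence truncates, after the Proposition \ref{prop:descent} reduction, to exactly the form $(u_n)^k$ / $(d_n)^k$ required by Lemma \ref{lem:uniqueBC}, and that the identical combinatorial pattern governs $\overline\zeta_{\ell[i-1]}$ on the homogeneous model, so that the two $\P^1$-bundles are simultaneously split or simultaneously nonsplit. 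Once this bookkeeping is in place the induction runs down to $i=0$ and delivers $Z_{\ell[i]}\simeq\overline Z_{\ell[i]}$ for every $i$.
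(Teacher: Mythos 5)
Your proposal is correct and follows essentially the same route as the paper's proof: downward induction on the truncation, Corollary \ref{cor:simple} for the non-exceptional nodes, Proposition \ref{prop:descent} to peel off the trailing partial block and reduce the exceptional node ($n-1$ for ${\rm B}_n$, $n$ for ${\rm C}_n$) to the one-dimensionality statement of Lemma \ref{lem:uniqueBC}, and Corollary \ref{cor:nonsplit} to conclude that the two nonzero extension classes $\zeta_{\ell[i-1]}$, $\overline\zeta_{\ell[i-1]}$ are proportional and hence define isomorphic $\P^1$-bundles. The bookkeeping you flag as the remaining obstacle is precisely what the paper carries out (with $k$ the integral part of $(n^2-i)/n$), so nothing essential is missing.
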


\begin{proof}
We will show the proof for $\cD={\rm B}_n$, being the case $\cD={\rm C}_n$ analogous. We proceed by induction on $i$. Let us assume that $Z_{\ell[i+1]}\simeq \overline Z_{\ell[i+1]}$ and consider the cocycles $\zeta_{\ell[i]},\overline\zeta_{\ell[i]} \in H^1(Z_{\ell[i+1]}, e^{K_{l_{n^2-i}}})$. If $l_{n^2-i}\neq n-1$, then the result follows from Corollary \ref{cor:simple}. If $l_{n^2-i}=n-1$, then let $k$ denote the integral part of $(n^2-i)/n$ and $\ell'=(u_n)^k$. If $k=0$, then $H^1(Z_{\ell[i+1]}, e^{K_{n-1}})=0$ by Corollary \ref{cor:simple}. If $k>0$, Proposition \ref{prop:descent} and Lemma \ref{lem:uniqueBC} provide:
$$
H^1\big(Z_{\ell[i+1]}, e^{K_{n-1}}\big)\simeq H^1\big(Z_{(u_n)^k}, e^{K_{n-1}}\big)\simeq \C.
$$
Since  $\zeta_{\ell[i]}$ and $\overline\zeta_{\ell[i]}$ are nonzero by Corollary \ref{cor:nonsplit}, they are necessarily proportional, from which we get $Z_{\ell[i]}\simeq \overline Z_{\ell[i]}$.
\end{proof}

\begin{proof}[of Theorem \ref{conj:CPforFT} for $\cD \not = {\rm F}_4$]
The case $\cD={\rm G}_2$ has been considered in Lemma \ref{lem:G2}.
Note also that $\dim X$ is equal to the dimension of its homogeneous model $G/B$, by Proposition \ref{prop:dim}. In particular, the assumption $\dim(X)\neq 24$ implies that $\cD$ is different from ${\rm F}_4$.

For $\cD\neq {\rm G}_2,{\rm F}_4$ we know, from Propositions \ref{prop:uniqueADE} and \ref{prop:uniqueBC}, that there exists a sequence $\ell=(l_1, \dots, l_r)$ satisfying that $w(\ell)=r_{l_1}\circ\dots\circ r_{l_r}$ is a reduced expression of the longest element of $W$, and such  that $Z_\ell=\overline{Z}_\ell$.
The two maps $f_{\ell}:Z_\ell\to X$ and $\overline{f}_\ell:Z_\ell\to G/B$ are both birational (cf. Corollary \ref{cor:birat}), and
they both have the same Stein factorization, by Corollary \ref{cor:stein}.
It follows that $X\simeq G/B$.\end{proof}

\section{Manifolds whose Dynkin diagram is ${\rm F_4}$}\label{sec:F4}

In this section
we will prove Theorem \ref{conj:CPforFT} for the case $\cD={\rm F}_4$, by following the line of argumentation of \cite[Section 6]{MOSW}. More concretely,
numbering the nodes of $\cD$ as in \cite[p.~58]{H},
we will consider the contraction $\pi^1:X \to X^1$ associated with the three-dimensional face of $\NE(X)$ generated by the rays $R_i$ $i \not = 1$; we will first show that $X^1$ is isomorphic to the corresponding rational homogeneous space by comparing their varieties of minimal rational tangents (see definition below), and then we will reconstruct inductively the variety $X$ upon $X^1$.

Let us briefly recall first some well-known facts on varieties of minimal rational tangents (cf. \cite{Hw2}), and introduce some notation.

\begin{notation}\label{notn:VMRT}
Given a smooth projective variety $Y$, a family
of rational curves on $Y$ is an irreducible component of the
scheme $\rat^n(Y)$ (see \cite[II.2]{Ko}). We say that a family
$\cH$ is {\it unsplit} if $\cH$ is a proper $\C$-scheme.

For a general point $y\in Y$, let $\cH_y$ be the normalization of the family of $\cH$-curves passing through $y$, and consider the corresponding universal family:
\[\xymatrix@=25pt{
 \cU_y \ar[d]^{\pi_y}  \ar[r]^{\iota_y}  &Y   \\
 \cH_y \ar@/^/[u]^{\sigma_{y}} &  \\
} \]
where $\sigma_y$ is the unique section of $\pi_y$ such that $\iota_y(\sigma_y(\cH_y))=\{y\}$.
Setting $\cE_y=(\pi_y)_*\cO_{\cU_y}(\sigma_y)$ we have that $\cU_y \simeq \P(\cE_y)$ and $\cO_{\cU_y}(\sigma_y)$ is the tautological line bundle.
A member of $\cH_y$ is called an {\it $\cH_y$-curve}.
We define the tangent map ${\tau}_y : {\cH}_y \rightarrow \P({T_yY}^{\vee})$ by assigning the tangent vector at $y$ to each $\cH_y$-curve which is smooth at $y$. We denote by $\cC_y \subset \P({T_yY}^{\vee})$ the closure of the image of ${\tau}_y$, which is called the {\it VMRT}    (\textit{\!Variety of Minimal Rational Tangents}) at $y$.
By \cite[Sequence 2.6]{AC},  $\tau_y^* \cO_{\cC_y}(1)= c_1(\cE_y)$.\par
\end{notation}

The VMRT's of a Fano manifold are not necessarily smooth. Later on we will need to use the following smoothness criterion.

\begin{lemma}\label{lem:smoothVMRT} Let $Y$ be a Fano manifold of Picard number one and denote by $\cO_Y(1)$ the ample generator of $\Pic(Y)$. Assume that $\cO(1)$ is spanned and that the rational curves of a family $\cH$ have degree one with respect to $\cO_Y(1)$.
Then, at a general point $y \in Y$ the variety of minimal rational tangents $\cC_y$ of $\cH$ is smooth.
\end{lemma}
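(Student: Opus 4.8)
The plan is to prove that, for general $y$, the tangent map $\tau_y:\cH_y\to\P(T_yY^\vee)$ is a closed embedding onto $\cC_y$. Since $\cH_y$ is a smooth projective variety for general $y$, this immediately yields the smoothness of $\cC_y$. Throughout I would use freely two standard facts from the theory of minimal rational curves (cf. \cite{Hw2}): for general $y$ the morphism $\tau_y$ is finite and an immersion (so it is unramified), and any two distinct members of $\cH_y$ meet only at $y$. Granting these, the whole content of the lemma reduces to proving that $\tau_y$ is \emph{injective}, and this is exactly the point at which the two hypotheses on $\cO_Y(1)$ are used.

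First I would exploit that $\cO_Y(1)$ is ample and spanned to produce the morphism $\phi:Y\to\P^N=\P(H^0(Y,\cO_Y(1))^\vee)$ with $\phi^*\cO_{\P^N}(1)=\cO_Y(1)$. Since $\cO_Y(1)$ is semiample with ample associated line bundle, $\phi$ is finite. Working over $\C$, a finite dominant morphism is generically \'etale, so for general $y$ the map $\phi$ is a local isomorphism at $y$; in particular $d\phi_y:T_yY\to T_{\phi(y)}\P^N$ is injective and $\phi$ is injective on a neighbourhood $U$ of $y$. The degree-one hypothesis then says that for every $C\in\cH_y$ the image $\phi(C)$ is a degree-one curve through $\phi(y)$, i.e.\ a line, and that $\phi|_C$ is birational onto it. (Note that degree-one curves cannot degenerate, so $\cH$ is automatically unsplit and $\cH_y$ is proper.)

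The heart of the argument is the injectivity of $\tau_y$. Suppose $C_1\neq C_2$ are members of $\cH_y$ with $\tau_y(C_1)=\tau_y(C_2)$, that is, with the same tangent direction at $y$. Applying the injective differential $d\phi_y$, the lines $\phi(C_1)$ and $\phi(C_2)$ acquire the same tangent direction at $\phi(y)$; since a line in $\P^N$ through a fixed point is determined by its tangent direction there, we obtain $\phi(C_1)=\phi(C_2)=:L$. Now $\phi(C_1\cap U)$ and $\phi(C_2\cap U)$ are dense open subsets of the irreducible curve $L$, so they meet at some point $q\neq\phi(y)$, say $q=\phi(a_1)=\phi(a_2)$ with $a_i\in C_i\cap U$. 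The injectivity of $\phi|_U$ forces $a_1=a_2$, giving a point of $C_1\cap C_2$ distinct from $y$. This contradicts the fact that two distinct members of $\cH_y$ meet only at $y$, and therefore $\tau_y$ is injective.

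Finally, combining injectivity with the properness (finiteness) and the immersion property of $\tau_y$, I conclude that $\tau_y$ is a closed embedding of the smooth variety $\cH_y$ onto $\cC_y$, whence $\cC_y$ is smooth at general $y$. The main obstacle is precisely the upgrade from the \emph{generic} injectivity supplied by the general theory (birationality of the tangent map) to injectivity at \emph{every} point of $\cH_y$; what makes this possible is the degree-one hypothesis, which turns the members of $\cH_y$ into genuine lines after applying $\phi$ and lets the elementary projective geometry of lines, together with the non-splitting of minimal curves through a general point, rule out the remaining identifications.
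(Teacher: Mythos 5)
There is a genuine gap, and it sits exactly at the step you dismiss as a ``standard fact'': the assertion that for general $y$ the tangent map $\tau_y$ is an \emph{immersion}. This is not part of the general theory of minimal rational curves. What is standard is that $\tau_y$ is finite (Kebekus), that it is birational onto $\cC_y$ (Hwang--Mok), that it is injective when all members of the family are smooth curves (this is \cite[Theorem~1.3]{KK}, which the paper invokes), and that $\tau_y$ is immersive \emph{at a point of $\cH_y$ corresponding to a standard curve} (\cite[Proposition~1.4]{Hw2}). Since only the \emph{general} member of $\cH_y$ is known to be standard, $\tau_y$ is a priori an immersion only generically on $\cH_y$, and indeed $\cC_y$ can be singular for general minimal families --- if immersivity were automatic, then together with Kebekus--Kov\'acs injectivity the VMRT would be smooth under hypotheses far weaker than those of the lemma, making the assumptions on $\cO_Y(1)$ nearly superfluous. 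The paper's proof spends its entire second half precisely on this point: it uses the spanned, degree-one polarization to construct a finite morphism $\psi:Y\to\P^d$ (a general linear subsystem of $|\cO_Y(1)|$, with $d=\dim Y$) sending every $C\in\cH_y$ to a line; since $y$ is general, $C$ is not contained in the ramification locus, so $N_{C/Y}\to N_{\psi(C)/\P^d}\cong\cO_{\P^1}(1)^{\oplus d-1}$ is generically surjective, and combining this with the global generation of $N_{C/Y}$ (freeness of every curve through a general point) forces $N_{C/Y}\cong\cO_{\P^1}(1)^{\oplus a}\oplus\cO_{\P^1}^{\oplus b}$, i.e.\ \emph{every} member of $\cH_y$ is standard. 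Your proposal contains no substitute for this argument, so the conclusion ``injective immersion, hence closed embedding, hence $\cC_y$ smooth'' does not go through as written.

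Two secondary remarks. First, your other black box --- that two distinct members of $\cH_y$ meet only at $y$ --- is likewise not a standard fact for general minimal families; but this one is inessential to your own argument: once you have $\phi(C_1)=\phi(C_2)=L$ and $\phi$ injective on an (analytic) neighbourhood $U$ of $y$, every point of the common open image in $L$ near $\phi(y)$ pulls back to a common point of $C_1$ and $C_2$, so the two curves agree on an infinite set and hence $C_1=C_2$ outright. With that repair, your injectivity argument via lines in $\P^N$ is a correct, self-contained alternative to the paper's appeal to \cite[Theorem~1.3]{KK} (whose smoothness hypothesis the paper verifies exactly as you implicitly do: degree one under $\phi$ forces each member to be a smooth $\P^1$ mapping birationally to a line, and in particular the family is unsplit). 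Second, then, the accurate summary is: your proposal handles the injectivity half of the lemma by a legitimate different route, but the immersivity half --- which is where the hypotheses on $\cO_Y(1)$ do their real work in the paper, via the normal-bundle computation above --- is missing, and the lemma is not proved without it.
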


\begin{proof} Note first that the hypotheses imply that the family $\cH$ is unsplit and that all its elements are smooth rational curves in $Y$. Hence we may apply \cite[Theorem~1.3]{KK}, to get that the tangent map $\tau_y$ is injective. According to \cite[Proposition~1.4]{Hw2}, it is then enough to show that every $\cH_y$-curve $C$ is standard, that is, $T_{Y}|_C \cong \cO_{\P^1}(2)\oplus \cO_{\P^1}(1)^{\oplus a}\oplus \cO_{\P^1}^{\oplus b}$, for some $a,b$.

To prove this, let $\psi: Y \rightarrow \P^d$ be a surjective morphism defined by a linear system in $|\cO_Y(1)|$, where $d:=\dim Y$.
The image via  $\psi$ of  $C$ is a line in $\P^d$ and, by the generality of $y \in Y$, $C$ is not contained in the ramification locus of $\psi$. In particular, the natural morphism of normal bundles $N_{C/Y} \rightarrow N_{\psi(C)/\P^d} \cong \cO_{\P^1}(1)^{\oplus d-1}$ is generically surjective. Since moreover, the generality of $C$ also implies that $N_{C/Y}$ is globally generated, it follows that $C$ is standard.
\end{proof}

\begin{notation}\label{notn:f4}
From now on $X$ will be a Fano manifold whose elementary contractions are smooth $\P^1$-fibrations, with
Dynkin diagram ${\rm F}_4$. Besides the notation introduced in the previous sections of the paper, we will denote by $R_J$, $J \subset I:=\{1,2,3,4\}$, the extremal face of $\cNE{X}$ spanned by the rays $R_j$, $j \in J$. Its associated contraction will be denoted either by $\pi_J:X \to X_J$, or by
$\pi^{J^c}:X \to X^{J^c}$, where $J^c$ is the complement of $J$ in $I$.
Moreover, we will denote
by $\cD_J$ the Dynkin diagram obtained from $\cD$ by deleting the nodes
which are not in $J$ and by $W_J$ the Weyl group of $\cD_J$.
Finally, we will denote by $\overline{X}$ the homogeneous model $G/B$ of $X$, and we will add an overline to the usual notation ($\overline{R}_J$, $\overline{\pi}_J$,\dots) to denote the corresponding extremal faces, contractions, etc.
\end{notation}

\subsection{Smoothness of the contractions of $X$}\label{ssec:smoothcont}

In this section we will show that, for every $J \subset I$, the contraction $\pi_J:X \to X_J$ is smooth,
and any fiber of $\pi_J$ is the complete flag manifold with Dynkin diagram $\cD_J$.

We will follow here the notation and ideas from \cite[IV. Section 4]{Ko}. For each $j = 1, \dots, 4$
 the proper prerelation given by $X\stackrel{s_j}{\longleftarrow}U_j\stackrel{w_j}{\longrightarrow}X$, and $X\stackrel{\sigma_j}{\longrightarrow}U_j$, fitting in the diagram:
$$
\xymatrix@=35pt{\hspace{-0.7cm}U_j:=X\times_{X_j}X\ar[r]^{w_j\hspace{-0.4cm}}\ar[d]^{s_j}&X\ar[d]^{\pi_{j}} \\
X\ar@/^/[u]^{\sigma_{j}} \ar[ru]^{\mbox{\tiny id}}\ar[r]^{\pi_{j}}&X_{j}}
$$
The scheme $U_j$, together with the maps $w_j,s_j$, is a proper algebraic relation on $X$.

Moreover, any list $\ell=(l_1, \dots, l_r)$ in $I$ defines an algebraic relation $U_\ell:=U_{l_1}*\dots *U_{l_r}$ (see \cite[IV. Definition 4.3]{Ko}) which is proper (\cite[IV. Lemma 4.3.1]{Ko}) and satisfies, by construction, that $(x,y)\in\widetilde{U_\ell}$ if and only if $y\in f_\ell(Z_\ell)$, being $Z_\ell$ the Bott-Samelson variety determined by the list $\ell$ and by $Z_{\ell[r]}=\{x\}$.

Finally, for every subset $J\subset I$ we may consider the proalgebraic relation $\ch(J)$ defined as the countable union of the relations $U_\ell$ constructed with lists $\ell$ of indices in $J$. By \cite[IV. Theorem 4.16]{Ko} there exists a proper fibration $\pi:X^0 \to Y^0$, defined on an open subset $X^0 \subset X$, whose fibers are $\ch(J)$-equivalence classes.

\begin{proposition}\label{prop:fiberschubert} Let $J$ be any nonempty subset of $I$. Let $x \in X$ be any point, let
$\ell=(l_1, \dots, l_r)$ be a reduced list such that $l_i \in J$ for all $i$ and $w(\ell)$ is the longest word in $W_J$, and let
$Z_\ell$ be the Bott-Samelson variety associated to $\ell$ such that $Z_{\ell[r]}=\{x\}$.
Then $f_\ell(Z_\ell)$ is set-theoretically equal to $\pi_J^{-1}(\pi_J(x))$.
\end{proposition}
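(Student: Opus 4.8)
The plan is to prove the two set-theoretic inclusions separately, the trivial one being $f_\ell(Z_\ell)\subseteq \pi_J^{-1}(\pi_J(x))$ and the substantial one the reverse. The guiding idea is to identify the closed set $f_\ell(Z_\ell)$ with the $\ch(J)$-equivalence class of $x$, and then to identify that class with the fibre of $\pi_J$. For the easy inclusion I would argue that, since $Z_\ell$ is connected and $f_\ell$ sends every curve contracted by the tower of $\P^1$-bundles $Z_\ell$ to a curve whose class lies in $\{[\Gamma_{l_i}]\}\subseteq R_J$, the composition $\pi_J\circ f_\ell$ is constant; as $x\in f_\ell(Z_\ell)$, its image lies in $\pi_J^{-1}(\pi_J(x))$.

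The key step is to show that $f_\ell(Z_\ell)$ is saturated with respect to each relation $U_j$, $j\in J$. Here I would consider the list $\ell(j)=(l_1,\dots,l_r,j)$: by the very construction of the Bott-Samelson varieties, $f_{\ell(j)}(Z_{\ell(j)})$ is obtained from $f_\ell(Z_\ell)$ by moving, in the last step, along the $\pi_j$-fibres through its points, so it is the $U_j$-saturation of $f_\ell(Z_\ell)$ and in particular contains it. Now $w(\ell(j))=w(\ell)\circ r_j$ is the product of the longest element of $W_J$ with the generator $r_j$, hence has length $r-1$; thus $\ell(j)$ is not reduced, and Corollary \ref{cor:stein2} gives $\dim f_{\ell(j)}(Z_{\ell(j)})<r+1$, i.e. at most $r$. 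Since $f_\ell(Z_\ell)$ is irreducible of dimension $r$ (again by Corollary \ref{cor:stein2}, $\ell$ being reduced) and is contained in the irreducible set $f_{\ell(j)}(Z_{\ell(j)})$ of dimension at most $r$, the two must coincide. Hence $f_\ell(Z_\ell)$ is $U_j$-saturated for every $j\in J$. Being saturated for all the generators of $\ch(J)$, it is a union of $\ch(J)$-equivalence classes; as every point of $f_\ell(Z_\ell)$ is reachable from $x$ by chains of curves from the families $\pi_{l_i}$ with $l_i\in J$, it is precisely the $\ch(J)$-class of $x$, and it is closed of dimension $r$ for every choice of base point.

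It then remains to identify the $\ch(J)$-class of $x$ with $\pi_J^{-1}(\pi_J(x))$. One inclusion is immediate, since $\ch(J)$ is generated by the families $\pi_j$, $j\in J$, whose classes span the face $R_J$ contracted by $\pi_J$, so each $\ch(J)$-class sits inside a single $\pi_J$-fibre. For the converse I would use that a fibre $F$ of the contraction $\pi_J$ of the extremal face $R_J$ is connected and chain connected by curves whose class lies in $R_J$; by unsplitness of the families $\pi_j$ (cf. Lemma \ref{lem:basis}) such chains are chains of the $\Gamma_j$, $j\in J$, so $F$ is contained in a single $\ch(J)$-class, whence $F=f_\ell(Z_\ell)$. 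I expect this last passage to be the main obstacle: what one verifies readily is only that the generic $\ch(J)$-class coincides with the generic fibre of $\pi_J$, through the Kollár fibration $\pi:X^0\to Y^0$ of \cite[IV. Theorem 4.16]{Ko} whose fibres are the $\ch(J)$-classes over an open set. Upgrading this generic identification to one valid at \emph{every} point $x$ is exactly what the everywhere-closedness and constant dimension $r$ of the classes — furnished by the saturation argument above — are meant to supply, so the delicate part is to turn the equidimensionality of the closed classes into the statement that each connected $\pi_J$-fibre is a single class rather than a positive-dimensional family of them.
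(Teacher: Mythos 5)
Your first two steps are sound, and the saturation argument is in fact a clean repackaging of the paper's concluding step: appending $j\in J$ to $\ell$ gives a non-reduced word, so Corollary \ref{cor:stein2} forces $\dim f_{\ell j}(Z_{\ell j})\le r=\dim f_\ell(Z_\ell)$, whence equality of the two irreducible images and $U_j$-saturation of $f_\ell(Z_\ell)$. This correctly identifies $f_\ell(Z_\ell)$ with the $\ch(J)$-class of $x$ for \emph{every} $x$, and shows all classes are closed, irreducible of dimension $r$ --- slightly more than the paper makes explicit at this stage.

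The gap is exactly where you suspect it, and your fallback claim does not close it. The assertion that every fiber $F$ of $\pi_J$ is ``chain connected by curves whose class lies in $R_J$, hence by chains of the $\Gamma_j$'' conflates two different things: a curve contracted by $\pi_J$ has class in the cone $R_J$, i.e.\ a nonnegative combination of the $[\Gamma_j]$, but that does not make it (or even put it in a chain of) curves from the families $\Gamma_j$; and rational chain connectedness of \emph{special} fibers by these specific unsplit families is not automatic. A priori a special fiber could have dimension strictly greater than $r$ and be an uncountable disjoint union of your closed $r$-dimensional classes, so connectedness plus equidimensionality of the classes cannot finish the argument, as you yourself observe. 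The paper supplies precisely the two missing ingredients: first, taking $H$ to be the closure of $\pi^{-1}(H_Y)$ for a general very ample $H_Y$ on Koll\'ar's quotient $Y^0$, one gets an effective divisor trivial on the face $R_J$, which forces the \emph{general} fiber of $\pi_J$ to be a single $\ch(J)$-class; second, this generic statement makes $U_{\ell'}\to X\times_{X_J}X$ dominant for some finite list $\ell'$, and since the relation $U_{\ell'}$ is \emph{proper} its image is closed, hence the map is surjective --- so every fiber of $\pi_J$ is $\ch(J)$-connected by chains of bounded length. Only then does the appending/dimension argument (your saturation step) yield $f_\ell(Z_\ell)=\pi_J^{-1}(\pi_J(x))$ at every point. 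Without some properness-and-boundedness argument of this kind, your proof establishes ``class $=f_\ell(Z_\ell)\subseteq$ fiber'' everywhere and ``class $=$ fiber'' only generically, which falls short of the statement.
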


\begin{proof}
Since any $\ch(J)$-equivalence class is contained in a fiber of $\pi_J$, clearly $f_\ell(Z_\ell)\subset \pi_J^{-1}(\pi_J(x))$, hence it is enough to show the opposite inclusion.

With the same notation as above, let $H_Y$ be a general very ample effective divisor on $Y$ such that $H_Y \cap Y_0 \not = \emptyset$. Set $H^0= \pi^{-1}(H_Y)$ and denote by $H$ the closure of $H^0$ in $X$. This divisor is trivial on curves $\Gamma_{j_1}, \dots, \Gamma_{j_k}$, hence on the
face $R_J$. It follows that a general fiber of $\pi_J$ is a $\ch(J)$-class.

But then for some list $\ell'$,
the morphism $U_{\ell'} \longrightarrow X \times_{X_J} X$
is dominant. Since the relation $U_{\ell'}$ is proper, the morphism is surjective and, in particular, $\pi_J^{-1}(\pi_J(x))$ is $\ch(J)$-connected.

Finally, let us show that $f_\ell(Z_\ell)=\pi_J^{-1}(\pi_J(x))$. If this is not the case, since $\pi_J^{-1}(\pi_J(x))$ is $\ch(J)$-connected, there exists $l_{r+1}\in J$ such that, being $\bar{\ell}=(l_1,\dots,l_r,l_{r+1})$, we have $f_\ell(Z_\ell)\subsetneq f_{\bar{\ell}}(Z_{\bar{\ell}})$. Since these two varieties are irreducible, $\dim (f_\ell(Z_\ell))< \dim( f_{\bar{\ell}}(Z_{\bar{\ell}}))$, contradicting the reducedness of $\ell$ by Corollary  \ref{cor:stein2}.
\end{proof}

\begin{proposition}\label{prop:surj} Let $\ell=(l_1, \dots, l_r)$ be a reduced  list in $I$, and $Z_\ell$ an associated Bott-Samelson variety. Then the map $H^0(f_\ell(Z_\ell), L|_{f_\ell(Z_\ell)}) \to H^0(Z_\ell, f^*_\ell L)$ is an isomorphism for any nef line bundle $L$ on $X$. In particular  $f_\ell(Z_\ell)$ is normal and $f_\ell$ is birational.
\end{proposition}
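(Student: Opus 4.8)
The plan is to prove the sharper statement that $f_{\ell,*}\cO_{Z_\ell}=\cO_{W_\ell}$, where $W_\ell:=f_\ell(Z_\ell)$ carries its reduced structure. Granting this, the asserted isomorphism follows for \emph{every} line bundle $L$ on $X$ from the projection formula, since
$H^0(Z_\ell,f_\ell^*L)=H^0(W_\ell,L|_{W_\ell}\otimes f_{\ell,*}\cO_{Z_\ell})=H^0(W_\ell,L|_{W_\ell})$; moreover the equality $f_{\ell,*}\cO_{Z_\ell}=\cO_{W_\ell}$ simultaneously forces the normality of $W_\ell$ (the normalization $\nu\colon W_\ell^{\nu}\to W_\ell$ satisfies $\cO_{W_\ell}\subseteq\nu_*\cO_{W_\ell^{\nu}}\subseteq f_{\ell,*}\cO_{Z_\ell}=\cO_{W_\ell}$, so $\nu$ is an isomorphism) and the birationality of $f_\ell$ (the generic rank of $f_{\ell,*}\cO_{Z_\ell}$ is the degree of $f_\ell$ onto its image). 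I would argue by induction on $r$, the case $r\le 1$ being clear since there $f_\ell$ is a closed immersion, and I may assume $\cD$ connected.

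For the inductive step I exploit the fibre product description $Z_\ell=Z_{\ell[1]}\times_{X_{l_r}}X$ built into the Bott--Samelson construction. Setting $W_{\ell[1]}:=f_{\ell[1]}(Z_{\ell[1]})$ and $V:=\pi_{l_r}(W_{\ell[1]})\subset X_{l_r}$, one identifies $W_\ell=\pi_{l_r}^{-1}(V)$, and the square with rows $f_\ell,g_{\ell[1]}$ and columns $p_{\ell[1]},\pi_{l_r}$ is cartesian with $\pi_{l_r}$ flat. Flat base change along $\pi_{l_r}$, combined with the inductive hypothesis $f_{\ell[1],*}\cO_{Z_{\ell[1]}}=\cO_{W_{\ell[1]}}$ (so that $g_{\ell[1],*}\cO_{Z_{\ell[1]}}=(\pi_{l_r}|_{W_{\ell[1]}})_*\cO_{W_{\ell[1]}}$), yields
\begin{equation*}
f_{\ell,*}\cO_{Z_\ell}=\pi_{l_r}^*\,g_{\ell[1],*}\cO_{Z_{\ell[1]}}=\pi_{l_r}^*\big(\pi_{l_r}|_{W_{\ell[1]}}\big)_*\cO_{W_{\ell[1]}}.
\end{equation*}
Thus everything reduces to showing $(\pi_{l_r}|_{W_{\ell[1]}})_*\cO_{W_{\ell[1]}}=\cO_V$, for then $f_{\ell,*}\cO_{Z_\ell}=\pi_{l_r}^*\cO_V=\cO_{W_\ell}$. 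This is exactly the statement that $\pi_{l_r}|_{W_{\ell[1]}}\colon W_{\ell[1]}\to V$ is birational onto a \emph{normal} base. Reducedness of $\ell$ enters at this point: since $\ell[1]$ is reduced and $\dim W_\ell=\dim Z_\ell=r$ by Corollary \ref{cor:stein2}, while $\dim W_\ell=\dim V+1$, one gets $\dim V=r-1=\dim W_{\ell[1]}$, so $\pi_{l_r}|_{W_{\ell[1]}}$ is at least generically finite.

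The remaining upgrade to birationality onto a normal base is the crux and the main obstacle. My approach is to transport it from the homogeneous model, where $\overline\pi_{l_r}|_{\overline W_{\ell[1]}}$ is the projection of the Schubert variety $\overline W_{\ell[1]}=\overline{Bw(\ell[1])B}/B$ onto a Schubert variety $\overline V\subset G/P_{l_r}$, classically known to be birational with $\overline V$ normal (cf. \cite{De},\cite{BK}). Concretely I would complete $\ell$ to a sequence $\tilde\ell=\ell\ell''$ whose word is a reduced expression of the longest element $w_{0}$; composing the section maps embeds $Z_\ell$ as a smooth Bott--Samelson subvariety $\Sigma(Z_\ell)\hookrightarrow Z_{\tilde\ell}$ with $f_\ell=f_{\tilde\ell}\circ\Sigma$, and $f_{\tilde\ell}\colon Z_{\tilde\ell}\to X$ is birational by Corollary \ref{cor:birat}. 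As $f_{\tilde\ell}$ restricts to an isomorphism over a dense open $U\subseteq X$, the map $f_\ell$ is birational onto $W_\ell$ as soon as $\Sigma(Z_\ell)$ is not contained in the (proper) non-isomorphism locus of $f_{\tilde\ell}$, which is the point one checks against the homogeneous model, whose exceptional loci are controlled by Bott--Samelson boundary divisors. Normality of $W_\ell$ I would then obtain exactly as in Corollary \ref{cor:birat}: the section ring $\bigoplus_m H^0(Z_\ell,f_\ell^*(mL))$ of the semiample and big divisor $f_\ell^*L$ on the smooth $Z_\ell$ is integrally closed, its graded dimensions match those of the normal Schubert variety $\overline W_\ell$ through Proposition \ref{prop:Chi} and \cite{De}, and \cite[Lemma~3.3.3]{BK} identifies it with the section ring of $(W_\ell,L|_{W_\ell})$. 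I expect the genuine difficulty to be precisely the degree-one property of $f_\ell$: the cohomological comparison of Proposition \ref{prop:Chi} only pins down the \emph{product} of the degree of $f_\ell$ with the degree of its image, so separating these factors is not formal and requires the geometric input above, rooted in the reducedness of $\ell$ and the Schubert geometry of $G/B$.
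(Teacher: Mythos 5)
Your inductive skeleton is sound as far as it goes: the identification $Z_\ell=Z_{\ell[1]}\times_{X_{l_r}}X$, the flat base change giving $f_{\ell,*}\cO_{Z_\ell}=\pi_{l_r}^*\,g_{\ell[1],*}\cO_{Z_{\ell[1]}}$, the identity $W_\ell=\pi_{l_r}^{-1}(V)$, and the dimension count from Corollary \ref{cor:stein2} are all correct, and the reduction to $(\pi_{l_r}|_{W_{\ell[1]}})_*\cO_{W_{\ell[1]}}=\cO_V$ is a clean reformulation. But the step you yourself flag as the crux is a genuine gap, and your proposed resolution does not work. You want to transport the birationality of $\pi_{l_r}|_{W_{\ell[1]}}$ (equivalently the degree-one property of $f_\ell$) and the normality of $V$ from the homogeneous model, by locating $\Sigma(Z_\ell)\subset Z_{\tilde\ell}$ relative to the non-isomorphism locus of $f_{\tilde\ell}$ ``as in $G/B$''. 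There is, however, no mechanism in the paper that identifies geometric loci of $f_{\tilde\ell}:Z_{\tilde\ell}\to X$ with those of $\overline f_{\tilde\ell}:\overline Z_{\tilde\ell}\to G/B$: the only comparisons available at this stage are cohomological (Proposition \ref{prop:Chi}, Corollary \ref{cor:cohomequal}), and an abstract isomorphism $Z_{\tilde\ell}\simeq\overline Z_{\tilde\ell}$ --- even where it holds (Propositions \ref{prop:uniqueADE}, \ref{prop:uniqueBC}) --- does not intertwine the two evaluation maps, which have different targets. Worse, this proposition is deployed precisely in the ${\rm F}_4$ case, where the paper explicitly states that the Bott--Samelson uniqueness argument fails for every reduced expression of the longest word; relying on $X\simeq G/B$-type geometric input there is circular. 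Your own diagnosis is accurate: the numerical comparisons pin down only the product of $\deg(f_\ell)$ with the degree/volume of its image, and nothing in your sketch separates the factors. The same defect infects your normality argument: matching the section ring of $f_\ell^*L$ on $Z_\ell$ with that of $\psi(L)$ on the Schubert variety $\overline W_\ell$ does not let you apply \cite[Lemma~3.3.3]{BK}, because that lemma needs the restriction $H^0(W_\ell,mL)\to H^0(Z_\ell,f_\ell^*mL)$ itself to be surjective, and $h^0(W_\ell,mL)$ cannot be compared with $h^0(\overline W_\ell,\psi(mL))$ without already knowing $W_\ell\simeq\overline W_\ell$.

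The paper closes exactly this gap by purely cohomological means, avoiding any geometric transport. It completes $\ell$ on the right to a reduced word $\bar\ell$ for $w_0$ (as you do), where $H^0(X,L)\to H^0(Z_{\bar\ell},e^L)$ is an isomorphism by Corollary \ref{cor:birat} --- itself proved cohomologically. The new ingredient you are missing is the Kumar-type vanishing of Lemma \ref{lem:j3}: $H^1(Z_{\ell'},f_{\ell'}^*L-Z_{\ell'(r')})=0$ for nef $L$, which makes every restriction map $H^0(Z_{\bar\ell[k]},e^L)\to H^0(Z_{\bar\ell[k+1]},e^L)$ down the Bott--Samelson tower surjective. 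A diagram chase (top horizontal map an isomorphism, right verticals surjective, all horizontals injective by surjectivity of $f_{\bar\ell[k]}$ onto its image) then forces $H^0(f_{\bar\ell[k]}(Z_{\bar\ell[k]}),L)\to H^0(Z_{\bar\ell[k]},e^L)$ to be an isomorphism at every level, in particular at $\ell$; normality and birationality then come simultaneously from \cite[Lemma~3.3.3]{BK} together with Corollary \ref{cor:stein2}. If you want to rescue your induction, it is this vanishing-driven surjectivity down the tower --- not an appeal to the exceptional locus of the homogeneous model --- that must replace your crux step.
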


\begin{proof} Complete the list $\ell$ to a reduced expression $\bar \ell$ of the longest word $w(\bar \ell)\in W$. Consider the commutative diagram
$$
\xymatrix@=20pt{H^0(X,L)\ar[d] \ar[r]^{\sim} &H^0(Z_{\bar \ell}, e^L) \ar@{-{>>}}[d] \\
H^0(f_{\bar \ell[1]}(Z_{\bar \ell[1]}),L
)\,\,  \ar[d] \ar@{^{(}->}[r] &H^0(Z_{\bar \ell[1]}, e^L)\ar@{-{>>}}[d]\\
H^0(f_{\bar \ell[2]}(Z_{\bar \ell[2]}),L
)  \,\,  \ar[d] \ar@{^{(}->}[r]  &H^0(Z_{\bar \ell[2]}, e^L)\ar@{-{>>}}[d]\\
\dots & \dots\\
}
$$
where the top horizontal map is an isomorphism by the birationality of $f_{\bar \ell}$ (Corollary \ref{cor:birat}) and the other horizontal maps are injective by the surjectivity of $f_{\bar \ell[k]}:Z_{\bar \ell[k]} \to X_ {\bar \ell[k]}$.
By  Lemma \ref{lem:j3}, for any list $\ell'$ and  any nef line bundle $L$ on $X$ we have the vanishing $h^1(Z_{\ell'}, f^*_{\ell'} L - Z_{\ell'[1]})=0$, so the vertical maps on the right are surjective.
Then, recursively, the horizontal maps are isomorphisms, and we obtain the normality of $f_\ell(Z_\ell)$ from \cite[Lemma~ 3.3.3]{BK}. \end{proof}

\begin{proposition}\label{prop:fibersf4}
Let $J\subset I$ be a proper nonempty subset. Then every fiber of $\pi_J$ is the complete flag manifold with Dynkin diagram $\cD_J$. Moreover $\pi_J$ and  $X_J$ are smooth.
\end{proposition}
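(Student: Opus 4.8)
The plan is to reduce the statement to the uniqueness results for Bott--Samelson varieties already proved in Section~\ref{sec:unique}, exploiting that $J$ is \emph{proper}. The first observation is that, since $\cD=\mathrm{F}_4$ is a path carrying a single double edge, every connected component of the subdiagram $\cD_J$ (for $J\subsetneq I$) is of type $\mathrm{A}_k$, $\mathrm{B}_2=\mathrm{C}_2$, $\mathrm{B}_3$ or $\mathrm{C}_3$; in particular no component is of type $\mathrm{F}_4$, $\mathrm{G}_2$, $\mathrm{D}$ or $\mathrm{E}$. This is exactly what makes Propositions~\ref{prop:uniqueADE} and \ref{prop:uniqueBC} available in the present situation.

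Next I would fix $x\in X$ and choose a reduced word $\ell$ for the longest element of $W_J$ with all indices in $J$, taking it to be the concatenation of the convenient words ($(u_k)^k$ or $(d_k)^k$ as in Lemma~\ref{lem:uniqueBC}) for the several connected components; by Lemma~\ref{lem:prodBS} the resulting $Z_\ell$ is the corresponding product. By Proposition~\ref{prop:fiberschubert}, $f_\ell(Z_\ell)$ coincides set-theoretically with the fiber $\pi_J^{-1}(\pi_J(x))$. The \emph{key point} is that every cohomological computation entering the proofs of Propositions~\ref{prop:uniqueADE} and \ref{prop:uniqueBC} (through Corollary~\ref{cor:simple} and Lemma~\ref{lem:uniqueBC}) involves only the intersection numbers $-K_{l_i}\cdot\Gamma_{l_j}$ with $l_i,l_j\in J$, that is, only the Cartan data of $\cD_J$; hence those proofs apply verbatim to $\ell$ and yield $Z_\ell\cong\overline Z_\ell$.

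I would then compare the two evaluation morphisms exactly as in the proof of Theorem~\ref{conj:CPforFT} for $\cD\neq\mathrm{F}_4$. By Proposition~\ref{prop:surj} the map $f_\ell\colon Z_\ell\to f_\ell(Z_\ell)$ is birational onto a normal variety, while $\overline f_\ell\colon \overline Z_\ell\to\overline f_\ell(\overline Z_\ell)$ is birational onto the Schubert variety $\overline{Bw(\ell)B}/B$; since $w(\ell)$ is the longest element of $W_J$, this Schubert variety is precisely the fiber of $\overline\pi_J$ in $\overline X=G/B$, namely the complete flag manifold $\overline F$ with Dynkin diagram $\cD_J$. Under the isomorphism $Z_\ell\cong\overline Z_\ell$ the two Stein factorizations agree, because by Corollary~\ref{cor:stein} each is the contraction of the face spanned by $\{\gamma_i\mid i\in J,\ l_i=l_k\text{ for some }k>i\}$, and $\psi_\ell$ matches these data (Corollary~\ref{cor:psi}). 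Therefore $\pi_J^{-1}(\pi_J(x))\cong\overline F$ for every $x\in X$, which gives the first assertion.

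Finally, for smoothness, every reduced fiber of $\pi_J$ is isomorphic to the smooth flag manifold $\overline F$, so $\pi_J$ is a surjective, equidimensional morphism from the smooth variety $X$ onto the normal variety $X_J$, with reduced fibers of constant dimension $\dim\overline F$. One deduces flatness of $\pi_J$ from the constancy of the fiber Hilbert polynomials (the numerical class of the restriction of a fixed ample line bundle is computed by intersection with curves in the fibers and is locally constant, hence constant over the connected base, so Hartshorne~III.9.9 applies), after which smoothness follows formally: flatness together with regularity of $X$ forces regularity of $X_J$ by faithfully flat descent of regularity of local rings, and a flat morphism with smooth fibers is smooth; the scheme-theoretic fibers are then smooth and hence equal to $\overline F$. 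I expect the genuine content, and the main obstacle, to lie in transferring the Bott--Samelson uniqueness of Section~\ref{sec:unique} to the subdiagram $\cD_J$ and in the Stein-factorization comparison; the concluding flatness argument is routine once all fibers are known to be one and the same smooth variety.
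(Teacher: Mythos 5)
Your first half is essentially the paper's own argument and is correct: identify the fiber set-theoretically with $f_\ell(Z_\ell)$ for a reduced word $\ell$ of the longest element of $W_J$ (Proposition \ref{prop:fiberschubert}), transfer the uniqueness results to the subdiagram (your observation that the cohomological computations in Corollary \ref{cor:simple} and Lemma \ref{lem:uniqueBC} only involve the intersection numbers $K_{l_i}\cdot\Gamma_{l_j}$ with $l_i,l_j\in J$, together with Lemma \ref{lem:prodBS} for disconnected $J$, is exactly the implicit content of the paper's appeal to Propositions \ref{prop:uniqueADE} and \ref{prop:uniqueBC}), and then match the two Stein factorizations via Proposition \ref{prop:surj} and Corollaries \ref{cor:stein} and \ref{cor:psi} to get $\pi_J^{-1}(\pi_J(x))\simeq \overline{F}$ for every $x$.

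The smoothness part, however, has a genuine gap, and it sits precisely where you dismiss the argument as ``routine'': your flatness step is circular. Hartshorne III.9.9 requires constancy of the Hilbert polynomials of the \emph{scheme-theoretic} fibers, whereas Proposition \ref{prop:fiberschubert} identifies the fibers only set-theoretically; a special fiber could a priori carry nilpotents, in which case its Hilbert polynomial strictly exceeds that of $\overline{F}$ for large twists. The numerical constancy of $L\cdot\Gamma_j$ along the family controls only the polarized \emph{reduced} fibers and says nothing about embedded or non-reduced structure, so it does not yield the required constancy; and miracle flatness is unavailable because regularity of $X_J$ is part of the conclusion. (Granting flatness, your descent of regularity and the reducedness of fibers would indeed follow, but flatness is the crux, not a formality.) The paper closes this gap by a completely different mechanism: by \cite[Lemma~4.13]{SW} it suffices to show that the normal bundle $N_{f_\ell(Z_\ell)/X}$ of each fiber is \emph{trivial}. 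Triviality restricted to each $\Gamma_j$, $j\in J$, is immediate, but globalizing it is the real work: the paper passes to $Y=\P(N_{f_\ell(Z_\ell)/X})$, uses the unsplit families of minimal sections $\tilde\Gamma_j$ and the chain-equivalence fibration of \cite[IV.~Theorem~4.16]{Ko} to produce an irreducible subvariety $V\subset Y$ mapping finitely onto the fiber, and then a branch-locus argument combined with the simple connectedness of the flag manifold shows that this map is an isomorphism, forcing the projectivized normal bundle to admit the required splitting. This normal-bundle triviality argument is the missing idea in your proposal, and it is where the actual content of the second assertion of the proposition lies.
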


\begin{proof}  Let $x \in X$ be any point. By Proposition \ref{prop:fiberschubert} the fiber of $\pi_J^{-1}(\pi_J(x))$ passing through $x$
 equals (set-theoretically) $f_\ell(Z_\ell)$, where $Z_\ell$ is the Bott-Samelson variety constructed starting from $x$ and corresponding to a list $\ell=(l_1, \dots, l_r)$ such that  $w(\ell)$ is the longest word in $W_J$.
By Propositions \ref{prop:uniqueADE} and \ref{prop:uniqueBC}, $\ell$ can be chosen so that $Z_{\ell} \simeq \overline Z_\ell$, where $\overline{Z}_\ell$ is the corresponding Bott-Samelson variety of the homogeneous model $\overline{X}=G/B$ of $X$.
Since $f_\ell(Z_\ell)$ is normal and $f_\ell$ is birational by Proposition \ref{prop:surj}, the map $f_\ell$ is the contraction of $Z_\ell$ corresponding to the extremal face
generated by cycles $\{\gamma_i\,\,| \,\,l_i=l_k\mbox{ for some } k>i\}$ (see Corollary \ref{cor:stein}), hence
$f_\ell (Z_{\ell}) \simeq \overline f_\ell(\overline Z_\ell)$ is the complete flag manifold with Dynkin diagram $\cD_J$.

For the second part, by \cite[Lemma~4.13]{SW}, it is enough to prove that the normal bundle $N_{f_\ell (Z_{\ell})/X}$ of $f_\ell (Z_{\ell})$ in $X$ is trivial, for any $f_\ell (Z_{\ell})$.

In order to see this, we consider its restriction to any curve $\Gamma_j \subset f_\ell (Z_{\ell})$, $j\in J$, that is the cokernel of the natural inclusion $N_{\Gamma_j/f_\ell (Z_{\ell})} \lra N_{\Gamma_j/X}$. Since these two bundles are trivial, $(N_{f_\ell (Z_{\ell})/X})|_{\Gamma_j}$ is trivial as well.

Denote $Z:=f_\ell (Z_{\ell})$, $Y:=\P(N_{f_\ell (Z_{\ell})/X})$, and $\pi:Y\to Z$ the natural projection. Let $\tilde{\Gamma}_j$ denote a minimal section of $Y$ over $\Gamma_j$, for all $j$. By the previous argument, there is precisely one of these sections passing by every point of $Y$. For each $j\in J$, the family of deformations of $\tilde{\Gamma}_j$ is unsplit and dominates $Y$, hence we may consider the proper fibration $\varphi:Y^0\to P^0$, defined on an open set $Y^0\subset Y$, whose fibers are equivalence classes with respect to the relation determined by the chains of deformations of the $\tilde{\Gamma}_j$'s, $j\in J$. Let $y\in Y^0$ be a general point, and consider the fiber $\varphi^{-1}(\varphi(y))\subset Y$, which is a proper closed subset because $Y$ is not rationally connected by curves $\tilde{\Gamma}_j$, since its Picard number is $|J|+1$.

Let $V$ be the irreducible component of $\varphi^{-1}(\varphi(y))$ passing by $y$. It is a smooth projective variety satisfying that any curve $\tilde{\Gamma}_j$ meeting it is strictly contained in it. Then the restriction $\pi_{|V}:V\to X$ is surjective, and the proof may be finished by showing that it is an isomorphism.

Suppose this is not the case. Since $Z$ is simply connected, then the branch locus $B\subset X$ of $\pi_{|V}$ is nonempty. Since $X$ is rationally connected by the $\Gamma_j$'s, there exists an index $k\in J$ and a curve $\Gamma_k$ meeting $B$ in a point $b\in B$ and not contained in it. But then the inverse image of a general point $x\in \Gamma_k$ has $\deg(\pi_{|V})$ inverse images, providing $\deg(\pi_{|V})$ sections of the form $\tilde{\Gamma}_k$, over the curve $\Gamma_k$, contained in $V$. We conclude that $b$ has  $\deg(\pi_{|V})$ inverse images as well, a contradiction.
\end{proof}

\subsection{Smoothness of the VMRT's}

Let us consider, for any $i= 1, \dots, 4$, the contraction $\pi^i:X\to X^i$ and the unsplit family of rational curves $\cH_i$ in $X^i$ containing all the curves of the form $\pi^i(\Gamma_i)$. In this section we will show the following:
\begin{proposition}\label{prop:smoothvmrt} With the same notation as above, the VMRT of $\cH_i$ at a general point of $X^i$ is smooth, for any $i=1, \dots, 4$.
\end{proposition}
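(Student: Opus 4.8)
The plan is to reduce the statement to the smoothness criterion of Lemma \ref{lem:smoothVMRT}, applied with $Y=X^i$ and $\cH=\cH_i$. To do so I must verify its three hypotheses: that $X^i$ is a Fano manifold of Picard number one, that its ample generator $\cO_{X^i}(1)$ is globally generated, and that the curves of $\cH_i$ have degree one with respect to $\cO_{X^i}(1)$. Once all three are in place, the lemma yields the smoothness of the VMRT of $\cH_i$ at a general point of $X^i$, which is exactly the assertion.

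First I would settle the geometry of $X^i$. Writing $\pi^i=\pi_{I\setminus\{i\}}$, the smoothness of $X^i$ and of $\pi^i$ is Proposition \ref{prop:fibersf4} applied to the proper nonempty subset $J=I\setminus\{i\}$. Since $\pi^i$ is the contraction of the three-dimensional face of $\cNE{X}$ spanned by the rays $R_j$, $j\neq i$, the Picard number of $X^i$ is $\rho_X-3=1$. Moreover $X^i$ is dominated by the images $\pi^i(\Gamma_i)$ of the dominating family $\{\Gamma_i\}$ of Lemma \ref{lem:basis}, hence it is uniruled; a smooth projective uniruled variety of Picard number one is automatically Fano, because $-K_{X^i}$ is then positive on the generator of the (one-dimensional) Mori cone and so lies in the ample cone.

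Next I would identify $\cO_{X^i}(1)$ and compute the degree of the $\cH_i$-curves. As the Cartan matrix of ${\rm F}_4$ has determinant one, the dual basis $\{H_j\}$ of $\{\Gamma_j\}$ consists of integral classes and is a $\Z$-basis of $\Pic(X)$; here $H_i\cdot\Gamma_j=\delta_{ij}$. By Lemma \ref{lem:basis} the $\Gamma_j$ generate $\cNE{X}$, so $H_i$ is nef and vanishes on the contracted face $R_{I\setminus\{i\}}$, whence $H_i=(\pi^i)^*A$ for some $A\in\Pic(X^i)$. Since $(\pi^i)^*$ maps $\Pic(X^i)=\Z\,\cO_{X^i}(1)$ isomorphically onto $\Z H_i$, we get $A=\cO_{X^i}(1)$. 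Writing $d$ for the degree of $\pi^i|_{\Gamma_i}$ onto its image, the projection formula gives $\cO_{X^i}(1)\cdot\pi^i(\Gamma_i)=(H_i\cdot\Gamma_i)/d=1/d$; being a positive integer this forces $d=1$ and $\cO_{X^i}(1)\cdot\pi^i(\Gamma_i)=1$, so the $\cH_i$-curves have degree one.

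The main obstacle is the remaining hypothesis, the global generation of $\cO_{X^i}(1)$, equivalently of $H_i$ on $X$. For ${\rm F}_4$ this cannot be read off a Bott--Samelson isomorphism over the full group, since the uniqueness Propositions \ref{prop:uniqueADE} and \ref{prop:uniqueBC} apply only to the proper sub-diagrams $\cD_J$. The route I would take is comparison with the homogeneous model: because $H_i$ lies in the sublattice of $\Pic(X)$ generated by $K_1,\dots,K_4$ (again as $\det$ of the Cartan matrix is one), Corollary \ref{cor:cohomequal} gives $h^0(X,H_i)=h^0(G/B,\psi(H_i))$, matching the number of sections on the homogeneous model, where the analogous ample generator is base-point free. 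It then remains to show that the base locus of $|\cO_{X^i}(1)|$, necessarily a union of fibres of $\pi^i$, is empty; I would establish this using the covering family $\cH_i$ of degree-one rational curves, on each of which $\cO_{X^i}(1)$ restricts to $\cO_{\P^1}(1)$, to exclude a common base point. Granting the spannedness, all hypotheses of Lemma \ref{lem:smoothVMRT} hold and the smoothness of the VMRT follows.
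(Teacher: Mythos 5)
Your reduction to Lemma \ref{lem:smoothVMRT} is exactly the paper's strategy, and your verification of the first two hypotheses is sound: smoothness of $X^i$ and $\pi^i$ from Proposition \ref{prop:fibersf4}, Picard number one, and the degree-one computation via the line bundle $\Lambda_i$ with $\Lambda_i\cdot\Gamma_j=\delta_{ij}$ (which exists because the Cartan matrix of ${\rm F}_4$ has determinant one) is in substance the paper's Lemma \ref{lem:fund}.

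The genuine gap is in the spannedness of $L_i$, which is the real content of the paper's proof. Corollary \ref{cor:cohomequal} tells you only that $h^0(X,\Lambda_i)=h^0(G/B,\psi(\Lambda_i))$; equality of \emph{dimensions} of spaces of sections on two different varieties carries no information about base points, since there is no map identifying the two linear systems. Your fallback --- using the covering family of degree-one curves to exclude a common base point --- does not close the argument: if $b$ were a base point of $|L_i|$, a section restricted to a degree-one curve $C$ through $b$ either vanishes identically on $C$ or vanishes exactly at the single point $b$, and neither alternative produces a contradiction. (Also, as stated, the base locus of $|\cO_{X^i}(1)|$ lives on $X^i$, so it is not a ``union of fibres of $\pi^i$''; you presumably meant $\Bs|\Lambda_i|=(\pi^i)^{-1}\Bs|L_i|$ on $X$.) The paper instead proceeds constructively (Lemma \ref{lem:pullbacks} and Proposition \ref{prop:spanned}): choosing $\ell=(l_1,\dots,l_{23})$ with $w(\ell)$ a reduced expression of $w_0\circ r_i$, the image $f_\ell(Z_\ell)$ of the Bott--Samelson variety is an effective divisor with $f_\ell(Z_\ell)\cdot\Gamma_j=0$ for $j\neq i$ (appending $j$ gives a non-reduced word, so the image does not grow) and $f_\ell(Z_\ell)\cdot\Gamma_i=1$ (appending $i$ gives a reduced expression of $w_0$, whose evaluation is birational by Corollary \ref{cor:birat}), so $f_\ell(Z_\ell)\in|\Lambda_i|$; then, given any $x\in X$, one runs the \emph{reversed} list $\ell'=(l_{23},\dots,l_1)$ starting at $x$ and picks the starting point $x'$ of $Z_\ell$ outside $f_{\ell'}(Z_{\ell'})$, which by the symmetry of chains of $\Gamma$-curves forces $x\notin f_\ell(Z_\ell)$, producing a member of $|\Lambda_i|$ missing $x$. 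This explicit production of divisors avoiding a prescribed point is the idea your proposal is missing, and without it (or a substitute) the hypotheses of Lemma \ref{lem:smoothVMRT} are not verified.
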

By the smoothness of $\pi^i$ (Proposition \ref{prop:fibersf4}), $X^i$ is a Fano manifold of Picard number one. Then the statement reduces to check that the ample generator of $\Pic(X^i)$, that we denote by $L_i$, satisfies the hypotheses of Lemma \ref{lem:smoothVMRT}. We will show this in Lemma \ref{lem:fund} and Proposition \ref{prop:spanned} below.

Note first that the determinant of the Cartan matrix of $\mathrm{F}_4$ is $1$, hence there exist line bundles $\Lambda_i \in \Pic(X)$ such that $\Lambda_i \cdot \Gamma_j= \delta_{ij}$, which allows to state the following:

\begin{lemma}\label{lem:fund}
With the same notation as above, $\Lambda_i=(\pi^i)^*L_i$ and, in particular $L_i\cdot \pi^i_*\Gamma_i=1$, for all $i$.
\end{lemma}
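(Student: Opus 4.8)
The plan is to exploit the fact that, by Proposition \ref{prop:fibersf4}, $\pi^i$ is a smooth contraction whose target $X^i$ has Picard number one with ample generator $L_i$, and to combine the defining property of $\Lambda_i$ with the descent of line bundles along an extremal contraction.

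First I would check that $\Lambda_i$ is trivial on the contracted face. By definition $\Lambda_i\cdot\Gamma_j=0$ for every $j\neq i$, and the classes $[\Gamma_j]$, $j\neq i$, generate the face $R_{I\setminus\{i\}}$ contracted by $\pi^i$ (the rays $R_j$, $j\neq i$, span it, cf. Lemma \ref{lem:basis}). Hence $\Lambda_i$ has degree zero on every curve contracted by $\pi^i$. By the standard theory of Mori contractions, where the pullback $(\pi^i)^*\colon\Pic(X^i)\to\Pic(X)$ is injective with image the subgroup of classes vanishing on the contracted face (see, e.g., \cite{KM}; here the smoothness of $\pi^i$ guarantees normal, connected fibers), it follows that $\Lambda_i=(\pi^i)^*M$ for a unique $M\in\Pic(X^i)$.

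Next, since $\Pic(X^i)=\Z L_i$, I would write $M=aL_i$ with $a\in\Z$. The curve $\Gamma_i$ is not contracted by $\pi^i$, because its class $R_i$ does not lie in the face $R_{I\setminus\{i\}}$; therefore $\pi^i_*\Gamma_i$ is a nonzero effective $1$-cycle on $X^i$. Applying the projection formula gives
$$
1=\Lambda_i\cdot\Gamma_i=a\,(\pi^i)^*L_i\cdot\Gamma_i=a\,\bigl(L_i\cdot\pi^i_*\Gamma_i\bigr).
$$
As $L_i$ is ample, $L_i\cdot\pi^i_*\Gamma_i$ is a positive integer, and $a$ is an integer; a product of two integers equalling $1$ with one factor positive forces both to be $1$. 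This yields $a=1$, hence $\Lambda_i=(\pi^i)^*L_i$, together with $L_i\cdot\pi^i_*\Gamma_i=1$, as claimed.

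The only delicate point I foresee is the descent step: one must be sure that a line bundle of degree zero on the contracted face is genuinely a pullback from $X^i$, not merely numerically a pullback. This is exactly where the smoothness of $\pi^i$ (Proposition \ref{prop:fibersf4}), ensuring that $X^i$ is a smooth variety of Picard number one, together with the contraction theorem, is essential; once descent is granted, the remainder is the elementary integrality argument above.
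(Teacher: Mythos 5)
Your proof is correct and follows essentially the same route as the paper: descend $\Lambda_i$ to a line bundle on $X^i$, then pin it down as $L_i$ via the projection formula $1=\Lambda_i\cdot\Gamma_i=a\,(L_i\cdot\pi^i_*\Gamma_i)$ and integrality/ampleness. The only (harmless) difference is in how descent is justified: the paper uses smoothness of $\pi^i$ to see that $\pi^i_*\Lambda_i$ is a line bundle with $(\pi^i)^*\pi^i_*\Lambda_i=\Lambda_i$, whereas you invoke the contraction-theorem descent of line bundles numerically trivial on the contracted face (\cite[Theorem~3.7(4)]{KM}), which is equally valid and in fact needs less than smoothness.
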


\begin{proof}
Since $\pi^i$ is smooth by Proposition \ref{prop:fibersf4},
$\pi^i_*\Lambda_i$ is a line bundle, satisfying $(\pi^i)^*\pi^i_*\Lambda_i=\Lambda_i$. Therefore $\pi^i_*\Lambda_i\cdot\pi^i_*\Gamma_i=\Lambda_i\cdot\Gamma_i=1$,  and so $\pi^i_*\Lambda_i$ is necessarily the ample generator of $\Pic(X_i)$.
\end{proof}

In order to check, finally, the spannedness of $L_i$ we will first show how to construct global sections of $\Lambda_i$ by means of Bott-Samelson varieties.

\begin{lemma}  \label{lem:pullbacks}
With the same notation as above, there exists a list $\ell=(l_1, \dots, l_{23})$ in $I$ such that  $w(\ell)$ is reduced and $f_\ell(Z_\ell)$ is
a divisor in the linear system $|\Lambda_i|$.
\end{lemma}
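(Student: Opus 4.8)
The plan is to build $\ell$ from a reduced word adapted to the contraction $\pi^i=\pi_{I\setminus\{i\}}$, and then to pin down the class of $V:=f_\ell(Z_\ell)$ by a verticality argument together with a single intersection number computed on a birational model.

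I would first choose the word using Weyl group combinatorics. Since $w_0$ is the longest element, $\len(w_0s_is_k)=24-2<23=\len(w_0s_i)$ for every $k\neq i$, so $w_0s_i$ is the longest element of its coset $w_0s_iW_{I\setminus\{i\}}$; hence it factors as $w_0s_i=u\cdot w_0^{I\setminus\{i\}}$ with additive lengths, where $w_0^{I\setminus\{i\}}$ is the longest element of $W_{I\setminus\{i\}}$ and $u$ the minimal coset representative. Taking reduced words $\ell'$ for $u$ and $\ell''$ for $w_0^{I\setminus\{i\}}$ (the latter on indices of $I\setminus\{i\}$), the concatenation $\ell=\ell'\ell''$ is a reduced word for $w_0s_i$ of length $23$, and $\bar\ell=\ell'\ell''(i)$ is a reduced word for $w_0$. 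Then $w(\ell)$ is reduced, so $\dim f_\ell(Z_\ell)=23=\dim X-1$ by Corollary~\ref{cor:stein2} and Proposition~\ref{prop:dim}; as $Z_\ell$ is irreducible and $f_\ell$ is birational onto its normal image (Proposition~\ref{prop:surj}), $V=f_\ell(Z_\ell)$ is a prime divisor.

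Next I would prove that $V$ is $\pi^i$-vertical. Writing $s$ for the length of $\ell''$, we have $\ell[s]=\ell'$, so there is a tower of $\P^1$-bundles $Z_\ell\to Z_{\ell'}$ whose fibre over $z'$ is the Bott--Samelson variety of $\ell''$ built from the base point $f_{\ell'}(z')$, and $f_\ell$ restricted to this fibre is the evaluation $f_{\ell''}$, whose image is the full fibre $(\pi^i)^{-1}(\pi^i(f_{\ell'}(z')))$ by Proposition~\ref{prop:fiberschubert} applied to $J=I\setminus\{i\}$. Taking the union over $z'$ gives $V=(\pi^i)^{-1}(\pi^i(f_{\ell'}(Z_{\ell'})))$, so $V$ is the reduced preimage of the prime divisor $V':=\pi^i(V)\subset X^i$; since $\pi^i$ is a smooth fibration (Proposition~\ref{prop:fibersf4}) this gives $V=(\pi^i)^*V'$ as Cartier divisors. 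Using $\rho_{X^i}=1$ and Lemma~\ref{lem:fund} we then have $[V]=c\,\Lambda_i$ with $c=V\cdot\Gamma_i\ge 1$.

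It remains to show $c=1$, and this is the crux, since for $\cD={\rm F}_4$ we cannot use an isomorphism $Z_{\bar\ell}\simeq\overline Z_{\bar\ell}$. I would pass to $\bar\ell$, for which $f_{\bar\ell}\colon Z_{\bar\ell}\to X$ is birational (Corollary~\ref{cor:birat}). As $f_{\bar\ell}\circ\sigma_{\bar\ell[1]}=f_{\bar\ell[1]}=f_\ell$ and $Z_{\bar\ell(24)}=\sigma_{\bar\ell[1]}(Z_{\bar\ell[1]})$, the section divisor $Z_{\bar\ell(24)}$ maps birationally onto $V$, so $[V]=f_{\bar\ell*}[Z_{\bar\ell(24)}]$. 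By the fibre-product description $Z_{\bar\ell}=Z_{\bar\ell[1]}\times_{X_i}X$, a general fibre $F$ of $p_{\bar\ell[1]}$, of class $\beta_{24}$, maps isomorphically onto a general fibre $\Gamma_i$ of $\pi_i$; since $X$ is smooth the locus over which $f_{\bar\ell}$ is not an isomorphism has codimension $\ge 2$, hence a general $\Gamma_i$, and so $F$, avoids the exceptional locus of $f_{\bar\ell}$. The projection formula (valid because $f_{\bar\ell}$ is an isomorphism near $F$) then gives $c=V\cdot\Gamma_i=Z_{\bar\ell(24)}\cdot\beta_{24}=1$, the section meeting each fibre once. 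Thus $[V]=\Lambda_i$ and $f_\ell(Z_\ell)\in|\Lambda_i|$. The main obstacle is precisely this computation of $c$: without the Bott--Samelson rigidity available in the other types it seems unavoidable to realise $V$ as a birational image on the length-$24$ model and to verify that the test curve stays off the codimension-$\ge 2$ indeterminacy centre.
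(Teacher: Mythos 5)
Your proof is correct, and while it follows the paper's overall skeleton (a reduced word for $w_0\circ r_i$, the intersection numbers $f_\ell(Z_\ell)\cdot\Gamma_j=\delta_{ij}$, and the conclusion via $\Lambda_i$), both key steps are carried out by genuinely different mechanisms, so a comparison is in order. The paper takes an \emph{arbitrary} reduced word $\ell$ for $w_0\circ r_i$ and gets the vanishings $f_\ell(Z_\ell)\cdot\Gamma_j=0$, $j\neq i$, by the appending trick: since $\len(w_0\circ r_i\circ r_j)=\len(w_0)-2$, the word $\ell_j=(l_1,\dots,l_{23},j)$ is not reduced, so Corollary \ref{cor:stein2} forces $\dim f_{\ell_j}(Z_{\ell_j})<\dim Z_{\ell_j}$, whence $f_{\ell_j}(Z_{\ell_j})=f_\ell(Z_\ell)$ by irreducibility and dimension count, i.e.\ every $\pi_j$-fibre meeting the divisor lies inside it; the unisecance $f_\ell(Z_\ell)\cdot\Gamma_i=1$ is then asserted in one phrase from the birationality of $f_{\ell_i}$ and the description $Z_{\ell_i}=Z_\ell\times_{X_i}X$. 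You instead pick a \emph{special} word via the parabolic factorization $w_0 r_i=u\cdot w_{0,J}$, $J=I\setminus\{i\}$ (correctly justified by the same length identity), so that Proposition \ref{prop:fiberschubert} applies fibrewise along the tower $Z_\ell\to Z_{\ell'}$ and yields the stronger statement that $V$ is $\pi^i$-vertical, $V=(\pi^i)^*V'$, killing all off-diagonal intersection numbers at once; you then prove the paper's ``unisecant'' claim in full detail, identifying $Z_{\bar\ell(24)}$ with the strict transform of $V$, using that the non-isomorphism locus of the birational map $f_{\bar\ell}$ has codimension at least two in the smooth $X$, and evaluating on a general fibre of class $\beta_{24}$ by the projection formula — all of these steps check out. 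Your route costs generality (the paper's argument shows \emph{every} reduced word for $w_0\circ r_i$ works, while yours needs the adapted one) but buys the explicit verticality $V=(\pi^i)^*V'$, which is precisely the form in which the lemma is exploited in Proposition \ref{prop:spanned}, and it supplies the details the paper leaves tacit in the final sentence of its proof. Two cosmetic remarks: your expression $\bar\ell=\ell'\ell''(i)$ collides with the deletion notation of \ref{notn:LT} (you clearly mean appending the letter $i$), and at the end you should invoke Notation \ref{notn:div} (numerical equals linear equivalence on the Fano $X$) to pass from the numerical identity $[V]=\Lambda_i$ to $V\in|\Lambda_i|$ — a step the paper also leaves implicit.
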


\begin{proof}
Let $w_0$ be the longest element in the Weyl group of $\mathrm{F}_4$, and let $\ell=(l_1, \dots, l_{23})$ be a list such that  $w(\ell)$ is a reduced expression of $w_0 \circ r_i$. Since, for every $j \not = i$, $\lambda(w_0 \circ r_i \circ r_j) = \lambda(w_0) -2$ (see \cite[10.3 Lemma A]{H}), then $\ell_j=(l_1, \dots, l_{23},j)$ satisfies that $w(\ell_j)$ is not reduced and, by Corollary \ref{cor:stein2}, $\dim f_{\ell_j}(Z_{\ell_j}) < \dim Z_{\ell_j}$. Hence $f_{\ell_j}(Z_{\ell_j})=f_{\ell}(Z_{\ell})$ and, in particular, $f_{\ell}(Z_{\ell})\cdot \Gamma_j=0$.

On the other hand $\ell_i=(l_1, \dots, l_{23},i)$ gives a reduced expression $w(\ell_i)$ of $w_0$, hence
$f_{\ell_i}$ is birational by Corollary \ref{cor:birat} and $f_{\ell}(Z_{\ell})$ is a unisecant divisor for $\pi_i$.
\end{proof}

\begin{proposition}\label{prop:spanned}
With the same notation as above, the line bundle  $L_i$ is spanned by global sections.
\end{proposition}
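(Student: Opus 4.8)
The plan is to prove that the complete linear system $|\Lambda_i|$ is base point free and to descend this property to $L_i$. Recall from Lemma \ref{lem:fund} that $\Lambda_i=(\pi^i)^*L_i$, and that by Proposition \ref{prop:fibersf4} the morphism $\pi^i$ is a smooth surjective fibration with connected fibres, so that $(\pi^i)_*\cO_X=\cO_{X^i}$. Hence $H^0(X,\Lambda_i)=H^0(X^i,L_i)$ and every section of $\Lambda_i$ is the pull-back of a section of $L_i$. Consequently $L_i$ is spanned as soon as $\Lambda_i$ is globally generated: given $p\in X^i$, one chooses $x\in(\pi^i)^{-1}(p)$ and a section of $\Lambda_i$ not vanishing at $x$, which is then the pull-back of a section of $L_i$ not vanishing at $p$. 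Thus it suffices to prove that $\Bs|\Lambda_i|=\emptyset$.

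To this end I would use the family of divisors produced by Lemma \ref{lem:pullbacks}. Fix a reduced list $\ell=(l_1,\dots,l_{23})$ with $f_\ell(Z_\ell)\in|\Lambda_i|$, and let the starting point $x$ of the Bott--Samelson construction (for which $Z_{\ell[r]}=\{x\}$) vary over $X$. For each $x$ this gives a divisor $D_x:=f_\ell(Z_\ell)\in|\Lambda_i|$: the numerical class of $D_x$ is independent of $x$ and equals $\Lambda_i$, since it is detected by the intersection numbers $D_x\cdot\Gamma_j=\delta_{ij}$ computed, uniformly in $x$, in the proof of Lemma \ref{lem:pullbacks}. Because $\Bs|\Lambda_i|\subseteq\bigcap_{x\in X}D_x$, it is enough to show that these divisors have empty common intersection. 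By the description recalled at the beginning of Section \ref{sec:F4}, one has $y\in D_x$ if and only if $(x,y)\in\widetilde{U_\ell}$, where $U_\ell=U_{l_1}*\dots*U_{l_{23}}$ is Kollár's proper relation on $X$.

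The main obstacle is precisely the emptiness of $\bigcap_x D_x$, i.e.\ transferring base point freeness \emph{without} yet knowing that $X$ is homogeneous; note that here one cannot compare $Z_\ell$ with its homogeneous counterpart, since for $\mathrm F_4$ the relevant Bott--Samelson varieties need not be isomorphic. A dimension count alone does not suffice: as $\dim\widetilde{U_\ell}\le\dim X+\dim f_\ell(Z_\ell)=24+23=47<48=\dim(X\times X)$, one only learns that a general fibre of the second projection is proper, which does not exclude a ``vertical'' component $X\times\{y_0\}$ inside $\widetilde{U_\ell}$, that is, a common base point. The key idea I would use instead is an intrinsic symmetry of this incidence. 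Each $U_l=X\times_{X_l}X$ is symmetric, so by \cite[IV.4]{Ko} the transpose of $U_\ell$ is $U_{\ell'}$, where $\ell'=(l_{23},\dots,l_1)$ is the reversed list; explicitly, $(x,y)\in\widetilde{U_\ell}$ if and only if $(y,x)\in\widetilde{U_{\ell'}}$.

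Now suppose $y_0\in\bigcap_{x}D_x$. Then $(x,y_0)\in\widetilde{U_\ell}$ for every $x$, hence $(y_0,x)\in\widetilde{U_{\ell'}}$ for every $x$, that is $x\in f_{\ell'}(Z_{\ell'})$ for every $x\in X$, where $Z_{\ell'}$ is the Bott--Samelson variety built from the starting point $y_0$. This forces $f_{\ell'}(Z_{\ell'})=X$. But $\ell'$ is reduced, its Weyl element being $w(\ell)^{-1}$, of the same length $23$; hence by Corollary \ref{cor:stein2} we get $\dim f_{\ell'}(Z_{\ell'})=23<24=\dim X$, a contradiction. Therefore $\bigcap_{x}D_x=\emptyset$, so $|\Lambda_i|$ is base point free, $\Lambda_i$ is globally generated, and by the reduction of the first paragraph $L_i$ is spanned by global sections.
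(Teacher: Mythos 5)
Your proof is correct and is essentially the paper's own argument: both rest on Lemma \ref{lem:pullbacks}, the reversal symmetry of the Bott--Samelson incidence ($y\in f_\ell(Z_\ell)$ started at $x$ if and only if $x\in f_{\ell'}(Z_{\ell'})$ started at $y$, with $\ell'$ the reversed list), and the dimension bound $\dim f_{\ell'}(Z_{\ell'})\leq 23<24$ from Corollary \ref{cor:stein2}. The only difference is presentational: you argue by contradiction from a hypothetical common base point, while the paper directly picks $x'\notin f_{\ell'}(Z_{\ell'})$ to produce a divisor of $|L_i|$ avoiding a given point, and your first paragraph makes explicit the descent from $\Lambda_i$ to $L_i$ that the paper leaves implicit.
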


\begin{proof} Let $\ell$ be as in Lemma \ref{lem:pullbacks}, and let ${\ell}':=(l_{23},\dots,l_1)$. Given any $y\in X^i$, and taking $x\in X$, $y=\pi^i(x)$, we may consider the Bott-Samelson variety $Z_{{\ell}'}$ starting at $Z_{{\ell}'[23]}=\{x\}$. Take any point $x'\in X\setminus f_{{\ell}'}(Z_{{\ell}}')$ and consider the Bott-Samelson variety $Z_\ell$ with $Z_{\ell[23]}=\{x'\}$. Then, by construction, $x\notin f_\ell(Z_\ell)$, so $\pi^i(f_\ell(Z_\ell))$ is a divisor in the linear system $|L_i|$ not passing by $x$. This concludes the proof.
\end{proof}

\subsection{Determination of $X^1$}

We will prove now that $X^1$ is isomorphic to its homogeneous model, by showing first that its VMRT at a general point is the appropriate one. We will start by computing some numerical invariants of $X^1$:

\begin{lemma}\label{lem:dimindex}
With the same notation as above, $X^1$ is a Fano manifold of dimension $15$ and index $8$.
\end{lemma}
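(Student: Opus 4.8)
The dimension is the easier half. By Proposition~\ref{prop:fibersf4} every fibre of $\pi^1=\pi_{\{2,3,4\}}$ is the complete flag manifold with Dynkin diagram $\cD_{\{2,3,4\}}$; deleting the node $1$ from ${\rm F}_4$ (with the ordering of \cite[p.~58]{H}) produces the diagram ${\rm C}_3$, so this fibre has dimension equal to the number of positive roots of ${\rm C}_3$, namely $9$. On the other hand $\dim X=\dim G/B$ by Proposition~\ref{prop:dim}, and $\dim G/B$ equals the number of positive roots of ${\rm F}_4$, that is $24$. Hence $\dim X^1=24-9=15$.

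For the index, recall that $X^1$ is smooth of Picard number one (Proposition~\ref{prop:fibersf4}); write $\Pic(X^1)=\Z L_1$ with $L_1$ the ample generator, so that $\Lambda_1=(\pi^1)^*L_1$ by Lemma~\ref{lem:fund} and $-K_{X^1}=r_1L_1$ for the index $r_1$. Since the images $\pi^1_*\Gamma_1$ form a covering family of rational curves, they are free and $r_1=-K_{X^1}\cdot\pi^1_*\Gamma_1\ge 2$, so $-K_{X^1}$ is ample and $X^1$ is Fano. The naive computation via $(\pi^1)^*(-K_{X^1})=-K_X+K_{X/X^1}$ is circular: restricting $K_{X/X^1}$ to a fibre only determines its $\Lambda_2,\Lambda_3,\Lambda_4$-components, whereas its $\Lambda_1$-component is exactly $2-r_1$. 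Circumventing this is the main obstacle, and I would do so by comparing Hilbert polynomials with the homogeneous model rather than computing the relative canonical directly.

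As $\pi^1$ is a smooth fibration whose fibres $F$ are flag manifolds, $H^q(F,\cO_F)=0$ for $q>0$, so $(\pi^1)_*\cO_X=\cO_{X^1}$ and $R^q(\pi^1)_*\cO_X=0$ for $q>0$; the projection formula then gives $\Chi(X^1,mL_1)=\Chi(X,m\Lambda_1)$ for every $m$. Because $\det\cA=1$ for ${\rm F}_4$, the divisors $-K_i$ form a $\Z$-basis of $\Pic(X)$, so Corollary~\ref{cor:cohomequal} applies to all line bundles; together with $\psi(\Lambda_1)=\omega_1$ (as $\psi$ sends $-K_i$ to $\alpha_i$ and preserves the pairing with the curve classes, it carries the basis dual to the $\Gamma_j$ to the fundamental weights) it yields $\Chi(X,m\Lambda_1)=\Chi(G/B,m\omega_1)$, and the same Leray argument for $\overline\pi^1\colon G/B\to \overline X^1$ gives $\Chi(G/B,m\omega_1)=\Chi(\overline X^1,m\overline L_1)$. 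Thus $(X^1,L_1)$ and the homogeneous $\overline X^1=G/P_1$ share the same Hilbert polynomial $P$.

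Finally I would read the index off $P$. For a Fano manifold of dimension $d$ and Picard number one with ample generator $\cO(1)$ and index $r$, Serre duality gives $P(m)=(-1)^dP(-r-m)$, and the centre $-r/2$ of this antisymmetry is determined by $P$. A direct computation in the root system of ${\rm F}_4$ shows that $\overline X^1$ has index $8$: pulling back to $G/B$ one finds $(\overline\pi^1)^*(-K_{\overline X^1})=2\rho-2\rho_{{\rm C}_3}=8\,\omega_1$, where $2\rho_{{\rm C}_3}$ is the sum of the positive roots of the sub-root-system ${\rm C}_3$ generated by $\alpha_2,\alpha_3,\alpha_4$; that is $-K_{\overline X^1}=8\overline L_1$. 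Since $X^1$ is Fano of Picard number one and dimension $15$ with the same Hilbert polynomial $P$, the uniqueness of the antisymmetry centre forces $r_1=8$, as claimed.
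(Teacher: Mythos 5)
Your proposal is correct and takes essentially the same route as the paper: the dimension follows from Propositions \ref{prop:dim} and \ref{prop:fibersf4} exactly as in the text, and the index is obtained, just as in the paper, by transferring cohomology along the smooth fibrations $\pi^1$ and $\overline{\pi}^1$ (Leray, using $R^q\pi^1_*\cO_X=0$) and through Corollary \ref{cor:cohomequal} --- legitimate for all of $\Pic(X)$ since $\det\cA=1$ for ${\rm F}_4$ --- so that the index of $X^1$ is identified with that of $\overline{X}^1=F_4/P_1$, namely $8$. The only difference is in how the index is read off the comparison: the paper characterizes it as the minimal $k$ with $H^{15}(X^1,-kL_1)\neq 0$ (which, via Serre duality, simultaneously yields $-K_{X^1}=8L_1>0$, making your separate free-curve argument for Fano-ness unnecessary), whereas you extract it from the uniqueness of the Serre-duality antisymmetry centre of the common Hilbert polynomial --- an equally valid finish.
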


\begin{proof}
That $X^1$ is smooth and $15$-dimensional follows from Propositions \ref{prop:dim} and \ref{prop:fibersf4}.
The index of  $X^1$  can then be seen as the minimum integer $k$
such that $H^{15}(X^1,-kL_1) \not = 0$. Since
$$ H^{15}(X^1,-kL_1)=H^{15}(X,-k\Lambda_1)=H^{15}(\overline{X},-k\overline{\Lambda}_1)=H^{15}(\overline{X}^1,-k \overline L_1),$$
where the equality in the middle follows from Corollary \ref{cor:cohomequal}, we get that the index of $X^1$
is equal to the index of $\overline{X}^1$, which is $8$. 
\end{proof}

\begin{proposition}\label{prop:maptovmrt}
With the same notation as above, $X^1 \simeq \overline{X}^1$.
\end{proposition}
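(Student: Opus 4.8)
The plan is to identify $X^1$ with its homogeneous model $\overline{X}^1$, the adjoint variety of $\mathrm{F}_4$, by invoking a recognition theorem that characterizes a Fano manifold of Picard number one through its variety of minimal rational tangents. Since the node $1$ corresponds to a \emph{long} simple root, the natural tool is the recognition theorem (Mok; Hong and Hwang) for the homogeneous space associated to a long simple root, which rests on the Cartan--Fubini type extension of Hwang and Mok. This theorem says that once the VMRT $\cC_y$ of $\cH_1$ at a general point $y\in X^1$ is \emph{projectively} isomorphic to the VMRT $\overline{\cC}_{\bar y}$ of $\overline{X}^1$, one concludes $X^1\simeq\overline{X}^1$. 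The whole proposition therefore reduces to identifying $\cC_y$, the target being the subadjoint variety of $\mathrm{F}_4$, namely the Lagrangian Grassmannian $\mathrm{LG}(3,6)$ in its Legendrian embedding in $\P^{13}$.

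First I would fix the numerical data of $\cC_y$. By Lemma \ref{lem:dimindex} the variety $X^1$ has dimension $15$ and index $8$, so $-K_{X^1}=8L_1$; by Lemma \ref{lem:fund} a minimal curve $C$ (the class $\pi^1_*\Gamma_1$) satisfies $L_1\cdot C=1$, whence $-K_{X^1}\cdot C=8$. Thus the subfamily of $\cH_1$ through a general point $y$ has dimension $-K_{X^1}\cdot C-2=6$, and since the tangent map $\tau_y$ is finite---this is exactly what the proof of Lemma \ref{lem:smoothVMRT} establishes---the VMRT $\cC_y$ is a six-dimensional subvariety of $\P((T_yX^1)^\vee)\simeq\P^{14}$, smooth by Proposition \ref{prop:smoothvmrt}. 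I note that the index $8$ equals $(\dim X^1+1)/2$, the numerical signature of a contact Fano manifold, so I expect $\cC_y$ to be linearly degenerate, spanning only a hyperplane $\P^{13}$; establishing this Legendrian position is the first substantive point.

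The heart of the matter, and the step I expect to be hardest, is the \emph{projective} identification of $\cC_y$ with $\mathrm{LG}(3,6)$. The dimension and smoothness of $\cC_y$ are already in hand, and its remaining numerical invariants (degree, and the Hilbert polynomial of the minimal family) can be matched with those of $\overline{\cC}_{\bar y}$ by comparing $X^1$ with $\overline{X}^1$ through Corollary \ref{cor:cohomequal}, which forces the two manifolds to share the cohomology of the relevant line bundles. One natural route to upgrade this numerical coincidence to a projective isomorphism is to exploit that $\mathrm{LG}(3,6)$ is itself an irreducible Hermitian symmetric space: one studies the minimal rational curves of the smooth sixfold $\cC_y$ together with their own VMRT and applies the simpler, Hermitian-symmetric recognition recursively, obtaining first that $\cC_y$ is abstractly $\mathrm{LG}(3,6)$ and then that its embedding in $\P^{13}$ is the Legendrian one. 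The essential difficulty is precisely the passage from an abstract to a projective identification of the VMRT: the recognition theorem requires the image $\cC_y\hookrightarrow\P((T_yX^1)^\vee)$ under control, not merely the isomorphism type of $\cC_y$, so the verification of linear nondegeneracy inside the contact hyperplane and of the precise Legendrian embedding is where the real work lies. Once this is secured, the recognition theorem yields $X^1\simeq\overline{X}^1$ at once.
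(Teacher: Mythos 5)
Your proposal correctly frames the endgame --- the recognition theorem of Hong--Hwang \cite{HH} for the homogeneous space of a long simple root, which requires the VMRT $\cC_x\subset\P((T_xX^1)^\vee)$ to be \emph{projectively} equivalent to the Legendrian $\mathrm{LG}(3,6)$ --- and your numerical bookkeeping (dimension $15$, index $8$, $L_1\cdot\pi^1_*\Gamma_1=1$, hence $\dim\cC_x=6$, smooth by Proposition \ref{prop:smoothvmrt}) matches the paper. But the proof stops exactly where the actual work begins: you concede that ``the passage from an abstract to a projective identification'' is the essential difficulty and offer only a sketch that does not close it. Concretely: (a) matching ``degree and Hilbert polynomial'' of $\cC_x$ with the model via Corollary \ref{cor:cohomequal} is unjustified --- that corollary compares cohomology of line bundles on $X$ and $G/B$, and there is no route given from it to the projective invariants of the VMRT in its embedding; (b) the proposed recursive application of Hermitian-symmetric recognition to $\cC_x$ presupposes control of the minimal rational curves of the abstract sixfold $\cC_x$, which you have no handle on at this stage, and even if it succeeded it would only yield the abstract isomorphism type, leaving the linear degeneracy and the Legendrian embedding --- which you yourself flag as ``where the real work lies'' --- unproved.

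The paper fills precisely this gap with a mechanism absent from your proposal. Take the Bott--Samelson variety $Z_\ell$ for the list $(2,3,4,2,3,4,2,3,4,1)$ starting at $\bar x$ with $\pi^1(\bar x)=x$: since the index $1$ does not occur in $\ell[1]$, Corollary \ref{cor:simple} forces the defining extension class to vanish, so $Z_\ell\simeq\P(f_{\ell[1]}^*K_1\oplus\cO_{Z_{\ell[1]}})$, and by Proposition \ref{prop:uniqueBC} this agrees with the homogeneous model, where $f_{\ell[1]}(Z_{\ell[1]})$ is a complete ${\rm C}_3$-flag mapping to $\G=\mathrm{LG}(3,6)$. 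The universal property of $\cH_1$ then produces a nonconstant morphism $\psi_2:\G\to(\cH_1)_x$, surjective because $\G$ has Picard number one and both sides have dimension $6$; composing with the tangent map gives a surjection $\f:\G\to\cC_x$ with $\f^*\cO_{\cC_x}(1)=\cO_\G(1)$. Lau's theorem \cite{Lau} on holomorphic maps from rational homogeneous spaces makes $\f$ an isomorphism (the projective-space alternative is excluded by \cite{Hw3} against dimension $15$ and index $8$), exhibiting $\cC_x$ as a linear projection of $\phi(\G)\subset\P^{13}$; Zak's theorem \cite{Zak} says $\phi(\G)$ admits no isomorphic projection, so $\cC_x$ is projectively the complete Legendrian embedding --- which simultaneously settles the linear-degeneracy question you proposed to attack separately via the contact-manifold heuristic. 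Only then does \cite{HH} apply. In short, your proposal is a correct plan missing its central step; the paper's surjection $\G\to\cC_x$ built from Bott--Samelson data, together with the Lau and Zak rigidity theorems, is the idea you would need to supply.
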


\begin{proof} Let us denote by $\G$ the Lagrangian Grassmannian of 3-dimensional  subspaces in $\C^6$ which
are isotropic with respect to a fixed symplectic form, i.e. the rational homogeneous space corresponding
to the Dynkin diagram $\mathrm{C}_3$ marked on the third node.

By Proposition \ref{prop:smoothvmrt}, $\cC_x$ is smooth, for a general $x \in X^1$.
Let $ \bar x\in X$ be such that $\pi^1(\bar x)=x$, and consider the Bott-Samelson variety $Z_\ell$, corresponding to the list $(2,3,4,2,3,4,2,3,4,1)$, constructed starting from the point $\bar x$.
By Corollary \ref{cor:simple} and Proposition \ref{prop:uniqueBC}, we have $Z_\ell \simeq
\P(f_{\ell[1]}^*(K_1)\oplus \cO_{Z_{\ell[1]}})\simeq \P(\overline f_{\ell[1]}^*(\overline K_1)\oplus \cO_{\overline Z_{\ell[1]}})\simeq 
\overline Z_{\ell}$.

By the universal property of $\cH_1$, there exists a morphism $h_x: Z_{\ell[1]} \to (\cH_1)_x$ such that $p_{\ell[1]}:Z_\ell \to Z_{\ell[1]}$ is the pull-back of the universal family $\pi_x:\P(\cE_x)\to(\cH_1)_x$. Since $h_x$ contracts curves on which $\cO_{Z_{\ell[1]}} \oplus f^*_{\ell[1]}K_1$ is trivial, it factors through $\psi_1:=\pi^2 \circ {f_{\ell[1]}}$, and we have a commutative diagram:
\[\xymatrix@=30pt{
Z_\ell=\P(\cO_{Z_{\ell[1]}} \oplus f^*_{\ell[1]}K_1)    \ar[r]^(.45){} \ar[d]_{p_{\ell[1]}}& \P(\cO_\G \oplus \cO_\G(1)) \ar[r]\ar[d]& \P(\cE_x) \ar[d]^{\pi_x}  \ar[r]_{\iota}  &X^1  \\
Z_{\ell[1]}  \ar[r]^(.45){\psi_1} & \G \ar[r]^{\psi_2}& (\cH_1)_x &  \\
} \]
Since $\G$ has dimension six and Picard number one, and $\dim \cH_x= -K_{X^1} \cdot \pi^1_*\Gamma_1 -2 = 6$, the morphism $\psi_2$, which is non constant --otherwise $f^*_{\ell[1]}K_1$ would be trivial--, is surjective,
hence the composition $\f:=\tau_x \circ \psi_2:\G \to \cC_x$ is a surjective morphism such that $\f^*\cO_{\cC_x}(1)= c_1( \psi_2^*\cE_x)=\cO_\G(1)$.

By \cite[Main Theorem]{Lau}, $\f$ is an isomorphism unless $\cC_x$ is a projective space. In the latter case, since $\f^*\cO_{\cC_x}(1) = \cO_{\G}(1)$, $\cC_x$ would be a linear space, so, by \cite[Proposition~5]{Hw3}, $X^1$ would be the projective space, contradicting that $X^1$ has dimension $15$ and index $8$.

So we have proved that $\f:\G \to \cC_x$ is an isomorphism, given by a linear projection of $\phi(\G)$, where $\phi$ is the embedding given by the complete linear system $|\cO_{\G}(1)|$.
On the other hand, by \cite[III. 1.4]{Zak}, $\phi(\G)$ cannot be projected isomorphically. This implies that $\cC_x$ is projectively equivalent to $\G$ embedded by  the complete linear system $|\cO_{\G}(1)|$.
We can thus apply \cite[Main Theorem]{HH} (which is a generalization of \cite[Main Theorem]{Mk2}) to get that $X^1 \simeq \overline X^1$.
\end{proof}

\subsection{Reconstructing the complete flag}

In order to prove that $X \simeq \overline X$ we will use the arguments in \cite[Section 6]{MOSW}. Let us note that in the quoted reference $X$ is assumed to be an FT-manifold, but this assumption is used only to show that the contractions of $X$ have the properties described in \cite[Proposition~4]{MOSW}, which we have shown to hold (see section \ref{ssec:smoothcont})  in the case we are considering.

\begin{proof}[of Theorem \ref{conj:CPforFT}, case $\mathrm{F}_4$]
Arguing as in \cite[Proposition~11]{MOSW} we can show that there is  a commutative diagram
\[\xymatrix@=30pt{
X^{1,2}  \ar@/^1pc/[rr]^{{\pi^{\{1,2\},1}} }  \ar[r]_(.45){\tilde{h}} \ar[d]_{\pi^{\{1,2\},2} }& \overline X^{1,2} \ar[d]^{{\overline\pi^{\{1,2\},2}} }  \ar[r]_{{\overline\pi^{\{1,2\},1}} } & \overline X^1   \\
X^{2}  \ar[r]_(.45)h  & \overline X^2 &  \\
} \]
 Since $\dim X^2 = \dim \overline X^2$ and $h$ is not constant, $h$ is a finite surjective map.
By \cite[Main Theorem]{Lau} $\tilde h$ restricted to the fibers of $\pi^{\{1,2\},1}$  is an isomorphism. It follows that $h$ is bijective, hence an isomorphism.
We conclude by applying \cite[Proposition~12]{MOSW} with $I_1=\{1,2\}$.
\end{proof}

\section{Consequences for the Campana-Peternell Conjecture}\label{sec:appCP}

In this section we will show the implications of our methods to the  Campana-Peternell Conjecture (see \ref{conj:CPconj}). Along this section $X$ will denote a complex projective Fano manifold with nef tangent bundle $T_X$, and we will say that $X$ is a CP-manifold. We define the width of $X$ as a measure of how far is $X$ from being a Fano manifold whose elementary contractions are smooth $\P^1$-fibrations.

\begin{definition}
Given a CP-manifold $X$, we define:
$$\tau(X):=\sum_C
(-K_X\cdot C-2)\in\Z_{\geq 0},$$
where the sum is taken over all the classes $C$ of rational curves of minimal ($-K_X$)-degree belonging to extremal rays of $\cNE{X}$.	
\end{definition}

In particular $\tau(X)=0$ if and only if every elementary contraction of $X$ is a smooth $\P^1$-fibration, i.e. Theorem \ref{conj:CPforFT} can be read as:

\begin{corollary}\label{cor:CP2}
Any CP-manifold $X$ with $\tau(X)=0$
is isomorphic to the quotient of a semisimple group $G$ by its Borel subgroup $B$.
\end{corollary}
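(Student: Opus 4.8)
The plan is to deduce this statement directly from Theorem~\ref{conj:CPforFT} by showing that the vanishing of the width is equivalent to all elementary contractions being smooth $\P^1$-fibrations, so that the hypotheses of that theorem are met. The only genuine content is the implication from $\tau(X)=0$ to the geometric structure of the contractions.

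First I would observe that for any CP-manifold the summands defining $\tau(X)$ are nonnegative, so that $\tau(X)=0$ forces every minimal extremal rational curve to have anticanonical degree exactly two. Indeed, if $C\iso\P^1$ is such a curve then the restriction $T_X|_C$ is a nef vector bundle on $\P^1$, hence splits as a direct sum of line bundles of nonnegative degree; since the inclusion $T_C=\cO_{\P^1}(2)\hookrightarrow T_X|_C$ exhibits a summand of degree at least two, we get $-K_X\cdot C=\deg(T_X|_C)\geq 2$. Thus each term $-K_X\cdot C-2$ is nonnegative, and $\tau(X)=0$ holds precisely when $-K_X\cdot C=2$ for the minimal curve $C$ on every extremal ray $R$ of $\cNE{X}$; in this case the splitting is forced to be $T_X|_C\iso\cO_{\P^1}(2)\osum\cO_{\P^1}^{\osum(n-1)}$.

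Next I would invoke the structure theory of Fano manifolds with nef tangent bundle to upgrade this numerical information into a geometric statement about the contractions. For a CP-manifold every elementary contraction $\pi:X\to Y$ is a smooth fibration whose general fiber $F$ is again a Fano manifold with nef tangent bundle, of Picard number one; the condition $-K_X\cdot C=2$ translates into the statement that the minimal rational curves of $F$ have anticanonical degree two with standard splitting of $T_F$, and the only such $F$ is $\P^1$. Hence $\pi$ is a smooth morphism all of whose fibers are $\P^1$, i.e.\ a smooth $\P^1$-fibration, and running the argument over all extremal rays shows that $\tau(X)=0$ implies that every elementary contraction of $X$ is a smooth $\P^1$-fibration. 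The converse is immediate, since for a fiber $\Gamma\iso\P^1$ of a smooth $\P^1$-fibration the normal bundle $N_{\Gamma/X}$ is trivial, whence $K_X|_\Gamma\iso K_\Gamma$ and $-K_X\cdot\Gamma=\deg(-K_\Gamma)=2$, so the corresponding summand of $\tau(X)$ vanishes.

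Once every elementary contraction is known to be a smooth $\P^1$-fibration, the manifold $X$ satisfies the hypotheses of Notation~\ref{not:ftmanifold}, and Theorem~\ref{conj:CPforFT} applies to give $X\iso G/B$ for a semisimple group $G$ and a Borel subgroup $B$, completing the proof. I expect the main obstacle to be the middle step: passing from the purely numerical vanishing $\tau(X)=0$ to the smoothness of the contractions and the identification of their fibers with $\P^1$. This is exactly where the hypothesis that $T_X$ is nef---rather than merely that $X$ is Fano---is indispensable, since it is the nefness of the tangent bundle, through the known structural results on CP-manifolds, that guarantees both the smoothness of the elementary contractions and the classification of their length-two fibers.
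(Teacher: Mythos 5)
Your reduction is the right one, and your first step (each summand of $\tau(X)$ is nonnegative because $T_{\P^1}=\cO_{\P^1}(2)$ injects into the nef bundle $T_X|_C$, so $\tau(X)=0$ forces $-K_X\cdot C=2$ and $T_X|_C\iso\cO_{\P^1}(2)\oplus\cO_{\P^1}^{\oplus(n-1)}$) agrees with what the paper implicitly uses. But your middle step has a genuine gap: the assertion that ``the only such $F$ is $\P^1$'' is exactly the nontrivial content of the corollary, and you neither prove it nor point to a citable result. There is no off-the-shelf classification of ``length-two fibers'' of CP-manifolds to invoke: without nefness the claim is simply false (a hypersurface of degree $n$ in $\P^{n+1}$ is Fano of Picard number one covered by anticanonical-degree-two curves), and with nefness its standard proof is precisely the argument the paper gives. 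Your appeal to structure theory is also slightly overstated on a second point: while smoothness of elementary contractions of CP-manifolds, with CP fibers, is indeed known (\cite{SW}; restated as \cite[Proposition~4]{MOSW}, which this paper itself uses in Section 5), the claim that the fibers have Picard number one is not among the quotable statements and would itself need an argument, since a priori $N_1(F)$ can be larger than the one-dimensional image of $N_1(F)$ in $N_1(X)$.

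The paper closes the gap directly on $X$, bypassing your detour through fibers entirely: for the family of deformations of a minimal extremal curve $\Gamma_i$, with universal family $p:\cU\to\cM$ and evaluation $q:\cU\to X$, the condition $-K_X\cdot\Gamma_i=2$ makes $q$ finite (the curves through a fixed point form a zero-dimensional family), and nefness of $T_X$ makes $q$ a holomorphic submersion, following the proof of \cite[Proposition~4]{HM}; a finite submersion is an \'etale cover, so simple connectedness of the Fano manifold $X$ forces $q$ to be an isomorphism, exhibiting $\pi_i$ as the smooth $\P^1$-fibration $p\circ q^{-1}$, after which Theorem~\ref{conj:CPforFT} applies. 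If you want to keep your fiberwise formulation, the honest repair is to run this same finite-plus-submersive-evaluation argument on the fiber $F$ (giving $F\iso\cU_F$, a $\P^1$-bundle over $\cM_F$, which contradicts $\rho_F=1$ unless $\dim F=1$) --- but then you need $\rho_F=1$, and you have invoked the smoothness theorem for no real gain. Applying the \'etale argument to $X$ itself is both shorter and self-contained.
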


\begin{proof}
Assume that $\tau(X)=0$, and let $\pi:X\to X_i$ be an elementary contraction, associated to an extremal ray $R_i$ generated by the class of a minimal rational curve $\Gamma_i$. Let $p:\cU\to \cM$ be the family of deformations of $\Gamma_i$, with evaluation morphism $q$. Since by hypothesis $-K_X\cdot\Gamma_i=2$, then $q:\cU\to X$ is finite. Moreover, following the proof of \cite[Proposition~4]{HM}, $q$ is a holomorphic submersion. Now, since $X$ is simply connected, it follows that $q$ is an isomorphism, and that the contraction $\pi$ is a $\P^1$-fibration.
\end{proof}

Then Conjecture \ref{conj:CPconj} boils down to proving the following:

\begin{conjecture}\label{conj:CP1}
Given a CP-manifold satisfying $\tau(X)>0$, there exists a contraction $f:X'\to X$ from a CP-manifold $X'$ satisfying $\tau(X')<\tau(X)$.
\end{conjecture}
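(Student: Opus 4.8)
The plan is to produce the reduction geometrically, by adjoining one smooth $\P^1$-fibration to a ``bad'' elementary contraction of $X$, in imitation of the passage from a partial flag $G/P$ to a partial flag $G/P'$ with one extra marked node. Since $\tau(X)>0$, there is an extremal ray $R$ of $\cNE{X}$ whose minimal rational curves $C$ satisfy $-K_X\cdot C\geq 3$. Invoking the (known, or inductively available) structural results for CP-manifolds, I would use that the curves $C$ form an unsplit covering family and are standard, so that, arguing as in Section~\ref{sec:F4} via \cite{HM}, the associated contraction $\pi_R:X\to Y$ is a smooth fibration whose fibers $F$ are CP-manifolds of Picard number one. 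I would then take $\cM\to Y$ to be the relative family of these minimal curves, with universal family $p:\cU\to\cM$ (whose fibers are the curves $C\iso\P^1$) and evaluation $q:\cU\to X$, and set $X':=\cU$, $f:=q$. The archetypal case $X=\P^d$, where $\cU=\{(x,\ell)\mid x\in\ell\}$ and $q$ is the bundle of lines through a point, already exhibits the mechanism.

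First I would check that $\cU$ is a smooth projective CP-manifold. Smoothness of $\cU$ and of $p,q$ follows from the submersivity of the evaluation, a consequence of the nefness of $T_X$ (again in the style of \cite{HM}); the Fano property follows by testing $-K_\cU=q^*(-K_X)-K_{\cU/X}$ against the generators of the rays, using that $-K_X$ is ample and $-K_{\cU/X}$ is $q$-ample. The decisive point is the nefness of $T_\cU$. For this I would use the relative tangent sequence
\begin{equation*}
0\lra T_{\cU/X}\lra T_\cU\lra q^*T_X\lra 0.
\end{equation*}
Since $q^*T_X$ is nef (pull-back of a nef bundle) and nefness of vector bundles is stable under extensions, it is enough to prove that the relative tangent bundle $T_{\cU/X}$ is nef.

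The value of $\tau(\cU)$ should then reduce to a bookkeeping of extremal rays. By construction $\rho_\cU=\rho_X+1$, and the ray $R$ is replaced by exactly two rays of $\cU$: the contraction $p$, which is a smooth $\P^1$-fibration and so contributes $0$, and the contraction $q$, whose fibers are the parameter spaces $\cH_x$ of the curves $C$ through a point. All remaining extremal rays of $X$ pull back unchanged. Since $\cH_x$ is (again) of Picard number one, its contribution to $\tau$ is $\tau(\cH_x)$, and the computation yields $\tau(\cU)=\tau(X)-\bigl(\tau(F)-\tau(\cH_x)\bigr)$. The required strict inequality $\tau(\cU)<\tau(X)$ is therefore equivalent to $\tau(\cH_x)<\tau(F)$, i.e.\ to the statement that the variety of minimal rational tangents of $F$ is strictly simpler than $F$; this holds on the homogeneous model (hence, in the inductive set-up where lower-dimensional CP-manifolds are already known to be homogeneous, for $F$ itself), giving the decrease --- by $1$ for $F=\P^d$, by $2$ for $F=Q^n$, and so on.

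The hard part will be the nefness of $T_{\cU/X}$. Nefness of $T_{\cH_x}$ on each individual fiber --- which is available by induction, the fibers being CP-manifolds of smaller dimension --- does not by itself force the relative tangent bundle to be nef on the whole of $\cU$. Establishing this global positivity appears to require genuine input from the geometry of the variety of minimal rational tangents, relating $T_{\cU/X}$ to the tautological data of the family and to the second fundamental form of the $\cH_x$'s along $X$, in the spirit of Hwang--Mok theory; it is precisely here that the real difficulty of the Campana--Peternell problem is concentrated. A parallel route worth pursuing is a direct inductive relativization: first resolve $F$ by the lower-dimensional case of the conjecture and then spread the resulting contraction $F'\to F$ over $Y$, which trades the nefness obstruction for the problem of making the fibrewise reduction canonical enough to globalize.
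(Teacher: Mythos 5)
You are attempting to prove a statement that the paper deliberately leaves unproved: Conjecture~\ref{conj:CP1} is stated as a \emph{conjecture}, and together with Theorem~\ref{conj:CPforFT} (via Corollary~\ref{cor:CP2}) it is the authors' reformulation of the full Campana--Peternell problem. There is therefore no proof in the paper to compare against, and your proposal does not supply one: you yourself concede that the nefness of $T_{\cU/X}$ (equivalently, of $T_\cU$, given the extension by $q^*T_X$) is left open, and that single step is not a technical loose end but precisely the open core of the problem. A sketch whose decisive positivity statement is acknowledged to be unproved is a research strategy, not a proof --- and indeed it is essentially the strategy the authors themselves have in mind, modelled on the passage from $G/P$ to the adjacent flag via the universal family of minimal curves, with $X=\P^d$, $\cU=F(0,1;d)$ as the archetype.

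Beyond the conceded gap, several intermediate steps are asserted where they would need proof. First, for $q:\cU\to X$ to be a \emph{contraction} you need its fibers $\cH_x$ to be connected; this holds on the homogeneous model but is not automatic for a CP-manifold (smoothness of $q$ does follow, since nefness of $T_X$ makes every rational curve free, but connectedness does not). Second, your argument for the Fano property of $\cU$ is too quick: $q$-ampleness of $-K_{\cU/X}$ presupposes that the fibers $\cH_x$ are Fano, which is unknown, and even granting it, the sum of an ample pull-back and a relatively ample divisor is ample only after a suitable twist $q^*(-mK_X)-K_{\cU/X}$, $m\gg 0$, not for $m=1$, so ampleness of $-K_\cU$ does not follow from the displayed sequence. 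Third, the bookkeeping $\rho_\cU=\rho_X+1$, the claim that $R$ is ``replaced by exactly two rays'' while ``all remaining extremal rays pull back unchanged,'' and the formula $\tau(\cU)=\tau(X)-\bigl(\tau(F)-\tau(\cH_x)\bigr)$ silently assume the cone and contraction structure of the homogeneous model --- the very structure the conjecture is meant to help establish --- so as written the argument is circular; an honest version would have to be organized as an induction on dimension in which lower-dimensional CP-manifolds are already known homogeneous, and even then you would need $\cH_x$ to be a CP-manifold to make sense of $\tau(\cH_x)$, which is again unknown. In short: the construction $X'=\cU$, $f=q$ is the right candidate and your $\tau$-count is correct on homogeneous models, but every step that distinguishes a proof from a heuristic (nefness of $T_\cU$, Fano-ness of $\cU$, connectedness of $\cH_x$, the extremal-ray accounting) remains open.
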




\end{document}